\let\OLDthebibliography\thebibliography
\renewcommand\thebibliography[1]{
	\OLDthebibliography{#1}
	\setlength{\parskip}{0pt}
	\setlength{\itemsep}{2pt} 
}
\theoremstyle{definition}
\newtheorem{df}{Definition}[section]
\newtheorem{cv}[df]{Convention}
\theoremstyle{plain}
\newtheorem{thm}[df]{Theorem}
\newtheorem{pp}[df]{Proposition}
\newtheorem{co}[df]{Corollary}
\newtheorem{lm}[df]{Lemma}
\newcommand{\fk}{\mathfrak}
\newcommand{\mc}{\mathcal}
\newcommand{\wtd}{\widetilde}
\newcommand{\wht}{\widehat}
\newcommand{\ovl}{\overline}
\newcommand{\Tr}{\mathrm{Tr}}
\newcommand{\End}{\mathrm{End}} 
\newcommand{\id}{\mathbf{1}}
\newcommand{\Hom}{\mathrm{Hom}}
\newcommand{\ev}{\mathrm{ev}}
\newcommand{\coev}{\mathrm{coev}}
\newcommand{\Rep}{\mathrm{Rep}}
\newcommand{\uni}{\mathrm{u}}
\newcommand{\ssp}{\mathrm{ss}}
\newcommand{\bk}[1]{\langle {#1}\rangle}
\newcommand{\im}{\mathbf{i}}
\newcommand{\Co}{\complement}
\newcommand{\RepV}{\mathrm{Rep}^\uni(V)}
\newcommand{\RepA}{\mathrm{Rep}^\uni(A)}
\newcommand{\RepAU}{\mathrm{Rep}^\uni(A_U)}
\newcommand{\RepU}{\mathrm{Rep}^\uni(U)}
\newcommand{\BIM}{\mathrm{BIM}^\uni}
\newcommand{\BIMA}{\mathrm{BIM}^\uni(A)}
\newcommand{\rfl}{\varepsilon}
\newcommand{\Pij}{\Psi_{i,j}}
\newcommand{\Pjk}{\Psi_{j,k}}
\newcommand{\Cij}{\chi_{i,j}}
\newcommand{\Cjk}{\chi_{j,k}}
\newcommand{\Cijk}{\chi_{i,j,k}}
\numberwithin{equation}{section}
\title{Q-systems and extensions of completely unitary vertex operator algebras}
\author{{\sc Bin Gui}
}
\date{}
\begin{document}\sloppy 
	\pagenumbering{arabic}
	\setcounter{section}{-1}

	\maketitle

\tableofcontents
	
\newpage

\begin{abstract}

	Complete unitarity is a natural condition on a CFT-type regular VOA  which ensures that its  modular tensor category is unitary. In this paper we show that any CFT-type unitary (conformal) extension $U$ of a completely unitary VOA $V$ is completely unitary. Our method is to relate  $U$ with a Q-system $A_U$  in the  $C^*$-tensor category $\RepV$ of unitary $V$-modules. We also update the main result of \cite{KO02} to the unitary cases by showing that the tensor category $\RepU$ of unitary $U$-modules is equivalent to the tensor category $\RepAU$ of unitary $A_U$-modules  as unitary modular tensor categories.

As an application, we obtain infinitely many new (regular and) completely unitary VOAs including all CFT-type $c<1$ unitary VOAs.   We also show that the latter are in one to one correspondence with the (irreducible) conformal nets of the same central charge $c$, the classification of which is given by \cite{KL04}.
\end{abstract}

\section{Introduction}

This is the first part in a series of papers to study the relations between  unitary vertex operator algebra (VOA) extensions and  conformal net extensions. We will always focus on rational conformal field theories, so our VOAs are assumed to be CFT-type, self-dual, and regular, so that the  categories of VOA modules are modular tensor categories (MTCs). 

Although both unitary VOAs and conformal nets are mathematical formulations of unitary chiral CFTs, they are defined and studied in rather different ways, with the former being more algebraic and geometric, and the latter mainly functional analytic. A systematic study to relate these two approaches was initiated by Carpi-Kawahigashi-Longo-Weiner \cite{CKLW18} and followed by \cite{Ten19a,Ten18b,Gui20,Ten19b,CW,CWX}, etc. In these works the methods of relating the two approaches are transcendental and have a lot of analytic subtleties. Due to these subtleties, certain models (such as unitary Virasoro VOAs, and unitary affine VOAs especially of type $A$) are easier to analyze than the others. On the other hand, when studying  the extensions and conformal inclusions of chiral CFTs, the main tools in the two approaches are quite similar: both are (commutative) associative algebras in a tensor category $\mc C$ (called $\mc C$-algebras); see \cite{KO02,HKL15,CKM17} for VOA extensions, and \cite{LR95,KL04,BKLR15} for conformal net extensions; see also \cite{FRS02,FS03} for the general notion of algebra objects inside a tensor category. In this and the following papers, we will see that $\mc C$-algebras are also powerful tools for relating  unitary VOA extensions and conformal net extensions in the above mentioned systematic and transcendental settings. 

There is, however, one important difference between the $\mc C$-algebras used in the two approaches:  for conformal net extensions the $\mc C$-algebras are \emph{unitary}. Unitarity is an essential property for conformal nets and operator algebras but not quite necessary for VOAs. However, it is impossible to relate VOAs and conformal nets without adding unitary structures on VOAs (and their representation categories). This point is already clear in \cite{Gui20}, where we have seen that  to relate the tensor categories of VOAs and conformal nets, one has to first make the VOA tensor categories unitary.

In this paper our main goal is to relate the $C^*$-tensor categories of unitary VOA extensions with those of unitary $\mc C$-algebras (also called $C^*$-Frobenius algebras or (under slightly stronger condition) $Q$-systems \cite{Lon94}).  As applications, we prove many important unitary properties of VOA extensions, the most important of which are the complete unitarity of VOAs as defined below.

\subsubsection*{Complete unitarity of unitary VOA extensions}

A CFT-type regular VOA $V$ is called \textbf{completely unitary} if the following conditions are satisfied.
\begin{itemize}
	\item $V$ is unitary \cite{DL14}, which means roughly that $V$ is equipped with an inner product and an antiunitary antiautomorphism $\Theta$ which relates the vertex operators of $V$ to their adjoints.
	\item Any $V$-module admits a unitary structure. Since $V$-modules are semisimple, it suffices to assume the unitarizability of irreducible $V$-modules.
	\item For any irreducible unitary $V$-modules $W_i,W_j,W_k$, the non-degenerate \emph{invariant sesquilinear form} $\Lambda$ introduced in \cite{Gui19b} and defined on  the dual vector space of type $k\choose i~j$  intertwining operators of $V$ is positive.
\end{itemize} 
The importance of complete unitarity lies in the following theorem.
\begin{thm}[\cite{Gui19b} theorem 7.9]
If $V$ is a CFT-type, regular, and completely unitary VOA, then the unitary $V$-modules form a unitary modular tensor category.
\end{thm}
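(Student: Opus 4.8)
The plan is to take as given the modular tensor category structure that Huang's theorems supply on the category $\Rep(V)$ of all (not necessarily unitary) $V$-modules, which is available precisely because $V$ is CFT-type and regular, and then to promote the full subcategory $\RepV$ of unitary modules to a \emph{unitary} modular tensor category. Since by the second clause of complete unitarity every $V$-module is unitarizable, the inclusion $\RepV \hookrightarrow \Rep(V)$ is an equivalence of braided tensor categories, so $\RepV$ automatically inherits a rigid braided tensor structure together with the non-degeneracy (modularity) of the braiding. The genuinely new content is therefore to equip this structure with a compatible $\dagger$-operation and to verify that all the coherence data—the associativity and unit isomorphisms, the braiding, the ribbon twist, and the duality morphisms—are unitary with respect to it.

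First I would build the underlying $C^*$-category. Each irreducible unitary module carries an inner product that, by irreducibility together with Schur's lemma, is unique up to a positive scalar, and every object decomposes as a finite orthogonal direct sum of such irreducibles. This endows each $\Hom$ space with an adjoint $*$-operation, making $\RepV$ a semisimple $C^*$-category with finite-dimensional morphism spaces and simple tensor unit $V$. Next I would address the tensor functor $\boxtimes$. For unitary irreducibles $W_i, W_j$, the Huang–Lepowsky tensor product $W_i\boxtimes W_j$ corepresents the space of intertwining operators, in the sense that morphisms $W_i\boxtimes W_j \to W_k$ correspond to intertwining operators of type $k\choose i\ j$. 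The strategy is to transport the invariant sesquilinear form $\Lambda$ of \cite{Gui19b} onto these morphism spaces; the third clause of complete unitarity is exactly the statement that $\Lambda$ is positive, so it defines a genuine inner product, and via the universal property this descends to a positive-definite inner product on $W_i\boxtimes W_j$ that is compatible with the adjoints of the elementary morphisms $f\boxtimes g$.

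With the $C^*$-structure and the $\Lambda$-inner products in hand, I would verify unitarity of the coherence morphisms. The ribbon twist $\theta$ acts on $W_i$ essentially as $e^{2\pi\im L_0}$, which is unitary because $L_0$ is self-adjoint on any unitary module; this is the easy case, and the duality morphisms can be normalized (standard solutions to the conjugate equations) so that rigidity becomes $*$-compatible. The main obstacle is the unitarity of the associativity and braiding isomorphisms: these are governed by the fusing and braiding matrices relating products and iterates of intertwining operators, and one must show that these matrices are unitary with respect to the $\Lambda$-inner products. This is where the positivity of $\Lambda$ does its essential work, but positivity alone is not formal—it has to be combined with the compatibility of $\Lambda$ with the operations of composing and re-expanding intertwining operators (the sewing/factorization of genus-zero conformal blocks). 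Establishing that $\Lambda$ intertwines the adjoint operation with these analytic moves is the delicate point, and I expect it to rest on the convergence and associativity results for intertwining operators proved earlier. Once these compatibilities are secured, the associator and braiding are unitary, and assembling the $C^*$-structure, the unitary tensor and braided structure, the unitary twist, and the inherited modularity yields that $\RepV$ is a unitary modular tensor category.
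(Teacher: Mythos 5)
Your proposal is correct and follows essentially the same route as the paper's source for this statement: the theorem is not proved in this paper but imported from \cite{Gui19b} theorem 7.9, and the strategy summarized here in sections 1.2--1.3 is exactly yours --- take Huang's modular tensor structure on $\Rep(V)$ as given, use strong unitarity to pass to $\RepV$, define the inner product on $W_i\boxtimes W_j=\bigoplus_t\mc V{t\choose i~j}^*\otimes W_t$ via the invariant form $\Lambda$ (whose positivity is the third clause of complete unitarity), and then verify unitarity of the twist $e^{2\im\pi L_0}$, of the standard $\ev$/$\coev$, and of the associativity and braiding isomorphisms. You also correctly identify the genuinely delicate point --- that positivity alone is not enough and one must check $\Lambda$ is compatible with the adjoint operation and the fusion/sewing of intertwining operators, which is precisely where the proof in \cite{Gui19b} (via the fusion relation defining $\Lambda$ with annihilation operators, cf.\ definition \ref{lb49} and theorem \ref{lb6}) does the real work.
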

However, compared to unitarity, complete unitarity is much harder to prove since  not only vertex operators but also  intertwining operators need to be taken care of.  In this paper, our main result as follows provides a powerful tool for proving the complete unitarity.
\begin{thm}\label{lb54}
Suppose that $V$ is a CFT-type, regular, and completely unitary VOA, 	and $U$ is a CFT-type unitary VOA extension of $V$. Then $U$ is also completely unitary.
\end{thm}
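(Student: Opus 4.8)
The plan is to recast the problem entirely in terms of $\mc C$-algebras in the unitary modular tensor category (UMTC) $\RepV$, which is available because $V$ is completely unitary. By the cited theorem \cite{Gui19b}, $\RepV$ is a UMTC and in particular a $C^*$-tensor category. The inclusion $V\subset U$ produces a commutative, haploid algebra object $A_U$ in $\RepV$ whose underlying $V$-module is $U$ itself, the multiplication being the vacuum-compatible intertwining operator $U\boxtimes U\to U$ furnished by the vertex operator of $U$, exactly as in the non-unitary correspondence of \cite{KO02,HKL15,CKM17}. The first and most substantial task is to promote $A_U$ to a Q-system: using the inner product of $U$ together with the antiunitary antiautomorphism $\Theta$, one defines the coproduct and counit as the categorical adjoints of the product and unit, and then verifies the Frobenius and specialness axioms. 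I expect this to be the main obstacle, because it requires showing that adjoints taken with respect to the VOA inner product coincide with adjoints in the $C^*$-category $\RepV$; this rests on the compatibility between the unitary structure of $U$ and the unitary (invariant-form) structure that $\RepV$ inherits from the complete unitarity of $V$.

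Once $A_U$ is known to be a Q-system, the next step is purely categorical and can be quoted from the general theory of Q-systems in a UMTC: the category $\RepAU$ of (local) unitary $A_U$-modules is again a UMTC, with its $C^*$, rigid and modular structures produced automatically, commutativity of $A_U$ being what singles out the local modules and makes the braiding well defined.

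The third step is to locate $\RepU$ in this picture. Algebraically, \cite{KO02,HKL15} identify $U$-modules with (local) $A_U$-modules, and every module over a Q-system in a $C^*$-category is unitarizable; hence every $U$-module is unitarizable, which is the second defining condition of complete unitarity (the first, unitarity of $U$, is a hypothesis). Upgrading this algebraic identification to a unitary one yields an equivalence $\RepU\simeq\RepAU$ of UMTCs, and in particular endows $\RepU$ with the structure of a UMTC.

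It remains to establish positivity of the invariant sesquilinear form $\Lambda$ on the spaces of $U$-intertwining operators, the third and analytically deepest condition. The idea is that under $\RepU\simeq\RepAU$ a type ${k\choose i~j}$ intertwining operator of $U$ corresponds to a morphism $W_i\boxtimes_{A_U}W_j\to W_k$ of $A_U$-modules, and that $\Lambda$ is identified, up to a positive normalization, with the Hilbert-space inner product carried by that morphism space in the $C^*$-category $\RepAU$; positivity of $\Lambda$ is then automatic. The delicate point here — a secondary obstacle after the Q-system construction — is to verify that the form $\Lambda$ of \cite{Gui19b}, defined analytically from $U$-intertwining operators, genuinely coincides with the categorical inner product transported from $\RepAU$. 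This matching is precisely where the full strength of the complete unitarity of $V$, rather than merely its unitarity, is used.
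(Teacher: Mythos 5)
Your proposal follows essentially the same route as the paper: it relates the VOA adjoint $\dagger$ to the categorical adjoint $*$ in $\RepV$ (the paper's theorem \ref{lb6}) to show that unitarity of $U$ makes $A_U$ a haploid commutative $C^*$-Frobenius algebra, hence a standard Q-system (theorems \ref{lb17}, \ref{lb18}); it gets unitarizability of all $U$-modules from unitarizability of modules over Q-systems (theorems \ref{lb22}, \ref{lb23}, \ref{lb53}); and it obtains positivity of $\Lambda$ by identifying the invariant sesquilinear form with the inner product making $(W_{ij},\mu_{i,j})$ a unitary tensor product in $\BIM^\uni(A_U)$ (theorem \ref{lb50}), which is exactly where the paper's work goes. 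The only point you pass over silently is that complete unitarity presupposes regularity of $U$, which the paper secures by citing \cite{McR20}.
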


Roughly speaking, if we know that  a  unitary VOA $U$ is an extension of a completely unitary VOA $V$, then $U$ is also completely unitary.  As applications, since the complete unitarity has been established for  unitary affine VOAs and $c<1$ unitary Virasoro VOAs (minimal models) as well as their tensor products (proposition \ref{lb56}), we know that all their unitary extensions are completely unitary. In particular, these extensions have unitary modular tensor categories. We also show that these tensor categories are equivalent to the unitary modular tensor categories associated to the corresponding $Q$-system. To  be more precise, we prove

\begin{thm}\label{lb59}
Let $V$ be a CFT-type, regular, and completely unitary VOA, and let $U$ be a CFT-type unitary extension of $V$ whose Q-system is $A_U$. If $\RepU$ is the category of unitary $U$-modules, and  $\RepAU$ is the category of unitary $A_U$-modules, then $\RepU$ is natually equivalent to $\RepAU$ as unitary modular tensor categories.
\end{thm}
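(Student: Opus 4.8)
The plan is to promote the non-unitary Kirillov--Ostrik equivalence to a unitary one. By \cite{KO02} (see also \cite{HKL15,CKM17}), restricting a $U$-module to $V$ yields a local $A_U$-module, and this assignment is a braided tensor equivalence from $\mathrm{Rep}(U)$ onto the category of local $A_U$-modules in $\mathrm{Rep}(V)$; the inverse builds a $U$-module out of a local $A_U$-module, with the extended vertex operators of $U$ recovered from the multiplication morphism of $A_U$. First I would recall this equivalence and isolate the underlying braided tensor functor $\mc F$, so that all that remains is to verify its compatibility with the $*$-structures, inner products, braidings, twists, and dualities on the two sides.

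Next I would check that $\mc F$ restricts to the unitary subcategories. By Theorem \ref{lb54} the extension $U$ is completely unitary, so every $U$-module is unitarizable and $\RepU$ is a unitary modular tensor category; on the other side, the theory of Q-systems in a unitary modular tensor category makes $\RepAU$ into a unitary modular tensor category as well. The essential point is to match the two notions of unitarity: I would show that a $U$-module $M$ is unitary, in the sense that its inner product is preserved by the action of $U$ relative to $\Theta$, exactly when the restricted $A_U$-module is unitary, i.e.\ when the adjoint of the $A_U$-action is the action intertwined with the $*$-operation of the Q-system. This pins down $\mc F$ as an equivalence between $\RepU$ and $\RepAU$.

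The decisive step is to show that $\mc F$ is \emph{unitary}: a $*$-functor with unitary tensorator that preserves the unitary braiding, ribbon twist, and duality. Here I would compare the inner product on a unitary $U$-module, defined analytically through the invariant sesquilinear form $\Lambda$ of \cite{Gui19b} on spaces of intertwining operators, with the categorical inner product on the associated $A_U$-module coming from the Frobenius structure of $A_U$. Since the intertwining operators of $U$ and the multiplication of $A_U$ are two encodings of the same maps, the two inner products should coincide under $\mc F$; granting this, unitarity of the tensorator and preservation of the braiding and twist follow because these structures are transported along module identifications that are already unitary.

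I expect the main obstacle to be exactly this identification of inner products --- reconciling the VOA-theoretic unitary structure, encoded by $\Theta$ and the positivity of $\Lambda$, with the purely categorical unitary structure of the Q-system $A_U$. This requires translating the analytic statements of \cite{Gui19b} into the diagrammatic language of the $C^*$-tensor category $\RepV$ and verifying that the two $*$-operations agree on the nose, which is where the delicate interplay between the VOA-geometric and operator-algebraic formalisms is concentrated.
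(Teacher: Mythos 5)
Your proposal is correct and follows essentially the same route as the paper's proof (theorem \ref{lb55}): the identification of $\RepU$ with $\RepAU$ as $C^*$-categories is theorem \ref{lb23}, and your pivotal step --- reconciling the invariant sesquilinear form of \cite{Gui19b} with the categorical inner product coming from the Q-system --- is exactly theorem \ref{lb50}, whose proof uses theorem \ref{lb6} to rewrite $\mu_{i,j}^*\mu_{i,j}=\Cij$ as the fusion relation of definition \ref{lb49}, after which the associativity, unit, braiding (via $B_+$), and canonical twist checks proceed as you indicate. The only organizational difference is that the paper does not take theorem \ref{lb54} as an input: the positivity of $\Lambda_U$, and hence the complete unitarity of $U$, is obtained simultaneously from the existence and uniqueness up to unitaries of unitary tensor products of $A_U$-bimodules, rather than being needed beforehand.
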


A non-unitary version of the above theorem has already been proved in \cite{CKM17}: $\RepU$ is known to be equivalent to $\RepAU$ as modular tensor categories. The above theorem says that the unitary structures of the two modular tensor categories are also equivalent in a natural way. We remark that the unitary tensor structures compatible with the $*$-structure of $\RepU$ are unique by \cite{Reu19,CCP}. Thus \cite{Reu19,CCP} provide a different method of proving the above theorem.

We would like to point out that our results on the relation between unitary VOA extensions and Q-systems also provide a new method of proving the unitarity of VOAs. For instance, the irreducible $c<1$ conformal nets are classified as (finite index) extensions of Virasoro nets  by Kawahigashi-Longo in \cite{KL04} (table 3), and their VOA counterparts are given by Dong-Lin in \cite{DL15}. However, Dong-Lin were not able to prove the unitarity of two exceptional cases: the types $(A_{10},E_6)$ and $(A_{28},E_8)$ (see remark 4.16 of \cite{DL15}). But since these two types are realized as commutative Q-systems in \cite{KL04}, we can now show that the corresponding VOAs are actually unitary. This proves that the  $c<1$ CFT-type   unitary VOAs are in one to one correspondence with the  irreducibile  conformal nets with the same central charge $c$.


We have left several important questions unanswered in this paper. We see that Q-systems can relate  unitary VOA extensions and conformal net extensions. But \cite{CKLW18} also provides a uniform way of relating unitary VOAs and conformal nets using smeared vertex operators. Are these two relations compatible? Moreover, do the VOA extensions and the corresponding conformal net extensions have the same tensor categories? Answers to these questions are out of scope of this paper, so we leave them to future works.

\subsubsection*{Outline of the paper}

In chapter 1 we review the construction and basis properties  of VOA tensor categories due to Huang-Lepowsky. We also review various methods of constructing new intertwining operators from  old ones, and translate them into tensor categorical language. The translation of adjoint and conjugate intertwining operators is the most important result of this chapter. Unitary VOAs, unitary representations, and the unitary structure on VOA tensor categories are also reviewed.

In chapter 2 we relate unitary VOA extensions and Q-systems as well as their (unitary) representations. The first two sections serve as background materials. In section 2.1 we review the relation between VOA extensions and commutative $\mc C$-algebras as in \cite{HKL15}. Their results are  adapted to our unitary setting. In section 2.2 we review various notions concerning dualizable objects in $C^*$-tensor categories. Most importantly, we review the construction of standard evaluations and coevaluations in $C^*$-tensor categories necessary for defining quantum traces and quantum dimensions. Standard reference for this topic is \cite{LR97}. We also explain why the naturally defined evaluations and coevaluations in the tensor categories associated to completely unitary VOAs are standard. In section 2.3 we define a notion of unitary $\mc C$-algebras which is a direct translation of unitary VOA extensions in categorical language. This notion is related to $C^*$-Frobenius algebras and Q-systems in section 2.4. The  equivalence of $c<1$ unitary VOAs and conformal nets is also proved in that section.     A VOA $U$ is called strongly unitary if it satisfies the first two of the three conditions defining complete unitarity.  Therefore, strong unitarity means the unitarity of $U$ and the unitarizability of all $U$-modules. In  section 2.5, we give two proofs that any unitary extension $U$ of a completely unitary VOA $V$ is strongly unitary. The first proof uses induced representations, and the second one uses a result of standard representations of Q-systems in \cite{BKLR15}.

In chapter 3 we use the $C^*$-tensor categories of the bimodules of Q-systems to prove the complete unitarity of unitary VOA extensions. We review the construction and basic properties of these $C^*$-tensor categories in the first four sections. Although these results are known to experts (cf. \cite{NY16} chapter 6 or \cite{NY18b} section 4.1), we provide detailed and self-contained proofs of all the relevant facts, which we hope are helpful to the readers who are not familiar with tensor categories. We present the theory in such a way that it can be directly compared with the (Hermitian) tensor categories of unitary VOA modules. So in some sense our approach is closer in spirit to \cite{KO02,CKM17}.  In section 3.5 we prove the main results of this paper:  theorems \ref{lb54} and \ref{lb59} (which are theorem \ref{lb55} of that section).  Finally, applications are given in section 3.6.

\subsubsection*{Acknowledgment}

I'm grateful to Robert McRae for a helpful discussion of the rationality of VOA extensions; Marcel Bischoff for informing me of certain literature on conformal inclusions; and Sergey Neshveyev and Makoto Yamashita for many discussions and helpful comments.

\section{Intertwining operators and tensor categories of unitary VOAs}\label{lb7}
\subsection{Braiding, fusion, and contragredient intertwining operators}\label{lb2}
Let $V$ be a self-dual VOA with vacuum vector $\Omega$ and conformal vector $\nu$. For any $v\in V$, its vertex operator is written as $Y(v,z)=\sum_{n\in\mathbb Z}Y(v)_nz^{-n-1}$. Then $\{L_n=Y(\nu)_{n+1}:n\in\mathbb Z \}$ are the Virasoro operators. Throughout this paper, we assume that the grading of $V$ satisfies $V=\mathbb C\Omega\oplus\big(\bigoplus_{n\in\mathbb Z_{>0}}V(n)\big)$ where $V(n)$ is the eigenspace of $L_0$ with eigenvalue $n$, i.e., $V$ is of CFT-type. We assume also that $V$ is  regular, which is equivalent to that $V$ is rational and $C_2$-cofinite. (See \cite{DLM97} for the definition of these terminologies as well as the equivalence theorem.) Such condition guarantees that the intertwining operators of $V$ satisfy the braiding and fusion relations \cite{Hua95,Hua05a} and the modular invariance \cite{Zhu96,Hua05b}, and that the  category $\Rep(V)$ \index{Rep@$\Rep(V)$} of (automatically semisimple) $V$-modules is indeed a modular tensor category \cite{Hua08}. 


We refer the reader to \cite{BK01,EGNO,Tur} for the general theory of tensor categories, and \cite{Hua08} for the construction of the tensor category $\Rep(V)$ of $V$-modules. A brief review of this construction can also be found in \cite{Gui19a} section 2.4 or \cite{Gui20} section 4.1. Here we outline some of the key properties of $\Rep(V)$ which will be used in the future. 

Representations of $V$ are written as $W_i,W_j,W_k$,\index{Wi@$W_i$} etc. If a $V$-module $W_i$ is given, its contragredient module (cf. \cite{FHL93} section 5.2) is written as $W_{\ovl i}$ \index{Wibar@$W_{\ovl i}$}.  $W_{\ovl{\ovl i}}$, the contragredient module of $W_{\ovl i}$, can naturally be identified with $W_i$. So we write $\ovl{\ovl i}=i$.  Note that the symbol $i$ is now reserved for representations. So we write the imaginary unit $\sqrt{-1}$  as $\im$. Let $W_0$\index{Wi@$W_i$!$W_0=V$} be the vacuum $V$-module, which is also the identity object in $\Rep(V)$. The product of $V$-modules is constructed in such a way that for any $W_i,W_j,W_k$ there is a canonical isomorphism of linear spaces
\begin{gather}
\mc Y:\Hom (W_i\boxtimes W_j,W_k)\overset{\simeq}{\longrightarrow}\mc V{k\choose i~j},\qquad\alpha\mapsto\mc Y_\alpha\label{eq61}
\end{gather} 
where $\mc V{k\choose i~j}=\mc V{W_k\choose W_iW_j}$\index{Vijk@$\mc V{k\choose i~j}$}  is the (finite-dimensional) vector space of intertwining operators of $V$. For any $w^{(i)}\in W_i$, we write $\mc Y_\alpha(w^{(i)},z)=\sum_{s\in\mathbb C}\mc Y_\alpha(w^{(i)})_sz^{-s-1}$ \index{Ya@$\mc Y_\alpha$} where $z$ is a complex variable defined in $\mathbb C^\times:=\mathbb C\setminus\{0\}$ and $\mc Y_\alpha(w^{(i)})_s:W_j\rightarrow W_k$ is the $s$-th mode of the intertwining operator. We say that $W_i,W_j,W_k$ are respectively the charge space, the source space, and the target space of the intertwining operator $\mc Y_\alpha$. Tensor products of morphisms are defined such that the following condition is satisfied: If $F\in\Hom(W_{i'},W_i),G\in\Hom(W_{j'},W_j),K\in\Hom(W_k,W_{k'})$, then for any $w^{(i')}\in W_{i'}$,
\begin{align}
\mc Y_{K\alpha(F\otimes G)}(w^{(i')},z)=K\mc Y_\alpha(Fw^{(i')},z)G.\label{eq31}
\end{align}

One way to realize the above properties is as follows: Notice first of all that $V$ has finitely many equivalence classes of irreducible unitary $V$-modules. Fix, for each equivalence class, a representing element, and let them form a finite set $\mc E$.\index{E@$\mc E$} We assume that the vacuum unitary module $V=W_0$ is in $\mc E$. If $W_t\in\mc E$, we will use the notation $t\in\mc E$ to simplify formulas. We then define $W_i\boxtimes W_j$ to be $\bigoplus_{t\in\mc E}\mc V{t\choose i~j}^*\otimes W_t$ \cite{HL95} (here $\mc V{t\choose i~j}^*$ is the dual vector space of $\mc V{t\choose i~j}$). Then for each $t\in\mc E$ there is a natural identification between $\Hom(W_i\boxtimes W_j,W_t)$ and $\mc V{t\choose i~j}$, which can be extended to the general case $\Hom(W_i\boxtimes W_j,W_k)\simeq\mc V{k\choose i~j}$ via the canonical isomorphisms
\begin{gather}
\Hom(W_i\boxtimes W_j,W_k)\simeq\bigoplus_{t\in\mc E}\Hom(W_i\boxtimes W_j,W_t)\otimes \Hom(W_t,W_k),\label{eq62}\\
\mc V{k\choose i~j}\simeq\bigoplus_{t\in\mc E}\mc V{t\choose i~j}\otimes \Hom(W_t,W_k).
\end{gather}

The tensor structure of $\Rep(V)$ is defined in such a way that it is related to the fusion relations of the intertwining operators of $V$ as follows. Choose non-zero $z,\zeta$ with the same arguments (notation: $\arg z=\arg \zeta$) satisfying $0<|z-\zeta|<|\zeta|<|z|$. In particular, we assume that $z,\zeta$ are on a common ray stemming from the origin. We also choose $\arg(z-\zeta)=\arg\zeta=\arg z$. Suppose that we have $W_i,W_j,W_k,W_l,W_p,W_q$ in $\Rep(V)$, and intertwining operators $\mc Y_\alpha\in\mc V{l\choose i~p},\mc Y_\beta\in\mc V{p\choose j~k},\mc Y_\gamma\in\mc V{q\choose i~j},\mc Y_\delta\in\mc V{l\choose q~k}$, such that for any $w^{(i)}\in W_i,w^{(j)}\in W_j$, the following fusion relation holds when acting on $W_k$:
\begin{align}
\mc Y_\alpha(w^{(i)},z)\mc Y_\beta(w^{(j)},\zeta)=\mc Y_\delta\big(\mc Y_\gamma(w^{(i)},z-\zeta)w^{(j)},\zeta\big).\label{eq16}
\end{align}
Then, under the identification of $W_i\boxtimes(W_j\boxtimes W_k)$ and $(W_i\boxtimes W_j)\boxtimes W_k$ (which we denote by $W_i\boxtimes W_j\boxtimes W_k$) via the associativity isomorphism, we have the  identity  $\alpha(\id_i\otimes\beta)=\delta(\gamma\otimes\id_k)$, which can be expressed graphically as
\begin{align}
\vcenter{\hbox{{\def\svgscale{0.6}
\begingroup%
  \makeatletter%
  \providecommand\color[2][]{%
    \errmessage{(Inkscape) Color is used for the text in Inkscape, but the package 'color.sty' is not loaded}%
    \renewcommand\color[2][]{}%
  }%
  \providecommand\transparent[1]{%
    \errmessage{(Inkscape) Transparency is used (non-zero) for the text in Inkscape, but the package 'transparent.sty' is not loaded}%
    \renewcommand\transparent[1]{}%
  }%
  \providecommand\rotatebox[2]{#2}%
  \newcommand*\fsize{\dimexpr\f@size pt\relax}%
  \newcommand*\lineheight[1]{\fontsize{\fsize}{#1\fsize}\selectfont}%
  \ifx\svgwidth\undefined%
    \setlength{\unitlength}{135.55264824bp}%
    \ifx\svgscale\undefined%
      \relax%
    \else%
      \setlength{\unitlength}{\unitlength * \real{\svgscale}}%
    \fi%
  \else%
    \setlength{\unitlength}{\svgwidth}%
  \fi%
  \global\let\svgwidth\undefined%
  \global\let\svgscale\undefined%
  \makeatother%
  \begin{picture}(1,1.00166023)%
    \lineheight{1}%
    \setlength\tabcolsep{0pt}%
    \put(0,0){\includegraphics[width=\unitlength,page=1]{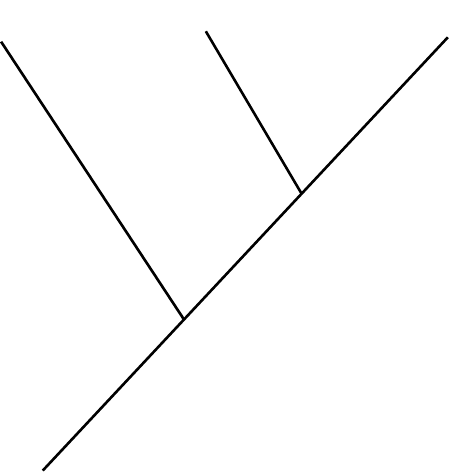}}%
    \put(0.073009,0.90778093){\color[rgb]{0,0,0}\makebox(0,0)[lt]{\lineheight{1.25}\smash{\begin{tabular}[t]{l}$i$\end{tabular}}}}%
    \put(0.44749362,0.94567521){\color[rgb]{0,0,0}\makebox(0,0)[lt]{\lineheight{1.25}\smash{\begin{tabular}[t]{l}$j$\end{tabular}}}}%
    \put(0.89553759,0.95682065){\color[rgb]{0,0,0}\makebox(0,0)[lt]{\lineheight{1.25}\smash{\begin{tabular}[t]{l}$k$\end{tabular}}}}%
    \put(0.11313235,0.17218609){\color[rgb]{0,0,0}\makebox(0,0)[lt]{\lineheight{1.25}\smash{\begin{tabular}[t]{l}$l$\end{tabular}}}}%
    \put(0.39894833,0.51269412){\color[rgb]{0,0,0}\makebox(0,0)[lt]{\lineheight{1.25}\smash{\begin{tabular}[t]{l}$p$\end{tabular}}}}%
    \put(0.64232305,0.52407988){\color[rgb]{0,0,0}\makebox(0,0)[lt]{\lineheight{1.25}\smash{\begin{tabular}[t]{l}$\beta$\end{tabular}}}}%
    \put(0.40959934,0.2747232){\color[rgb]{0,0,0}\makebox(0,0)[lt]{\lineheight{1.25}\smash{\begin{tabular}[t]{l}$\alpha$\end{tabular}}}}%
    \put(0,0){\includegraphics[width=\unitlength,page=2]{fusion-relation.pdf}}%
  \end{picture}%
\endgroup%
}}}~~=~~\vcenter{\hbox{{\def\svgscale{0.6}
\begingroup%
  \makeatletter%
  \providecommand\color[2][]{%
    \errmessage{(Inkscape) Color is used for the text in Inkscape, but the package 'color.sty' is not loaded}%
    \renewcommand\color[2][]{}%
  }%
  \providecommand\transparent[1]{%
    \errmessage{(Inkscape) Transparency is used (non-zero) for the text in Inkscape, but the package 'transparent.sty' is not loaded}%
    \renewcommand\transparent[1]{}%
  }%
  \providecommand\rotatebox[2]{#2}%
  \newcommand*\fsize{\dimexpr\f@size pt\relax}%
  \newcommand*\lineheight[1]{\fontsize{\fsize}{#1\fsize}\selectfont}%
  \ifx\svgwidth\undefined%
    \setlength{\unitlength}{124.99204469bp}%
    \ifx\svgscale\undefined%
      \relax%
    \else%
      \setlength{\unitlength}{\unitlength * \real{\svgscale}}%
    \fi%
  \else%
    \setlength{\unitlength}{\svgwidth}%
  \fi%
  \global\let\svgwidth\undefined%
  \global\let\svgscale\undefined%
  \makeatother%
  \begin{picture}(1,1.08629079)%
    \lineheight{1}%
    \setlength\tabcolsep{0pt}%
    \put(0,0){\includegraphics[width=\unitlength,page=1]{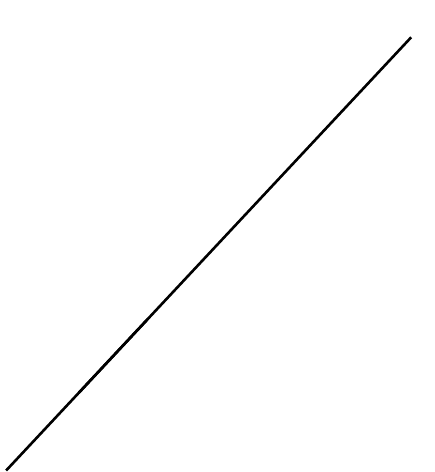}}%
    \put(-0.00531284,0.9844796){\color[rgb]{0,0,0}\makebox(0,0)[lt]{\lineheight{1.25}\smash{\begin{tabular}[t]{l}$i$\end{tabular}}}}%
    \put(0.40081207,1.02557541){\color[rgb]{0,0,0}\makebox(0,0)[lt]{\lineheight{1.25}\smash{\begin{tabular}[t]{l}$j$\end{tabular}}}}%
    \put(0.88671154,1.03766269){\color[rgb]{0,0,0}\makebox(0,0)[lt]{\lineheight{1.25}\smash{\begin{tabular}[t]{l}$k$\end{tabular}}}}%
    \put(0.03820055,0.18673404){\color[rgb]{0,0,0}\makebox(0,0)[lt]{\lineheight{1.25}\smash{\begin{tabular}[t]{l}$l$\end{tabular}}}}%
    \put(0,0){\includegraphics[width=\unitlength,page=2]{fusion-relation-2.pdf}}%
    \put(0.38147274,0.66538155){\color[rgb]{0,0,0}\makebox(0,0)[lt]{\lineheight{1.25}\smash{\begin{tabular}[t]{l}$q$\end{tabular}}}}%
    \put(0.14036893,0.69644643){\color[rgb]{0,0,0}\makebox(0,0)[lt]{\lineheight{1.25}\smash{\begin{tabular}[t]{l}$\gamma$\end{tabular}}}}%
    \put(0.54343935,0.45264917){\color[rgb]{0,0,0}\makebox(0,0)[lt]{\lineheight{1.25}\smash{\begin{tabular}[t]{l}$\delta$\end{tabular}}}}%
    \put(0,0){\includegraphics[width=\unitlength,page=3]{fusion-relation-2.pdf}}%
  \end{picture}%
\endgroup%
}}}.
\end{align}
Here we take the convention that morphisms go from top to bottom.

\begin{cv}\label{lb43}
When we consider fusion relations in the form \eqref{eq16}, we always assume $0<|z-\zeta|<|\zeta|<|z|$ and $\arg(z-\zeta)=\arg\zeta=\arg z$.
\end{cv}	

The braided and the contragredient intertwining operators are two major ways of constructing new intertwining operators from old ones \cite{FHL93}. As we shall see, they can all be translated into operations on morphisms. We first discuss braiding. Given $\mc Y_\alpha\in\mc V{k\choose i~j}$, we can define \textbf{braided intertwining operators} $B_+\mc Y_\alpha,B_-\mc Y_\alpha$\index{Ya@$\mc Y_\alpha$!$B_\pm\mc Y_\alpha=\mc Y_{B_\pm\alpha}$} of type $\mc V{k\choose j~i}$ in the following way: Choose any $w^{(i)}\in W_i,w^{(j)}\in W_j$. Then
\begin{align}
(B_\pm\mc Y_\alpha)(w^{(j)},z)w^{(i)}=e^{zL_{-1}}\mc Y_\alpha(w^{(i)},e^{\pm\im\pi}z)w^{(j)}.
\end{align}
Then the  braid isomorphism $\ss=\ss_{i,j}:W_i\boxtimes W_j\rightarrow W_j\boxtimes W_i$\index{zz@$\ss=\ss_{i,j}$} is constructed in such a way that $B_\pm\mc Y_\alpha=\mc Y_{\alpha\circ\ss^{\pm1}}$. Write $B_+\mc Y_\alpha=\mc Y_{B_+\alpha}$ and $B_-\mc Y_\alpha=\mc Y_{B_-\alpha}$. Then $B_\pm\alpha=\alpha\circ\ss^{\pm 1}$. Using $\vcenter{\hbox{{\def\svgscale{0.2}
\begingroup%
  \makeatletter%
  \providecommand\color[2][]{%
    \errmessage{(Inkscape) Color is used for the text in Inkscape, but the package 'color.sty' is not loaded}%
    \renewcommand\color[2][]{}%
  }%
  \providecommand\transparent[1]{%
    \errmessage{(Inkscape) Transparency is used (non-zero) for the text in Inkscape, but the package 'transparent.sty' is not loaded}%
    \renewcommand\transparent[1]{}%
  }%
  \providecommand\rotatebox[2]{#2}%
  \newcommand*\fsize{\dimexpr\f@size pt\relax}%
  \newcommand*\lineheight[1]{\fontsize{\fsize}{#1\fsize}\selectfont}%
  \ifx\svgwidth\undefined%
    \setlength{\unitlength}{86.32166943bp}%
    \ifx\svgscale\undefined%
      \relax%
    \else%
      \setlength{\unitlength}{\unitlength * \real{\svgscale}}%
    \fi%
  \else%
    \setlength{\unitlength}{\svgwidth}%
  \fi%
  \global\let\svgwidth\undefined%
  \global\let\svgscale\undefined%
  \makeatother%
  \begin{picture}(1,1.12223564)%
    \lineheight{1}%
    \setlength\tabcolsep{0pt}%
    \put(0,0){\includegraphics[width=\unitlength,page=1]{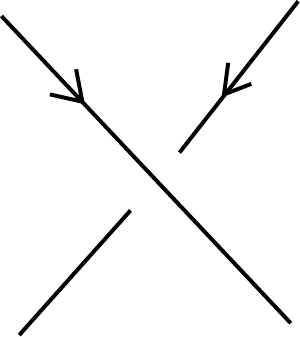}}%
  \end{picture}%
\endgroup%
}}}$ and $\vcenter{\hbox{{\def\svgscale{0.2}
\begingroup%
  \makeatletter%
  \providecommand\color[2][]{%
    \errmessage{(Inkscape) Color is used for the text in Inkscape, but the package 'color.sty' is not loaded}%
    \renewcommand\color[2][]{}%
  }%
  \providecommand\transparent[1]{%
    \errmessage{(Inkscape) Transparency is used (non-zero) for the text in Inkscape, but the package 'transparent.sty' is not loaded}%
    \renewcommand\transparent[1]{}%
  }%
  \providecommand\rotatebox[2]{#2}%
  \newcommand*\fsize{\dimexpr\f@size pt\relax}%
  \newcommand*\lineheight[1]{\fontsize{\fsize}{#1\fsize}\selectfont}%
  \ifx\svgwidth\undefined%
    \setlength{\unitlength}{86.94598813bp}%
    \ifx\svgscale\undefined%
      \relax%
    \else%
      \setlength{\unitlength}{\unitlength * \real{\svgscale}}%
    \fi%
  \else%
    \setlength{\unitlength}{\svgwidth}%
  \fi%
  \global\let\svgwidth\undefined%
  \global\let\svgscale\undefined%
  \makeatother%
  \begin{picture}(1,1.14872907)%
    \lineheight{1}%
    \setlength\tabcolsep{0pt}%
    \put(0,0){\includegraphics[width=\unitlength,page=1]{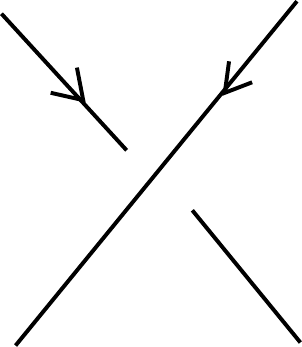}}%
  \end{picture}%
\endgroup%
}}}$ to denote $\ss$ and $\ss^{-1}$ respectively, this formula can be pictured as
\begin{gather}
\vcenter{\hbox{{\def\svgscale{0.6}
\begingroup%
  \makeatletter%
  \providecommand\color[2][]{%
    \errmessage{(Inkscape) Color is used for the text in Inkscape, but the package 'color.sty' is not loaded}%
    \renewcommand\color[2][]{}%
  }%
  \providecommand\transparent[1]{%
    \errmessage{(Inkscape) Transparency is used (non-zero) for the text in Inkscape, but the package 'transparent.sty' is not loaded}%
    \renewcommand\transparent[1]{}%
  }%
  \providecommand\rotatebox[2]{#2}%
  \newcommand*\fsize{\dimexpr\f@size pt\relax}%
  \newcommand*\lineheight[1]{\fontsize{\fsize}{#1\fsize}\selectfont}%
  \ifx\svgwidth\undefined%
    \setlength{\unitlength}{83.7225393bp}%
    \ifx\svgscale\undefined%
      \relax%
    \else%
      \setlength{\unitlength}{\unitlength * \real{\svgscale}}%
    \fi%
  \else%
    \setlength{\unitlength}{\svgwidth}%
  \fi%
  \global\let\svgwidth\undefined%
  \global\let\svgscale\undefined%
  \makeatother%
  \begin{picture}(1,1.20701809)%
    \lineheight{1}%
    \setlength\tabcolsep{0pt}%
    \put(0,0){\includegraphics[width=\unitlength,page=1]{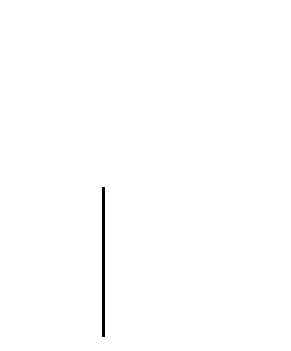}}%
    \put(-0.00396585,1.17071887){\color[rgb]{0,0,0}\makebox(0,0)[lt]{\lineheight{1.25}\smash{\begin{tabular}[t]{l}$j$\end{tabular}}}}%
    \put(0.62637033,1.16051137){\color[rgb]{0,0,0}\makebox(0,0)[lt]{\lineheight{1.25}\smash{\begin{tabular}[t]{l}$i$\end{tabular}}}}%
    \put(0.19508791,0.00702189){\color[rgb]{0,0,0}\makebox(0,0)[lt]{\lineheight{1.25}\smash{\begin{tabular}[t]{l}$k$\end{tabular}}}}%
    \put(0.38751657,0.46228151){\color[rgb]{0,0,0}\makebox(0,0)[lt]{\lineheight{1.25}\smash{\begin{tabular}[t]{l}$B_+\alpha$\end{tabular}}}}%
    \put(0,0){\includegraphics[width=\unitlength,page=2]{braid-relation.pdf}}%
  \end{picture}%
\endgroup%
}}}=\vcenter{\hbox{{\def\svgscale{0.6}
\begingroup%
  \makeatletter%
  \providecommand\color[2][]{%
    \errmessage{(Inkscape) Color is used for the text in Inkscape, but the package 'color.sty' is not loaded}%
    \renewcommand\color[2][]{}%
  }%
  \providecommand\transparent[1]{%
    \errmessage{(Inkscape) Transparency is used (non-zero) for the text in Inkscape, but the package 'transparent.sty' is not loaded}%
    \renewcommand\transparent[1]{}%
  }%
  \providecommand\rotatebox[2]{#2}%
  \newcommand*\fsize{\dimexpr\f@size pt\relax}%
  \newcommand*\lineheight[1]{\fontsize{\fsize}{#1\fsize}\selectfont}%
  \ifx\svgwidth\undefined%
    \setlength{\unitlength}{58.3104557bp}%
    \ifx\svgscale\undefined%
      \relax%
    \else%
      \setlength{\unitlength}{\unitlength * \real{\svgscale}}%
    \fi%
  \else%
    \setlength{\unitlength}{\svgwidth}%
  \fi%
  \global\let\svgwidth\undefined%
  \global\let\svgscale\undefined%
  \makeatother%
  \begin{picture}(1,1.73304458)%
    \lineheight{1}%
    \setlength\tabcolsep{0pt}%
    \put(0,0){\includegraphics[width=\unitlength,page=1]{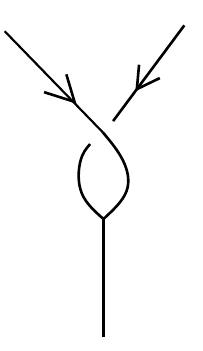}}%
    \put(-0.0056942,1.68092593){\color[rgb]{0,0,0}\makebox(0,0)[lt]{\lineheight{1.25}\smash{\begin{tabular}[t]{l}$j$\end{tabular}}}}%
    \put(0.89934668,1.66626993){\color[rgb]{0,0,0}\makebox(0,0)[lt]{\lineheight{1.25}\smash{\begin{tabular}[t]{l}$i$\end{tabular}}}}%
    \put(0.28010852,0.01008208){\color[rgb]{0,0,0}\makebox(0,0)[lt]{\lineheight{1.25}\smash{\begin{tabular}[t]{l}$k$\end{tabular}}}}%
    \put(0.58056723,0.53405218){\color[rgb]{0,0,0}\makebox(0,0)[lt]{\lineheight{1.25}\smash{\begin{tabular}[t]{l}$\alpha$\end{tabular}}}}%
    \put(0,0){\includegraphics[width=\unitlength,page=2]{braid-relation-2.pdf}}%
  \end{picture}%
\endgroup%
}}},\qquad \vcenter{\hbox{{\def\svgscale{0.6}
\begingroup%
  \makeatletter%
  \providecommand\color[2][]{%
    \errmessage{(Inkscape) Color is used for the text in Inkscape, but the package 'color.sty' is not loaded}%
    \renewcommand\color[2][]{}%
  }%
  \providecommand\transparent[1]{%
    \errmessage{(Inkscape) Transparency is used (non-zero) for the text in Inkscape, but the package 'transparent.sty' is not loaded}%
    \renewcommand\transparent[1]{}%
  }%
  \providecommand\rotatebox[2]{#2}%
  \newcommand*\fsize{\dimexpr\f@size pt\relax}%
  \newcommand*\lineheight[1]{\fontsize{\fsize}{#1\fsize}\selectfont}%
  \ifx\svgwidth\undefined%
    \setlength{\unitlength}{79.85872854bp}%
    \ifx\svgscale\undefined%
      \relax%
    \else%
      \setlength{\unitlength}{\unitlength * \real{\svgscale}}%
    \fi%
  \else%
    \setlength{\unitlength}{\svgwidth}%
  \fi%
  \global\let\svgwidth\undefined%
  \global\let\svgscale\undefined%
  \makeatother%
  \begin{picture}(1,1.26541733)%
    \lineheight{1}%
    \setlength\tabcolsep{0pt}%
    \put(0,0){\includegraphics[width=\unitlength,page=1]{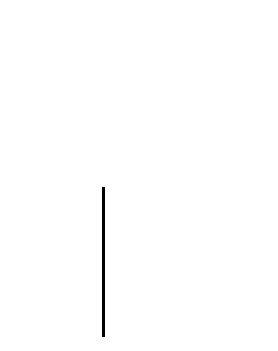}}%
    \put(-0.00415773,1.22736185){\color[rgb]{0,0,0}\makebox(0,0)[lt]{\lineheight{1.25}\smash{\begin{tabular}[t]{l}$j$\end{tabular}}}}%
    \put(0.65667606,1.21666048){\color[rgb]{0,0,0}\makebox(0,0)[lt]{\lineheight{1.25}\smash{\begin{tabular}[t]{l}$i$\end{tabular}}}}%
    \put(0.20452686,0.00736163){\color[rgb]{0,0,0}\makebox(0,0)[lt]{\lineheight{1.25}\smash{\begin{tabular}[t]{l}$k$\end{tabular}}}}%
    \put(0.40626581,0.48464811){\color[rgb]{0,0,0}\makebox(0,0)[lt]{\lineheight{1.25}\smash{\begin{tabular}[t]{l}$B_-\alpha$\end{tabular}}}}%
    \put(0,0){\includegraphics[width=\unitlength,page=2]{braid-relation-3.pdf}}%
  \end{picture}%
\endgroup%
}}}=\vcenter{\hbox{{\def\svgscale{0.6}
\begingroup%
  \makeatletter%
  \providecommand\color[2][]{%
    \errmessage{(Inkscape) Color is used for the text in Inkscape, but the package 'color.sty' is not loaded}%
    \renewcommand\color[2][]{}%
  }%
  \providecommand\transparent[1]{%
    \errmessage{(Inkscape) Transparency is used (non-zero) for the text in Inkscape, but the package 'transparent.sty' is not loaded}%
    \renewcommand\transparent[1]{}%
  }%
  \providecommand\rotatebox[2]{#2}%
  \newcommand*\fsize{\dimexpr\f@size pt\relax}%
  \newcommand*\lineheight[1]{\fontsize{\fsize}{#1\fsize}\selectfont}%
  \ifx\svgwidth\undefined%
    \setlength{\unitlength}{60.66068648bp}%
    \ifx\svgscale\undefined%
      \relax%
    \else%
      \setlength{\unitlength}{\unitlength * \real{\svgscale}}%
    \fi%
  \else%
    \setlength{\unitlength}{\svgwidth}%
  \fi%
  \global\let\svgwidth\undefined%
  \global\let\svgscale\undefined%
  \makeatother%
  \begin{picture}(1,1.69055506)%
    \lineheight{1}%
    \setlength\tabcolsep{0pt}%
    \put(0,0){\includegraphics[width=\unitlength,page=1]{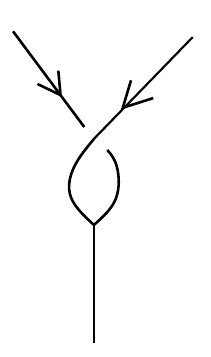}}%
    \put(-0.00547358,1.64045568){\color[rgb]{0,0,0}\makebox(0,0)[lt]{\lineheight{1.25}\smash{\begin{tabular}[t]{l}$j$\end{tabular}}}}%
    \put(0.90324638,1.59818976){\color[rgb]{0,0,0}\makebox(0,0)[lt]{\lineheight{1.25}\smash{\begin{tabular}[t]{l}$i$\end{tabular}}}}%
    \put(0.20585713,0.00969146){\color[rgb]{0,0,0}\makebox(0,0)[lt]{\lineheight{1.25}\smash{\begin{tabular}[t]{l}$k$\end{tabular}}}}%
    \put(0,0){\includegraphics[width=\unitlength,page=2]{braid-relation-4.pdf}}%
    \put(0.5862512,0.57323742){\color[rgb]{0,0,0}\makebox(0,0)[lt]{\lineheight{1.25}\smash{\begin{tabular}[t]{l}$\alpha$\end{tabular}}}}%
  \end{picture}%
\endgroup%
}}}.\label{eq65}
\end{gather}

Let $Y_i=Y_i(v,z)$\index{Yi@$Y_i$} be the vertex operator associated to the module $W_i$. Then $Y_i$ is also a type $i\choose 0~i$ intertwining operator. It's easy to verify that $B_+Y_i=B_-Y_i$ as type $\mc V{i\choose i~0}$ intertwining operators, which we denote by $\mc Y_{\kappa(i)}$\index{ki@$\kappa(i)$, $\mc Y_{\kappa(i)}$} and call the \textbf{creation operator} of $W_i$. Then the canonical isomorphism of the left  multiplication by identity $W_0\boxtimes W_i\overset{\simeq}{\rightarrow} W_i$ is defined to be the one corresponding to $Y_i$. Similarly the right multiplication by identity $W_i\boxtimes W_0\overset{\simeq}{\rightarrow} W_i$ is chosen to be $\kappa(i)$.

One can also construct \textbf{contragredient intertwining operators} $C_+\mc Y_\alpha\equiv\mc Y_{C_+\alpha},C_-\mc Y_\alpha\equiv\mc Y_{C_-\alpha}$ of $\mc Y_\alpha$, \index{Ya@$\mc Y_\alpha$!$C_\pm\mc Y_\alpha=\mc Y_{C_\pm\alpha}$} which are of type ${\ovl j\choose i~\ovl k}$, such that for any $w^{(i)}\in W_i,w^{(j)}\in W_j,w^{(\ovl k)}\in W_{\ovl k}$,
\begin{align}
\bk{\mc Y_{C_\pm\alpha}(w^{(i)},z)w^{(\ovl k)},w^{(j)} }=\bk{w^{(\ovl k)},\mc Y_\alpha(e^{zL_1}(e^{\mp\im\pi}z^{-2})^{L_0}w^{(i)},z^{-1})w^{(j)}}.
\end{align}
Here, and also throughout this paper, we follow the convention $\arg z^{r}=r\arg z$ ($r\in\mathbb R$) unless otherwise stated. To express contragredient intertwining operators graphically, we first introduce, for any $V$-module $W_i$ (together with its contragredient module $W_{\ovl i}$), two important intertwining operators $\mc Y_{\ev_{i,\ovl i}}\in\mc V{0\choose  i~\ovl i}$ and $\mc Y_{\ev_{\ovl i,i}}\in\mc V{0\choose \ovl i~i}$, called the \textbf{annihilation operators} of $W_{\ovl i}$ and $W_i$ respectively. Recall that $V$ is self dual. Fix an isomorphism $V=W_0\simeq W_{\ovl 0}$ and identify $W_0$ and $W_{\ovl 0}$ through this isomorphism.   We now define \index{evii@$\ev_{i,\ovl i}$, $\mc Y_{\ev_{i,\ovl i}}$}
\begin{align}
\mc Y_{\ev_{i,\ovl i}}=C_-\mc Y_{\kappa(i)}=C_-B_\pm Y_{i}.\label{eq73}
\end{align}
The type of $\mc Y_{\ev_{i,\ovl i}}$ shows that $\ev_{i,\ovl i}\in\Hom(W_i\boxtimes W_{\ovl i},V)$, which plays the role of the evaluation map of $W_{\ovl i}$. $\ev_{\ovl i,i}\in\Hom(W_{\ovl i}\boxtimes W_i,V)$ can be defined in a similar way. We write $\ev_{i,\ovl i}=\vcenter{\hbox{{\def\svgscale{0.4}
\begingroup%
  \makeatletter%
  \providecommand\color[2][]{%
    \errmessage{(Inkscape) Color is used for the text in Inkscape, but the package 'color.sty' is not loaded}%
    \renewcommand\color[2][]{}%
  }%
  \providecommand\transparent[1]{%
    \errmessage{(Inkscape) Transparency is used (non-zero) for the text in Inkscape, but the package 'transparent.sty' is not loaded}%
    \renewcommand\transparent[1]{}%
  }%
  \providecommand\rotatebox[2]{#2}%
  \newcommand*\fsize{\dimexpr\f@size pt\relax}%
  \newcommand*\lineheight[1]{\fontsize{\fsize}{#1\fsize}\selectfont}%
  \ifx\svgwidth\undefined%
    \setlength{\unitlength}{62.15604006bp}%
    \ifx\svgscale\undefined%
      \relax%
    \else%
      \setlength{\unitlength}{\unitlength * \real{\svgscale}}%
    \fi%
  \else%
    \setlength{\unitlength}{\svgwidth}%
  \fi%
  \global\let\svgwidth\undefined%
  \global\let\svgscale\undefined%
  \makeatother%
  \begin{picture}(1,0.48348698)%
    \lineheight{1}%
    \setlength\tabcolsep{0pt}%
    \put(0,0){\includegraphics[width=\unitlength,page=1]{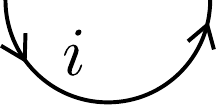}}%
  \end{picture}%
\endgroup%
}}}$ and $\ev_{\ovl i,i}=\vcenter{\hbox{{\def\svgscale{0.4}
\begingroup%
  \makeatletter%
  \providecommand\color[2][]{%
    \errmessage{(Inkscape) Color is used for the text in Inkscape, but the package 'color.sty' is not loaded}%
    \renewcommand\color[2][]{}%
  }%
  \providecommand\transparent[1]{%
    \errmessage{(Inkscape) Transparency is used (non-zero) for the text in Inkscape, but the package 'transparent.sty' is not loaded}%
    \renewcommand\transparent[1]{}%
  }%
  \providecommand\rotatebox[2]{#2}%
  \newcommand*\fsize{\dimexpr\f@size pt\relax}%
  \newcommand*\lineheight[1]{\fontsize{\fsize}{#1\fsize}\selectfont}%
  \ifx\svgwidth\undefined%
    \setlength{\unitlength}{62.15604006bp}%
    \ifx\svgscale\undefined%
      \relax%
    \else%
      \setlength{\unitlength}{\unitlength * \real{\svgscale}}%
    \fi%
  \else%
    \setlength{\unitlength}{\svgwidth}%
  \fi%
  \global\let\svgwidth\undefined%
  \global\let\svgscale\undefined%
  \makeatother%
  \begin{picture}(1,0.48348698)%
    \lineheight{1}%
    \setlength\tabcolsep{0pt}%
    \put(0,0){\includegraphics[width=\unitlength,page=1]{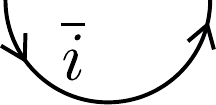}}%
  \end{picture}%
\endgroup%
}}}=\vcenter{\hbox{{\def\svgscale{0.4}
\begingroup%
  \makeatletter%
  \providecommand\color[2][]{%
    \errmessage{(Inkscape) Color is used for the text in Inkscape, but the package 'color.sty' is not loaded}%
    \renewcommand\color[2][]{}%
  }%
  \providecommand\transparent[1]{%
    \errmessage{(Inkscape) Transparency is used (non-zero) for the text in Inkscape, but the package 'transparent.sty' is not loaded}%
    \renewcommand\transparent[1]{}%
  }%
  \providecommand\rotatebox[2]{#2}%
  \newcommand*\fsize{\dimexpr\f@size pt\relax}%
  \newcommand*\lineheight[1]{\fontsize{\fsize}{#1\fsize}\selectfont}%
  \ifx\svgwidth\undefined%
    \setlength{\unitlength}{61.91265967bp}%
    \ifx\svgscale\undefined%
      \relax%
    \else%
      \setlength{\unitlength}{\unitlength * \real{\svgscale}}%
    \fi%
  \else%
    \setlength{\unitlength}{\svgwidth}%
  \fi%
  \global\let\svgwidth\undefined%
  \global\let\svgscale\undefined%
  \makeatother%
  \begin{picture}(1,0.48538758)%
    \lineheight{1}%
    \setlength\tabcolsep{0pt}%
    \put(0,0){\includegraphics[width=\unitlength,page=1]{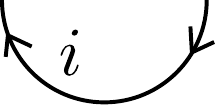}}%
  \end{picture}%
\endgroup%
}}}$,
following the convention that a vertical line with label $i$ but upward-pointing arrow means (the identity morphism of) $W_{\ovl i}$. We can now give a categorical description of $C_-\alpha$  with the help of the following fusion relation (cf. \cite{Gui19b} remark 5.4)
\begin{align}
\mc Y_{\ev_{j,\ovl j}}(w^{(j)},z)\mc Y_{C_-\alpha}(w^{(i)},\zeta)=\mc Y_{\ev_{k,\ovl k}}\big(\mc Y_{B_-\alpha}(w^{(j)},z-\zeta)w^{(i)},\zeta \big),
\end{align}
which can be translated to
\begin{align}\label{eq1}
\vcenter{\hbox{{\def\svgscale{0.6}
\begingroup%
  \makeatletter%
  \providecommand\color[2][]{%
    \errmessage{(Inkscape) Color is used for the text in Inkscape, but the package 'color.sty' is not loaded}%
    \renewcommand\color[2][]{}%
  }%
  \providecommand\transparent[1]{%
    \errmessage{(Inkscape) Transparency is used (non-zero) for the text in Inkscape, but the package 'transparent.sty' is not loaded}%
    \renewcommand\transparent[1]{}%
  }%
  \providecommand\rotatebox[2]{#2}%
  \newcommand*\fsize{\dimexpr\f@size pt\relax}%
  \newcommand*\lineheight[1]{\fontsize{\fsize}{#1\fsize}\selectfont}%
  \ifx\svgwidth\undefined%
    \setlength{\unitlength}{116.78392541bp}%
    \ifx\svgscale\undefined%
      \relax%
    \else%
      \setlength{\unitlength}{\unitlength * \real{\svgscale}}%
    \fi%
  \else%
    \setlength{\unitlength}{\svgwidth}%
  \fi%
  \global\let\svgwidth\undefined%
  \global\let\svgscale\undefined%
  \makeatother%
  \begin{picture}(1,0.81940997)%
    \lineheight{1}%
    \setlength\tabcolsep{0pt}%
    \put(0,0){\includegraphics[width=\unitlength,page=1]{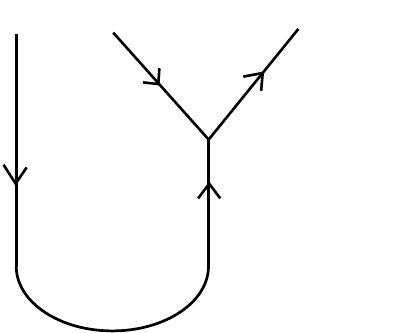}}%
    \put(-0.00284312,0.79338702){\color[rgb]{0,0,0}\makebox(0,0)[lt]{\lineheight{1.25}\smash{\begin{tabular}[t]{l}$j$\end{tabular}}}}%
    \put(0.2481268,0.79338702){\color[rgb]{0,0,0}\makebox(0,0)[lt]{\lineheight{1.25}\smash{\begin{tabular}[t]{l}$i$\end{tabular}}}}%
    \put(0.7151376,0.78821221){\color[rgb]{0,0,0}\makebox(0,0)[lt]{\lineheight{1.25}\smash{\begin{tabular}[t]{l}$k$\end{tabular}}}}%
    \put(0.58139322,0.39102647){\color[rgb]{0,0,0}\makebox(0,0)[lt]{\lineheight{1.25}\smash{\begin{tabular}[t]{l}$C_-\alpha$\end{tabular}}}}%
    \put(0.37810962,0.12158476){\color[rgb]{0,0,0}\makebox(0,0)[lt]{\lineheight{1.25}\smash{\begin{tabular}[t]{l}$j$\end{tabular}}}}%
  \end{picture}%
\endgroup%
}}}~~~=~~~\vcenter{\hbox{{\def\svgscale{0.6}
\begingroup%
  \makeatletter%
  \providecommand\color[2][]{%
    \errmessage{(Inkscape) Color is used for the text in Inkscape, but the package 'color.sty' is not loaded}%
    \renewcommand\color[2][]{}%
  }%
  \providecommand\transparent[1]{%
    \errmessage{(Inkscape) Transparency is used (non-zero) for the text in Inkscape, but the package 'transparent.sty' is not loaded}%
    \renewcommand\transparent[1]{}%
  }%
  \providecommand\rotatebox[2]{#2}%
  \newcommand*\fsize{\dimexpr\f@size pt\relax}%
  \newcommand*\lineheight[1]{\fontsize{\fsize}{#1\fsize}\selectfont}%
  \ifx\svgwidth\undefined%
    \setlength{\unitlength}{79.88899401bp}%
    \ifx\svgscale\undefined%
      \relax%
    \else%
      \setlength{\unitlength}{\unitlength * \real{\svgscale}}%
    \fi%
  \else%
    \setlength{\unitlength}{\svgwidth}%
  \fi%
  \global\let\svgwidth\undefined%
  \global\let\svgscale\undefined%
  \makeatother%
  \begin{picture}(1,1.25438664)%
    \lineheight{1}%
    \setlength\tabcolsep{0pt}%
    \put(0,0){\includegraphics[width=\unitlength,page=1]{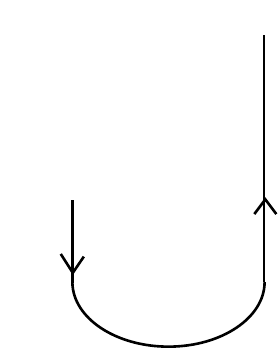}}%
    \put(-0.00415616,1.21634558){\color[rgb]{0,0,0}\makebox(0,0)[lt]{\lineheight{1.25}\smash{\begin{tabular}[t]{l}$j$\end{tabular}}}}%
    \put(0.44970926,1.21067207){\color[rgb]{0,0,0}\makebox(0,0)[lt]{\lineheight{1.25}\smash{\begin{tabular}[t]{l}$i$\end{tabular}}}}%
    \put(0.90546597,1.21634503){\color[rgb]{0,0,0}\makebox(0,0)[lt]{\lineheight{1.25}\smash{\begin{tabular}[t]{l}$k$\end{tabular}}}}%
    \put(0,0){\includegraphics[width=\unitlength,page=2]{contragredient-2.pdf}}%
    \put(0.34712951,0.38968502){\color[rgb]{0,0,0}\makebox(0,0)[lt]{\lineheight{1.25}\smash{\begin{tabular}[t]{l}$\alpha$\end{tabular}}}}%
    \put(0,0){\includegraphics[width=\unitlength,page=3]{contragredient-2.pdf}}%
  \end{picture}%
\endgroup%
}}}\quad.
\end{align}
By \cite{Hua08} section 3, there exist coevaluation maps $\coev_{i,\ovl i}=\vcenter{\hbox{{\def\svgscale{0.4}
\begingroup%
  \makeatletter%
  \providecommand\color[2][]{%
    \errmessage{(Inkscape) Color is used for the text in Inkscape, but the package 'color.sty' is not loaded}%
    \renewcommand\color[2][]{}%
  }%
  \providecommand\transparent[1]{%
    \errmessage{(Inkscape) Transparency is used (non-zero) for the text in Inkscape, but the package 'transparent.sty' is not loaded}%
    \renewcommand\transparent[1]{}%
  }%
  \providecommand\rotatebox[2]{#2}%
  \newcommand*\fsize{\dimexpr\f@size pt\relax}%
  \newcommand*\lineheight[1]{\fontsize{\fsize}{#1\fsize}\selectfont}%
  \ifx\svgwidth\undefined%
    \setlength{\unitlength}{60.84525407bp}%
    \ifx\svgscale\undefined%
      \relax%
    \else%
      \setlength{\unitlength}{\unitlength * \real{\svgscale}}%
    \fi%
  \else%
    \setlength{\unitlength}{\svgwidth}%
  \fi%
  \global\let\svgwidth\undefined%
  \global\let\svgscale\undefined%
  \makeatother%
  \begin{picture}(1,0.49390271)%
    \lineheight{1}%
    \setlength\tabcolsep{0pt}%
    \put(0,0){\includegraphics[width=\unitlength,page=1]{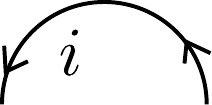}}%
  \end{picture}%
\endgroup%
}}}\in\Hom(V,W_i\boxtimes W_{\ovl i})$ \index{coev@$\coev_{i,\ovl i}$} and $\coev_{\ovl i,i}=\vcenter{\hbox{{\def\svgscale{0.4}
\begingroup%
  \makeatletter%
  \providecommand\color[2][]{%
    \errmessage{(Inkscape) Color is used for the text in Inkscape, but the package 'color.sty' is not loaded}%
    \renewcommand\color[2][]{}%
  }%
  \providecommand\transparent[1]{%
    \errmessage{(Inkscape) Transparency is used (non-zero) for the text in Inkscape, but the package 'transparent.sty' is not loaded}%
    \renewcommand\transparent[1]{}%
  }%
  \providecommand\rotatebox[2]{#2}%
  \newcommand*\fsize{\dimexpr\f@size pt\relax}%
  \newcommand*\lineheight[1]{\fontsize{\fsize}{#1\fsize}\selectfont}%
  \ifx\svgwidth\undefined%
    \setlength{\unitlength}{60.84525407bp}%
    \ifx\svgscale\undefined%
      \relax%
    \else%
      \setlength{\unitlength}{\unitlength * \real{\svgscale}}%
    \fi%
  \else%
    \setlength{\unitlength}{\svgwidth}%
  \fi%
  \global\let\svgwidth\undefined%
  \global\let\svgscale\undefined%
  \makeatother%
  \begin{picture}(1,0.49390271)%
    \lineheight{1}%
    \setlength\tabcolsep{0pt}%
    \put(0,0){\includegraphics[width=\unitlength,page=1]{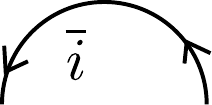}}%
  \end{picture}%
\endgroup%
}}}=\vcenter{\hbox{{\def\svgscale{0.4}
\begingroup%
  \makeatletter%
  \providecommand\color[2][]{%
    \errmessage{(Inkscape) Color is used for the text in Inkscape, but the package 'color.sty' is not loaded}%
    \renewcommand\color[2][]{}%
  }%
  \providecommand\transparent[1]{%
    \errmessage{(Inkscape) Transparency is used (non-zero) for the text in Inkscape, but the package 'transparent.sty' is not loaded}%
    \renewcommand\transparent[1]{}%
  }%
  \providecommand\rotatebox[2]{#2}%
  \newcommand*\fsize{\dimexpr\f@size pt\relax}%
  \newcommand*\lineheight[1]{\fontsize{\fsize}{#1\fsize}\selectfont}%
  \ifx\svgwidth\undefined%
    \setlength{\unitlength}{61.72387992bp}%
    \ifx\svgscale\undefined%
      \relax%
    \else%
      \setlength{\unitlength}{\unitlength * \real{\svgscale}}%
    \fi%
  \else%
    \setlength{\unitlength}{\svgwidth}%
  \fi%
  \global\let\svgwidth\undefined%
  \global\let\svgscale\undefined%
  \makeatother%
  \begin{picture}(1,0.48687211)%
    \lineheight{1}%
    \setlength\tabcolsep{0pt}%
    \put(0,0){\includegraphics[width=\unitlength,page=1]{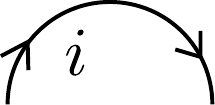}}%
  \end{picture}%
\endgroup%
}}}\in\Hom(V,W_{\ovl i}\boxtimes W_i)$ satisfying
\begin{gather}
(\ev_{i,\ovl i}\otimes\id_i)(\id_i\otimes\coev_{\ovl i,i})=\id_i=(\id_i\otimes\ev_{\ovl i,i})(\coev_{i,\ovl i}\otimes\id_i),\label{eq17}\\
(\ev_{\ovl i,i}\otimes\id_{\ovl i})(\id_{\ovl i}\otimes\coev_{i,\ovl i})=\id_{\ovl i}=(\id_{\ovl i}\otimes\ev_{i,\ovl i})(\coev_{\ovl i,i}\otimes\id_{\ovl i}).\label{eq18}
\end{gather}
Thus, let $W_{\ovl j}$ tensor both sides of  equation \eqref{eq1} from the left,  and then apply $\coev_{\ovl j,j}\otimes\id_i\otimes\id_k$ to the tops, we obtain

\begin{pp}\label{lb4}
For any $V$-modules $W_i,W_j,W_k$, and any $\mc Y_\alpha\in\mc V{k\choose i~j}$,
\end{pp}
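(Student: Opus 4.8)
The plan is to read the fusion relation \eqref{eq1} as an identity of morphisms and then straighten it with the rigidity (snake) identities \eqref{eq17}--\eqref{eq18}, exactly as the paragraph preceding the statement suggests. Concretely, \eqref{eq1} asserts the equality in $\Hom(W_j\boxtimes W_i\boxtimes W_{\ovl k},V)$ of the left-hand morphism $\ev_{j,\ovl j}(\id_j\otimes C_-\alpha)$ and the right-hand morphism $\ev_{k,\ovl k}(B_-\alpha\otimes\id_{\ovl k})$. The entire analytic content --- the fusion of the intertwining operators $\mc Y_{\ev_{j,\ovl j}}$, $\mc Y_{C_-\alpha}$, $\mc Y_{\ev_{k,\ovl k}}$, and $\mc Y_{B_-\alpha}$ --- is already packaged into this categorical identity, so the remaining argument is purely formal.

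First I would tensor both sides of \eqref{eq1} on the left by $\id_{\ovl j}$, obtaining morphisms with codomain $W_{\ovl j}$, and then precompose with $\coev_{\ovl j,j}\otimes\id_i\otimes\id_{\ovl k}$, whose domain is $W_i\boxtimes W_{\ovl k}$. Both sides thereby become morphisms in $\Hom(W_i\boxtimes W_{\ovl k},W_{\ovl j})$, which is the correct type for $C_-\alpha$. The crucial simplification is on the left: using the interchange law to slide $C_-\alpha$ below the coevaluation, the left-hand composite factors as $(\id_{\ovl j}\otimes\ev_{j,\ovl j})(\coev_{\ovl j,j}\otimes\id_{\ovl j})\circ C_-\alpha$, and the bracketed zigzag collapses to $\id_{\ovl j}$ by \eqref{eq18} (with $i$ replaced by $j$). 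Hence the left-hand side is exactly $C_-\alpha$, while the same two operations applied to the right-hand side leave the asserted composite built from $\coev_{\ovl j,j}$, $B_-\alpha$, and $\ev_{k,\ovl k}$; this is the identity claimed by the proposition.

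I expect the only real obstacle to be bookkeeping rather than ideas: one must keep careful track of which tensor factors $\ev$ and $\coev$ act on, and invoke \eqref{eq18} with the orientation that straightens the $j$/$\ovl j$ strands instead of producing a twist. Graphically this is transparent --- it is the standard move of bending a leg down with a cap-and-cup --- but the displayed-formula version requires inserting the associativity and unit isomorphisms that are silently identified in \eqref{eq61}--\eqref{eq31}, so a little care is needed to confirm that those are precisely the identifications already used to state \eqref{eq1}. The companion statement for $C_+\alpha$, if needed, would follow identically from the analogous fusion relation together with the other zigzag in \eqref{eq17}.
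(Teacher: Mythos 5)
Your argument is exactly the paper's own proof: the proposition is obtained by tensoring \eqref{eq1} on the left with $\id_{\ovl j}$, precomposing both sides with $\coev_{\ovl j,j}\otimes\id_i\otimes\id_{\ovl k}$ (morphisms running top to bottom), and collapsing the resulting zigzag on the left via the second identity of \eqref{eq18}, so that the left-hand side becomes $C_-\alpha$ while the right-hand side is the bent diagram of \eqref{eq2}, with $B_-\alpha=\alpha\circ\ss^{-1}$ supplying the crossing. Your bookkeeping is also right where it matters: the third tensor factor is indeed $\id_{\ovl k}$ (the paper's ``$\coev_{\ovl j,j}\otimes\id_i\otimes\id_k$'' is a typo), and no further input beyond the already-established categorical identity \eqref{eq1} and rigidity is needed.
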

\begin{align}\label{eq2}
\vcenter{\hbox{{\def\svgscale{0.6}
\begingroup%
  \makeatletter%
  \providecommand\color[2][]{%
    \errmessage{(Inkscape) Color is used for the text in Inkscape, but the package 'color.sty' is not loaded}%
    \renewcommand\color[2][]{}%
  }%
  \providecommand\transparent[1]{%
    \errmessage{(Inkscape) Transparency is used (non-zero) for the text in Inkscape, but the package 'transparent.sty' is not loaded}%
    \renewcommand\transparent[1]{}%
  }%
  \providecommand\rotatebox[2]{#2}%
  \newcommand*\fsize{\dimexpr\f@size pt\relax}%
  \newcommand*\lineheight[1]{\fontsize{\fsize}{#1\fsize}\selectfont}%
  \ifx\svgwidth\undefined%
    \setlength{\unitlength}{87.47467277bp}%
    \ifx\svgscale\undefined%
      \relax%
    \else%
      \setlength{\unitlength}{\unitlength * \real{\svgscale}}%
    \fi%
  \else%
    \setlength{\unitlength}{\svgwidth}%
  \fi%
  \global\let\svgwidth\undefined%
  \global\let\svgscale\undefined%
  \makeatother%
  \begin{picture}(1,1.08655325)%
    \lineheight{1}%
    \setlength\tabcolsep{0pt}%
    \put(0,0){\includegraphics[width=\unitlength,page=1]{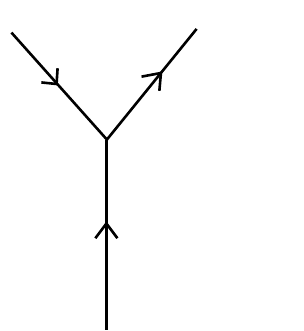}}%
    \put(-0.00379574,1.05181105){\color[rgb]{0,0,0}\makebox(0,0)[lt]{\lineheight{1.25}\smash{\begin{tabular}[t]{l}$i$\end{tabular}}}}%
    \put(0.61969165,1.04490237){\color[rgb]{0,0,0}\makebox(0,0)[lt]{\lineheight{1.25}\smash{\begin{tabular}[t]{l}$k$\end{tabular}}}}%
    \put(0.44113489,0.51463563){\color[rgb]{0,0,0}\makebox(0,0)[lt]{\lineheight{1.25}\smash{\begin{tabular}[t]{l}$C_-\alpha$\end{tabular}}}}%
    \put(0.16973911,0.15491482){\color[rgb]{0,0,0}\makebox(0,0)[lt]{\lineheight{1.25}\smash{\begin{tabular}[t]{l}$j$\end{tabular}}}}%
  \end{picture}%
\endgroup%
}}}~~~=~~~\vcenter{\hbox{{\def\svgscale{0.6}
\begingroup%
  \makeatletter%
  \providecommand\color[2][]{%
    \errmessage{(Inkscape) Color is used for the text in Inkscape, but the package 'color.sty' is not loaded}%
    \renewcommand\color[2][]{}%
  }%
  \providecommand\transparent[1]{%
    \errmessage{(Inkscape) Transparency is used (non-zero) for the text in Inkscape, but the package 'transparent.sty' is not loaded}%
    \renewcommand\transparent[1]{}%
  }%
  \providecommand\rotatebox[2]{#2}%
  \newcommand*\fsize{\dimexpr\f@size pt\relax}%
  \newcommand*\lineheight[1]{\fontsize{\fsize}{#1\fsize}\selectfont}%
  \ifx\svgwidth\undefined%
    \setlength{\unitlength}{99.77423893bp}%
    \ifx\svgscale\undefined%
      \relax%
    \else%
      \setlength{\unitlength}{\unitlength * \real{\svgscale}}%
    \fi%
  \else%
    \setlength{\unitlength}{\svgwidth}%
  \fi%
  \global\let\svgwidth\undefined%
  \global\let\svgscale\undefined%
  \makeatother%
  \begin{picture}(1,1.74437684)%
    \lineheight{1}%
    \setlength\tabcolsep{0pt}%
    \put(0,0){\includegraphics[width=\unitlength,page=1]{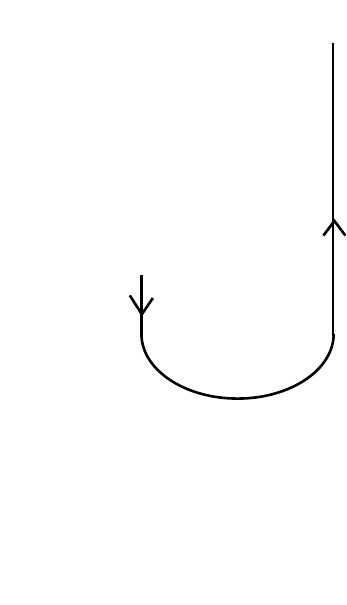}}%
    \put(0.53667056,1.71088937){\color[rgb]{0,0,0}\makebox(0,0)[lt]{\lineheight{1.25}\smash{\begin{tabular}[t]{l}$i$\end{tabular}}}}%
    \put(0.92279258,1.71391745){\color[rgb]{0,0,0}\makebox(0,0)[lt]{\lineheight{1.25}\smash{\begin{tabular}[t]{l}$k$\end{tabular}}}}%
    \put(0.48481919,0.87333682){\color[rgb]{0,0,0}\makebox(0,0)[lt]{\lineheight{1.25}\smash{\begin{tabular}[t]{l}$\alpha$\end{tabular}}}}%
    \put(0,0){\includegraphics[width=\unitlength,page=2]{contragredient-4.pdf}}%
    \put(0.58049964,0.12172797){\color[rgb]{0,0,0}\makebox(0,0)[lt]{\lineheight{1.25}\smash{\begin{tabular}[t]{l}$j$\end{tabular}}}}%
    \put(0,0){\includegraphics[width=\unitlength,page=3]{contragredient-4.pdf}}%
  \end{picture}%
\endgroup%
}}}\quad.
\end{align}

Finally, we remark that the ribbon structure on $\Rep(V)$ is defined by the twist $\vartheta=\vartheta_i:=e^{2\im\pi L_0}\in\End(W_i)$ for any $V$-module $W_i$. \index{zz@$\vartheta=\vartheta_i$}

\subsection{Unitary VOAs and unitary representations}

In this section, we only assume that $V$ is of CFT-type, and  discuss the unitary conditions on $V$ and its tensor category. We do not assume, at the beginning, that $V$ is self-dual. In particular, we do not identify $W_0$ with $W_{\ovl 0}$. As we shall see, the unitary structure on $V$ is closely related to certain $V$-module isomorphism $\epsilon:W_0\rightarrow W_{\ovl 0}$.

Suppose  $V$ is equipped with a normalized inner product $\bk{\cdot|\cdot}$ (``normalized" means $\bk{\Omega|\Omega}=1$). We say that $(V,\bk{\cdot|\cdot})$ (simply $V$ in the future) is \textbf{unitary} \cite{DL14,CKLW18}, if there exists an anti-unitary anti-automorphism $\Theta$\index{zz@$\Theta$} of $V$ (called \textbf{PCT operator}) satisfying that
\begin{align}
\bk{Y(v,z)v_1|v_2 }=\bk{v_1|Y(e^{\ovl zL_1}(-\ovl{z^{-2}} )^{L_0}\Theta v,\ovl{z^{-1}})v_2}.
\end{align}
We abbreviate the above relation to
\begin{align}
Y(v,z)=Y(e^{\ovl zL_1}(-\ovl{z^{-2}} )^{L_0}\Theta v,\ovl{z^{-1}})^\dagger,\label{eq5}
\end{align}
with $\dagger$ understood as formal adjoint\index{formal adjoint@Formal adjoint $\dagger$}.

By our definition, $\Theta:V\rightarrow V$ is an antilinear (i.e., conjugate linear) bijective  map satisfying
\begin{gather}
\bk{u|v}=\bk{\Theta v|\Theta u},\\
 \Theta Y(u,z)v=Y(\Theta u,\ovl z)\Theta v\label{eq6}
\end{gather}
for all $u,v\in V$. Such $\Theta$ is unique by \cite{CKLW18} proposition 5.1. Note that $\Theta v=\Theta Y(\Omega,z)v=Y(\Theta\Omega,\ovl z)\Theta v$ which implies $\Theta\Omega=\Omega$. Also $\Theta\nu=\nu$ by \cite{CKLW18} corollary 4.11. Indeed, by that corollary, under the assumption of anti-automorphism, anti-unitarity is equivalent to $\Theta\nu=\nu$, and also equivalent to that $\Theta$ preserves the grading of $V$. Note also that $\Theta$ is uniquely determined by $V$ and its inner product $\bk{\cdot|\cdot}$, and that $\Theta^2=\id_V$ (\cite{CKLW18} proposition 5.1).

Later we will discuss the unitarity of VOA extensions using tensor-categorical methods. For that purpose the map $\Theta$ is difficult to deal with due to its anti-linearity. So let us give an equivalent description of unitarity using linear maps. Let $V^*$ be the dual vector space of $V$.   The inner product $\bk{\cdot|\cdot}$ on $V$ induces naturally an antilinear injective map $\Co:V\rightarrow  V^*,v\mapsto\bk{\cdot|v}$. We set $\ovl V=\Co V$,\index{Conjugate@$\Co$} and define an inner product  on $\ovl V$, also denoted by $\bk{\cdot|\cdot}$, under which $\Co$ becomes anti-unitary. Equivalently, we require $\bk{u|v}=\bk{\Co v|\Co u}$ for all $u,v\in V$. The adjoint (which is also the inverse) of $\Co:V\rightarrow \ovl V$ is denoted by $\Co:\ovl V\rightarrow V$. The reason we use the same symbol for the two conjugation maps is to regard $\Co$ as an involution. We also adopt the notation $\ovl{v}=\Co v$ and $\ovl{\ovl v}=v$\index{Conjugate@$\Co$!vbar@$\ovl v=\Co v$}. We will use both $\ovl v$ and $\Co$ for conjugation very frequently, but the later is used more often when  no specific vectors are mentioned.


\begin{pp}\label{lb9}
$(V,\bk{\cdot|\cdot})$ is unitary if and only if there exists a (unique) unitary map $\epsilon:V\rightarrow \ovl V$\index{zz@$\epsilon$}, called the \textbf{reflection operator} of $V$, such that the following two relations hold for all $v\in V$:
\begin{gather}
Y(v,z)=Y(e^{\ovl zL_1}(-\ovl{z^{-2}} )^{L_0}\ovl{\epsilon v},\ovl{z^{-1}})^\dagger,\label{eq3}\\
\epsilon Y(\epsilon^*\ovl v,z )\epsilon^*=\Co Y(v,\ovl z)\Co.\label{eq4}
\end{gather}
When $V$ is unitary, we have
\begin{align}
\Co\epsilon=\epsilon^*\Co.\label{eq7}
\end{align}
\end{pp}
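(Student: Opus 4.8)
The plan is to exploit the fact that the PCT operator $\Theta$ is antilinear: composing it with the antilinear conjugation $\Co\colon V\to\ovl V$ produces a \emph{linear} map, and this will be the reflection operator. Precisely, I would set $\epsilon:=\Co\Theta\colon V\to\ovl V$; since $\epsilon$ is a composite of two anti-unitary maps it is unitary, and conversely, given a unitary $\epsilon$ I would define $\Theta:=\Co\epsilon\colon V\to V$, which is antilinear, bijective, and anti-unitary (as the composite of a unitary and an anti-unitary, so that $\bk{\Theta v|\Theta u}=\bk{\epsilon u|\epsilon v}=\bk{u|v}$). The engine of the whole argument is the single dictionary identity $\ovl{\epsilon v}=\Co\epsilon v=\Theta v$, which holds because $\Co^2=\id$; under it the $\Theta$-formulation and the $\epsilon$-formulation translate into one another term by term.

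For the forward direction I would assume $V$ unitary with $\Theta$ and put $\epsilon=\Co\Theta$. Then $\ovl{\epsilon v}=\Theta v$, so \eqref{eq3} is literally the defining adjoint relation \eqref{eq5}. For \eqref{eq4}, note that $\epsilon^*=\epsilon^{-1}=\Theta\Co$ (using $\Theta^2=\id_V$), so $\epsilon^*\ovl v=\Theta v$ and the left side of \eqref{eq4} becomes $\Co\Theta\,Y(\Theta v,z)\,\Theta\Co$. Writing \eqref{eq6} in operator form as $\Theta Y(u,z)\Theta=Y(\Theta u,\ovl z)$ and specializing $u=\Theta v$ gives $\Theta Y(\Theta v,z)\Theta=Y(v,\ovl z)$; hence the middle collapses and the left side equals $\Co Y(v,\ovl z)\Co$, the right side of \eqref{eq4}.

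For the converse I would assume $\epsilon$ unitary with \eqref{eq3} and \eqref{eq4}, and set $\Theta=\Co\epsilon$. Again $\ovl{\epsilon v}=\Theta v$ shows \eqref{eq3} is exactly \eqref{eq5}. To recover the intertwining relation \eqref{eq6}, substitute $\epsilon=\Co\Theta$ and $\epsilon^*=\Theta^{-1}\Co$ into \eqref{eq4}; since $\epsilon^*\ovl v=\Theta^{-1}v$, cancelling the outer $\Co$'s reduces \eqref{eq4} to $\Theta Y(\Theta^{-1}v,z)\Theta^{-1}=Y(v,\ovl z)$, and the substitution $v\mapsto\Theta u$ turns this into $\Theta Y(u,z)=Y(\Theta u,\ovl z)\Theta$, which is \eqref{eq6}. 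Thus $\Theta$ is an anti-unitary anti-automorphism obeying the defining relation, so $V$ is unitary; and uniqueness of $\epsilon$ follows from uniqueness of $\Theta$ (\cite{CKLW18} proposition 5.1) via $\epsilon=\Co\Theta$.

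Relation \eqref{eq7} then drops out of the explicit formulas: $\Co\epsilon=\Co\Co\Theta=\Theta$ and, by $\Theta^2=\id_V$, also $\epsilon^*\Co=\Theta\Co\Co=\Theta$, so $\Co\epsilon=\epsilon^*\Co$. I expect the only delicate point to be the bookkeeping in the equivalence \eqref{eq4}$\Leftrightarrow$\eqref{eq6}: one must keep careful track of the domains $V$ versus $\ovl V$ of $\epsilon,\epsilon^*$ and $\Co$, of the substitution of the formal variable $z$ by $\ovl z$ forced by $\Theta$, and of the two cancellations coming from $\Co^2=\id$ and $\Theta^2=\id_V$. Everything else reduces to the dictionary $\ovl{\epsilon v}=\Theta v$.
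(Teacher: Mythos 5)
Your proof is correct and takes essentially the same approach as the paper's: the paper likewise sets $\epsilon=\Co\Theta$, uses the dictionary $\ovl{\epsilon v}=\Theta v$, $\epsilon^*\ovl v=\Theta^{-1}v$ to identify \eqref{eq3} and \eqref{eq4} with \eqref{eq5} and \eqref{eq6} respectively, gets uniqueness of $\epsilon$ from that of $\Theta$, and obtains \eqref{eq7} from $\Co\epsilon=\Theta=\Theta^{-1}=\epsilon^*\Co$. Your write-up simply fills in the substitutions and cancellations that the paper leaves as ``easy to see,'' and it does so correctly, including the careful use of $\Theta^{-1}$ (rather than $\Theta$) in the converse before $\Theta^2=\id_V$ is available.
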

Note that the first and the third equations are acting on $V$, while the second one on $\ovl V$.
\begin{proof}
$\epsilon$ and $\Theta$ are related by $\epsilon=\Co\Theta$. Then $\ovl{\epsilon v}=\Theta v,\epsilon^*\ovl v=\Theta^{-1}v$. It is easy to see that \eqref{eq3} and \eqref{eq4} are equivalent to \eqref{eq5} and \eqref{eq6} respectively. The uniqueness of $\epsilon$ follows from that of $\Theta$. When $V$ is unitary, $\Co\epsilon=\Theta=\Theta^{-1}=\epsilon^*\Co$.
\end{proof}

In the following we always assume $V$ to be unitary. Let $W_i$ be a $V$-module, and assume that the vector space $W_i$ is equipped with an inner product $\bk{\cdot|\cdot}$. We say that $(W_i,\bk{\cdot|\cdot})$ (or just $W_i$) is a \textbf{unitary} $V$-module, if for any $v\in V$ we have $Y_i(v,z)=Y_i(e^{\ovl zL_1}(-\ovl{z^{-2}} )^{L_0}\Theta v,\ovl{z^{-1}})^\dagger$ when acting on $W_i$. Equivalently,
\begin{align}
Y_i(v,z)=Y_i(e^{\ovl zL_1}(-\ovl{z^{-2}} )^{L_0}\ovl{\epsilon v},\ovl{z^{-1}})^\dagger.\label{eq8}
\end{align}
A $V$-module is called \textbf{unitarizable} if it can be equipped with an inner product under which it becomes a unitary $V$-module. An intertwining operator $\mc Y_\alpha$ of $V$ is called \textbf{unitary} if it is among unitary $V$-modules.

Note that just as the conjugations between $V$ and $\ovl V$, given a unitary $V$-module $W_i$, we have a natural conjugation $\Co:W_i\rightarrow W_{\ovl i}$\index{Conjugate@$\Co$} whose inverse is also written as $\Co:W_{\ovl i}\rightarrow W_i$. (Indeed, we first have an injective antilinear map $\Co:W_i\rightarrow W_i^*$. That $\Co W_i=W_{\ovl i}$, i.e., that vectors in $\Co W_i$ are precisely those with finite conformal weights, follows from the fact that $\bk{L_0\cdot|\cdot }=\bk{\cdot|L_0\cdot}$ which is a consequence of \eqref{eq8} and the fact that $\ovl{\epsilon\nu}=\Theta\nu=\nu$.) Fix an inner product on $W_{\ovl i}$ under which $\Co$ becomes anti-unitary. Then $W_{\ovl i}$ is also a unitary $V$-module. For any $w^{(i)}\in W_i$, write $\ovl{w^{(i)}}=\Co w^{(i)},\ovl{\ovl {w^{(i)}}}=\Co\ovl{ w^{(i)}}=w^{(i)}$.\index{Conjugate@$\Co$!wbar@$\ovl{w^{(i)}}=\Co w^{(i)}$}  Recall by our notation that $Y_i$ and $Y_{\ovl i}$ are the vertex operators of $W_i$ and $W_{\ovl i}$ respectively. The relation between these two operators is pretty simple: $Y_{\ovl i}(\Theta v,z)=\Co Y_i(v,\ovl z)\Co$, acting on $W_{\ovl i}$. (See \cite{Gui19a} formula (1.19).) Equivalently,
\begin{align}
Y_{\ovl i}(\ovl{\epsilon v},z)=\Co Y_i(v,\ovl z)\Co.
\end{align}
This equation (applied to $i=0$), together with \eqref{eq4} and \eqref{eq7}, shows for any $v\in V$ that $\epsilon Y_0(v,z)\epsilon^*=Y_{\ovl 0}(v,z)$, where $Y_0=Y$ is the vertex operator of the vacuum module $V$, and $Y_{\ovl 0}$ is that of its contragredient module. We conclude:
\begin{pp}\label{lb10}
	If $V$ is a unitary CFT-type VOA, then $V$ is self-dual. The vacuum module $V=W_0$ is unitarily equivalent to its contragredient module $\ovl V=W_{\ovl 0}$ via the reflection operator $\epsilon$.
\end{pp}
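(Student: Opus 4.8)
The plan is to exhibit the reflection operator $\epsilon:V\to\ovl V$ from Proposition \ref{lb9} as a unitary isomorphism of $V$-modules between the vacuum module $W_0=V$ and its contragredient $W_{\ovl 0}=\ovl V$; self-duality and the asserted unitary equivalence would then follow at once. Recall from the proof of Proposition \ref{lb9} that $\epsilon=\Co\Theta$ is a \emph{linear} unitary map, being the composite of the two antilinear maps $\Theta$ and $\Co$; in particular it is already a linear bijection, so the only thing I would need to verify is that it intertwines the two module structures.

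The decisive identity is essentially in hand: combining the conjugation formula $Y_{\ovl 0}(\ovl{\epsilon v},z)=\Co Y_0(v,\ovl z)\Co$ with \eqref{eq4} and \eqref{eq7} gives, for every $v\in V$,
\[
\epsilon Y_0(v,z)\epsilon^*=Y_{\ovl 0}(v,z)
\]
as operators on $\ovl V$, where $Y_0=Y$ is the vacuum vertex operator and $Y_{\ovl 0}$ is that of the contragredient module. Since $\epsilon$ is unitary we have $\epsilon^*=\epsilon^{-1}$, so I would rearrange this into the intertwining relation
\[
\epsilon Y_0(v,z)=Y_{\ovl 0}(v,z)\,\epsilon\qquad(v\in V),
\]
which is exactly the condition that $\epsilon\in\Hom(W_0,W_{\ovl 0})$ be a homomorphism of $V$-modules.

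Being simultaneously a linear bijection and a module homomorphism, $\epsilon$ would then be a module isomorphism; hence $W_0\cong W_{\ovl 0}$, which is precisely the self-duality of $V$. Since $\epsilon$ moreover preserves the inner products, this would be a unitary equivalence $V\cong\ovl V$, as claimed. I expect essentially no obstacle at this stage: all of the analytic content (the adjoint relation \eqref{eq3}, the intertwining identity \eqref{eq4}, and the compatibility \eqref{eq7}) has already been secured, so the proposition is a formal consequence. The one point deserving a word of care is that the displayed intertwining relation must be read modewise, as an equality of linear maps $V\to\ovl V$ for each coefficient of the formal variable $z$, so that $\epsilon$ genuinely lands in $\Hom(W_0,W_{\ovl 0})$; this is immediate once both sides are expanded into their $z$-modes.
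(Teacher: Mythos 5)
Your proof is correct and follows the paper's own route exactly: the paper likewise derives $\epsilon Y_0(v,z)\epsilon^*=Y_{\ovl 0}(v,z)$ by combining $Y_{\ovl 0}(\ovl{\epsilon v},z)=\Co Y_0(v,\ovl z)\Co$ with \eqref{eq4} and \eqref{eq7}, and concludes that $\epsilon$ is a unitary $V$-module isomorphism $W_0\overset{\simeq}{\rightarrow}W_{\ovl 0}$. Your added remarks (linearity of $\epsilon=\Co\Theta$ and the modewise reading of the intertwining relation) are correct clarifications of points the paper leaves implicit.
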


\begin{cv}\label{lb1}
	Unless otherwise stated, if $V$ is unitary and CFT-type, the isomorphism $W_0\overset{\simeq}{\rightarrow}W_{\ovl 0}$ is always chosen to be the reflection operator $\epsilon$, and the identification of $V=W_0$ and $\ovl V=W_{\ovl 0}$ is always assumed to be through $\epsilon$.
\end{cv}

\subsection{Adjoint and conjugate intertwining operators}\label{lb48}

We have seen in section \ref{lb2} two ways of producing new intertwining operators from old ones: the braided and the contragredient intertwining operators. In the unitary case there are two extra methods: the conjugate and the adjoint intertwining operators. In this section, our main goal is to derive tensor-categorical descriptions of these two constructions of intertwining operators.

First we review the definition of these two constructions; see \cite{Gui19a} section 1.3 for more details and basic properties. Let $V$ be unitary and CFT-type. Choose unitary $V$-modules $W_i,W_j,W_k$. For any $\mc Y_\alpha\in\mc V{k\choose i~j}$, its \textbf{conjugate intertwining operator} $\Co\mc Y_\alpha\equiv\mc Y_{\Co\alpha}\equiv\mc Y_{\ovl\alpha}$ \index{Ya@$\mc Y_\alpha$!$\Co\mc Y_\alpha=\mc Y_{\Co\alpha}=\mc Y_{\ovl\alpha}$} is of type $\mc V{\ovl k\choose \ovl i~\ovl j}$ defined by
\begin{align}
\mc Y_{\ovl\alpha}(\ovl{w^{(i)}},z)=\Co\mc Y_\alpha(w^{(i)},\ovl z)\Co
\end{align}
for any $w^{(i)}\in W_i$. Despite its simple form, one cannot directly translate this definition into  tensor categorical language, again due to the anti-linearity of $\Co$. Therefore we need to first consider the  \textbf{adjoint intertwining operator} $\mc Y^\dagger_\alpha\equiv\mc Y_{\alpha^\dagger}\in\mc V{j\choose\ovl i~k}$, \index{Ya@$\mc Y_\alpha$!$\mc Y^\dagger_\alpha=\mc Y_{\alpha^\dagger}$} defined by
\begin{align}
\alpha^\dagger=\ovl{C_+\alpha}=C_-\ovl\alpha,
\end{align}
which will be closely related to the $*$-structures of the $C^*$-tensor categories. Then for any $w^{(i)}\in W_i$, 
\begin{align}
\mc Y_{\alpha^\dagger}(\ovl{w^{(i)}},z)=\mc Y_\alpha(e^{\ovl zL_1}(e^{-\im\pi}\ovl{z^{-2}} )^{L_0}\ovl{w^{(i)}},\ovl{z^{-1}})^\dagger.
\end{align}
Note that $\dagger$ is an involution: $\alpha^{\dagger\dagger}=\alpha$. Moreover,  by unitarity, up to equivalence of the charge spaces $\epsilon:V\overset{\simeq}{\rightarrow}\ovl V$, $Y_i$ is equal to its adjoint intertwining operator, and obviously also equal to the conjugate intertwining operator of $Y_{\ovl i}$. Another important relation is
\begin{align}
\ev_{\ovl i,i}=\kappa(i)^\dagger\label{eq9}
\end{align}
\index{evii@$\ev_{i,\ovl i}$, $\mc Y_{\ev_{i,\ovl i}}$} by \cite{Gui19a} formula (1.44). Recall from  section \ref{lb2} that, up to the isomorphism $W_0\simeq W_{\ovl 0}$, $\ev_{\ovl i,i}$ is defined to be $C_-\kappa(\ovl i)$. Due to convention \ref{lb1}, the more precise definition is $\ev_{\ovl i,i}=\epsilon^{-1}\circ C_-\kappa(\ovl i)$. We will use \eqref{eq9} more often than this definition in our paper.

We shall now relate $\alpha^\dagger$ with the unitarity structure of the tensor category of unitary $V$-modules. Let $V$ be a CFT-type VOA. Assume that $V$ is \textbf{strongly unitary} \cite{Ten18a}, which means that $V$ is unitary, and any $V$-module is unitarizable. Then $\Rep^\uni(V)$,\index{RepV@$\Rep^\uni(V)$} the category of unitary $V$-modules, is a $C^*$-category, whose $*$ structure is defined as follows: if $W_i,W_j$ are unitary and $T\in\Hom(W_i,W_j)$, then $T^*\in\Hom(W_j,W_i)$ is simply the adjoint of $T$, defined with respect to the inner products of $W_i$ and $W_j$.

Assume also that $V$ is regular. To make $\Rep^\uni(V)$ a $C^*$-tensor category, we have to choose, for any unitary $V$-modules $W_i,W_j$, a suitable unitary structure on $W_i\boxtimes W_j$. Note that it is already  known that $W_i\boxtimes W_j$ is unitarizable by the strong unitary of $V$. But here the unitary structure has to be chosen such that the structural isomorphisms  become unitary. So, for instance, if $W_k$ is also unitary, the associativity isomorphism $(W_i\boxtimes W_j)\boxtimes W_k\overset\simeq\rightarrow W_i\boxtimes(W_j\boxtimes W_k)$ and the braid isomorphism $\ss:W_i\boxtimes W_j\overset\simeq\rightarrow W_j\boxtimes W_i$ have to be unitary. Then we can identify $(W_i\boxtimes W_j)\boxtimes W_k$ and $W_i\boxtimes(W_j\boxtimes W_k)$ as the same unitary $V$-module called $W_i\boxtimes W_j\boxtimes W_k$.

To fulfill these purposes, recall that $W_i\boxtimes W_j$ is defined to be  $\bigoplus_{t\in\mc E}\mc V{t\choose i~j}^*\otimes W_t$. Since every $W_t$ already has a unitary structure, it suffices to assume that the direct sum is orthogonal, and define a suitable inner product $\Lambda$ (called invariant inner product) on each $\mc V{t\choose i~j}^*$. 

Let us first assume that we can always find $\Lambda$ which make $\Rep^\uni(V)$ a braided $C^*$-tensor category. Then we can define an inner product on $\mc V{t\choose i~j}$, also denoted by $\Lambda$, under which the anti-linear map $\mc V{t\choose i~j}\rightarrow \mc V{t\choose i~j}^*$, $\mc Y_\alpha\mapsto \Lambda(\cdot|\mc Y_\alpha)$ becomes anti-unitary. Here we assume $\Lambda(\cdot|\cdot)$ to be linear on the first variable and anti-linear on the second one.  Let $\bk{\mc Y_\alpha:\alpha\in\Xi^t_{i,j}}$ \index{zz@$\Xi^t_{i,j}$} be a basis of $\mc V{t\choose i~j}$, and let $\bk{\widecheck{\mc Y}^\alpha:\alpha\in\Xi^t_{i,j} }$ be its dual basis in $\mc V{t\choose i~j}^*$. In other words we assume for any $\alpha,\beta\in\Xi^t_{i,j}$ that $\bk{\mc Y_\alpha,\widecheck{\mc Y}^\beta}=\delta_{\alpha,\beta}$. The following proposition relates $\Lambda$ with the categorical inner product.

\begin{pp}\label{lb3}
For any $\mc Y_\alpha,\mc Y_\beta\in\mc V{t\choose i~j}$ we have
\begin{align}
\alpha\beta^*=\Lambda(\mc Y_\alpha|\mc Y_\beta)\id_t.\label{eq10}
\end{align}
\end{pp}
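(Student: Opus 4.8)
The plan is to reduce both sides of \eqref{eq10} to a one-line computation inside a single homogeneous summand of $W_i\boxtimes W_j$. First I would record that here $W_t$ is irreducible (i.e. $t\in\mc E$), so Schur's lemma gives $\End(W_t)=\mathbb C\id_t$; consequently $\alpha\beta^*$ is automatically a scalar multiple of $\id_t$, and the whole content of the proposition is to identify that scalar with $\Lambda(\mc Y_\alpha|\mc Y_\beta)$. Since $\alpha\mapsto\mc Y_\alpha$ is linear via \eqref{eq61} while $\beta\mapsto\beta^*$ is conjugate-linear (adjoint), the left-hand side is linear in $\mc Y_\alpha$ and anti-linear in $\mc Y_\beta$; the right-hand side has the same variance because $\Lambda(\cdot|\cdot)$ is anti-linear in its second slot. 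Hence it suffices to verify the identity when $\mc Y_\alpha,\mc Y_\beta$ range over a $\Lambda$-orthonormal basis $\{\mc Y_\alpha:\alpha\in\Xi^t_{i,j}\}$ of $\mc V{t\choose i~j}$.

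Next I would unwind the definition $W_i\boxtimes W_j=\bigoplus_{s\in\mc E}\mc V{s\choose i~j}^*\otimes W_s$ together with the chosen inner product: the direct sum is orthogonal, and on the summand $\mc V{t\choose i~j}^*\otimes W_t$ the inner product is the tensor product of $\Lambda$ on $\mc V{t\choose i~j}^*$ with the given inner product on $W_t$. Anti-unitarity of the map $\mc Y_\gamma\mapsto\Lambda(\cdot|\mc Y_\gamma)$ shows that the dual basis $\{\wch{\mc Y}^\alpha\}$ is precisely the image of $\{\mc Y_\alpha\}$ under this map, hence is $\Lambda$-orthonormal in $\mc V{t\choose i~j}^*$. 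Under the canonical identification \eqref{eq61}, the morphism $\alpha\in\Hom(W_i\boxtimes W_j,W_t)$ attached to $\mc Y_\alpha$ annihilates every summand $W_s$ with $s\neq t$ and acts on the $t$-summand by $\alpha(\wch{\mc Y}^\gamma\otimes w)=\bk{\mc Y_\alpha,\wch{\mc Y}^\gamma}w=\delta_{\alpha,\gamma}w$.

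From here the computation is immediate. Using the orthogonality of the summands and the orthonormality just recorded, the adjoint $\beta^*\colon W_t\to W_i\boxtimes W_j$ is pinned down by $\bk{\beta^*w\,|\,\wch{\mc Y}^\gamma\otimes w'}=\bk{w\,|\,\beta(\wch{\mc Y}^\gamma\otimes w')}=\delta_{\beta,\gamma}\bk{w|w'}$, which forces $\beta^*w=\wch{\mc Y}^\beta\otimes w$. Composing, $\alpha\beta^*w=\alpha(\wch{\mc Y}^\beta\otimes w)=\delta_{\alpha,\beta}w$, so $\alpha\beta^*=\delta_{\alpha,\beta}\id_t=\Lambda(\mc Y_\alpha|\mc Y_\beta)\id_t$ on the orthonormal basis, and the general identity follows by (conjugate-)linearity.

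I expect the only genuine difficulty to lie in the bookkeeping of the second step rather than in the final calculation. One must verify that the canonical isomorphism $\Hom(W_i\boxtimes W_j,W_t)\simeq\mc V{t\choose i~j}$ pairs $\alpha$ with $\wch{\mc Y}^\gamma$ exactly as $\bk{\mc Y_\alpha,\wch{\mc Y}^\gamma}$, with no stray conjugation or transpose, and that the inner product on $W_i\boxtimes W_j$ really restricts on each summand to the tensor product of $\Lambda$ and the module inner product with the correct placement of complex conjugation. Once these normalizations are fixed consistently with the stated anti-linearity of $\Lambda(\cdot|\cdot)$ and with the orthonormal-basis identification $\wch{\mc Y}^\alpha=\Lambda(\cdot|\mc Y_\alpha)$, everything else is forced.
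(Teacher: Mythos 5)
Your proof is correct and is essentially the paper's own argument: both reduce by sesquilinearity to a $\Lambda$-orthonormal basis $\Xi^t_{i,j}$ and then compute $\alpha\beta^*=\delta_{\alpha,\beta}\id_t$ directly on the orthogonal decomposition $W_i\boxtimes W_j=\bigoplus^\perp_{s\in\mc E}\mc V{s\choose i~j}^*\otimes W_s$, the paper merely packaging your explicit formula $\beta^*w=\wch{\mc Y}^\beta\otimes w$ as the factorization $\alpha=U_\alpha P_\alpha$ with $P_\alpha$ the projection onto $\wch{\mc Y}^\alpha\otimes W_t$ and $U_\alpha$ the obvious unitary. Your opening Schur's-lemma remark is a harmless extra observation that neither version actually needs.
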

\begin{proof}
Assume that $\Xi^t_{i,j}$ is orthonormal under $\Lambda$. Then so is $\bk{\widecheck{\mc Y}^\alpha:\alpha\in\Xi^t_{i,j} }$. For any $\alpha\in\Xi^t_{i,j}$, let $P_\alpha$ be the projection mapping $W_i\boxtimes W_j=\bigoplus^\perp_{t\in\mc E}\mc V{t\choose i~j}^*\otimes W_t$ onto $\widecheck{\mc Y}^\alpha\otimes W_t$, and let $U_\alpha$ be the  unitary map $\widecheck{\mc Y}^\alpha\otimes W_t\rightarrow W_t$, $\widecheck{\mc Y}^\alpha\otimes w^{(t)}\mapsto w^{(t)}$. Then we clearly have $\alpha=U_\alpha P_\alpha$. So $\alpha\alpha^*=\id_t$. If $\beta\in\Xi^t_{i,j}$ and $\beta\neq\alpha$, then clearly $P_\beta P_\alpha=0$, and hence $\alpha\beta^*=0$. Thus \eqref{eq9} holds for basis vectors, and hence holds in general.
\end{proof}

We warn the reader the difference of the two notations $\alpha^\dagger$ and $\alpha^*$. If $\mc Y_\alpha\in\mc V{t\choose i~j}$ then $\alpha^\dagger\in\Hom(W_{\ovl i}\boxtimes W_t,W_j)$, while $\alpha^*$, defined using the $*$-structure of $\Rep^\uni(V)$, is in $\Hom(W_t,W_i\boxtimes W_j)$. Thus, $\mc Y_{\alpha^\dagger}$ is the adjoint intertwining operator while $\mc Y_{\alpha^*}$ makes no sense. This is different from the notations used in \cite{Gui19a,Gui19b}, where $\alpha^\dagger$ is not defined but $\mc Y_{\alpha^*}$ (also written as $\mc Y_\alpha^\dagger$) denotes the adjoint intertwining operator. 
Despite such difference, $\alpha^\dagger$ and $\alpha^*$, the VOA adjoint and the categorical adjoint, should be related in a natural way. And it is time to review the construction of invariant $\Lambda$ introduced in \cite{Gui19b}.

\begin{df}\label{lb49}
Let $W_i,W_j$ be unitary $V$-modules. Then the \textbf{invariant sesquilinear form} $\Lambda$  on $\mc V{t\choose i~j}^*$ for any $t\in\mc E$ is defined such that the fusion relation holds for any $w^{(i)}_1,w^{(i)}_2\in W_i$:
\begin{align}
Y_j\big(\mc Y_{\ev_{\ovl i,i}}(\ovl{w^{(i)}_2},z-\zeta)w^{(i)}_1,\zeta \big)=\sum_{t\in\mc E}\sum_{\alpha,\beta\in\Xi^t_{i,j}}\Lambda(\widecheck{\mc Y}^\alpha|\widecheck{\mc Y}^\beta)\cdot\mc Y_{\beta^\dagger}(\ovl{w^{(i)}_2},z)\mc Y_\alpha(w^{(i)}_1,\zeta).\label{eq11}
\end{align}
\end{df}
Equation \eqref{eq11} can equivalently be presented as
\begin{align}
\vcenter{\hbox{{\def\svgscale{0.75}
\begingroup%
  \makeatletter%
  \providecommand\color[2][]{%
    \errmessage{(Inkscape) Color is used for the text in Inkscape, but the package 'color.sty' is not loaded}%
    \renewcommand\color[2][]{}%
  }%
  \providecommand\transparent[1]{%
    \errmessage{(Inkscape) Transparency is used (non-zero) for the text in Inkscape, but the package 'transparent.sty' is not loaded}%
    \renewcommand\transparent[1]{}%
  }%
  \providecommand\rotatebox[2]{#2}%
  \newcommand*\fsize{\dimexpr\f@size pt\relax}%
  \newcommand*\lineheight[1]{\fontsize{\fsize}{#1\fsize}\selectfont}%
  \ifx\svgwidth\undefined%
    \setlength{\unitlength}{79.6445857bp}%
    \ifx\svgscale\undefined%
      \relax%
    \else%
      \setlength{\unitlength}{\unitlength * \real{\svgscale}}%
    \fi%
  \else%
    \setlength{\unitlength}{\svgwidth}%
  \fi%
  \global\let\svgwidth\undefined%
  \global\let\svgscale\undefined%
  \makeatother%
  \begin{picture}(1,1.04620956)%
    \lineheight{1}%
    \setlength\tabcolsep{0pt}%
    \put(0.45966871,0.99667046){\color[rgb]{0,0,0}\makebox(0,0)[lt]{\lineheight{1.25}\smash{\begin{tabular}[t]{l}$i$\end{tabular}}}}%
    \put(0.92630835,1.00805175){\color[rgb]{0,0,0}\makebox(0,0)[lt]{\lineheight{1.25}\smash{\begin{tabular}[t]{l}$j$\end{tabular}}}}%
    \put(-0.00416891,0.98507628){\color[rgb]{0,0,0}\makebox(0,0)[lt]{\lineheight{1.25}\smash{\begin{tabular}[t]{l}$i$\end{tabular}}}}%
    \put(0,0){\includegraphics[width=\unitlength,page=1]{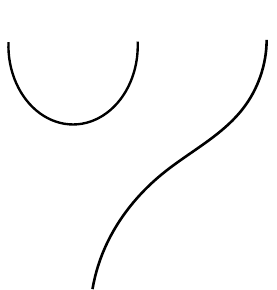}}%
    \put(0.19500648,0.055226){\color[rgb]{0,0,0}\makebox(0,0)[lt]{\lineheight{1.25}\smash{\begin{tabular}[t]{l}$j$\end{tabular}}}}%
    \put(0,0){\includegraphics[width=\unitlength,page=2]{transport.pdf}}%
  \end{picture}%
\endgroup%
}}}~~=~~\sum_{t\in\mc E}\sum_{\alpha,\beta\in\Xi^t_{i,j}}\Lambda(\widecheck{\mc Y}^\alpha|\widecheck{\mc Y}^\beta)\vcenter{\hbox{{\def\svgscale{0.5}
\begingroup%
  \makeatletter%
  \providecommand\color[2][]{%
    \errmessage{(Inkscape) Color is used for the text in Inkscape, but the package 'color.sty' is not loaded}%
    \renewcommand\color[2][]{}%
  }%
  \providecommand\transparent[1]{%
    \errmessage{(Inkscape) Transparency is used (non-zero) for the text in Inkscape, but the package 'transparent.sty' is not loaded}%
    \renewcommand\transparent[1]{}%
  }%
  \providecommand\rotatebox[2]{#2}%
  \newcommand*\fsize{\dimexpr\f@size pt\relax}%
  \newcommand*\lineheight[1]{\fontsize{\fsize}{#1\fsize}\selectfont}%
  \ifx\svgwidth\undefined%
    \setlength{\unitlength}{144.14952464bp}%
    \ifx\svgscale\undefined%
      \relax%
    \else%
      \setlength{\unitlength}{\unitlength * \real{\svgscale}}%
    \fi%
  \else%
    \setlength{\unitlength}{\svgwidth}%
  \fi%
  \global\let\svgwidth\undefined%
  \global\let\svgscale\undefined%
  \makeatother%
  \begin{picture}(1,0.99882065)%
    \lineheight{1}%
    \setlength\tabcolsep{0pt}%
    \put(0,0){\includegraphics[width=\unitlength,page=1]{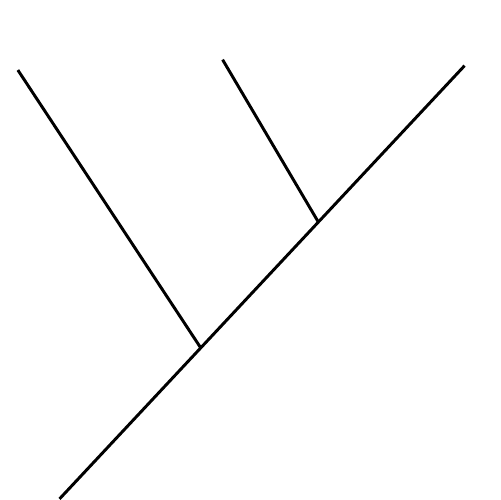}}%
    \put(-0.00460676,0.94577301){\color[rgb]{0,0,0}\makebox(0,0)[lt]{\lineheight{1.25}\smash{\begin{tabular}[t]{l}$i$\end{tabular}}}}%
    \put(0.91856871,0.95038671){\color[rgb]{0,0,0}\makebox(0,0)[lt]{\lineheight{1.25}\smash{\begin{tabular}[t]{l}$j$\end{tabular}}}}%
    \put(0.00792645,0.04950549){\color[rgb]{0,0,0}\makebox(0,0)[lt]{\lineheight{1.25}\smash{\begin{tabular}[t]{l}$j$\end{tabular}}}}%
    \put(0.40861184,0.48235278){\color[rgb]{0,0,0}\makebox(0,0)[lt]{\lineheight{1.25}\smash{\begin{tabular}[t]{l}$t$\end{tabular}}}}%
    \put(0.67897336,0.50343482){\color[rgb]{0,0,0}\makebox(0,0)[lt]{\lineheight{1.25}\smash{\begin{tabular}[t]{l}$\alpha$\end{tabular}}}}%
    \put(0.43214097,0.22616081){\color[rgb]{0,0,0}\makebox(0,0)[lt]{\lineheight{1.25}\smash{\begin{tabular}[t]{l}$\beta^\dagger$\end{tabular}}}}%
    \put(0,0){\includegraphics[width=\unitlength,page=2]{transport-2.pdf}}%
    \put(0.40577689,0.95665523){\color[rgb]{0,0,0}\makebox(0,0)[lt]{\lineheight{1.25}\smash{\begin{tabular}[t]{l}$i$\end{tabular}}}}%
    \put(0,0){\includegraphics[width=\unitlength,page=3]{transport-2.pdf}}%
  \end{picture}%
\endgroup%
}}}.\label{eq13}
\end{align}

It is not too hard to show that $\Lambda$ is Hermitian (i.e. $\Lambda(\widecheck{\mc Y}^\alpha|\widecheck{\mc Y}^\beta)=\ovl{\Lambda(\widecheck{\mc Y}^\beta|\widecheck{\mc Y}^\alpha)}$). Indeed, one can prove this by applying   \cite{Gui19b} formula (5.34) to the adjoint of \eqref{eq11}. (The intertwining operator $\mc Y_{\tilde\sigma}$ in that formula could be determined from the proofs of \cite{Gui19b} corollary 5.7 and theorem 5.5.) Moreover, from the rigidity of $\Rep(V)$ one can deduce that $\Lambda$ is non-degenerate; see \cite{HK07} theorem 3.4,\footnote{The relation between the bilinear form in \cite{HK07} theorem 3.4 and the sesquilinear form $\Lambda$ is explained in \cite{Gui19b} section 8.3.} or step 3 of the proof of \cite{Gui19b} theorem 6.7. However, to make $\Lambda$ an inner product one has to prove that $\Lambda$ is positive. In \cite{Gui19b,Gui19c} we have proved the positivity of $\Lambda$ for many examples of VOAs. As mentioned in the introduction, one of our main goal of this paper is to prove that all unitary extensions of these examples have positive $\Lambda$. We first introduce the following definition.

\begin{df}
Let $V$ be a regular and CFT-type VOA. We say that $V$ is \textbf{completely unitary}, if $V$ is strongly unitary (i.e., $V$ is unitary and any $V$-module is unitarizable), and if for any unitary $V$-modules $W_i,W_j$ and any $t\in\mc E$, the invariant  sesquilinear form $\Lambda$ defined on $\mc V{t\choose i~j}^*$ is positive. In this case we call $\Lambda$ the \textbf{invariant inner product} of $V$.
\end{df}

As we have said, since $\Lambda$ is non-degenerate, when $V$ is completely unitary $\Lambda$ becomes an inner product on $\mc V{t\choose i~j}^*$ (and hence also on $\mc V{t\choose i~j}$), which can be extended to an inner product on $W_i\boxtimes W_j$, also denoted by $\Lambda$. Then $W_i\boxtimes W_j$ becomes a unitary (but not just unitarizable) $V$-module. In other words it is an object not just in $\Rep(V)$ but also in $\Rep^\uni(V)$. Moreover, as shown in \cite{Gui19b}, $\Lambda$ is the right inner product which makes all structural isomorphisms unitary. More precisely:

\begin{thm}[cf. \cite{Gui19b} theorem 7.9]\label{lb51}
If $V$ is regular, CFT-type, and completely unitary, then $\Rep^\uni(V)$ is a unitary modular tensor category.
\end{thm}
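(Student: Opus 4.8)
The plan is to take the modular tensor category $\Rep(V)$ already constructed by Huang as given, and to promote the $C^*$-category $\Rep^\uni(V)$ to a \emph{unitary} modular tensor category by equipping each fusion product $W_i\boxtimes W_j$ with the inner product supplied by complete unitarity, and then checking that every structural isomorphism becomes unitary for this choice. Modularity — non-degeneracy of the braiding — is a property of the underlying braided category $\Rep(V)$ that does not reference any Hermitian structure, so it is inherited automatically; the entire content is therefore to verify that $\Rep^\uni(V)$ is a braided ribbon $C^*$-tensor category with compatible (standard) duals.

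First I would record that $\Rep^\uni(V)$ is a $C^*$-category: by strong unitarity every module is unitarizable, each $\Hom$ space carries the operator inner product, and the $*$-operation is the adjoint of a morphism with respect to the module inner products. Complete unitarity is exactly the hypothesis that $\Lambda$ is positive, so $\Lambda$ is a genuine inner product on each $\mc V{t\choose i~j}^*$; making the decomposition $W_i\boxtimes W_j=\bigoplus_{t\in\mc E}\mc V{t\choose i~j}^*\otimes W_t$ orthogonal then turns $W_i\boxtimes W_j$ into a unitary $V$-module. The crucial bridge between this choice and the $*$-structure is Proposition \ref{lb3}: for $\mc Y_\alpha,\mc Y_\beta\in\mc V{t\choose i~j}$ one has $\alpha\beta^*=\Lambda(\mc Y_\alpha|\mc Y_\beta)\id_t$, which says precisely that the categorical inner product on $\Hom(W_t,W_i\boxtimes W_j)$ coincides with $\Lambda$. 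This identity is what lets every later computation be performed interchangeably with VOA adjoints $\alpha^\dagger$ and categorical adjoints $\alpha^*$.

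Next I would check the structural morphisms one at a time. Compatibility of $*$ with the tensor product of morphisms, $(S\boxtimes T)^*=S^*\boxtimes T^*$, should follow from the defining relation \eqref{eq31} together with Proposition \ref{lb3}. Unitarity of the unit isomorphisms is essentially immediate, since the left and right unitors correspond to $Y_i$ and $\kappa(i)$, whose adjoint behaviour is controlled by the unitarity of $V$ and its modules and by the relation $\ev_{\ovl i,i}=\kappa(i)^\dagger$ of \eqref{eq9}. For the braiding I would use $B_\pm\alpha=\alpha\circ\ss^{\pm1}$ and the (nearly diagonal) behaviour of $\Lambda$ under braiding to see that $\ss$ preserves $\Lambda$. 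The twist $\vartheta_i=e^{2\im\pi L_0}$ is unitary because $L_0$ is self-adjoint on a unitary module. Finally, for rigidity I would verify that $\ev_{i,\ovl i}$ and $\coev_{i,\ovl i}$ are related to $\ev_{\ovl i,i},\coev_{\ovl i,i}$ by adjunction in the manner required for a \emph{standard} solution of the conjugate equations \eqref{eq17}–\eqref{eq18}, so that the associated quantum dimensions are positive and the resulting pivotal structure is spherical and unitary.

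The main obstacle I expect is the unitarity of the associativity isomorphism. Unlike the braiding and the twist, which act almost summand-by-summand on the spaces $\mc V{t\choose i~j}^*$, the associator relates the two bracketings $(W_i\boxtimes W_j)\boxtimes W_k$ and $W_i\boxtimes(W_j\boxtimes W_k)$ through the full fusion relation \eqref{eq16}, and showing that it intertwines the two induced $\Lambda$-inner products amounts to a pentagon-type consistency for the sesquilinear form $\Lambda$ itself. This is where the precise normalization in Definition \ref{lb49} (formula \eqref{eq11}) — rather than any other choice — becomes indispensable, and where the Hermitian and non-degeneracy properties of $\Lambda$ recalled after that definition feed directly into the argument. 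I would therefore treat the associator as the heart of the proof and organize the remaining verifications around the identity of Proposition \ref{lb3}.
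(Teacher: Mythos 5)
Your overall plan---endow each $\mc V{t\choose i~j}^*$ with the form $\Lambda$ (positive by complete unitarity), extend orthogonally to an inner product on $W_i\boxtimes W_j$, verify that all structural isomorphisms become unitary, and observe that modularity is inherited from Huang's $\Rep(V)$---is indeed the strategy behind this theorem. But be aware that the present paper contains no proof of it at all: theorem \ref{lb51} is imported from \cite{Gui19b} theorem 7.9, and the surrounding text records only the slogan that ``$\Lambda$ is the right inner product which makes all structural isomorphisms unitary.'' So at the level of outline you agree with the cited approach, while your sketch stops short exactly where that reference does its real work.

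The genuine gaps are these. First, the unitarity of the associator is never argued: calling it ``a pentagon-type consistency for $\Lambda$'' names the problem rather than solving it, and in \cite{Gui19b} this verification consumes the bulk of the technical machinery (fusion and transport relations among intertwining operators and their adjoints, built on definition \ref{lb49}). The identity in the present paper closest to what you would need is theorem \ref{lb6}, $\alpha^\dagger=(\ev_{\ovl i,i}\otimes\id_j)(\id_{\ovl i}\otimes\alpha^*)$, but in this paper that result is established \emph{downstream} of theorem \ref{lb51}, inside the already-constructed unitary tensor category, so you cannot appeal to it without circularity; proposition \ref{lb3}, which you call the crucial bridge, is by contrast essentially tautological for \emph{any} orthogonal extension of \emph{any} inner product on the multiplicity spaces, so it cannot single out the $\Lambda$ of definition \ref{lb49} as the correct choice. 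Second, with the paper's definition of a unitary MTC it is not enough that $\vartheta=e^{2\im\pi L_0}$ is unitary (self-adjointness of $L_0$): you must show this ribbon twist \emph{equals} the canonical twist of the rigid braided $C^*$-structure, i.e.\ verify $\ev_{i,\ovl i}=\ev_{\ovl i,i}\ss_{i,\ovl i}(\vartheta_i\otimes\id_{\ovl i})$ as in \eqref{eq59}; the paper explicitly points to \cite{Gui19b} equation (7.30) and \cite{Gui19a} formula (1.41) for precisely this, and your sketch omits it. Third, standardness of the chosen $\ev$ and $\coev$ is likewise not a routine adjunction check: in this paper it is proposition \ref{lb60}, resting on $\Tr_L(\id_i)=d_i=d_{\ovl i}=\Tr_R(\id_i)$ from \cite{Gui19b} proposition 7.7. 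The same under-justification affects your claims that $(F\otimes G)^*=F^*\otimes G^*$ and that $\ss$ preserves $\Lambda$: both are substantive properties of the specific normalization in \eqref{eq11}, not consequences of \eqref{eq31} and proposition \ref{lb3} alone.
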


In the remaining part of this paper, we assume, unless otherwise stated, that $V$ is regular, CFT-type, and completely unitary. The following relation is worth noting; see \cite{Gui19b} section 7.3.

\begin{pp}\label{lb5}
If $W_i$ is unitary then $\coev_{i,\ovl i}=\ev_{i,\ovl i}^*$.\index{coev@$\coev_{i,\ovl i}$}
\end{pp}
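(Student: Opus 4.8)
The plan is to reduce the identity to a single scalar normalization and then to compute that scalar from the explicit constructions of $\ev$, $\coev$ and $\Lambda$. First I would use additivity to assume $W_i$ is irreducible. The morphisms $\ev$, $\coev$ and the $*$-operation are all compatible with orthogonal direct sums: if $W_i=W_a\oplus W_b$ then $\ev_{i,\ovl i}$ and $\coev_{i,\ovl i}$ are block-diagonal, since the off-diagonal pieces factor through $\Hom(W_a\boxtimes W_{\ovl b},W_0)\cong\Hom(W_a,W_b)=0$ for $a\neq b$. Hence the general case follows from the irreducible one. For irreducible $W_i$ the space $\Hom(W_0,W_i\boxtimes W_{\ovl i})$ is one-dimensional: via $*$ it is anti-linearly isomorphic to $\Hom(W_i\boxtimes W_{\ovl i},W_0)$, which by the canonical identification \eqref{eq61} is $\mc V{0\choose i~\ovl i}\cong\Hom(W_{\ovl i},W_{\ovl i})=\mathbb C\,\id_{\ovl i}$. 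Therefore there is a unique scalar $c\in\mathbb C^\times$ with $\coev_{i,\ovl i}=c\,\ev_{i,\ovl i}^*$, and the entire content of the proposition is that $c=1$.

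Next I would extract a usable equation for $c$ by computing the ``bubble'' $\ev_{i,\ovl i}\circ\coev_{i,\ovl i}\in\End(W_0)$ in two ways. On one hand, by rigidity \eqref{eq17}--\eqref{eq18} this scalar is a quantum-dimension-type quantity determined by the ribbon structure of $\RepV$. On the other hand, substituting $\coev_{i,\ovl i}=c\,\ev_{i,\ovl i}^*$ and applying \eqref{eq10} of Proposition \ref{lb3} to the pair $(\mc Y_{\ev_{i,\ovl i}},\mc Y_{\ev_{i,\ovl i}})$ turns it into $c\,\Lambda(\mc Y_{\ev_{i,\ovl i}}|\mc Y_{\ev_{i,\ovl i}})\,\id_0$. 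Comparing the two expressions reduces the proposition to the single normalization identity asserting that this categorical dimension equals the number $\Lambda(\mc Y_{\ev_{i,\ovl i}}|\mc Y_{\ev_{i,\ovl i}})$, which is positive by complete unitarity.

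The hard part is this normalization identity, and my plan is to prove it by translating $\ev_{i,\ovl i}^*$ back into the language of intertwining operators. The categorical adjoint $*$ is governed by $\Lambda$ (Proposition \ref{lb3}), while $\Lambda$ is itself \emph{defined} through the VOA adjoint intertwining operator $\dagger$ (Definition \ref{lb49}); combining these with the identifications $\mc Y_{\ev_{i,\ovl i}}=C_-\mc Y_{\kappa(i)}$ from \eqref{eq73} and $\ev_{\ovl i,i}=\kappa(i)^\dagger$ from \eqref{eq9}, one computes the adjoint intertwining operator of $\mc Y_{\ev_{i,\ovl i}}$ explicitly. Feeding this into the fusion relation \eqref{eq1} and its rotated form \eqref{eq2} (Proposition \ref{lb4}), I would identify the intertwining operator representing $\ev_{i,\ovl i}^*$ with the one representing $\coev_{i,\ovl i}$, which forces $c=1$.

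I expect the main obstacle to be exactly this final matching: keeping careful track of the several analytic normalizations (the $e^{zL_{\pm1}}$ and $(\cdots)^{L_0}$ operators and the phase factors hidden inside $B_\pm$, $C_-$ and $\dagger$) so that the VOA adjoint used to define $\Lambda$ lines up with the categorical adjoint of Huang's canonical duality morphisms with \emph{no} residual scalar. This is the delicate computation carried out in \cite{Gui19b} section 7.3. A more structural alternative would be to first establish the general formula expressing $\alpha^\dagger$ in terms of $\alpha^*$ together with the standard $\ev/\coev$ (the main result of this chapter), after which the proposition follows immediately by specializing to $\alpha=\kappa(i)$ and using that the unit isomorphism $\kappa(i)$ is unitary.
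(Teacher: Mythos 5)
The paper does not actually prove Proposition \ref{lb5}: it states the relation and points to \cite{Gui19b} section 7.3, so any complete argument necessarily goes beyond what is on the page, and your proposal must be judged on its own. Your main route (reduce to $W_i$ irreducible, write $\coev_{i,\ovl i}=c\,\ev_{i,\ovl i}^*$ using $\dim\Hom(W_0,W_i\boxtimes W_{\ovl i})=1$, then compute the bubble $\ev_{i,\ovl i}\coev_{i,\ovl i}$ once by rigidity and once via Proposition \ref{lb3}) correctly reduces everything to the normalization $\Lambda(\mc Y_{\ev_{i,\ovl i}}|\mc Y_{\ev_{i,\ovl i}})=d_i$, but you then defer exactly this crux to the phase-bookkeeping computation of \cite{Gui19b} section 7.3 --- the very reference the paper cites --- so as a self-contained proof this route is incomplete at the only hard step. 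Also, your justification for the direct-sum reduction is wrong as stated: $\Hom(W_a,W_b)$ need not vanish when a semisimple decomposition contains isomorphic summands with multiplicity. Block-diagonality of $\ev_{i,\ovl i}$ does hold, but because $W_{\ovl i}$ is the contragredient module and the pairing matches each summand with its own contragredient, which is precisely the content of \eqref{eq19} in the proof of Proposition \ref{lb60}.

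Your ``structural alternative'' is the better route and, importantly, is non-circular within this paper: the proof of Theorem \ref{lb6} uses only Proposition \ref{lb3} and the fusion relation \eqref{eq13} defining $\Lambda$, not Proposition \ref{lb5}, so deriving \ref{lb5} from \ref{lb6} is legitimate. But the specialization you name is the wrong one: taking $\alpha=\kappa(i)$ literally is vacuous, since $\kappa(i)^\dagger=\ev_{\ovl i,i}$ and $\kappa(i)^*=\kappa(i)^{-1}$ make the resulting identity $\ev_{\ovl i,i}=(\ev_{\ovl i,i}\otimes\id_0)(\id_{\ovl i}\otimes\kappa(i)^*)$ carry no information. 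You should instead take $\alpha=\ev_{\ovl i,i}$ and $\alpha=\ev_{i,\ovl i}$, whose adjoint intertwining operators are $\kappa(i)$ and $\kappa(\ovl i)$ by \eqref{eq9} and the involutivity of $\dagger$. Theorem \ref{lb6} then yields, up to the suppressed unit isomorphisms, $(\ev_{i,\ovl i}\otimes\id_i)(\id_i\otimes\ev_{\ovl i,i}^*)=\id_i$ and $(\ev_{\ovl i,i}\otimes\id_{\ovl i})(\id_{\ovl i}\otimes\ev_{i,\ovl i}^*)=\id_{\ovl i}$; together with their adjoints these give all four relations \eqref{eq17}--\eqref{eq18} with $\ev^*$ in place of $\coev$, and since the coevaluation satisfying these zig-zags for a given evaluation is unique, one gets $\coev_{i,\ovl i}=\ev_{i,\ovl i}^*$ directly --- with no irreducibility assumption, making your step 1 (and its multiplicity issue) unnecessary.
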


We are now ready to state the main results of this section.
\begin{thm}\label{lb6}
For any unitary $V$-modules $W_i,W_j,W_k$ and any $\mc Y_\alpha\in\mc V{k\choose i~j}$, we have $\alpha^\dagger=(\ev_{\ovl i,i}\otimes\id_j)(\id_{\ovl i}\otimes\alpha^*)$. In other words,
\begin{align}
\vcenter{\hbox{{\def\svgscale{0.6}
\begingroup%
  \makeatletter%
  \providecommand\color[2][]{%
    \errmessage{(Inkscape) Color is used for the text in Inkscape, but the package 'color.sty' is not loaded}%
    \renewcommand\color[2][]{}%
  }%
  \providecommand\transparent[1]{%
    \errmessage{(Inkscape) Transparency is used (non-zero) for the text in Inkscape, but the package 'transparent.sty' is not loaded}%
    \renewcommand\transparent[1]{}%
  }%
  \providecommand\rotatebox[2]{#2}%
  \newcommand*\fsize{\dimexpr\f@size pt\relax}%
  \newcommand*\lineheight[1]{\fontsize{\fsize}{#1\fsize}\selectfont}%
  \ifx\svgwidth\undefined%
    \setlength{\unitlength}{62.04422582bp}%
    \ifx\svgscale\undefined%
      \relax%
    \else%
      \setlength{\unitlength}{\unitlength * \real{\svgscale}}%
    \fi%
  \else%
    \setlength{\unitlength}{\svgwidth}%
  \fi%
  \global\let\svgwidth\undefined%
  \global\let\svgscale\undefined%
  \makeatother%
  \begin{picture}(1,1.62580073)%
    \lineheight{1}%
    \setlength\tabcolsep{0pt}%
    \put(0,0){\includegraphics[width=\unitlength,page=1]{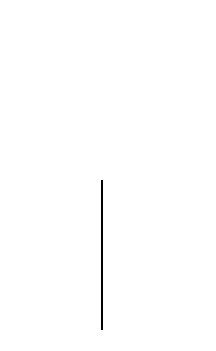}}%
    \put(0.55596038,0.01341029){\color[rgb]{0,0,0}\makebox(0,0)[lt]{\lineheight{1.25}\smash{\begin{tabular}[t]{l}$j$\end{tabular}}}}%
    \put(-0.00535153,1.57681853){\color[rgb]{0,0,0}\makebox(0,0)[lt]{\lineheight{1.25}\smash{\begin{tabular}[t]{l}$i$\end{tabular}}}}%
    \put(0.88588659,1.57647275){\color[rgb]{0,0,0}\makebox(0,0)[lt]{\lineheight{1.25}\smash{\begin{tabular}[t]{l}$k$\end{tabular}}}}%
    \put(0,0){\includegraphics[width=\unitlength,page=2]{adjoint.pdf}}%
    \put(0.58413489,0.6962492){\color[rgb]{0,0,0}\makebox(0,0)[lt]{\lineheight{1.25}\smash{\begin{tabular}[t]{l}$\alpha^\dagger$\end{tabular}}}}%
  \end{picture}%
\endgroup%
}}}~~~=~~\vcenter{\hbox{{\def\svgscale{0.6}
\begingroup%
  \makeatletter%
  \providecommand\color[2][]{%
    \errmessage{(Inkscape) Color is used for the text in Inkscape, but the package 'color.sty' is not loaded}%
    \renewcommand\color[2][]{}%
  }%
  \providecommand\transparent[1]{%
    \errmessage{(Inkscape) Transparency is used (non-zero) for the text in Inkscape, but the package 'transparent.sty' is not loaded}%
    \renewcommand\transparent[1]{}%
  }%
  \providecommand\rotatebox[2]{#2}%
  \newcommand*\fsize{\dimexpr\f@size pt\relax}%
  \newcommand*\lineheight[1]{\fontsize{\fsize}{#1\fsize}\selectfont}%
  \ifx\svgwidth\undefined%
    \setlength{\unitlength}{72.99625681bp}%
    \ifx\svgscale\undefined%
      \relax%
    \else%
      \setlength{\unitlength}{\unitlength * \real{\svgscale}}%
    \fi%
  \else%
    \setlength{\unitlength}{\svgwidth}%
  \fi%
  \global\let\svgwidth\undefined%
  \global\let\svgscale\undefined%
  \makeatother%
  \begin{picture}(1,1.30226525)%
    \lineheight{1}%
    \setlength\tabcolsep{0pt}%
    \put(0,0){\includegraphics[width=\unitlength,page=1]{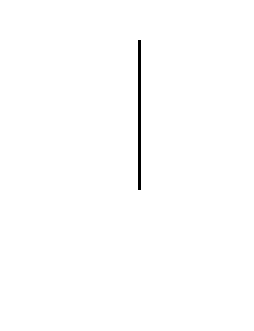}}%
    \put(0.91959669,0.01250773){\color[rgb]{0,0,0}\makebox(0,0)[lt]{\lineheight{1.25}\smash{\begin{tabular}[t]{l}$j$\end{tabular}}}}%
    \put(-0.00454861,1.26063212){\color[rgb]{0,0,0}\makebox(0,0)[lt]{\lineheight{1.25}\smash{\begin{tabular}[t]{l}$i$\end{tabular}}}}%
    \put(0.52323845,1.26033763){\color[rgb]{0,0,0}\makebox(0,0)[lt]{\lineheight{1.25}\smash{\begin{tabular}[t]{l}$k$\end{tabular}}}}%
    \put(0,0){\includegraphics[width=\unitlength,page=2]{adjoint-2.pdf}}%
    \put(0.63860896,0.58343767){\color[rgb]{0,0,0}\makebox(0,0)[lt]{\lineheight{1.25}\smash{\begin{tabular}[t]{l}$\alpha^*$\end{tabular}}}}%
  \end{picture}%
\endgroup%
}}}~~.\label{eq12}
\end{align}
\end{thm}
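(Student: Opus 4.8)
The plan is to obtain \eqref{eq12} by composing the defining relation of the invariant form $\Lambda$ with $\alpha^*$ and collapsing the resulting sum via Proposition \ref{lb3}. Observe first that both sides of \eqref{eq12} are antilinear in $\alpha$ (the adjoint $\dagger$ is antilinear because $\alpha^\dagger=C_-\ovl\alpha$, and the categorical adjoint $*$ is antilinear) and additive under orthogonal decompositions of $W_k$. Using that $\Rep^\uni(V)$ is semisimple I would therefore reduce to the case $W_k=W_s$ with $s\in\mc E$. For every $t\in\mc E$ I would moreover choose the basis $\Xi^t_{i,j}$ of $\mc V{t\choose i~j}$ to be orthonormal for $\Lambda$; since $\mc Y_\mu\mapsto\Lambda(\cdot|\mc Y_\mu)$ is anti-unitary onto $\mc V{t\choose i~j}^*$, the dual basis $\{\widecheck{\mc Y}^\mu\}$ is then orthonormal as well, so that $\Lambda(\widecheck{\mc Y}^\mu|\widecheck{\mc Y}^\nu)=\delta_{\mu\nu}$.

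With this choice the graphical definition \eqref{eq13} of $\Lambda$, read as an identity of morphisms $W_{\ovl i}\boxtimes W_i\boxtimes W_j\to W_j$, keeps only its diagonal terms and becomes
\begin{align*}
\ev_{\ovl i,i}\otimes\id_j=\sum_{t\in\mc E}\sum_{\mu\in\Xi^t_{i,j}}\mu^\dagger(\id_{\ovl i}\otimes\mu).
\end{align*}
I would then compose on the right with $\id_{\ovl i}\otimes\alpha^*$ to get
\begin{align*}
(\ev_{\ovl i,i}\otimes\id_j)(\id_{\ovl i}\otimes\alpha^*)=\sum_{t\in\mc E}\sum_{\mu\in\Xi^t_{i,j}}\mu^\dagger\big(\id_{\ovl i}\otimes(\mu\alpha^*)\big).
\end{align*}

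Here $\mu\alpha^*\in\Hom(W_s,W_t)$, and this is where Proposition \ref{lb3} is used: by Schur's lemma it vanishes unless $t=s$, and for $t=s$ it equals $\Lambda(\mc Y_\mu|\mc Y_\alpha)\id_s$. Thus the right-hand side reduces to $\sum_{\mu\in\Xi^s_{i,j}}\Lambda(\mc Y_\mu|\mc Y_\alpha)\,\mu^\dagger$. Expanding $\mc Y_\alpha=\sum_\nu d_\nu\mc Y_\nu$ in the orthonormal basis gives $\Lambda(\mc Y_\mu|\mc Y_\alpha)=\ovl{d_\mu}$, so this sum is $\sum_\mu\ovl{d_\mu}\,\mu^\dagger$, which is precisely $\alpha^\dagger$ by antilinearity of $\dagger$. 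This establishes \eqref{eq12}.

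The computation is short, so the care is in the bookkeeping. One must read \eqref{eq13} with the correct assignment of legs --- the leftmost $\ovl i$-strand feeding both $\ev_{\ovl i,i}$ and $\mu^\dagger$ --- since transposing the roles of $\mu$ and $\nu$ would conjugate the coefficient. The reduction to irreducible $W_k$ tacitly uses the naturality $(\iota\alpha)^\dagger=\alpha^\dagger(\id_{\ovl i}\otimes\iota^*)$ of the adjoint under a module map $\iota\colon W_s\to W_k$, which matches the transformation $(\iota\alpha)^*=\alpha^*\iota^*$ of the categorical adjoint; I would either invoke this or, equivalently, expand $\alpha$ over an orthonormal frame of $\Hom(W_i\boxtimes W_j,W_k)$ refining the isotypic decomposition. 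The one substantive input beyond formal manipulation is the anti-unitarity of $\mc Y_\mu\mapsto\Lambda(\cdot|\mc Y_\mu)$, which makes the basis and its dual simultaneously orthonormal and is exactly what forces the double sum in \eqref{eq13} onto its diagonal.
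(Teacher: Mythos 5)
Your proof is correct and takes essentially the same route as the paper's: both arguments rest on proposition \ref{lb3} together with the categorical reading \eqref{eq13} of the fusion relation defining $\Lambda$, collapsed by choosing a $\Lambda$-orthonormal basis $\Xi^t_{i,j}$. The only difference is organizational --- you compose \eqref{eq13} directly with $\id_{\ovl i}\otimes\alpha^*$ and collapse the sum by Schur's lemma, whereas the paper introduces the auxiliary $\wtd\alpha$ with $\wtd\alpha^\dagger=(\ev_{\ovl i,i}\otimes\id_j)(\id_{\ovl i}\otimes\alpha^*)$ and compares the two expansions of $\ev_{\ovl i,i}\otimes\id_j$ --- and this is immaterial.
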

\begin{proof}
It suffices to assume $W_k$ to be irreducible. So let us prove \eqref{eq12} for all $k=t\in\mc E$ and any basis vector $\alpha\in\Xi^t_{i,j}$. Here we assume $\Xi^t_{i,j}$ to be orthonormal under $\Lambda$. Choose $\wtd\alpha\in \Hom(W_i\boxtimes W_j,W_t)$ such that $\wtd\alpha^\dagger$ equals $(\ev_{\ovl i,i}\otimes\id_j)(\id_{\ovl i}\otimes\alpha^*)$. We want to show $\wtd\alpha=\alpha$. 

By proposition \ref{lb3} we have $\alpha\beta^*=\delta_{\alpha,\beta}\id_t$ for any $\alpha,\beta\in\Xi^t_{i,j}$. This implies
\begin{align}
\vcenter{\hbox{{\def\svgscale{0.6}
\begingroup%
  \makeatletter%
  \providecommand\color[2][]{%
    \errmessage{(Inkscape) Color is used for the text in Inkscape, but the package 'color.sty' is not loaded}%
    \renewcommand\color[2][]{}%
  }%
  \providecommand\transparent[1]{%
    \errmessage{(Inkscape) Transparency is used (non-zero) for the text in Inkscape, but the package 'transparent.sty' is not loaded}%
    \renewcommand\transparent[1]{}%
  }%
  \providecommand\rotatebox[2]{#2}%
  \newcommand*\fsize{\dimexpr\f@size pt\relax}%
  \newcommand*\lineheight[1]{\fontsize{\fsize}{#1\fsize}\selectfont}%
  \ifx\svgwidth\undefined%
    \setlength{\unitlength}{58.37056847bp}%
    \ifx\svgscale\undefined%
      \relax%
    \else%
      \setlength{\unitlength}{\unitlength * \real{\svgscale}}%
    \fi%
  \else%
    \setlength{\unitlength}{\svgwidth}%
  \fi%
  \global\let\svgwidth\undefined%
  \global\let\svgscale\undefined%
  \makeatother%
  \begin{picture}(1,1.63807235)%
    \lineheight{1}%
    \setlength\tabcolsep{0pt}%
    \put(-0.00568833,1.58600737){\color[rgb]{0,0,0}\makebox(0,0)[lt]{\lineheight{1.25}\smash{\begin{tabular}[t]{l}$i$\end{tabular}}}}%
    \put(0.8821112,1.58563982){\color[rgb]{0,0,0}\makebox(0,0)[lt]{\lineheight{1.25}\smash{\begin{tabular}[t]{l}$j$\end{tabular}}}}%
    \put(0,0){\includegraphics[width=\unitlength,page=1]{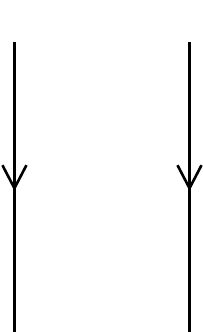}}%
  \end{picture}%
\endgroup%
}}}~~~=~~\sum_{t\in\mc E}\sum_{\alpha\in\Xi^t_{i,j}}\vcenter{\hbox{{\def\svgscale{0.6}
\begingroup%
  \makeatletter%
  \providecommand\color[2][]{%
    \errmessage{(Inkscape) Color is used for the text in Inkscape, but the package 'color.sty' is not loaded}%
    \renewcommand\color[2][]{}%
  }%
  \providecommand\transparent[1]{%
    \errmessage{(Inkscape) Transparency is used (non-zero) for the text in Inkscape, but the package 'transparent.sty' is not loaded}%
    \renewcommand\transparent[1]{}%
  }%
  \providecommand\rotatebox[2]{#2}%
  \newcommand*\fsize{\dimexpr\f@size pt\relax}%
  \newcommand*\lineheight[1]{\fontsize{\fsize}{#1\fsize}\selectfont}%
  \ifx\svgwidth\undefined%
    \setlength{\unitlength}{60.11783644bp}%
    \ifx\svgscale\undefined%
      \relax%
    \else%
      \setlength{\unitlength}{\unitlength * \real{\svgscale}}%
    \fi%
  \else%
    \setlength{\unitlength}{\svgwidth}%
  \fi%
  \global\let\svgwidth\undefined%
  \global\let\svgscale\undefined%
  \makeatother%
  \begin{picture}(1,1.87695894)%
    \lineheight{1}%
    \setlength\tabcolsep{0pt}%
    \put(-0.00552301,1.82640718){\color[rgb]{0,0,0}\makebox(0,0)[lt]{\lineheight{1.25}\smash{\begin{tabular}[t]{l}$i$\end{tabular}}}}%
    \put(0.87406499,1.82102407){\color[rgb]{0,0,0}\makebox(0,0)[lt]{\lineheight{1.25}\smash{\begin{tabular}[t]{l}$j$\end{tabular}}}}%
    \put(0,0){\includegraphics[width=\unitlength,page=1]{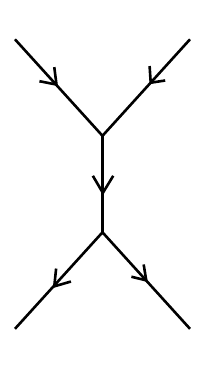}}%
    \put(0.90237272,0.0193487){\color[rgb]{0,0,0}\makebox(0,0)[lt]{\lineheight{1.25}\smash{\begin{tabular}[t]{l}$j$\end{tabular}}}}%
    \put(0.02532123,0.00977897){\color[rgb]{0,0,0}\makebox(0,0)[lt]{\lineheight{1.25}\smash{\begin{tabular}[t]{l}$i$\end{tabular}}}}%
    \put(0.22516575,0.93737248){\color[rgb]{0,0,0}\makebox(0,0)[lt]{\lineheight{1.25}\smash{\begin{tabular}[t]{l}$t$\end{tabular}}}}%
    \put(0.59960879,1.15600644){\color[rgb]{0,0,0}\makebox(0,0)[lt]{\lineheight{1.25}\smash{\begin{tabular}[t]{l}$\alpha$\end{tabular}}}}%
    \put(0.60212191,0.73632967){\color[rgb]{0,0,0}\makebox(0,0)[lt]{\lineheight{1.25}\smash{\begin{tabular}[t]{l}$\alpha^*$\end{tabular}}}}%
  \end{picture}%
\endgroup%
}}}~~.
\end{align}
Tensor $\id_{\ovl i}$ from the left and apply $\ev_{\ovl i,i}\otimes\id_j$ to the bottom, we obtain
\begin{align}
\vcenter{\hbox{{\def\svgscale{0.6}
\begingroup%
  \makeatletter%
  \providecommand\color[2][]{%
    \errmessage{(Inkscape) Color is used for the text in Inkscape, but the package 'color.sty' is not loaded}%
    \renewcommand\color[2][]{}%
  }%
  \providecommand\transparent[1]{%
    \errmessage{(Inkscape) Transparency is used (non-zero) for the text in Inkscape, but the package 'transparent.sty' is not loaded}%
    \renewcommand\transparent[1]{}%
  }%
  \providecommand\rotatebox[2]{#2}%
  \newcommand*\fsize{\dimexpr\f@size pt\relax}%
  \newcommand*\lineheight[1]{\fontsize{\fsize}{#1\fsize}\selectfont}%
  \ifx\svgwidth\undefined%
    \setlength{\unitlength}{59.32668501bp}%
    \ifx\svgscale\undefined%
      \relax%
    \else%
      \setlength{\unitlength}{\unitlength * \real{\svgscale}}%
    \fi%
  \else%
    \setlength{\unitlength}{\svgwidth}%
  \fi%
  \global\let\svgwidth\undefined%
  \global\let\svgscale\undefined%
  \makeatother%
  \begin{picture}(1,1.60443588)%
    \lineheight{1}%
    \setlength\tabcolsep{0pt}%
    \put(0,0){\includegraphics[width=\unitlength,page=1]{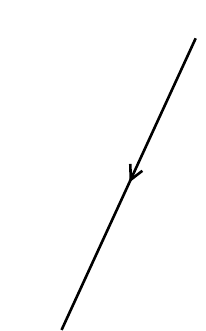}}%
    \put(-0.00162406,1.54076258){\color[rgb]{0,0,0}\makebox(0,0)[lt]{\lineheight{1.25}\smash{\begin{tabular}[t]{l}$i$\end{tabular}}}}%
    \put(0.41722511,1.5481167){\color[rgb]{0,0,0}\makebox(0,0)[lt]{\lineheight{1.25}\smash{\begin{tabular}[t]{l}$i$\end{tabular}}}}%
    \put(0.90107081,1.55320998){\color[rgb]{0,0,0}\makebox(0,0)[lt]{\lineheight{1.25}\smash{\begin{tabular}[t]{l}$j$\end{tabular}}}}%
    \put(0,0){\includegraphics[width=\unitlength,page=2]{adjoint-5.pdf}}%
  \end{picture}%
\endgroup%
}}}~~~=~~\sum_{t\in\mc E}\sum_{\alpha\in\Xi^t_{i,j}}\vcenter{\hbox{{\def\svgscale{0.6}
\begingroup%
  \makeatletter%
  \providecommand\color[2][]{%
    \errmessage{(Inkscape) Color is used for the text in Inkscape, but the package 'color.sty' is not loaded}%
    \renewcommand\color[2][]{}%
  }%
  \providecommand\transparent[1]{%
    \errmessage{(Inkscape) Transparency is used (non-zero) for the text in Inkscape, but the package 'transparent.sty' is not loaded}%
    \renewcommand\transparent[1]{}%
  }%
  \providecommand\rotatebox[2]{#2}%
  \newcommand*\fsize{\dimexpr\f@size pt\relax}%
  \newcommand*\lineheight[1]{\fontsize{\fsize}{#1\fsize}\selectfont}%
  \ifx\svgwidth\undefined%
    \setlength{\unitlength}{64.2530111bp}%
    \ifx\svgscale\undefined%
      \relax%
    \else%
      \setlength{\unitlength}{\unitlength * \real{\svgscale}}%
    \fi%
  \else%
    \setlength{\unitlength}{\svgwidth}%
  \fi%
  \global\let\svgwidth\undefined%
  \global\let\svgscale\undefined%
  \makeatother%
  \begin{picture}(1,1.85761453)%
    \lineheight{1}%
    \setlength\tabcolsep{0pt}%
    \put(0,0){\includegraphics[width=\unitlength,page=1]{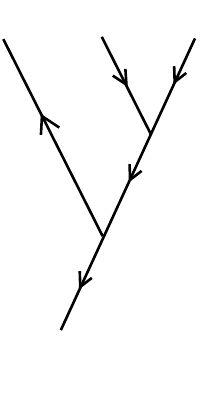}}%
    \put(-0.0047116,1.79821516){\color[rgb]{0,0,0}\makebox(0,0)[lt]{\lineheight{1.25}\smash{\begin{tabular}[t]{l}$i$\end{tabular}}}}%
    \put(0.3820241,1.80500544){\color[rgb]{0,0,0}\makebox(0,0)[lt]{\lineheight{1.25}\smash{\begin{tabular}[t]{l}$i$\end{tabular}}}}%
    \put(0.82877299,1.80970821){\color[rgb]{0,0,0}\makebox(0,0)[lt]{\lineheight{1.25}\smash{\begin{tabular}[t]{l}$j$\end{tabular}}}}%
    \put(0.45021228,1.09961316){\color[rgb]{0,0,0}\makebox(0,0)[lt]{\lineheight{1.25}\smash{\begin{tabular}[t]{l}$t$\end{tabular}}}}%
    \put(0.20332515,0.0128581){\color[rgb]{0,0,0}\makebox(0,0)[lt]{\lineheight{1.25}\smash{\begin{tabular}[t]{l}$j$\end{tabular}}}}%
    \put(0.72654491,1.17511938){\color[rgb]{0,0,0}\makebox(0,0)[lt]{\lineheight{1.25}\smash{\begin{tabular}[t]{l}$\alpha$\end{tabular}}}}%
    \put(0.54623509,0.63765951){\color[rgb]{0,0,0}\makebox(0,0)[lt]{\lineheight{1.25}\smash{\begin{tabular}[t]{l}$\widetilde\alpha^\dagger$\end{tabular}}}}%
  \end{picture}%
\endgroup%
}}}~~,
\end{align}
which, together with equation \eqref{eq13}, implies
\begin{align*}
\wtd\alpha^\dagger=\sum_{\beta\in\Xi^t_{i,j}}\Lambda(\widecheck{\mc Y}^\alpha|\widecheck{\mc Y}^\beta)\beta^\dagger=\sum_{\beta\in\Xi^t_{i,j}}\delta_{\alpha,\beta}\beta^\dagger=\alpha^\dagger.
\end{align*}
Thus $\wtd\alpha=\alpha$.
\end{proof}

%
\begin{co}\label{lb13}
For any unitary $V$-modules $W_i,W_j,W_k$ and any $\mc Y_\alpha\in\mc V{k\choose i~j}$,
\begin{align}
\vcenter{\hbox{{\def\svgscale{0.6}
\begingroup%
  \makeatletter%
  \providecommand\color[2][]{%
    \errmessage{(Inkscape) Color is used for the text in Inkscape, but the package 'color.sty' is not loaded}%
    \renewcommand\color[2][]{}%
  }%
  \providecommand\transparent[1]{%
    \errmessage{(Inkscape) Transparency is used (non-zero) for the text in Inkscape, but the package 'transparent.sty' is not loaded}%
    \renewcommand\transparent[1]{}%
  }%
  \providecommand\rotatebox[2]{#2}%
  \newcommand*\fsize{\dimexpr\f@size pt\relax}%
  \newcommand*\lineheight[1]{\fontsize{\fsize}{#1\fsize}\selectfont}%
  \ifx\svgwidth\undefined%
    \setlength{\unitlength}{60.0764647bp}%
    \ifx\svgscale\undefined%
      \relax%
    \else%
      \setlength{\unitlength}{\unitlength * \real{\svgscale}}%
    \fi%
  \else%
    \setlength{\unitlength}{\svgwidth}%
  \fi%
  \global\let\svgwidth\undefined%
  \global\let\svgscale\undefined%
  \makeatother%
  \begin{picture}(1,1.60220007)%
    \lineheight{1}%
    \setlength\tabcolsep{0pt}%
    \put(0,0){\includegraphics[width=\unitlength,page=1]{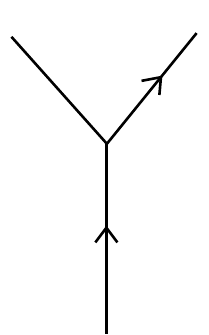}}%
    \put(-0.00552681,1.53149537){\color[rgb]{0,0,0}\makebox(0,0)[lt]{\lineheight{1.25}\smash{\begin{tabular}[t]{l}$i$\end{tabular}}}}%
    \put(0.90230549,1.5516135){\color[rgb]{0,0,0}\makebox(0,0)[lt]{\lineheight{1.25}\smash{\begin{tabular}[t]{l}$j$\end{tabular}}}}%
    \put(0.6516145,0.73530564){\color[rgb]{0,0,0}\makebox(0,0)[lt]{\lineheight{1.25}\smash{\begin{tabular}[t]{l}$\overline{\alpha}$\end{tabular}}}}%
    \put(0.24714959,0.22556459){\color[rgb]{0,0,0}\makebox(0,0)[lt]{\lineheight{1.25}\smash{\begin{tabular}[t]{l}$k$\end{tabular}}}}%
    \put(0,0){\includegraphics[width=\unitlength,page=2]{conjugate.pdf}}%
  \end{picture}%
\endgroup%
}}}~~~=~~~\vcenter{\hbox{{\def\svgscale{0.6}
\begingroup%
  \makeatletter%
  \providecommand\color[2][]{%
    \errmessage{(Inkscape) Color is used for the text in Inkscape, but the package 'color.sty' is not loaded}%
    \renewcommand\color[2][]{}%
  }%
  \providecommand\transparent[1]{%
    \errmessage{(Inkscape) Transparency is used (non-zero) for the text in Inkscape, but the package 'transparent.sty' is not loaded}%
    \renewcommand\transparent[1]{}%
  }%
  \providecommand\rotatebox[2]{#2}%
  \newcommand*\fsize{\dimexpr\f@size pt\relax}%
  \newcommand*\lineheight[1]{\fontsize{\fsize}{#1\fsize}\selectfont}%
  \ifx\svgwidth\undefined%
    \setlength{\unitlength}{133.11719607bp}%
    \ifx\svgscale\undefined%
      \relax%
    \else%
      \setlength{\unitlength}{\unitlength * \real{\svgscale}}%
    \fi%
  \else%
    \setlength{\unitlength}{\svgwidth}%
  \fi%
  \global\let\svgwidth\undefined%
  \global\let\svgscale\undefined%
  \makeatother%
  \begin{picture}(1,1.34250014)%
    \lineheight{1}%
    \setlength\tabcolsep{0pt}%
    \put(0,0){\includegraphics[width=\unitlength,page=1]{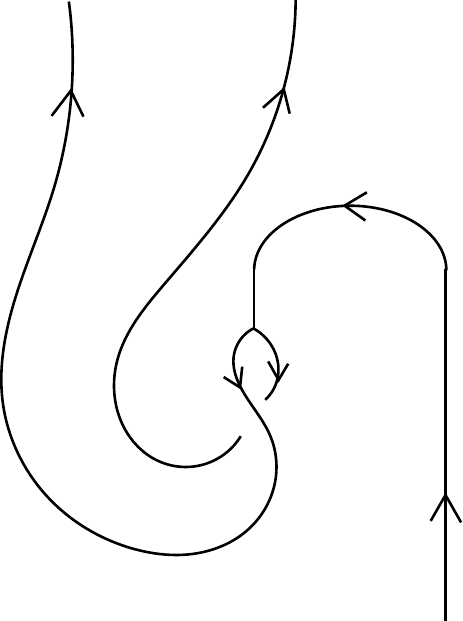}}%
    \put(0.01687577,1.2763304){\color[rgb]{0,0,0}\makebox(0,0)[lt]{\lineheight{1.25}\smash{\begin{tabular}[t]{l}$i$\end{tabular}}}}%
    \put(0.53209088,1.28917083){\color[rgb]{0,0,0}\makebox(0,0)[lt]{\lineheight{1.25}\smash{\begin{tabular}[t]{l}$j$\end{tabular}}}}%
    \put(0.85951736,0.05169115){\color[rgb]{0,0,0}\makebox(0,0)[lt]{\lineheight{1.25}\smash{\begin{tabular}[t]{l}$k$\end{tabular}}}}%
    \put(0.59629223,0.6487628){\color[rgb]{0,0,0}\makebox(0,0)[lt]{\lineheight{1.25}\smash{\begin{tabular}[t]{l}$\alpha^*$\end{tabular}}}}%
  \end{picture}%
\endgroup%
}}}\quad.\label{eq15}
\end{align}
\end{co}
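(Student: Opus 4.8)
The plan is to obtain \eqref{eq15} as a formal consequence of Theorem \ref{lb6} together with the graphical description of the contragredient operator in Proposition \ref{lb4}. The bridge between the conjugate operator $\overline\alpha$, which appears on the left of \eqref{eq15}, and the adjoint $\alpha^\dagger$, which Theorem \ref{lb6} controls, is the defining relation $\alpha^\dagger=C_-\overline\alpha$ recorded in Section \ref{lb48}. Since the target identity is a planar equality of morphisms valid for arbitrary $W_k$, there is no need to reduce to irreducible $W_k$ or to choose bases as in the proof of Theorem \ref{lb6}; everything can be done by isotopy.

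Concretely, I would produce two descriptions of the single morphism $\alpha^\dagger\in\Hom(W_{\ovl i}\boxtimes W_k,W_j)$ and compare them. On one hand, Theorem \ref{lb6} (equation \eqref{eq12}) writes $\alpha^\dagger=(\ev_{\ovl i,i}\otimes\id_j)(\id_{\ovl i}\otimes\alpha^*)$, a diagram built from $\alpha^*$. On the other hand, applying formula \eqref{eq2} of Proposition \ref{lb4} with $\overline\alpha\in\mc V{\ovl k\choose \ovl i~\ovl j}$ in place of $\alpha$ rewrites $C_-\overline\alpha$ — which is exactly $\alpha^\dagger$ — as the morphism obtained from $\overline\alpha$ by bending its $\ovl j$-input into a $j$-output (with a coevaluation) and its $\ovl k$-output into a $k$-input (with $\ev_{k,\ovl k}$). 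Equating these two expressions for $\alpha^\dagger$ yields a planar identity whose left side is a single bending of $\overline\alpha$ and whose right side is built from $\alpha^*$.

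The last step is to invert the bending that produced $C_-\overline\alpha$. I would tensor both sides by the appropriate identity factor, cap and cup with the matching evaluation and coevaluation on the $j$- and $k$-strands, and then collapse the resulting zig-zags using the rigidity relations \eqref{eq17} and \eqref{eq18} (using Proposition \ref{lb5}, $\coev_{i,\ovl i}=\ev_{i,\ovl i}^*$, to match the precise evaluations that occur). On the $\overline\alpha$ side the snakes straighten and isolate $\overline\alpha$; on the other side the same operation rotates the $\alpha^*$-diagram into the configuration displayed on the right of \eqref{eq15}. Equivalently, one may phrase this as $\overline\alpha=C_+\alpha^\dagger$ and feed \eqref{eq12} through the $C_+$-analogue of \eqref{eq2}, which is the same computation read in the opposite direction.

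I expect the only real difficulty to be graphical bookkeeping rather than anything conceptual: one must track the arrow orientations distinguishing $W_i$ from $W_{\ovl i}$, keep $\ev_{i,\ovl i}$ and $\ev_{\ovl i,i}$ (and the corresponding coevaluations) straight as they are created at each bend, and confirm that the snakes collapse to identities in exactly the pattern drawn in \eqref{eq15}. A useful internal consistency check is that no ribbon twist $\vartheta$ should survive: each invocation of $C_-$ and each snake identity is a pure bending, so the twists must cancel, matching the absence of any $\vartheta$ in the statement.
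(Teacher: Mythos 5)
Your proposal is correct and is essentially the paper's own argument read in the opposite direction: the paper writes $\ovl\alpha=\ovl{C_+C_-\alpha}=(C_-\alpha)^\dagger$, takes the adjoint of the diagram of Proposition \ref{lb4} (using Proposition \ref{lb5} and the unitarity of $\ss$) to get $(C_-\alpha)^*$, and then applies Theorem \ref{lb6} once to bend the $i$-leg, whereas you apply Proposition \ref{lb4} to $\ovl\alpha$ and Theorem \ref{lb6} to $\alpha$, equate the two resulting expressions for $\alpha^\dagger=C_-\ovl\alpha$, and undo the bends via the snake relations \eqref{eq17}--\eqref{eq18}. The ingredients are identical and your plan goes through; the only point to make explicit in the bookkeeping is the unitarity of the braiding ($\ss^*=\ss^{-1}$), which the paper invokes directly and which you need to bring the surviving crossing into the form drawn on the right of \eqref{eq15}.
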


\begin{proof}
We have $\ovl{\alpha}=\ovl{C_+C_-\alpha}=(C_-\alpha)^\dagger$. By propositions \ref{lb4}, \ref{lb5}, and the unitarity of $\ss$,  $(C_-\alpha)^*$ equals
\begin{align}
\vcenter{\hbox{{\def\svgscale{0.5}
\begingroup%
  \makeatletter%
  \providecommand\color[2][]{%
    \errmessage{(Inkscape) Color is used for the text in Inkscape, but the package 'color.sty' is not loaded}%
    \renewcommand\color[2][]{}%
  }%
  \providecommand\transparent[1]{%
    \errmessage{(Inkscape) Transparency is used (non-zero) for the text in Inkscape, but the package 'transparent.sty' is not loaded}%
    \renewcommand\transparent[1]{}%
  }%
  \providecommand\rotatebox[2]{#2}%
  \newcommand*\fsize{\dimexpr\f@size pt\relax}%
  \newcommand*\lineheight[1]{\fontsize{\fsize}{#1\fsize}\selectfont}%
  \ifx\svgwidth\undefined%
    \setlength{\unitlength}{98.95589782bp}%
    \ifx\svgscale\undefined%
      \relax%
    \else%
      \setlength{\unitlength}{\unitlength * \real{\svgscale}}%
    \fi%
  \else%
    \setlength{\unitlength}{\svgwidth}%
  \fi%
  \global\let\svgwidth\undefined%
  \global\let\svgscale\undefined%
  \makeatother%
  \begin{picture}(1,1.63496124)%
    \lineheight{1}%
    \setlength\tabcolsep{0pt}%
    \put(0,0){\includegraphics[width=\unitlength,page=1]{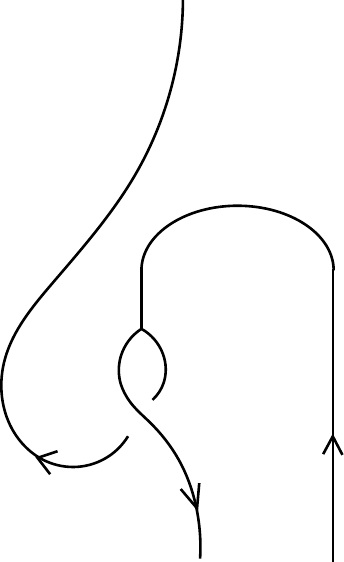}}%
    \put(0.45469956,0.69741609){\color[rgb]{0,0,0}\makebox(0,0)[lt]{\lineheight{1.25}\smash{\begin{tabular}[t]{l}$\alpha^*$\end{tabular}}}}%
    \put(0.81743117,0.05615818){\color[rgb]{0,0,0}\makebox(0,0)[lt]{\lineheight{1.25}\smash{\begin{tabular}[t]{l}$k$\end{tabular}}}}%
    \put(0.41799461,0.05831744){\color[rgb]{0,0,0}\makebox(0,0)[lt]{\lineheight{1.25}\smash{\begin{tabular}[t]{l}$i$\end{tabular}}}}%
    \put(0.36185767,1.48117485){\color[rgb]{0,0,0}\makebox(0,0)[lt]{\lineheight{1.25}\smash{\begin{tabular}[t]{l}$j$\end{tabular}}}}%
  \end{picture}%
\endgroup%
}}}~~.\label{eq14}
\end{align}
By theorem \ref{lb6}, one obtains $(C_-\alpha)^\dagger$ from $(C_-\alpha)^*$ by bending the leg $i$ to the top. Thus \eqref{eq14} becomes the right hand side of \eqref{eq15}.
\end{proof}

\section{Unitary VOA extensions, $C^*$-Frobenius algebras, and their representations}

\subsection{Preunitary VOA extensions and commutative $\mc C$-algebras}

In this chapter, we  fix a regular, CFT-type, and completely unitary VOA $V$. $W_{\ovl 0}=\ovl V$ is identified with $W_0=V$ via the reflection operator $\epsilon$. Thus $\epsilon=1$.   Let $U$ be a (VOA) extension of $V$,\index{Uext@$U$} whose vertex operator is denoted by $\mc Y_\mu$.\index{Uext@$U$!$\mc Y_\mu,\mu=\vcenter{\hbox{{\def\svgscale{0.4}
\begingroup%
  \makeatletter%
  \providecommand\color[2][]{%
    \errmessage{(Inkscape) Color is used for the text in Inkscape, but the package 'color.sty' is not loaded}%
    \renewcommand\color[2][]{}%
  }%
  \providecommand\transparent[1]{%
    \errmessage{(Inkscape) Transparency is used (non-zero) for the text in Inkscape, but the package 'transparent.sty' is not loaded}%
    \renewcommand\transparent[1]{}%
  }%
  \providecommand\rotatebox[2]{#2}%
  \newcommand*\fsize{\dimexpr\f@size pt\relax}%
  \newcommand*\lineheight[1]{\fontsize{\fsize}{#1\fsize}\selectfont}%
  \ifx\svgwidth\undefined%
    \setlength{\unitlength}{54.19421239bp}%
    \ifx\svgscale\undefined%
      \relax%
    \else%
      \setlength{\unitlength}{\unitlength * \real{\svgscale}}%
    \fi%
  \else%
    \setlength{\unitlength}{\svgwidth}%
  \fi%
  \global\let\svgwidth\undefined%
  \global\let\svgscale\undefined%
  \makeatother%
  \begin{picture}(1,1.41535979)%
    \lineheight{1}%
    \setlength\tabcolsep{0pt}%
    \put(0,0){\includegraphics[width=\unitlength,page=1]{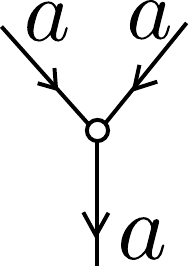}}%
  \end{picture}%
\endgroup%
}}}$} By definition, $U$ and $V$ share the same conformal vector $\nu$ and vacuum vector $\Omega$, and $V$ is a vertex operator subalgebra of $U$. As in the previous chapter, the symbol $Y$ is  reserved for the vertex operator of $V$. Then  $Y$  is the restriction of the action $\mc Y_\mu:U\curvearrowright U$ to $V\curvearrowright V$. More generally, let $Y_a$  be the restriction of $\mc Y_\mu$ to $V\curvearrowright U$. Then $(U,Y_a)$ becomes a representation of $V$. We write this $V$-module as $(W_a,Y_a)$, or  $W_a$ for short.\index{Uext@$U$!$(W_a,Y_a)$} (So by our notation, $U$ equals $W_a$ as a vector space.) Since all $V$-modules are unitarizable, we fix a unitary structure (i.e., an inner product) $\bk{\cdot|\cdot}$ on the $V$-module $W_a$ whose restriction to $V$ is the one $\bk{\cdot|\cdot}$ of the unitary VOA $V$. Such $(U,\bk{\cdot|\cdot})$, or $U$ for short, is called a \textbf{preunitary (VOA) extension} of $V$. It is clear that any extension of $V$ is preunitarizable.  Finally, we notice that $\mc Y_\mu$ is a type ${a\choose a~a}={W_a\choose W_a W_a}$ unitary intertwining operator of $V$.

The above discussion can be summarized as follows: The preunitary VOA extension $U$ is a unitary $V$ module $(W_a,Y_a)$; its vertex operator $\mc Y_\mu$ is in $\mc V{a\choose a~a}$. Thus $\mu\equiv\vcenter{\hbox{{\def\svgscale{0.4}
			}}}\in\Hom(W_a\boxtimes W_a,W_a)$ by our notation in the last chapter. Let $\iota:W_0\rightarrow W_a$ denote the embedding of $V$ into $U$. Then clearly $\iota\in\Hom(W_0,W_a)$. 
We write $\iota=\vcenter{\hbox{{\def\svgscale{0.4}
\begingroup%
  \makeatletter%
  \providecommand\color[2][]{%
    \errmessage{(Inkscape) Color is used for the text in Inkscape, but the package 'color.sty' is not loaded}%
    \renewcommand\color[2][]{}%
  }%
  \providecommand\transparent[1]{%
    \errmessage{(Inkscape) Transparency is used (non-zero) for the text in Inkscape, but the package 'transparent.sty' is not loaded}%
    \renewcommand\transparent[1]{}%
  }%
  \providecommand\rotatebox[2]{#2}%
  \newcommand*\fsize{\dimexpr\f@size pt\relax}%
  \newcommand*\lineheight[1]{\fontsize{\fsize}{#1\fsize}\selectfont}%
  \ifx\svgwidth\undefined%
    \setlength{\unitlength}{21.95117885bp}%
    \ifx\svgscale\undefined%
      \relax%
    \else%
      \setlength{\unitlength}{\unitlength * \real{\svgscale}}%
    \fi%
  \else%
    \setlength{\unitlength}{\svgwidth}%
  \fi%
  \global\let\svgwidth\undefined%
  \global\let\svgscale\undefined%
  \makeatother%
  \begin{picture}(1,1.94741476)%
    \lineheight{1}%
    \setlength\tabcolsep{0pt}%
    \put(0,0){\includegraphics[width=\unitlength,page=1]{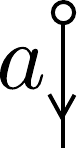}}%
  \end{picture}%
\endgroup%
}}}~~$.\index{Uext@$U$!$\iota=\vcenter{\hbox{{\def\svgscale{0.4}
				}}}$} Set $A_U=(W_a,\mu,\iota)$. \index{Uext@$U$!$A_U$}
Then $A_U$ is a \textbf{commutative associative algebra in $\Rep^\uni(V)$} (or \textbf{commutative $\RepV$-algebra} for short) \cite{Par95,KO02}, which means:
\begin{itemize}
	\item (Associativity) $\mu(\mu\otimes\id_a)=\mu(\id_a\otimes\mu)$.
	\item (Commutativity) $\mu\circ\ss=\mu$.
	\item (Unit) $\mu(\iota\otimes\id_a)=\id_a$.
\end{itemize}
Note that the associators and the unitors of $\Rep^\uni(V)$ have been suppressed to simplify discussions. We will also do so in the remaining part of this article.

Recall that $\ss$ is the braid isomorphism $\ss_{a,a}:W_a\boxtimes W_a\rightarrow W_a\boxtimes W_a$. These three conditions can respectively be pictured as
\begin{gather*}
\vcenter{\hbox{{\def\svgscale{0.6}
\begingroup%
  \makeatletter%
  \providecommand\color[2][]{%
    \errmessage{(Inkscape) Color is used for the text in Inkscape, but the package 'color.sty' is not loaded}%
    \renewcommand\color[2][]{}%
  }%
  \providecommand\transparent[1]{%
    \errmessage{(Inkscape) Transparency is used (non-zero) for the text in Inkscape, but the package 'transparent.sty' is not loaded}%
    \renewcommand\transparent[1]{}%
  }%
  \providecommand\rotatebox[2]{#2}%
  \newcommand*\fsize{\dimexpr\f@size pt\relax}%
  \newcommand*\lineheight[1]{\fontsize{\fsize}{#1\fsize}\selectfont}%
  \ifx\svgwidth\undefined%
    \setlength{\unitlength}{59.00748267bp}%
    \ifx\svgscale\undefined%
      \relax%
    \else%
      \setlength{\unitlength}{\unitlength * \real{\svgscale}}%
    \fi%
  \else%
    \setlength{\unitlength}{\svgwidth}%
  \fi%
  \global\let\svgwidth\undefined%
  \global\let\svgscale\undefined%
  \makeatother%
  \begin{picture}(1,1.29553593)%
    \lineheight{1}%
    \setlength\tabcolsep{0pt}%
    \put(0,0){\includegraphics[width=\unitlength,page=1]{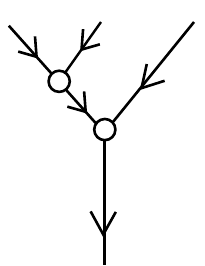}}%
    \put(-0.00562693,1.23407934){\color[rgb]{0,0,0}\makebox(0,0)[lt]{\lineheight{1.25}\smash{\begin{tabular}[t]{l}$a$\end{tabular}}}}%
    \put(0.41946425,1.24403219){\color[rgb]{0,0,0}\makebox(0,0)[lt]{\lineheight{1.25}\smash{\begin{tabular}[t]{l}$a$\end{tabular}}}}%
    \put(0.87776312,1.24403292){\color[rgb]{0,0,0}\makebox(0,0)[lt]{\lineheight{1.25}\smash{\begin{tabular}[t]{l}$a$\end{tabular}}}}%
    \put(0.55004101,0.04323865){\color[rgb]{0,0,0}\makebox(0,0)[lt]{\lineheight{1.25}\smash{\begin{tabular}[t]{l}$a$\end{tabular}}}}%
  \end{picture}%
\endgroup%
}}}~~=~\vcenter{\hbox{{\def\svgscale{0.6}
\begingroup%
  \makeatletter%
  \providecommand\color[2][]{%
    \errmessage{(Inkscape) Color is used for the text in Inkscape, but the package 'color.sty' is not loaded}%
    \renewcommand\color[2][]{}%
  }%
  \providecommand\transparent[1]{%
    \errmessage{(Inkscape) Transparency is used (non-zero) for the text in Inkscape, but the package 'transparent.sty' is not loaded}%
    \renewcommand\transparent[1]{}%
  }%
  \providecommand\rotatebox[2]{#2}%
  \newcommand*\fsize{\dimexpr\f@size pt\relax}%
  \newcommand*\lineheight[1]{\fontsize{\fsize}{#1\fsize}\selectfont}%
  \ifx\svgwidth\undefined%
    \setlength{\unitlength}{59.00748267bp}%
    \ifx\svgscale\undefined%
      \relax%
    \else%
      \setlength{\unitlength}{\unitlength * \real{\svgscale}}%
    \fi%
  \else%
    \setlength{\unitlength}{\svgwidth}%
  \fi%
  \global\let\svgwidth\undefined%
  \global\let\svgscale\undefined%
  \makeatother%
  \begin{picture}(1,1.29553593)%
    \lineheight{1}%
    \setlength\tabcolsep{0pt}%
    \put(0,0){\includegraphics[width=\unitlength,page=1]{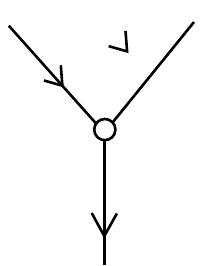}}%
    \put(-0.00562693,1.23407934){\color[rgb]{0,0,0}\makebox(0,0)[lt]{\lineheight{1.25}\smash{\begin{tabular}[t]{l}$a$\end{tabular}}}}%
    \put(0.41946425,1.24403219){\color[rgb]{0,0,0}\makebox(0,0)[lt]{\lineheight{1.25}\smash{\begin{tabular}[t]{l}$a$\end{tabular}}}}%
    \put(0.87776312,1.24403292){\color[rgb]{0,0,0}\makebox(0,0)[lt]{\lineheight{1.25}\smash{\begin{tabular}[t]{l}$a$\end{tabular}}}}%
    \put(0.55004101,0.04323865){\color[rgb]{0,0,0}\makebox(0,0)[lt]{\lineheight{1.25}\smash{\begin{tabular}[t]{l}$a$\end{tabular}}}}%
    \put(0,0){\includegraphics[width=\unitlength,page=2]{C-algebra-4.pdf}}%
  \end{picture}%
\endgroup%
}}}~~,\qquad \vcenter{\hbox{{\def\svgscale{0.6}
\begingroup%
  \makeatletter%
  \providecommand\color[2][]{%
    \errmessage{(Inkscape) Color is used for the text in Inkscape, but the package 'color.sty' is not loaded}%
    \renewcommand\color[2][]{}%
  }%
  \providecommand\transparent[1]{%
    \errmessage{(Inkscape) Transparency is used (non-zero) for the text in Inkscape, but the package 'transparent.sty' is not loaded}%
    \renewcommand\transparent[1]{}%
  }%
  \providecommand\rotatebox[2]{#2}%
  \newcommand*\fsize{\dimexpr\f@size pt\relax}%
  \newcommand*\lineheight[1]{\fontsize{\fsize}{#1\fsize}\selectfont}%
  \ifx\svgwidth\undefined%
    \setlength{\unitlength}{59.00748267bp}%
    \ifx\svgscale\undefined%
      \relax%
    \else%
      \setlength{\unitlength}{\unitlength * \real{\svgscale}}%
    \fi%
  \else%
    \setlength{\unitlength}{\svgwidth}%
  \fi%
  \global\let\svgwidth\undefined%
  \global\let\svgscale\undefined%
  \makeatother%
  \begin{picture}(1,1.29553593)%
    \lineheight{1}%
    \setlength\tabcolsep{0pt}%
    \put(0,0){\includegraphics[width=\unitlength,page=1]{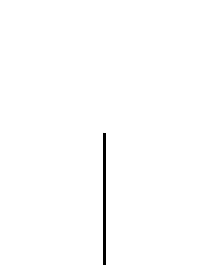}}%
    \put(-0.00562693,1.23407934){\color[rgb]{0,0,0}\makebox(0,0)[lt]{\lineheight{1.25}\smash{\begin{tabular}[t]{l}$a$\end{tabular}}}}%
    \put(0.87776312,1.24403292){\color[rgb]{0,0,0}\makebox(0,0)[lt]{\lineheight{1.25}\smash{\begin{tabular}[t]{l}$a$\end{tabular}}}}%
    \put(0.55004101,0.04323865){\color[rgb]{0,0,0}\makebox(0,0)[lt]{\lineheight{1.25}\smash{\begin{tabular}[t]{l}$a$\end{tabular}}}}%
    \put(0,0){\includegraphics[width=\unitlength,page=2]{C-algebra-5.pdf}}%
  \end{picture}%
\endgroup%
}}}~~=~\vcenter{\hbox{{\def\svgscale{0.6}
\begingroup%
  \makeatletter%
  \providecommand\color[2][]{%
    \errmessage{(Inkscape) Color is used for the text in Inkscape, but the package 'color.sty' is not loaded}%
    \renewcommand\color[2][]{}%
  }%
  \providecommand\transparent[1]{%
    \errmessage{(Inkscape) Transparency is used (non-zero) for the text in Inkscape, but the package 'transparent.sty' is not loaded}%
    \renewcommand\transparent[1]{}%
  }%
  \providecommand\rotatebox[2]{#2}%
  \newcommand*\fsize{\dimexpr\f@size pt\relax}%
  \newcommand*\lineheight[1]{\fontsize{\fsize}{#1\fsize}\selectfont}%
  \ifx\svgwidth\undefined%
    \setlength{\unitlength}{60.60990783bp}%
    \ifx\svgscale\undefined%
      \relax%
    \else%
      \setlength{\unitlength}{\unitlength * \real{\svgscale}}%
    \fi%
  \else%
    \setlength{\unitlength}{\svgwidth}%
  \fi%
  \global\let\svgwidth\undefined%
  \global\let\svgscale\undefined%
  \makeatother%
  \begin{picture}(1,1.25423413)%
    \lineheight{1}%
    \setlength\tabcolsep{0pt}%
    \put(0,0){\includegraphics[width=\unitlength,page=1]{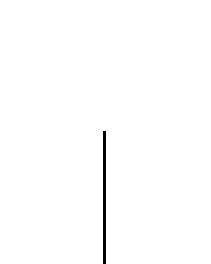}}%
    \put(-0.00547817,1.20145233){\color[rgb]{0,0,0}\makebox(0,0)[lt]{\lineheight{1.25}\smash{\begin{tabular}[t]{l}$a$\end{tabular}}}}%
    \put(0.88099486,1.20409278){\color[rgb]{0,0,0}\makebox(0,0)[lt]{\lineheight{1.25}\smash{\begin{tabular}[t]{l}$a$\end{tabular}}}}%
    \put(0.53549884,0.04209549){\color[rgb]{0,0,0}\makebox(0,0)[lt]{\lineheight{1.25}\smash{\begin{tabular}[t]{l}$a$\end{tabular}}}}%
    \put(0,0){\includegraphics[width=\unitlength,page=2]{C-algebra-6.pdf}}%
  \end{picture}%
\endgroup%
}}}~~,\qquad	\vcenter{\hbox{{\def\svgscale{0.6}
\begingroup%
  \makeatletter%
  \providecommand\color[2][]{%
    \errmessage{(Inkscape) Color is used for the text in Inkscape, but the package 'color.sty' is not loaded}%
    \renewcommand\color[2][]{}%
  }%
  \providecommand\transparent[1]{%
    \errmessage{(Inkscape) Transparency is used (non-zero) for the text in Inkscape, but the package 'transparent.sty' is not loaded}%
    \renewcommand\transparent[1]{}%
  }%
  \providecommand\rotatebox[2]{#2}%
  \newcommand*\fsize{\dimexpr\f@size pt\relax}%
  \newcommand*\lineheight[1]{\fontsize{\fsize}{#1\fsize}\selectfont}%
  \ifx\svgwidth\undefined%
    \setlength{\unitlength}{48.94242412bp}%
    \ifx\svgscale\undefined%
      \relax%
    \else%
      \setlength{\unitlength}{\unitlength * \real{\svgscale}}%
    \fi%
  \else%
    \setlength{\unitlength}{\svgwidth}%
  \fi%
  \global\let\svgwidth\undefined%
  \global\let\svgscale\undefined%
  \makeatother%
  \begin{picture}(1,1.55323354)%
    \lineheight{1}%
    \setlength\tabcolsep{0pt}%
    \put(0,0){\includegraphics[width=\unitlength,page=1]{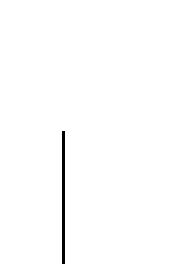}}%
    \put(0.03858,0.70208261){\color[rgb]{0,0,0}\makebox(0,0)[lt]{\lineheight{1.25}\smash{\begin{tabular}[t]{l}$a$\end{tabular}}}}%
    \put(0.85262499,1.4911389){\color[rgb]{0,0,0}\makebox(0,0)[lt]{\lineheight{1.25}\smash{\begin{tabular}[t]{l}$a$\end{tabular}}}}%
    \put(0.42476547,0.05213072){\color[rgb]{0,0,0}\makebox(0,0)[lt]{\lineheight{1.25}\smash{\begin{tabular}[t]{l}$a$\end{tabular}}}}%
    \put(0,0){\includegraphics[width=\unitlength,page=2]{C-algebra-7.pdf}}%
  \end{picture}%
\endgroup%
}}}~~=~\vcenter{\hbox{{\def\svgscale{0.6}
\begingroup%
  \makeatletter%
  \providecommand\color[2][]{%
    \errmessage{(Inkscape) Color is used for the text in Inkscape, but the package 'color.sty' is not loaded}%
    \renewcommand\color[2][]{}%
  }%
  \providecommand\transparent[1]{%
    \errmessage{(Inkscape) Transparency is used (non-zero) for the text in Inkscape, but the package 'transparent.sty' is not loaded}%
    \renewcommand\transparent[1]{}%
  }%
  \providecommand\rotatebox[2]{#2}%
  \newcommand*\fsize{\dimexpr\f@size pt\relax}%
  \newcommand*\lineheight[1]{\fontsize{\fsize}{#1\fsize}\selectfont}%
  \ifx\svgwidth\undefined%
    \setlength{\unitlength}{12.01269797bp}%
    \ifx\svgscale\undefined%
      \relax%
    \else%
      \setlength{\unitlength}{\unitlength * \real{\svgscale}}%
    \fi%
  \else%
    \setlength{\unitlength}{\svgwidth}%
  \fi%
  \global\let\svgwidth\undefined%
  \global\let\svgscale\undefined%
  \makeatother%
  \begin{picture}(1,5.77862463)%
    \lineheight{1}%
    \setlength\tabcolsep{0pt}%
    \put(0,0){\includegraphics[width=\unitlength,page=1]{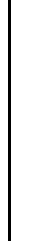}}%
    \put(0.39956116,4.84563392){\color[rgb]{0,0,0}\makebox(0,0)[lt]{\lineheight{1.25}\smash{\begin{tabular}[t]{l}$a$\end{tabular}}}}%
    \put(0,0){\includegraphics[width=\unitlength,page=2]{C-algebra-8.pdf}}%
  \end{picture}%
\endgroup%
}}}~~.
\end{gather*}
Indeed, associativity and commutativity are equivalent to the Jacobi identity for $\mc Y_\mu$. The unit property follows from the fact that $\mc Y_\mu$ restricts to $Y_a$. See \cite{HKL15} for more details. Note that we also have
\begin{gather}
\mu\circ\ss^n=\mu,\qquad\mu(\id_a\otimes\iota)=\id_a
\end{gather}
for any $n\in\mathbb Z$. The first equation follows from induction and that $\mu=\mu\ss\ss^{-1}=\mu\ss^{-1}$. The second equation holds because $\mu(\id_a\otimes\iota)=\mu\ss_{a,a}(\id_a\otimes\iota)=\mu(\iota\otimes\id_a)\ss_{a,0}=\id_a\ss_{a,0}=\id_a$.

Recall that the twist is defined by $e^{2\im\pi L_0}$. Since $L_0$ has only integral eigenvalues on $W_a$, the unitary $V$-module $W_a$ has trivial twist: $\vartheta_a=\id_a$. We also notice that $A$ is \textbf{normalized} (i.e., $\iota^*\iota=\id_0$) since the inner product on $U$ restricts to that of $V$. We thus conclude: \emph{If $U$ is a preunitary extension of $V$ then $A_U$ is a normalized commutative $\Rep^\uni(V)$-algebra with trivial twist. Conversely, any such commutative $\Rep^\uni(V)$-algebra arises from a preunitary CFT-type extension.} Moreover, if $U$ is of CFT type, then $A_U$ is \textbf{haploid}, which means that $\dim\Hom(W_0,W_a)=1$. Indeed, if $W_i$ is a unitary $V$-submodule of $W_a$ equivalent to $W_0$, then the lowest conformal weight of $W_i$ is $0$, which implies that $\Omega\in W_i$ and hence that $W_i=W_0$. Therefore $\dim\Hom(W_0,W_a)=\dim\Hom(W_0,W_0)=1$ by the simpleness of $V$.
Conversely, if $A_U$ is haploid, then $U$ is of CFT type provided that any irreducible $V$-module not equivalent to $W_0$ has no homogeneous vectors with conformal weight $0$. This converse statement will not be used in this paper (except in corollary \ref{lb58}). We thus content ourselves with the following result.

\begin{pp}[cf. \cite{HKL15} theorem 3.2]
If $U$ is a preunitary CFT-type VOA extension of $V$, then $A_U$ is a normalized haploid commutative $\RepV$-algebra with trivial twist.
\end{pp}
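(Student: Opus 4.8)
The plan is to read off the four asserted properties from the dictionary between intertwining operators and morphisms of section \ref{lb2}, using only that $U$ is a preunitary CFT-type extension of $V$. Recall that this means $W_a = U$ is a unitary $V$-module, its vertex operator $\mc Y_\mu$ is a type $\mc V{a\choose a~a}$ intertwining operator, so that $\mu \in \Hom(W_a \boxtimes W_a, W_a)$, and the embedding of the vacuum gives $\iota \in \Hom(W_0, W_a)$. The triple $A_U = (W_a, \mu, \iota)$ is the candidate algebra, and I must verify that it is commutative associative, normalized, haploid, and of trivial twist.

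First, and most substantially, I would establish the commutative associative $\RepV$-algebra axioms following \cite{HKL15}. The associativity $\mu(\mu \otimes \id_a) = \mu(\id_a \otimes \mu)$ and the commutativity $\mu \circ \ss = \mu$ are the categorical translations of the Jacobi identity for $\mc Y_\mu$: passing through the fusion relation \eqref{eq16} and the braiding relation \eqref{eq65} converts the formal-variable identity into these two morphism equations, where the choice of arguments and branches is governed by convention \ref{lb43}. The unit axiom $\mu(\iota \otimes \id_a) = \id_a$ records that $\mc Y_\mu$ restricts to the module vertex operator $Y_a$ on $V \curvearrowright U$ together with the vacuum property $Y_a(\Omega, z) = \id$, i.e. that the left unit isomorphism $W_0 \boxtimes W_a \overset{\simeq}{\rightarrow} W_a$ is implemented by $Y_a$. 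I expect this faithful encoding of the Jacobi identity to be the main obstacle; the remaining properties are quick consequences of unitarity and the grading.

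It remains to treat normalization, triviality of the twist, and haploidness. Normalization $\iota^*\iota = \id_0$ holds because the inner product fixed on $U$ restricts to that of $V$, so $\iota$ is an isometric embedding of $W_0$ into $W_a$, which is precisely the statement $\iota^*\iota = \id_0$. For the twist, $\vartheta_a = e^{2\im\pi L_0}$ acts on $W_a = U$, and since $U$ is of CFT-type its $L_0$-grading is integral, so $e^{2\im\pi L_0} = \id_a$ and $\vartheta_a = \id_a$. Finally, for haploidness, if a unitary $V$-submodule $W_i \subset W_a$ is isomorphic to $W_0$ then its lowest conformal weight is $0$, so $W_i$ meets the weight-$0$ space of $U$; but the CFT-type assumption forces that space to be $\mathbb C\Omega$, whence $\Omega \in W_i$ and $W_i = W_0$. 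Therefore the $W_0$-isotypic part of $W_a$ is a single copy, and $\dim \Hom(W_0, W_a) = \dim \Hom(W_0, W_0) = 1$ by the simpleness of $V$.
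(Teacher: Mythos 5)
Your proposal is correct and follows essentially the same route as the paper: the associativity, commutativity, and unit axioms are delegated to the Jacobi-identity translation of \cite{HKL15}, normalization is read off from the restriction of the inner product, trivial twist follows from the integrality of the $L_0$-grading on $W_a$, and haploidness is proved by exactly the paper's argument that a submodule $W_i\simeq W_0$ has lowest weight $0$, hence contains $\Omega$ by the CFT-type assumption and equals $W_0$, giving $\dim\Hom(W_0,W_a)=\dim\Hom(W_0,W_0)=1$ by simpleness of $V$. No gaps; your slightly more explicit remark that the weight-$0$ space of $U$ is $\mathbb{C}\Omega$ only spells out what the paper leaves implicit.
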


A detailed discussion of unitary VOA extensions will be given in the following sections. For now we first give the definition:

\begin{df}
Let $U$ be a CFT-type VOA extension of $V$, and assume that the vector space $U$ is equipped with a normalized inner product $\bk{\cdot|\cdot}$. We say that the extension $V\subset U$ is unitary (equivalently, that $U$ is a \textbf{unitary (VOA) extension} of $V$), if $\bk{\cdot|\cdot}$ restricts to the normalized inner product of $V$, and $(U,\bk{\cdot|\cdot})$ is a unitary VOA.
\end{df}

A unitary VOA extension is clearly preunitary. Another useful fact is the following:
\begin{pp}
If $U$ is a CFT-type unitary VOA extension of $V$, then the PCT operator $\Theta_U$ of $U$ restricts to the one $\Theta_V$ of $V$. In particular, $V$ is a $\Theta_U$-invariant subspace of $U$. 
\end{pp}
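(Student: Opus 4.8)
The plan is to use the uniqueness of the PCT operator. Since $\Theta_V$ is the unique anti-unitary anti-automorphism of $V$ realizing its unitary structure, it suffices to show that $\Theta_U$ preserves $V$ and that its restriction to $V$ is again an anti-unitary anti-automorphism satisfying the defining relation; uniqueness then identifies $\Theta_U|_V$ with $\Theta_V$. Granting $\Theta_U$-invariance of $V$, the restriction is automatically anti-unitary (the inner product of $U$ restricts to that of $V$) and multiplicative (the relation $\Theta_U\mc Y_\mu(u,z)v=\mc Y_\mu(\Theta_U u,\ovl z)\Theta_U v$ descends to $V$ because $\mc Y_\mu$ restricts to the vertex operator $Y$ of $V$ on $V\times V$). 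Hence everything reduces to proving that $\Theta_U V\subseteq V$, and this invariance is the main obstacle: $V$ carries no intrinsic description inside $U$ that is visibly stable under the antilinear map $\Theta_U$.

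To get at invariance I would compare the two adjoint relations. With $B(z)=e^{\ovl z L_1}(-\ovl{z^{-2}})^{L_0}$, unitarity of $U$ reads $\bk{\mc Y_\mu(v,z)a|b}=\bk{a|\mc Y_\mu(B(z)\Theta_U v,\ovl{z^{-1}})b}$ for $v,a,b\in U$, and unitarity of $V$ gives the same with $V,Y,\Theta_V$ in place of $U,\mc Y_\mu,\Theta_U$. Specializing the $U$-relation to $v,a,b\in V$, using that $\mc Y_\mu$ and $Y$ agree there and that the inner products agree on $V$, and subtracting the $V$-relation, I obtain for $u_0:=\Theta_U v-\Theta_V v$ that $\bk{a|\mc Y_\mu(B(z)u_0,\ovl{z^{-1}})b}=0$ for all $a,b\in V$ (here $B(z)\Theta_V v\in V$ since $L_0,L_1$ preserve $V$). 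The decisive step is to read $\mc Y_\mu(B(z)u_0,\ovl{z^{-1}})$ as an adjoint: applying the $U$-relation with state $\Theta_U u_0$ and using $\Theta_U^2=\id$ turns the previous matrix element into $\bk{\mc Y_\mu(\Theta_U u_0,z)a|b}$, which therefore vanishes for all $a,b\in V$. Taking $a=\Omega$ and reading off the constant term in $z$ via the creation property $\mc Y_\mu(w,z)\Omega=w+O(z)$ gives $\bk{\Theta_U u_0|b}=0$ for all $b\in V$; writing $P\colon U\to V$ for the orthogonal projection and unwinding $\Theta_U u_0=v-\Theta_U\Theta_V v$, this is exactly $P\Theta_U v=\Theta_V v$ for all $v\in V$.

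It remains to promote this projection identity to genuine membership $\Theta_U v\in V$, which I would do by a norm comparison. Anti-unitarity of $\Theta_U$ and $\Theta_V$, together with the agreement of the two inner products on $V$ and $\Theta_V v\in V$, give $\|\Theta_U v\|=\|v\|=\|\Theta_V v\|=\|P\Theta_U v\|$; the Pythagorean identity $\|\Theta_U v\|^2=\|P\Theta_U v\|^2+\|(\id-P)\Theta_U v\|^2$ then forces $(\id-P)\Theta_U v=0$, so $\Theta_U v=\Theta_V v\in V$. This simultaneously establishes $\Theta_U(V)=V$ and $\Theta_U|_V=\Theta_V$. The one genuinely substantial point is the invariance; once $P\Theta_U v=\Theta_V v$ is secured by the adjoint comparison, anti-unitarity disposes of the orthogonal remainder for free.
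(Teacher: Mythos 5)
Your proof is correct and takes essentially the same route as the paper's: both exploit $\Theta_U^2=\id_U$, $\Theta_V^2=\id_V$ together with the adjoint relation \eqref{eq5} to identify the matrix elements of $\mc Y_\mu(\Theta_U v,z)$ and $Y(\Theta_V v,z)$ between vectors of $V$, using that $\mc Y_\mu$ restricts to $Y$ and that the inner product of $U$ restricts to that of $V$. Your coefficient extraction via the creation property and the Pythagorean norm argument upgrading the projection identity $P\Theta_U v=\Theta_V v$ to $\Theta_U v=\Theta_V v\in V$ simply make explicit the steps the paper leaves to the reader.
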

Thus we can let $\Theta$ denote unambiguously both the PCT operators of $U$ and of $V$.
\begin{proof}
By relation \eqref{eq5} and the fact that $\Theta_U^2=\id_U,\Theta_V^2=\id_V$, for any $v\in V\subset U$ we have
\begin{align*}
Y(\Theta_Uv,z)=Y(e^{\ovl zL_1}(-\ovl{z^{-2}} )^{L_0} v,\ovl{z^{-1}})^\dagger=Y(\Theta_Vv,z)
\end{align*}
when evaluating between vectors in $V$. It should be clear to the reader how the condition that the normalized inner product of $U$ restricts to that of $V$ is used in the above equations. Thus $\Theta_U|_V=\Theta_V$.
\end{proof}

\subsection{Duals and standard evaluations in $C^*$-tensor categories}\label{lb14}

\subsubsection*{Dualizable objects}

Let $\mc C$ be  a $C^*$-tensor category (cf. \cite{Yam04}) whose identity object $W_0$ is simple. We assume tacitly that $\mc C$ is closed under finite orthogonal direct sums and orthogonal subojects, which means that for a finite collection $\{W_s:s\in\mathfrak S\}$ of objects in $\mc C$ there exists an object $W_i$ and  partial isometries $\{u_s\in\Hom(W_i,W_s):s\in\mathfrak S \}$ satisfying  $u_tu_s^*=\delta_{s,t}\id_s$ ($\forall s,t\in\mathfrak S$) and $\sum_{s\in\mathfrak S}u_s^*u_s=\id_i$, and that for any object $W_i$ and a projection $p\in\End(W_i)$ there exists an object $W_j$ and a partial isometry $u\in\Hom(W_i,W_j)$ such that $uu^*=\id_j,u^*u=p$.\footnote{A morphism $u\in\Hom(W_i,W_j)$ is called a partial isometry if $u^*u$ and $uu^*$ are projections. A morphism $e\in\End(W_i)$ is called a projection if $e^2=e=e^*$.}

Assume that an object $W_i$ in $\mc C$ has a right dual $W_{\ovl i}$, which means that there exist evaluation $\ev_i\in\Hom(W_{\ovl i}\boxtimes W_i,W_0)$ and coevaluation $\coev_i\in\Hom(W_0, W_i\boxtimes W_{\ovl i})$ satisfying $(\id_i\otimes\ev_i)(\coev_i\otimes\id_i)=\id_i$ and $(\ev_i\otimes\id_{\ovl i})(\id_{\ovl i}\otimes\coev_i)=\id_{\ovl i}$. Set $\ev_{\ovl i,i}=\ev_i,\coev_{i,\ovl i}=\coev_i$, and  set also $\ev_{i,\ovl i}=(\coev_{i,\ovl i})^*,\coev_{\ovl i,i}=(\ev_{\ovl i,i})^*$. \index{evii@$\ev_{i,\ovl i}$, $\mc Y_{\ev_{i,\ovl i}}$} Then equations \eqref{eq17} and \eqref{eq18} are satisfied, which shows that $W_{\ovl i}$ is also a left dual of $W_i$. In this case we say that $W_i$ is \textbf{dualizable}. Note that $\ev_{i,\ovl i}$ determines the remaining three $\ev$ and $\coev$. In general, we say that $\ev_{i,\ovl i}\in\Hom(W_i\boxtimes W_{\ovl i},W_0),\ev_{\ovl i,i}\in\Hom(W_{\ovl i}\boxtimes W_i,W_0)$ are \textbf{evaluations} (or simply \textbf{ev}) of $W_i$ and $W_{\ovl i}$ if equations \eqref{eq17} and \eqref{eq18} are satisfied when setting  $\coev_{i,\ovl i}=\ev_{i,\ovl i}^*,\coev_{\ovl i,i}=\ev_{\ovl i,i}^*$.  In the case that $W_i$ is self-dual, we say that $\ev_{i,i}$ is an  \textbf{evaluation} (\textbf{ev}) of $W_i$, if, by setting $\coev_{i,i}=\ev_{i,i}^*$, we have $(\ev_{i,i}\otimes\id_i)(\id_i\otimes\coev_{i,i})=\id_i$. Taking adjoint, we also have $(\id_i\otimes\ev_{i,i})(\coev_{i,i}\otimes\id_i)=\id_i$.

Assume that $W_i,W_j$ are dualizable with duals $W_{\ovl i},W_{\ovl j}$ respectively. Choose evaluations $\ev_{i,\ovl i},\ev_{\ovl i,i},\ev_{j,\ovl j},\ev_{\ovl j,j}$. Then $W_{i\boxtimes j}:=W_i\boxtimes W_j$ is also dualizable with a dual $W_{\ovl j\boxtimes\ovl i}:=W_{\ovl j}\boxtimes W_{\ovl i}$ and evaluations
\begin{gather}
\ev_{i\boxtimes j,\ovl j\boxtimes\ovl i}=\ev_{i,\ovl i}(\id_i\otimes\ev_{j,\ovl j}\otimes\id_{\ovl i}),\qquad \ev_{\ovl j\boxtimes\ovl i,i\boxtimes j}=\ev_{\ovl j, j}(\id_{\ovl j}\otimes\ev_{\ovl i,i}\otimes\id_j).\label{eq22}
\end{gather}
\begin{cv}\label{lb35}
Unless otherwise stated, if the $\ev$ for $W_i,W_{\ovl i}$ and $W_j,W_{\ovl j}$ are chosen, then we always define the $\ev$ for $W_{i\boxtimes j},W_{\ovl j\boxtimes \ovl i}$ using equations \eqref{eq22}.
\end{cv}

Using $\ev$ and $\coev$ for $W_i,W_j$ and their duals $W_{\ovl i},W_{\ovl j}$, one can define for any $F\in\Hom(W_i,W_j)$ a pair of transposes $F^\vee,{^\vee}F$ by
\begin{gather*}
F^\vee=(\ev_{\ovl j,j}\otimes\id_{\ovl i})(\id_{\ovl j}\otimes F\otimes \id_{\ovl i})(\id_{\ovl j}\otimes\coev_{i,\ovl i}),\\
{}^\vee F=(\id_{\ovl i}\otimes\ev_{j,\ovl j})(\id_{\ovl i}\otimes F\otimes \id_{\ovl j})(\coev_{\ovl i,i}\otimes\id_{\ovl j}).
\end{gather*}
Pictorially,
\index{Fvee@$F^\vee,{}^\vee F$} 
\begin{align}
F^\vee=~\vcenter{\hbox{{\def\svgscale{0.8}
\begingroup%
  \makeatletter%
  \providecommand\color[2][]{%
    \errmessage{(Inkscape) Color is used for the text in Inkscape, but the package 'color.sty' is not loaded}%
    \renewcommand\color[2][]{}%
  }%
  \providecommand\transparent[1]{%
    \errmessage{(Inkscape) Transparency is used (non-zero) for the text in Inkscape, but the package 'transparent.sty' is not loaded}%
    \renewcommand\transparent[1]{}%
  }%
  \providecommand\rotatebox[2]{#2}%
  \newcommand*\fsize{\dimexpr\f@size pt\relax}%
  \newcommand*\lineheight[1]{\fontsize{\fsize}{#1\fsize}\selectfont}%
  \ifx\svgwidth\undefined%
    \setlength{\unitlength}{65.06220872bp}%
    \ifx\svgscale\undefined%
      \relax%
    \else%
      \setlength{\unitlength}{\unitlength * \real{\svgscale}}%
    \fi%
  \else%
    \setlength{\unitlength}{\svgwidth}%
  \fi%
  \global\let\svgwidth\undefined%
  \global\let\svgscale\undefined%
  \makeatother%
  \begin{picture}(1,1.2003256)%
    \lineheight{1}%
    \setlength\tabcolsep{0pt}%
    \put(0,0){\includegraphics[width=\unitlength,page=1]{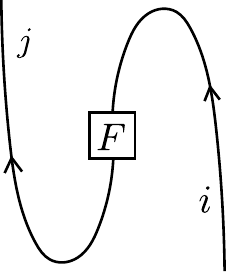}}%
  \end{picture}%
\endgroup%
}}}~,\qquad {}^\vee F=\vcenter{\hbox{{\def\svgscale{0.8}
\begingroup%
  \makeatletter%
  \providecommand\color[2][]{%
    \errmessage{(Inkscape) Color is used for the text in Inkscape, but the package 'color.sty' is not loaded}%
    \renewcommand\color[2][]{}%
  }%
  \providecommand\transparent[1]{%
    \errmessage{(Inkscape) Transparency is used (non-zero) for the text in Inkscape, but the package 'transparent.sty' is not loaded}%
    \renewcommand\transparent[1]{}%
  }%
  \providecommand\rotatebox[2]{#2}%
  \newcommand*\fsize{\dimexpr\f@size pt\relax}%
  \newcommand*\lineheight[1]{\fontsize{\fsize}{#1\fsize}\selectfont}%
  \ifx\svgwidth\undefined%
    \setlength{\unitlength}{65.06220872bp}%
    \ifx\svgscale\undefined%
      \relax%
    \else%
      \setlength{\unitlength}{\unitlength * \real{\svgscale}}%
    \fi%
  \else%
    \setlength{\unitlength}{\svgwidth}%
  \fi%
  \global\let\svgwidth\undefined%
  \global\let\svgscale\undefined%
  \makeatother%
  \begin{picture}(1,1.2003256)%
    \lineheight{1}%
    \setlength\tabcolsep{0pt}%
    \put(0,0){\includegraphics[width=\unitlength,page=1]{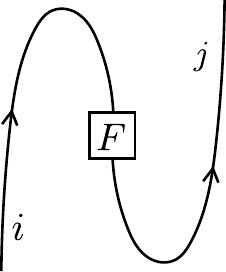}}%
  \end{picture}%
\endgroup%
}}}~.\label{eq25}
\end{align}
One easily checks that $^\vee(F^\vee)=F=({^\vee F})^\vee$, $(FG)^\vee=G^\vee F^\vee,{^\vee}(FG)=(^\vee G)(^\vee F),(F^\vee)^*={}^\vee(F^*)$.

\subsubsection*{Standard evaluations}

The evaluations defined above are not unique even up to unitaries. For any $\ev_{i,\ovl i},\ev_{\ovl i,i}$ of $W_i,W_{\ovl i}$, and any invertible $K\in\End(W_i)$, $\wtd\ev_{i,\ovl i}:=\ev_{i,\ovl i}(K\otimes\id_{\ovl i})$ and $\wtd\ev_{\ovl i,i}:=\ev_{\ovl i,i}(\id_{\ovl i}\otimes (K^*)^{-1})$ are also evaluations. Thus one can normalize evaluations to satisfy certain nice conditions. In $C^*$-tensor categories, the $\ev$ that attract most interest are the so called standard evaluations. It is known that for dualizable objects, standard $\ev$ always exist and are unique up to unitaries, and that the two transposes defined by standard $\ev$ are equal. We refer the reader to \cite{LR97,Yam04,BDH14} for these results. In the following, we review an explicit method of constructing standard evaluations following \cite{Yam04}. Since the evaluations for VOA tensor categories defined in the previous chapter can be realized by this construction, these evaluations are standard (see proposition \ref{lb60}).


Define scalars $\Tr_L(F)$ and $\Tr_R(F)$ for each $F\in\End(W_i)$ \index{Tr@$\Tr_L,\Tr_R$} such that $\ev_{i,\ovl i}(F\otimes\id_{\ovl i})\coev_{i,\ovl i}=\Tr_L(F)\id_0$ and $\ev_{\ovl i,i}(\id_{\ovl i}\otimes F)\coev_{\ovl i,i}=\Tr_R(F)\id_0$. Then $\Tr_L$ is a positive linear functional on $\End(W_i)$. Moreover, $\Tr_L$ is faithful: if $\Tr_L(F^*F)=0$, then $\ev_{i,\ovl i}\circ(F\otimes\id_{\ovl i})$ is zero (since its absolute value is zero). So $F=0$. Similar things can be said about $\Tr_R$. We say that $\ev_{i,\ovl i}$ is a \textbf{standard evaluation} if $\Tr_L(F)=\Tr_R(F)$  for all $F\in\End(W_i)$. Since $\Tr_L(F^\vee)=\Tr_R(F)=\Tr_L(F^\vee)$ by easy graphical calculus, it is easy to see that $\ev_{i,\ovl i}$ is standard if and only if $\ev_{\ovl i,i}$ is so. Standard $\ev$ are unique up to  unitaries: If $W_{i'}\simeq W_{\ovl i},T\in\Hom(W_{i'},W_{\ovl i})$ is unitary,  and $\wtd\ev_{i,i'},\wtd\ev_{i',i}$  are also standard, then we may find a unitary $K\in\End(W_i)$ satisfying $\wtd\ev_{i,i'}=\ev_{i,\ovl i}(K\otimes T),\wtd\ev_{i',i}=\ev_{\ovl i,i}(T\otimes K)$. (See \cite{Yam04} lemma 3.9-(iii).)

If $W_i$ is simple, a standard evaluation is easy to construct by multiplying $\ev_{i,\ovl i}$ by some nonzero constant $\lambda$ (and hence multiplying $\ev_{\ovl i,i}$ by $\ovl{\lambda^{-1}}$) such that $\Tr_L(\id_i)=\Tr_R(\id_i)$. In general, if $W_i$ is dualizable and hence semisimple, we have orthogonal irreducible decomposition $W_i\simeq \bigoplus^\perp_{s\in\mathfrak S} W_s$ where each irreducible subobject $W_s$ is dualizable (with a dual $W_{\ovl s}$). Choose  partial isometries  $\{u_s\in\Hom(W_i,W_s):s\in\mathfrak S \}$ and $\{v_{\ovl s}\in\Hom(W_{\ovl i},W_{\ovl s}):s\in\mathfrak S\}$ satisfying $u_t u_s^*=\delta_{s,t}\id_s,v_{\ovl t}v_{\ovl s}^*=\delta_{s,t}\id_{\ovl s}$ and $\sum_s u_s^*u_s=\id_i,\sum_s v_{\ovl s}^*v_{\ovl s}=\id_{\ovl i}$. Then we define
\begin{align}
\ev_{i,\ovl i}=\sum_{s\in\mathfrak S}\ev_{s,\ovl s}(u_s\otimes v_{\ovl s}),\qquad\ev_{\ovl i,i}=\sum_{s\in\mathfrak S}\ev_{\ovl s,s}(v_{\ovl s}\otimes u_s)\label{eq19}
\end{align}
(where $\ev_{s,\ovl s}$ and $\ev_{\ovl s,s}$ are standard for all $s$), define $\coev$ using adjoint. Then $\ev_{i,\ovl i}$ and $\ev_{\ovl i,i}$ are standard. (See \cite{Yam04} lemma 3.9 for details.)\footnote{In \cite{Yam04} the categories are assumed to be rigid. Thus any orthogonal subobject of $W_i$, i.e., any object $W_s$ which is a associated with a partial isometry $u:W_i\rightarrow W_s$ satisfying $uu^*=\id_s$, is dualizable. This fact is also true without assuming $\mc C$ to be rigid. (Cf. \cite{ABD04} lemma 4.20.) Here is one way to see this. Notice that we may assume $\Tr_L$ is tracial by multiplying $\ev_{i,\ovl i}$ by $K\otimes \id_{\ovl i}$ where $K$ is a positive invertible element of $\End(W_i)$. (Cf. the proof of \cite{ABD04} Thm. 4.12.) Thus, for any $F,G\in\End(W_i)$, we have $\Tr_L(GF)=\Tr_L(F^{\vee\vee}\cdot G)$ in general and $\Tr_L(FG)=\Tr_L(GF)$ by tracialness, which shows $F^{\vee\vee}=F$ and hence $F^\vee={}^\vee F$. Thus, $(F^\vee)^*=(F^*)^\vee$. So $F$ is a projection  iff $F^\vee$ is so. Let $p=u^*u$. Then $p^\vee\in\End(W_{\ovl i})$ is a projection. Thus, there exist an object $W_{\ovl s}$ and $v\in\Hom(W_{\ovl i},W_{\ovl s})$ satisfying $vv^*=\id_{\ovl s},v^*v=p^\vee$. Then $W_{\ovl s}$ is dual to $W_s$ since one can choose evaluations $\ev_{s,\ovl s}:=\ev_{i,\ovl i}(u^*\otimes v^*), \ev_{\ovl s,s}:=\ev_{\ovl i,i}(v^*\otimes u^*)$.}



\begin{pp}\label{lb60}
If $V$ is a regular, CFT-type, and completely unitary VOA, then the $\ev$ and $\coev$ defined in chapter \ref{lb7} (same as those in \cite{Gui19a,Gui19b}) for any object $W_i$ in $\RepV$ and its contragredient module $W_{\ovl i}$ are standard.
\end{pp}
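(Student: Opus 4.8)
The plan is to verify standardness directly from the ribbon structure of $\Rep(V)$, comparing the two functionals $\Tr_L$ and $\Tr_R$ on all of $\End(W_i)$ at once, rather than passing through the decomposition \eqref{eq19} into simple summands.

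The first observation is that, by proposition \ref{lb5} applied to $W_i$ and (using $\ovl{\ovl i}=i$) to $W_{\ovl i}$, the coevaluations produced in chapter \ref{lb7} are already the adjoints of the evaluations: $\coev_{i,\ovl i}=\ev_{i,\ovl i}^*$ and $\coev_{\ovl i,i}=\ev_{\ovl i,i}^*$. Hence these natural data satisfy the convention $\coev=\ev^*$ of section \ref{lb14}, and the scalars $\Tr_L(F)$ and $\Tr_R(F)$ computed from them are exactly the left and right partial traces appearing in the definition of standardness. So it remains only to show $\Tr_L=\Tr_R$ for the natural morphisms.

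The key input is the ribbon identity linking the two evaluations of $W_i$, namely $\ev_{i,\ovl i}=\ev_{\ovl i,i}\circ\ss_{i,\ovl i}\circ(\vartheta_i\otimes\id_{\ovl i})$, which holds because $\Rep(V)$ is a ribbon category with twist $\vartheta_i=e^{2\im\pi L_0}$. I will also use three facts already available: the braiding $\ss$ is unitary; the twist $\vartheta_i$ is unitary (since $L_0$ is self-adjoint on a unitary module); and $\vartheta$ is natural, so $\vartheta_iF=F\vartheta_i$ for every $F\in\End(W_i)$ (equivalently $F$ commutes with $L_0$). Taking adjoints of the ribbon identity gives $\coev_{i,\ovl i}=\ev_{i,\ovl i}^*=(\vartheta_i^*\otimes\id_{\ovl i})\ss_{i,\ovl i}^*\coev_{\ovl i,i}$, and then for any $F\in\End(W_i)$,
\begin{align*}
\Tr_L(F)\id_0 &=\ev_{i,\ovl i}(F\otimes\id_{\ovl i})\coev_{i,\ovl i}
=\ev_{\ovl i,i}\,\ss_{i,\ovl i}\big((\vartheta_iF\vartheta_i^*)\otimes\id_{\ovl i}\big)\ss_{i,\ovl i}^*\,\coev_{\ovl i,i}\\
&=\ev_{\ovl i,i}\,\ss_{i,\ovl i}(F\otimes\id_{\ovl i})\ss_{i,\ovl i}^*\,\coev_{\ovl i,i}
=\ev_{\ovl i,i}(\id_{\ovl i}\otimes F)\coev_{\ovl i,i}=\Tr_R(F)\id_0,
\end{align*}
where I used centrality and unitarity of $\vartheta_i$ to replace $\vartheta_iF\vartheta_i^*$ by $F$, and naturality of the braiding together with $\ss_{i,\ovl i}\ss_{i,\ovl i}^*=\id$ to slide $F$ across and cancel the braidings. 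This proves $\Tr_L=\Tr_R$, i.e.\ standardness.

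The step I expect to be the main obstacle is pinning down the ribbon identity $\ev_{i,\ovl i}=\ev_{\ovl i,i}\ss_{i,\ovl i}(\vartheta_i\otimes\id_{\ovl i})$ with the correct placement of $\ss^{\pm1}$ and $\vartheta_i^{\pm1}$: it is part of the ribbon structure constructed in \cite{Hua08}, but to keep the argument self-contained one should extract it from the $C_\pm$-and-braiding definitions of $\ev$ and $\kappa$ in section \ref{lb2} (e.g.\ via \eqref{eq73} and \eqref{eq9}). Reassuringly, the final cancellation is insensitive to these sign choices, since $\ss$ and $\vartheta_i$ are unitary and $\vartheta$ is central; conceptually the entire computation is just the sphericality of the unitary modular tensor category $\RepV$ of theorem \ref{lb51}.
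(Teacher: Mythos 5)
Your proof is correct, but it takes a genuinely different route from the paper's. The paper argues by reduction to simple objects: for irreducible $W_i$ it quotes \cite{Gui19b} proposition 7.7 to get $\Tr_L(\id_i)=d_i=d_{\ovl i}=\Tr_R(\id_i)$, which suffices since $\End(W_i)=\mathbb C\id_i$; for general semisimple $W_i$ it uses \eqref{eq73} to check that the natural annihilation operator decomposes over an orthogonal irreducible decomposition exactly as in \eqref{eq19}, so standardness follows from Yamagami's construction (\cite{Yam04} lemma 3.9). You instead prove $\Tr_L(F)=\Tr_R(F)$ for all $F\in\End(W_i)$ in one stroke from the balance identity \eqref{eq59}, the unitarity of $\ss$ and of $\vartheta_i=e^{2\im\pi L_0}$, and the centrality of $\vartheta_i$ (which holds because module morphisms commute with $L_0$); the computation is sound, and your remark that it is insensitive to the placement of $\ss^{\pm1}$ and $\vartheta^{\pm1}$ is right, since any natural unitary in place of $\ss_{i,\ovl i}$ cancels in the same way, and both proofs share proposition \ref{lb5} as the reason the chapter-\ref{lb7} data fit the $\coev=\ev^*$ convention of section \ref{lb14}. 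What each approach buys: the paper's needs only the weaker numerical input $d_i=d_{\ovl i}$ on simples plus an elementary intertwining-operator identity, while yours is decomposition-free and treats all objects and all of $\End(W_i)$ uniformly, at the cost of the stronger input \eqref{eq59} for the natural evaluations. One caveat you already half-anticipate deserves emphasis: you cannot source \eqref{eq59} from section 3.4's discussion of the canonical twist, since there the paper identifies $e^{2\im\pi L_0}$ as the canonical twist by combining \eqref{eq59} with the standardness of the natural $\ev$ --- i.e.\ with proposition \ref{lb60} itself (canonical twists being defined through standard evaluations), which would be circular; the non-circular source is exactly the one you name, namely \cite{Gui19b} equation (7.30) (resting on \cite{Gui19a} formula (1.41)), which establishes \eqref{eq59} for the chapter-\ref{lb7} data directly from the $C_\pm$-and-braiding definitions, independently of standardness. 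With that citation made explicit, your argument is complete; it is amusing that both proofs ultimately draw their decisive input from \cite{Gui19b} section 7.3, just from different parts of it.
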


\begin{proof}
First of all, assume $W_i$ is irreducible. By \cite{Gui19b} proposition 7.7 and the paragraph before that, we have $\Tr_L(\id_i)=d_i=d_{\ovl i}=\Tr_R(\id_i)$. So $\ev_{i,\ovl i}$ is standard. Now, assume $W_i$ is semisimple with finite orthogonal irreducible decomposition $W_i=\bigoplus_s^\perp W_s$. For each irreducible summand $W_s$, define $u_s$ (resp. $v_{\ovl s}$) to be the projection of $W_i$ onto $W_s$ (resp. $W_{\ovl i}$ onto $W_{\ovl s}$). (Note that $W_{\ovl i}$ and $W_{\ovl s}$ are respectively the contragredient modules of $W_i,W_s$.) Using \eqref{eq73}, it is easy to see that $\mc Y_{\ev_{i,\ovl i}}(w,z)=\sum_s\mc Y_{\ev_{s,\ovl s}}(u_s w,z)v_{\ovl s}$ for any $w\in W_i$. So the first equation of \eqref{eq19} is satisfied. Similarly, the second one of \eqref{eq19} holds.
\end{proof}

\begin{cv}\label{lb11}
Unless otherwise stated, for any object $W_i$ in  $\RepV$, $W_{\ovl i}$ is always understood as the contragredient module of $W_i$, and the standard $\ev$ and $\coev$ for $W_i,W_{\ovl i}$ are always defined as in chapter \ref{lb7}.
\end{cv}

It is  worth noting that standardness is preserved by tensor products: If standard $\ev$ are chosen for $W_i,W_j$ and their dual $W_{\ovl i},W_{\ovl j}$, then  the $\ev$ of $W_{i\boxtimes j},W_{\ovl j\boxtimes\ovl i}$  defined by \eqref{eq22} are also standard.

Standard $\ev$ are also characterized by minimizing quantum dimensions.  Define constants $d_i,d_{\ovl i}$ satisfying $\ev_{i,\ovl i}\coev_{i,\ovl i}=d_i\id_0,\ev_{\ovl i,i}\coev_{\ovl i,i}=d_{\ovl i}\id_0$. Then standard $\ev$ are precisely those minimizing $d_id_{\ovl i}$ and satisfying $d_i=d_{\ovl i}$ (cf. \cite{LR97}). We will always assume $d_i,d_{\ovl i}$ \index{di@$d_i$} to be those defined by standard evaluations, and call them the \textbf{quantum dimensions} of $W_i,W_{\ovl i}$.

\subsection{Unitarity of $\mc C$-algebras and VOA extensions}

Let $W_a$ be an object in $\mc C$, choose $\mu\in\Hom(W_a\boxtimes W_a,W_a),\iota\in\Hom(W_0,W_a)$, and asume that  $A=(W_a,\mu,\iota)$ is  an \textbf{associative algebra in $\mc C$}\index{A@$A=(W_a,\mu,\iota)$} (also called \textbf{$\mc C$-algebra}\footnote{In \cite{KO02,HKL15,CKM17}, commutativity is required in the definition of $\mc C$-algebras when $\mc C$ is braided. This is not assumed in our paper.}), which means:
\begin{itemize}
	\item (Associativity) $\mu(\mu\otimes\id_a)=\mu(\id_a\otimes\mu)$.
	\item (Unit) $\mu(\iota\otimes\id_a)=\id_a=\mu(\id_a\otimes\iota)$.
\end{itemize}
Since $W_0$ is simple, we can choose $D_A>0$ (called the \textbf{quantum dimension} of $A$)   satisfying $\iota^*\mu\mu^*\iota=D_A\id_0$. \index{DA@$D_A$} We say that $A$ is
\begin{itemize}
	\item \textbf{haploid} if $\dim\Hom(W_0,W_a)=1$;
	\item \textbf{normalized} if $\iota^*\iota=\id_0$;
	\item \textbf{special} if $\mu\mu^*\in\mathbb C\id_a$; in this case we set scalar $d_A>0$ such that $\mu\mu^*=d_A\id_a$;\index{dA@$d_A$}
	\item \textbf{standard}  if $A$ is special, $W_a$ is dualizable (with quantum dimension $d_a$),   and $D_A=d_a$.
\end{itemize}

Note that any $\mc C$-algebra $A$ is clearly equivalent to a normalized one.

Assume that $W_a$ has a dual $W_{\ovl a}$, together with (not necessarily standard) $\ev_{a,\ovl a},\ev_{\ovl a,a}$. Define  $\ev$  for $W_a\boxtimes W_a$ and $W_{\ovl a}\boxtimes W_{\ovl a}$ using \eqref{eq22}. Assume also that $W_a$ is self-dual, i.e., $W_a\simeq W_{\ovl a}$. Choose a unitary morphism $\rfl=\vcenter{\hbox{{\def\svgscale{0.4}
\begingroup%
  \makeatletter%
  \providecommand\color[2][]{%
    \errmessage{(Inkscape) Color is used for the text in Inkscape, but the package 'color.sty' is not loaded}%
    \renewcommand\color[2][]{}%
  }%
  \providecommand\transparent[1]{%
    \errmessage{(Inkscape) Transparency is used (non-zero) for the text in Inkscape, but the package 'transparent.sty' is not loaded}%
    \renewcommand\transparent[1]{}%
  }%
  \providecommand\rotatebox[2]{#2}%
  \newcommand*\fsize{\dimexpr\f@size pt\relax}%
  \newcommand*\lineheight[1]{\fontsize{\fsize}{#1\fsize}\selectfont}%
  \ifx\svgwidth\undefined%
    \setlength{\unitlength}{26.95796801bp}%
    \ifx\svgscale\undefined%
      \relax%
    \else%
      \setlength{\unitlength}{\unitlength * \real{\svgscale}}%
    \fi%
  \else%
    \setlength{\unitlength}{\svgwidth}%
  \fi%
  \global\let\svgwidth\undefined%
  \global\let\svgscale\undefined%
  \makeatother%
  \begin{picture}(1,2.35804907)%
    \lineheight{1}%
    \setlength\tabcolsep{0pt}%
    \put(0,0){\includegraphics[width=\unitlength,page=1]{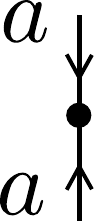}}%
  \end{picture}%
\endgroup%
}}}~~\in\Hom(W_a, W_{\ovl a})$, and write its adjoint as $\rfl^*=\vcenter{\hbox{{\def\svgscale{0.4}
\begingroup%
  \makeatletter%
  \providecommand\color[2][]{%
    \errmessage{(Inkscape) Color is used for the text in Inkscape, but the package 'color.sty' is not loaded}%
    \renewcommand\color[2][]{}%
  }%
  \providecommand\transparent[1]{%
    \errmessage{(Inkscape) Transparency is used (non-zero) for the text in Inkscape, but the package 'transparent.sty' is not loaded}%
    \renewcommand\transparent[1]{}%
  }%
  \providecommand\rotatebox[2]{#2}%
  \newcommand*\fsize{\dimexpr\f@size pt\relax}%
  \newcommand*\lineheight[1]{\fontsize{\fsize}{#1\fsize}\selectfont}%
  \ifx\svgwidth\undefined%
    \setlength{\unitlength}{26.95796801bp}%
    \ifx\svgscale\undefined%
      \relax%
    \else%
      \setlength{\unitlength}{\unitlength * \real{\svgscale}}%
    \fi%
  \else%
    \setlength{\unitlength}{\svgwidth}%
  \fi%
  \global\let\svgwidth\undefined%
  \global\let\svgscale\undefined%
  \makeatother%
  \begin{picture}(1,2.35804907)%
    \lineheight{1}%
    \setlength\tabcolsep{0pt}%
    \put(0,0){\includegraphics[width=\unitlength,page=1]{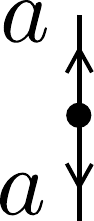}}%
  \end{picture}%
\endgroup%
}}}~~$. \index{zz@$\varepsilon=\vcenter{\hbox{{\def\svgscale{0.4}
				}}}~~,\varepsilon^*=\vcenter{\hbox{{\def\svgscale{0.4}
				}}}~~$}
Write also $\mu^*=\vcenter{\hbox{{\def\svgscale{0.4}
\begingroup%
  \makeatletter%
  \providecommand\color[2][]{%
    \errmessage{(Inkscape) Color is used for the text in Inkscape, but the package 'color.sty' is not loaded}%
    \renewcommand\color[2][]{}%
  }%
  \providecommand\transparent[1]{%
    \errmessage{(Inkscape) Transparency is used (non-zero) for the text in Inkscape, but the package 'transparent.sty' is not loaded}%
    \renewcommand\transparent[1]{}%
  }%
  \providecommand\rotatebox[2]{#2}%
  \newcommand*\fsize{\dimexpr\f@size pt\relax}%
  \newcommand*\lineheight[1]{\fontsize{\fsize}{#1\fsize}\selectfont}%
  \ifx\svgwidth\undefined%
    \setlength{\unitlength}{69.38030774bp}%
    \ifx\svgscale\undefined%
      \relax%
    \else%
      \setlength{\unitlength}{\unitlength * \real{\svgscale}}%
    \fi%
  \else%
    \setlength{\unitlength}{\svgwidth}%
  \fi%
  \global\let\svgwidth\undefined%
  \global\let\svgscale\undefined%
  \makeatother%
  \begin{picture}(1,1.01846161)%
    \lineheight{1}%
    \setlength\tabcolsep{0pt}%
    \put(0,0){\includegraphics[width=\unitlength,page=1]{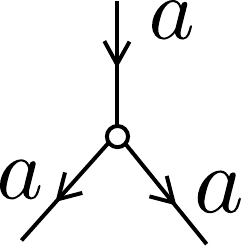}}%
  \end{picture}%
\endgroup%
}}}~~$, \index{Uext@$U$!$\mu^*=\vcenter{\hbox{{\def\svgscale{0.4}
				}}}~~$} $\iota^*=\vcenter{\hbox{{\def\svgscale{0.4}
\begingroup%
  \makeatletter%
  \providecommand\color[2][]{%
    \errmessage{(Inkscape) Color is used for the text in Inkscape, but the package 'color.sty' is not loaded}%
    \renewcommand\color[2][]{}%
  }%
  \providecommand\transparent[1]{%
    \errmessage{(Inkscape) Transparency is used (non-zero) for the text in Inkscape, but the package 'transparent.sty' is not loaded}%
    \renewcommand\transparent[1]{}%
  }%
  \providecommand\rotatebox[2]{#2}%
  \newcommand*\fsize{\dimexpr\f@size pt\relax}%
  \newcommand*\lineheight[1]{\fontsize{\fsize}{#1\fsize}\selectfont}%
  \ifx\svgwidth\undefined%
    \setlength{\unitlength}{25.75592132bp}%
    \ifx\svgscale\undefined%
      \relax%
    \else%
      \setlength{\unitlength}{\unitlength * \real{\svgscale}}%
    \fi%
  \else%
    \setlength{\unitlength}{\svgwidth}%
  \fi%
  \global\let\svgwidth\undefined%
  \global\let\svgscale\undefined%
  \makeatother%
  \begin{picture}(1,1.59351988)%
    \lineheight{1}%
    \setlength\tabcolsep{0pt}%
    \put(0,0){\includegraphics[width=\unitlength,page=1]{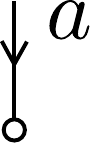}}%
  \end{picture}%
\endgroup%
}}}~~$. \index{Uext@$U$!$\iota^*=\vcenter{\hbox{{\def\svgscale{0.4}
				}}}~~$}
\begin{df}
A unitary $\rfl\in\Hom(W_a,W_{\ovl a})$ is called a \textbf{reflection operator}  of $A$ (with respect to the chosen $\ev$  of $W_a,W_{\ovl a}$), if the following two equations are satisfied:
\begin{gather}
\mu=(\ev_{\ovl a,a}\otimes\id_a)(\rfl\otimes\mu^*),\qquad
\rfl\mu(\rfl^*\otimes\rfl^*)=(\mu^*)^\vee.\label{eq30}
\end{gather}
Pictorially,
\begin{gather}
\vcenter{\hbox{{\def\svgscale{0.6}
\begingroup%
  \makeatletter%
  \providecommand\color[2][]{%
    \errmessage{(Inkscape) Color is used for the text in Inkscape, but the package 'color.sty' is not loaded}%
    \renewcommand\color[2][]{}%
  }%
  \providecommand\transparent[1]{%
    \errmessage{(Inkscape) Transparency is used (non-zero) for the text in Inkscape, but the package 'transparent.sty' is not loaded}%
    \renewcommand\transparent[1]{}%
  }%
  \providecommand\rotatebox[2]{#2}%
  \newcommand*\fsize{\dimexpr\f@size pt\relax}%
  \newcommand*\lineheight[1]{\fontsize{\fsize}{#1\fsize}\selectfont}%
  \ifx\svgwidth\undefined%
    \setlength{\unitlength}{72.29037509bp}%
    \ifx\svgscale\undefined%
      \relax%
    \else%
      \setlength{\unitlength}{\unitlength * \real{\svgscale}}%
    \fi%
  \else%
    \setlength{\unitlength}{\svgwidth}%
  \fi%
  \global\let\svgwidth\undefined%
  \global\let\svgscale\undefined%
  \makeatother%
  \begin{picture}(1,1.27394707)%
    \lineheight{1}%
    \setlength\tabcolsep{0pt}%
    \put(0,0){\includegraphics[width=\unitlength,page=1]{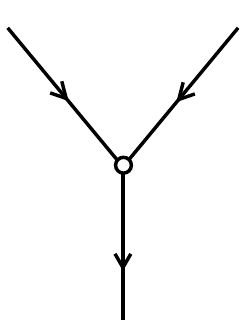}}%
    \put(-0.00459302,1.22181663){\color[rgb]{0,0,0}\makebox(0,0)[lt]{\lineheight{1.25}\smash{\begin{tabular}[t]{l}$a$\end{tabular}}}}%
    \put(0.90022336,1.23190741){\color[rgb]{0,0,0}\makebox(0,0)[lt]{\lineheight{1.25}\smash{\begin{tabular}[t]{l}$a$\end{tabular}}}}%
    \put(0.56084305,0.03546705){\color[rgb]{0,0,0}\makebox(0,0)[lt]{\lineheight{1.25}\smash{\begin{tabular}[t]{l}$a$\end{tabular}}}}%
  \end{picture}%
\endgroup%
}}}~~=~~\vcenter{\hbox{{\def\svgscale{0.6}
\begingroup%
  \makeatletter%
  \providecommand\color[2][]{%
    \errmessage{(Inkscape) Color is used for the text in Inkscape, but the package 'color.sty' is not loaded}%
    \renewcommand\color[2][]{}%
  }%
  \providecommand\transparent[1]{%
    \errmessage{(Inkscape) Transparency is used (non-zero) for the text in Inkscape, but the package 'transparent.sty' is not loaded}%
    \renewcommand\transparent[1]{}%
  }%
  \providecommand\rotatebox[2]{#2}%
  \newcommand*\fsize{\dimexpr\f@size pt\relax}%
  \newcommand*\lineheight[1]{\fontsize{\fsize}{#1\fsize}\selectfont}%
  \ifx\svgwidth\undefined%
    \setlength{\unitlength}{86.58362042bp}%
    \ifx\svgscale\undefined%
      \relax%
    \else%
      \setlength{\unitlength}{\unitlength * \real{\svgscale}}%
    \fi%
  \else%
    \setlength{\unitlength}{\svgwidth}%
  \fi%
  \global\let\svgwidth\undefined%
  \global\let\svgscale\undefined%
  \makeatother%
  \begin{picture}(1,1.05666387)%
    \lineheight{1}%
    \setlength\tabcolsep{0pt}%
    \put(0,0){\includegraphics[width=\unitlength,page=1]{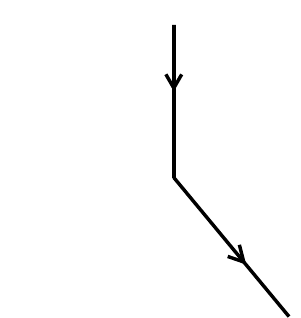}}%
    \put(-0.0038348,1.01488458){\color[rgb]{0,0,0}\makebox(0,0)[lt]{\lineheight{1.25}\smash{\begin{tabular}[t]{l}$a$\end{tabular}}}}%
    \put(0.53873812,1.02156413){\color[rgb]{0,0,0}\makebox(0,0)[lt]{\lineheight{1.25}\smash{\begin{tabular}[t]{l}$a$\end{tabular}}}}%
    \put(0.91669452,0.09417317){\color[rgb]{0,0,0}\makebox(0,0)[lt]{\lineheight{1.25}\smash{\begin{tabular}[t]{l}$a$\end{tabular}}}}%
    \put(0,0){\includegraphics[width=\unitlength,page=2]{reflection-4.pdf}}%
  \end{picture}%
\endgroup%
}}}~~,\label{eq23}\\[1.5ex]
\vcenter{\hbox{{\def\svgscale{0.6}
\begingroup%
  \makeatletter%
  \providecommand\color[2][]{%
    \errmessage{(Inkscape) Color is used for the text in Inkscape, but the package 'color.sty' is not loaded}%
    \renewcommand\color[2][]{}%
  }%
  \providecommand\transparent[1]{%
    \errmessage{(Inkscape) Transparency is used (non-zero) for the text in Inkscape, but the package 'transparent.sty' is not loaded}%
    \renewcommand\transparent[1]{}%
  }%
  \providecommand\rotatebox[2]{#2}%
  \newcommand*\fsize{\dimexpr\f@size pt\relax}%
  \newcommand*\lineheight[1]{\fontsize{\fsize}{#1\fsize}\selectfont}%
  \ifx\svgwidth\undefined%
    \setlength{\unitlength}{67.12987308bp}%
    \ifx\svgscale\undefined%
      \relax%
    \else%
      \setlength{\unitlength}{\unitlength * \real{\svgscale}}%
    \fi%
  \else%
    \setlength{\unitlength}{\svgwidth}%
  \fi%
  \global\let\svgwidth\undefined%
  \global\let\svgscale\undefined%
  \makeatother%
  \begin{picture}(1,1.28594584)%
    \lineheight{1}%
    \setlength\tabcolsep{0pt}%
    \put(0,0){\includegraphics[width=\unitlength,page=1]{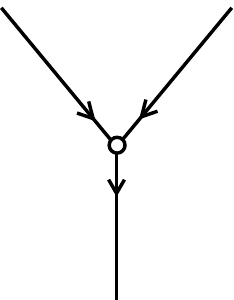}}%
    \put(0.05682584,1.23458174){\color[rgb]{0,0,0}\makebox(0,0)[lt]{\lineheight{1.25}\smash{\begin{tabular}[t]{l}$a$\end{tabular}}}}%
    \put(0.78453577,1.24067445){\color[rgb]{0,0,0}\makebox(0,0)[lt]{\lineheight{1.25}\smash{\begin{tabular}[t]{l}$a$\end{tabular}}}}%
    \put(0.57661207,0.03819352){\color[rgb]{0,0,0}\makebox(0,0)[lt]{\lineheight{1.25}\smash{\begin{tabular}[t]{l}$a$\end{tabular}}}}%
    \put(0,0){\includegraphics[width=\unitlength,page=2]{reflection-5.pdf}}%
  \end{picture}%
\endgroup%
}}}~~=~~\vcenter{\hbox{{\def\svgscale{0.6}
\begingroup%
  \makeatletter%
  \providecommand\color[2][]{%
    \errmessage{(Inkscape) Color is used for the text in Inkscape, but the package 'color.sty' is not loaded}%
    \renewcommand\color[2][]{}%
  }%
  \providecommand\transparent[1]{%
    \errmessage{(Inkscape) Transparency is used (non-zero) for the text in Inkscape, but the package 'transparent.sty' is not loaded}%
    \renewcommand\transparent[1]{}%
  }%
  \providecommand\rotatebox[2]{#2}%
  \newcommand*\fsize{\dimexpr\f@size pt\relax}%
  \newcommand*\lineheight[1]{\fontsize{\fsize}{#1\fsize}\selectfont}%
  \ifx\svgwidth\undefined%
    \setlength{\unitlength}{93.0799952bp}%
    \ifx\svgscale\undefined%
      \relax%
    \else%
      \setlength{\unitlength}{\unitlength * \real{\svgscale}}%
    \fi%
  \else%
    \setlength{\unitlength}{\svgwidth}%
  \fi%
  \global\let\svgwidth\undefined%
  \global\let\svgscale\undefined%
  \makeatother%
  \begin{picture}(1,0.95649317)%
    \lineheight{1}%
    \setlength\tabcolsep{0pt}%
    \put(-0.00356716,0.92384316){\color[rgb]{0,0,0}\makebox(0,0)[lt]{\lineheight{1.25}\smash{\begin{tabular}[t]{l}$a$\end{tabular}}}}%
    \put(0.28526483,0.92356435){\color[rgb]{0,0,0}\makebox(0,0)[lt]{\lineheight{1.25}\smash{\begin{tabular}[t]{l}$a$\end{tabular}}}}%
    \put(0.92250869,0.10959213){\color[rgb]{0,0,0}\makebox(0,0)[lt]{\lineheight{1.25}\smash{\begin{tabular}[t]{l}$a$\end{tabular}}}}%
    \put(0,0){\includegraphics[width=\unitlength,page=1]{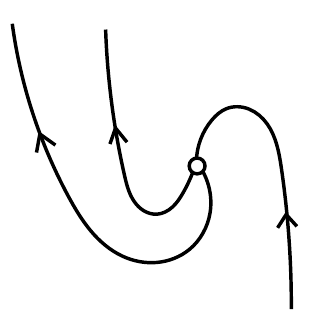}}%
  \end{picture}%
\endgroup%
}}}~~.\label{eq24}
\end{gather}
\end{df}

\begin{pp}
The reflection operator $\rfl$ is uniquely determined by the dual $W_{\ovl a}$ and the $\ev$  of $W_a$ and $W_{\ovl a}$.
\end{pp}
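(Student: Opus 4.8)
The plan is to show that the first defining equation in \eqref{eq30}, namely $\mu=(\ev_{\ovl a,a}\otimes\id_a)(\rfl\otimes\mu^*)$, already pins down $\rfl$ by an explicit formula built from the fixed data $\mu,\iota$ and the chosen $\ev$; the second equation in \eqref{eq30} should not even be needed for uniqueness. Concretely, I expect to produce a closed expression for $\rfl^*$ (hence for $\rfl$) whose right-hand side manifestly involves only $A=(W_a,\mu,\iota)$, the dual $W_{\ovl a}$, and $\ev_{a,\ovl a}$, so that two reflection operators attached to the same duality data must coincide.

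First I would take the adjoint of $\mu=(\ev_{\ovl a,a}\otimes\id_a)(\rfl\otimes\mu^*)$. Using $(\mu^*)^*=\mu$ and $\coev_{\ovl a,a}=\ev_{\ovl a,a}^*$, this gives $\mu^*=(\rfl^*\otimes\mu)(\coev_{\ovl a,a}\otimes\id_a)$. Next I would precompose with the unit $\iota\colon W_0\to W_a$. The key point is that the factor $\mu$ then meets the slot occupied by $\iota$, so the unit axiom $\mu(\id_a\otimes\iota)=\id_a$ collapses it, leaving the transparent relation $\mu^*\iota=(\rfl^*\otimes\id_a)\coev_{\ovl a,a}$. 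Finally I would invert this by a single snake move: tensoring on the right by $\id_{\ovl a}$ and postcomposing with $\id_a\otimes\ev_{a,\ovl a}$, the zigzag identity $(\id_{\ovl a}\otimes\ev_{a,\ovl a})(\coev_{\ovl a,a}\otimes\id_{\ovl a})=\id_{\ovl a}$ from \eqref{eq18} straightens the right-hand side to $\rfl^*$. This yields $\rfl^*=(\id_a\otimes\ev_{a,\ovl a})(\mu^*\iota\otimes\id_{\ovl a})$, and since every ingredient on the right is among $\mu,\iota,W_{\ovl a},\ev_{a,\ovl a}$, the operator $\rfl$ is uniquely determined.

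The individual manipulations are routine graphical calculus, so the one genuine insight, and the step I expect to be the crux, is the second one: realizing that precomposing the adjoint of the first relation with $\iota$ lets $\mu$ absorb a unit and vanish, converting an equation that entangles $\rfl$ with the multiplication into the clean duality relation $\mu^*\iota=(\rfl^*\otimes\id_a)\coev_{\ovl a,a}$ that a single snake can invert. It is worth remarking that no positivity, speciality, or commutativity of $A$ enters the argument, and that only the first of the two equations defining $\rfl$ is used; one should simply verify along the way that $\mu^*\iota$ plays the role of the expected coevaluation-type data so that the final inversion is legitimate.
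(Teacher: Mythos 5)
Your proof is correct and is essentially the paper's own argument read through the $*$-operation: the paper postcomposes \eqref{eq23} with $\iota^*$, applies the unit axiom, and then uses one snake identity to obtain $\rfl=(\iota^*\mu\otimes\id_{\ovl a})(\id_a\otimes\coev_{a,\ovl a})$ (equation \eqref{eq26}), which is exactly the adjoint of your closed formula $\rfl^*=(\id_a\otimes\ev_{a,\ovl a})(\mu^*\iota\otimes\id_{\ovl a})$. Like the paper, you rightly observe that only the first equation of \eqref{eq30} is needed, so the arguments coincide up to taking adjoints throughout.
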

\begin{proof}
Apply $\iota^*$ to the bottom of \eqref{eq23}, and then apply the unit property,  we have $\ev_{\ovl a,a}(\rfl\otimes\id_a)=\iota^*\mu$, which, by rigidity, implies
\begin{align}
\rfl=(\iota^*\mu\otimes\id_{\ovl a})(\id_a\otimes\coev_{a,\ovl a}).\label{eq26}
\end{align}
\end{proof}

\begin{df}
Let $\mc C$ be a $C^*$-tensor category with simple $W_0$. A $\mc C$-algebra $A=(W_a,\mu,\iota)$ in $\mc C$ is called \textbf{unitary}, if $W_a$ is dualizable, and for a choice of $W_{\ovl a}$ dual to $W_a$ and $\ev$ of $W_a,W_{\ovl a}$, there exists a reflection operator $\rfl\in\Hom(W_a,W_{\ovl a})$. If, moreover, the $\ev$ of $W_a,W_{\ovl a}$ are standard, we say that $A$ is  \textbf{s-unitary}\footnote{The letter ``s" stands for several closely related notions: standard evaluations, spherical tensor categories, symmetric Frobenius algebras \cite{FRS02}.}.
\end{df}

The definition of s-unitary $\mc C$-algebras is independent of the choice of duals and standard $\ev$, as shown below:
\begin{pp}\label{lb8}
If $A$ is s-unitary,   then for any $W_{a'}$ dual to $W_a$, and any standard $\wtd\ev$ of $W_a,W_{a'}$, there exists a reflection operator $\wtd\rfl:W_a\rightarrow W_{a'}$ 
\end{pp}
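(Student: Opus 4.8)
The plan is to transport the reflection operator $\rfl$ along the unitaries that compare the two standard evaluations, relying on the uniqueness of standard $\ev$ up to unitaries recalled just before this proposition (cf. \cite{Yam04} lemma 3.9-(iii)). Since $W_{\ovl a}$ and $W_{a'}$ are both duals of the dualizable object $W_a$ they are isomorphic, so I would fix a unitary $T\in\Hom(W_{a'},W_{\ovl a})$. Because both $\ev$ and $\wtd\ev$ are standard, that uniqueness result supplies a unitary $K\in\End(W_a)$ with
\begin{align}
\wtd\ev_{a,a'}=\ev_{a,\ovl a}(K\otimes T),\qquad \wtd\ev_{a',a}=\ev_{\ovl a,a}(T\otimes K);\label{eq:TK}
\end{align}
taking adjoints gives in particular $\wtd\coev_{a,a'}=(K^*\otimes T^*)\coev_{a,\ovl a}$.

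Next I would write down the candidate. Any reflection operator for $(W_{a'},\wtd\ev)$ must satisfy the analogue of \eqref{eq26}, so I apply that formula to the new data and simplify. Sliding $K^*$ across $\coev_{a,\ovl a}$ by the standard identity $(F\otimes\id_{\ovl a})\coev_{a,\ovl a}=(\id_a\otimes F^\vee)\coev_{a,\ovl a}$, together with $(F^*)^\vee=(F^\vee)^*$, yields
\begin{align}
\wtd\rfl=T^*(K^\vee)^*\rfl,
\end{align}
where $K^\vee\in\End(W_{\ovl a})$ is the transpose of $K$. Since $(FG)^\vee=G^\vee F^\vee$ and $(F^*)^\vee=(F^\vee)^*$ for standard $\ev$, the morphism $K^\vee$ is unitary whenever $K$ is, so $\wtd\rfl$ is a composite of three unitaries and hence unitary.

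Then I would verify the first reflection equation \eqref{eq23} for the pair $(\wtd\ev,\wtd\rfl)$. Substituting \eqref{eq:TK} and $\wtd\rfl=T^*(K^\vee)^*\rfl$, using $TT^*=\id_{\ovl a}$, and sliding $K$ through $\ev_{\ovl a,a}$ as its transpose $K^\vee$ so that $K^\vee$ cancels $(K^\vee)^*$, the left-hand side $(\wtd\ev_{a',a}\otimes\id_a)(\wtd\rfl\otimes\mu^*)$ collapses to $(\ev_{\ovl a,a}\otimes\id_a)(\rfl\otimes\mu^*)$, which equals $\mu$ by \eqref{eq23} for $\rfl$.

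The hard part will be the second reflection equation \eqref{eq24}, namely $\wtd\rfl\,\mu(\wtd\rfl^*\otimes\wtd\rfl^*)=(\mu^*)^{\wtd\vee}$. The difficulty is that the transpose $(\mu^*)^{\wtd\vee}$ is taken with respect to $\wtd\ev$ (and, by convention \ref{lb35}, with respect to the induced $\ev$ of $W_a\boxtimes W_a$ from \eqref{eq22}), so it differs from $(\mu^*)^\vee$ by insertions of $T$ and $K$ on each external $W_a$-leg. Using \eqref{eq:TK} I would first rewrite $(\mu^*)^{\wtd\vee}$ in terms of $(\mu^*)^\vee$, then expand the left-hand side through $\wtd\rfl=T^*(K^\vee)^*\rfl$, and finally check that every occurrence of $T$, $K$, and $K^\vee$ cancels by the sphericality (standardness) of the evaluations, reducing the identity to \eqref{eq24} for $\rfl$. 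Organizing this graphical cancellation—keeping track of which transpose of $K$ acts on which leg and invoking $F^\vee={}^\vee F$ at each slide—is the main obstacle; once it is carried out, $\wtd\rfl$ is the desired reflection operator, and by the preceding uniqueness proposition it is the only one.
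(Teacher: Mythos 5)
Your proposal is correct and follows essentially the same route as the paper: the paper likewise defines $\wtd\rfl=(\iota^*\mu\otimes\id_{\ovl a})(\id_a\otimes\wtd\coev_{a,a'})$ via \eqref{eq26}, invokes the up-to-unitary uniqueness of standard evaluations to obtain $\wtd\rfl=T^*(K^\vee)^*\rfl$ and hence unitarity, and then verifies \eqref{eq23} and \eqref{eq24} for the new data. The only organizational difference is at the end: instead of the graphical cancellation you flag as the main obstacle, the paper notes that $\wtd\ev_{a',a}(\wtd\rfl\otimes\id_a)=\iota^*\mu=\ev_{\ovl a,a}(\rfl\otimes\id_a)$, from which both reflection equations follow at once — equivalently, in your setup both tilde evaluations equal the original ones composed with the single unitary $S:=K^\vee T$ on the $W_{a'}$ leg, so your anticipated cancellation does go through coherently.
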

\begin{proof}
Since $A$ is s-unitary, we can choose $W_{\ovl a}$ dual to $W_a$ and standard $\ev$ of $W_a,W_{\ovl a}$ such that there exists a reflection operator $\rfl:W_a\rightarrow W_{\ovl a}$.	Let us define $\wtd\rfl=(\iota^*\mu\otimes\id_{\ovl a})(\id_a\otimes\wtd\coev_{a,a'})$ and show that $\wtd\rfl$ is a reflection operator. We first show that $\wtd\rfl$ is unitary. Choose a unitary $T\in\Hom(W_{a'},W_{\ovl a})$. Then by the up to unitary uniqueness of standard $\ev$, there exists a unitary $K\in\Hom(W_a,W_a)$ such that $\wtd\ev_{a,a'}=\ev_{a,\ovl a}(K\otimes T),\wtd\ev_{a',a}=\ev_{\ovl a,a}(T\otimes K)$. By \eqref{eq25}, $\ev_{a,\ovl a}(K\otimes\id_{\ovl a})=\ev_{a,\ovl a}(\id_a\otimes K^\vee)$. Therefore $\wtd\ev_{a,a'}=\ev_{a,\ovl a}(\id_a\otimes (K^\vee) T)$. Thus $\wtd\rfl=(\iota^*\mu\otimes\id_{\ovl a})(\id_a\otimes T^*(K^\vee)^*)(\id_a\otimes\coev_{a,\ovl a})=(\id_0\otimes T^*(K^\vee)^*)(\iota^*\mu\otimes\id_{\ovl a})(\id_a\otimes\coev_{a,\ovl a})$, which, together with \eqref{eq26}, implies $\wtd\rfl=T^*(K^\vee)^*\rfl$. Thus $\wtd\rfl$ is unitary since $\rfl,K^\vee,T$ are unitary.
	
	Now, from the definition of $\wtd\rfl$, we see that $\wtd\ev_{a',a}(\wtd\rfl\otimes\id_a)=\iota^*\mu$. Therefore $\wtd\ev_{a',a}(\wtd\rfl\otimes\id_a)=\ev_{\ovl a,a}(\rfl\otimes\id_a)$. Using this fact, one can now easily check that $\eqref{eq23}$ and $\eqref{eq24}$ hold for $\wtd\rfl$ and the standard $\wtd\ev$ of $W_a,W_{a'}$.
\end{proof}

We now relate  s-unitary $\mc C$-algebras with unitary VOA extensions. First we need a lemma.

\begin{lm}\label{lb12}
Let $U$ be a preunitary CFT-type extension  of $V$, $A_U=(W_a,\mu,\iota)$ the associated $\RepV$-algebra, and $W_{\ovl a}$ the contragredient module of $W_a$. Then $U$ is a unitary VOA if and only if there exists a unitary $\rfl\in\Hom(W_a,W_{\ovl a})$ satisfying for all $w^{(a)}\in W_a$ that
\begin{gather}
\mc Y_\mu(w^{(a)},z)=\mc Y_{\mu^\dagger}(\rfl w^{(a)},z),\label{eq27}\\
\rfl\mc Y_\mu(\rfl^*\ovl{w^{(a)}},z)\rfl^*=\mc Y_{\ovl\mu}(\ovl{w^{(a)}},z).\label{eq28}
\end{gather}
\end{lm}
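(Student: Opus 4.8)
The plan is to deduce the lemma from Proposition \ref{lb9} applied to the VOA $U$ itself. The vertex operator of $U$ is $\mc Y_\mu$, and the conjugate space $\ovl U$ of $U$ is the contragredient $V$-module $W_{\ovl a}$, the conjugation being the canonical antiunitary $\Co\colon W_a\to W_{\ovl a}$. Since $U$ is preunitary its inner product is normalized, so Proposition \ref{lb9} applies verbatim: $U$ is a unitary VOA if and only if there is a unique unitary $\epsilon_U\colon W_a\to W_{\ovl a}$ satisfying \eqref{eq3} and \eqref{eq4} written out for $U$, namely
\begin{gather}
\mc Y_\mu(w^{(a)},z)=\mc Y_\mu(e^{\ovl zL_1}(-\ovl{z^{-2}})^{L_0}\ovl{\epsilon_U w^{(a)}},\ovl{z^{-1}})^\dagger,\label{eqA}\\
\epsilon_U\mc Y_\mu(\epsilon_U^*\ovl{w^{(a)}},z)\epsilon_U^*=\Co\mc Y_\mu(w^{(a)},\ovl z)\Co.\label{eqB}
\end{gather}
Thus the whole content of the lemma is that, after identifying $\rfl$ with $\epsilon_U$, equation \eqref{eqA} is equivalent to \eqref{eq27} and equation \eqref{eqB} is equivalent to \eqref{eq28}.

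For the first equivalence I would feed the charge vector $\rfl w^{(a)}\in W_{\ovl a}$ into the defining formula of the adjoint intertwining operator $\mc Y_{\mu^\dagger}$; together with $e^{-\im\pi}=-1$ this rewrites the right-hand side of \eqref{eq27} in the shape of the right-hand side of \eqref{eqA}, the conjugation supplied by the adjoint construction turning $\rfl w^{(a)}$ into the charge $\ovl{\epsilon_U w^{(a)}}=\Co\epsilon_U w^{(a)}$ of $\mc Y_\mu$. The two relations then coincide exactly when $\rfl=\epsilon_U$. For the second equivalence I would use the defining formula of the conjugate intertwining operator, $\mc Y_{\ovl\mu}(\ovl{w^{(a)}},z)=\Co\mc Y_\mu(w^{(a)},\ovl z)\Co$ (with $w^{(a)}=\Co\ovl{w^{(a)}}$); the right-hand side of \eqref{eq28} then becomes the right-hand side of \eqref{eqB}, and the left-hand sides agree once $\rfl=\epsilon_U$. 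Reassuringly, both translations force the same identification $\rfl=\epsilon_U$, consistent with the uniqueness of the reflection operator.

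With the translation in place the two implications are short. For the ``if'' direction, a unitary $\rfl$ obeying \eqref{eq27} and \eqref{eq28} yields, via the translation, a unitary linear map $\epsilon_U:=\rfl$ obeying \eqref{eqA} and \eqref{eqB}, whence $U$ is unitary by Proposition \ref{lb9}; note that here only the unitarity and linearity of $\rfl$ are used. For the ``only if'' direction, unitarity of $U$ provides the reflection operator $\epsilon_U$ satisfying \eqref{eqA}--\eqref{eqB}, and the translation gives \eqref{eq27}--\eqref{eq28} with $\rfl:=\epsilon_U$; it remains to verify that $\rfl\in\Hom(W_a,W_{\ovl a})$, i.e.\ that it is a $V$-module map. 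This I would read off from Proposition \ref{lb10} applied to the unitary VOA $U$: the reflection operator $\epsilon_U$ is the unitary $U$-module isomorphism from the vacuum module $U$ onto its contragredient $\ovl U$, hence a fortiori a $V$-module isomorphism $W_a\to W_{\ovl a}$, since $U$ and $V$ share the same $L_0,L_1$ and so the restriction to $V$ of the contragredient $U$-action is the contragredient $V$-action.

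I expect the only delicate part to be the bookkeeping of the antilinear conjugations when passing between the VOA-theoretic relations \eqref{eqA}--\eqref{eqB} and the categorical relations \eqref{eq27}--\eqref{eq28}: one must match the formal-adjoint and conjugation conventions of $\mc Y_{\mu^\dagger}$ and $\mc Y_{\ovl\mu}$ exactly, keeping track of $\ovl{\epsilon_U w^{(a)}}=\Co\epsilon_U w^{(a)}$ and of the factor $e^{-\im\pi}=-1$. The one genuinely non-formal step is the claim in the ``only if'' direction that the reflection operator is automatically a morphism in $\RepV$, for which Proposition \ref{lb10} is the essential input.
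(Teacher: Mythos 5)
Your proposal is correct and follows essentially the same route as the paper's own (much terser) proof: both directions reduce to Proposition \ref{lb9} applied to $U$, with the definitions of the adjoint and conjugate intertwining operators translating \eqref{eq3}--\eqref{eq4} into \eqref{eq27}--\eqref{eq28}, and Proposition \ref{lb10} supplying the one non-formal point that the reflection operator of the unitary VOA $U$ is a $U$-module (hence $V$-module) homomorphism, so that $\rfl\in\Hom(W_a,W_{\ovl a})$. Your added bookkeeping (identifying $\ovl U$ with $W_{\ovl a}$ via $\Co$, and noting that the ``if'' direction uses only the unitarity and linearity of $\rfl$) is consistent with the paper and fills in details the paper leaves implicit.
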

\begin{proof}
Suppose that such $\rfl$ exists, then by proposition \ref{lb9} and the definition of adjoint and conjugate intertwining operators, $U$ is unitary. Conversely, if $U$ is unitary, then by proposition \ref{lb9} there exists a unitary map $\rfl:W_a\rightarrow W_{\ovl a}$ such that \eqref{eq27} and \eqref{eq28} are true. Moreover, by proposition \ref{lb10}, $\rfl$ is a homomorphism of $U$-modules. So it is also a homomorphism of $V$-modules. This proves $\rfl\in\Hom(W_a,W_{\ovl a})$.
\end{proof}

The following is the main result of this section.

\begin{thm}\label{lb17}
Let $V$ be a regular, CFT-type, and completely unitary VOA. Let $U$ be a CFT-type preunitary extension of $V$, and let $A_U=(W_a,\mu,\iota)$ be the haploid commutative $\RepV$-algebra associated to $U$. Then $U$ is unitary if and only if   $A_U$ is  s-unitary.
\end{thm}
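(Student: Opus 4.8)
The plan is to read off the reflection operator of $A_U$ directly from the unitarity criterion for $U$ in Lemma \ref{lb12}. By that lemma, $U$ is a unitary VOA precisely when there is a unitary $\rfl\in\Hom(W_a,W_{\ovl a})$ satisfying the two intertwining-operator identities \eqref{eq27} and \eqref{eq28}. On the categorical side, a reflection operator for $A_U$ (with respect to a choice of $\ev$ for $W_a,W_{\ovl a}$) is by definition a unitary in the same space $\Hom(W_a,W_{\ovl a})$ satisfying \eqref{eq23} and \eqref{eq24}. Thus, taking $W_{\ovl a}$ to be the contragredient module and using the standard $\ev,\coev$ of $\RepV$ from chapter \ref{lb7} (standard by Proposition \ref{lb60}), the theorem reduces to showing that for a fixed unitary $\rfl$ the pair \eqref{eq27}--\eqref{eq28} is equivalent to the pair \eqref{eq23}--\eqref{eq24}.

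For the first identity I would invoke the functoriality relation \eqref{eq31}: applying $\rfl$ to the charge vector, the identity $\mc Y_\mu(w^{(a)},z)=\mc Y_{\mu^\dagger}(\rfl w^{(a)},z)$ is equivalent to the equality of morphisms $\mu=\mu^\dagger(\rfl\otimes\id_a)$. Rewriting $\mu^\dagger=(\ev_{\ovl a,a}\otimes\id_a)(\id_{\ovl a}\otimes\mu^*)$ via Theorem \ref{lb6} (with $i=j=k=a$) and using the interchange law $(\id_{\ovl a}\otimes\mu^*)(\rfl\otimes\id_a)=\rfl\otimes\mu^*$, this becomes $\mu=(\ev_{\ovl a,a}\otimes\id_a)(\rfl\otimes\mu^*)$, which is exactly the first reflection equation \eqref{eq23}. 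Hence \eqref{eq27} and \eqref{eq23} are equivalent.

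For the second identity, \eqref{eq31} with $\rfl^*$ on both the charge and source spaces and $\rfl$ on the target turns $\rfl\mc Y_\mu(\rfl^*\ovl{w^{(a)}},z)\rfl^*=\mc Y_{\ovl\mu}(\ovl{w^{(a)}},z)$ into the morphism identity $\rfl\mu(\rfl^*\otimes\rfl^*)=\ovl\mu$. It then remains to identify the morphism $\ovl\mu$ attached to the conjugate intertwining operator with the transpose $(\mu^*)^\vee$; this is Corollary \ref{lb13} specialized to $\alpha=\mu$. Once $\ovl\mu=(\mu^*)^\vee$ is in hand, the identity is precisely \eqref{eq24}, so \eqref{eq28} and \eqref{eq24} are equivalent. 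Assembling the two equivalences and using that the VOA $\ev,\coev$ are standard, the unitary $\rfl$ of Lemma \ref{lb12} is a reflection operator with respect to standard $\ev$, so $U$ unitary implies $A_U$ s-unitary; conversely, Proposition \ref{lb8} lets us transport any reflection operator to the VOA standard $\ev$, and running the translations backwards recovers \eqref{eq27}--\eqref{eq28}, so $A_U$ s-unitary implies $U$ unitary.

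The step I expect to be the main obstacle is the identification $\ovl\mu=(\mu^*)^\vee$. Corollary \ref{lb13} is stated for a general $\alpha\in\mc V{k\choose i~j}$, whose transpose naturally has source $W_{\ovl j}\boxtimes W_{\ovl i}$ rather than $W_{\ovl i}\boxtimes W_{\ovl j}$; for the algebra we are rescued by $i=j=a$, so that both slots are the single object $W_{\ovl a}$ and the source and target of $(\mu^*)^\vee$ agree with those of $\ovl\mu$ on the nose. One must nonetheless check that the bending prescribed by Corollary \ref{lb13} places the two $\ovl a$-legs in the order demanded by \eqref{eq24}, invoking the commutativity $\mu\circ\ss=\mu$ of $A_U$ should a residual braiding appear. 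The remaining points — that the $\rfl$ of Lemma \ref{lb12} and the reflection operator are literally the same unitary, and that associativity and the unit condition are not needed here — are routine.
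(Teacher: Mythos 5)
Your proposal is correct and follows essentially the same route as the paper's proof: Lemma \ref{lb12} together with \eqref{eq31} reduce the unitarity of $U$ to the morphism identities $\mu=\mu^\dagger(\rfl\otimes\id_a)$ and $\rfl\mu(\rfl^*\otimes\rfl^*)=\ovl\mu$, Theorem \ref{lb6} converts the first into \eqref{eq23}, and Corollary \ref{lb13} converts the second into \eqref{eq24} — including the residual braiding you flagged, which the paper removes exactly as you anticipated via $\ovl\mu=(\ss_{a,a}\mu^*)^\vee=((\mu\ss_{a,a}^{-1})^*)^\vee=(\mu^*)^\vee$ using commutativity of $A_U$. Your explicit appeal to Proposition \ref{lb8} to pass between choices of standard $\ev$ is a correct bookkeeping step that the paper leaves implicit.
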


\begin{proof}
Following convention \ref{lb11}, we let $W_{\ovl a}$ be the contragredient $V$-module of $W_a$, and choose standard $\ev$ for $W_a,W_{\ovl a}$ as in chapter \ref{lb7}. By lemma \ref{lb12} and relation \eqref{eq31}, the unitarity of $U$ is equivalent to the existence of a unitary $\rfl\in\Hom(W_a,W_{\ovl a})$ satisfying
\begin{align}
\mu=\mu^\dagger(\rfl\otimes\id_a),\qquad \rfl\mu(\rfl^*\otimes\rfl^*)=\ovl\mu.\label{eq29}
\end{align}
Now, by theorem \ref{lb6}, $\mu^\dagger=(\ev_{\ovl a,a}\otimes\id_a)(\id_{\ovl a}\otimes\mu^*)$. Thus $\mu^\dagger(\rfl\otimes\id_a)=(\ev_{\ovl a,a}\otimes\id_a)(\rfl\otimes\mu^*)$. Therefore the first equation of \eqref{eq29} is equivalent to the first one of \eqref{eq30}. By corollary \ref{lb13}, $\ovl\mu=(\ss_{a,a}\mu^*)^\vee=((\mu\ss_{a,a}^{-1})^*)^\vee$, which equals $(\mu^*)^\vee$ by the commutativity of $A_U$. Therefore the second equation of \eqref{eq29} is also equivalent to that of \eqref{eq30}. We conclude that $\rfl$ satisfies \eqref{eq29} if and only if $\rfl$ is a reflection operator. Thus the unitarity of $U$ is equivalent to the existence of a reflection operator under the standard $\ev$, which is precisely the s-unitarity of $A_U$.
\end{proof}

\subsection{Unitary $\mc C$-algebras and $C^*$-Frobenius algebras}

Let $A=(W_a,\mu,\iota)$ be a $\mc C$-algebra. $A$ is called a \textbf{$C^*$-Frobenius algebra} in $\mc C$ if $(\id_a\otimes\mu)(\mu^*\otimes\id_a)=\mu^*\mu$. By taking adjoint we have the equivalent condition $\mu^*\mu=(\mu\otimes\id_a)(\id_a\otimes\mu^*)$. Assume in this section that all line segments in the pictures are labeled by $a$.   Then these two equations read
\begin{align}
\vcenter{\hbox{{\def\svgscale{0.6}
\begingroup%
  \makeatletter%
  \providecommand\color[2][]{%
    \errmessage{(Inkscape) Color is used for the text in Inkscape, but the package 'color.sty' is not loaded}%
    \renewcommand\color[2][]{}%
  }%
  \providecommand\transparent[1]{%
    \errmessage{(Inkscape) Transparency is used (non-zero) for the text in Inkscape, but the package 'transparent.sty' is not loaded}%
    \renewcommand\transparent[1]{}%
  }%
  \providecommand\rotatebox[2]{#2}%
  \newcommand*\fsize{\dimexpr\f@size pt\relax}%
  \newcommand*\lineheight[1]{\fontsize{\fsize}{#1\fsize}\selectfont}%
  \ifx\svgwidth\undefined%
    \setlength{\unitlength}{51.37902103bp}%
    \ifx\svgscale\undefined%
      \relax%
    \else%
      \setlength{\unitlength}{\unitlength * \real{\svgscale}}%
    \fi%
  \else%
    \setlength{\unitlength}{\svgwidth}%
  \fi%
  \global\let\svgwidth\undefined%
  \global\let\svgscale\undefined%
  \makeatother%
  \begin{picture}(1,1.63029008)%
    \lineheight{1}%
    \setlength\tabcolsep{0pt}%
    \put(0,0){\includegraphics[width=\unitlength,page=1]{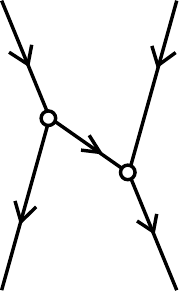}}%
  \end{picture}%
\endgroup%
}}}~~~=~~\vcenter{\hbox{{\def\svgscale{0.6}
\begingroup%
  \makeatletter%
  \providecommand\color[2][]{%
    \errmessage{(Inkscape) Color is used for the text in Inkscape, but the package 'color.sty' is not loaded}%
    \renewcommand\color[2][]{}%
  }%
  \providecommand\transparent[1]{%
    \errmessage{(Inkscape) Transparency is used (non-zero) for the text in Inkscape, but the package 'transparent.sty' is not loaded}%
    \renewcommand\transparent[1]{}%
  }%
  \providecommand\rotatebox[2]{#2}%
  \newcommand*\fsize{\dimexpr\f@size pt\relax}%
  \newcommand*\lineheight[1]{\fontsize{\fsize}{#1\fsize}\selectfont}%
  \ifx\svgwidth\undefined%
    \setlength{\unitlength}{51.15810042bp}%
    \ifx\svgscale\undefined%
      \relax%
    \else%
      \setlength{\unitlength}{\unitlength * \real{\svgscale}}%
    \fi%
  \else%
    \setlength{\unitlength}{\svgwidth}%
  \fi%
  \global\let\svgwidth\undefined%
  \global\let\svgscale\undefined%
  \makeatother%
  \begin{picture}(1,1.6430023)%
    \lineheight{1}%
    \setlength\tabcolsep{0pt}%
    \put(0,0){\includegraphics[width=\unitlength,page=1]{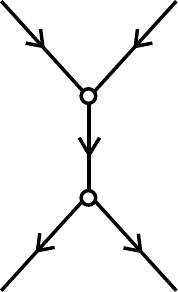}}%
  \end{picture}%
\endgroup%
}}}~~=~~\vcenter{\hbox{{\def\svgscale{0.6}
\begingroup%
  \makeatletter%
  \providecommand\color[2][]{%
    \errmessage{(Inkscape) Color is used for the text in Inkscape, but the package 'color.sty' is not loaded}%
    \renewcommand\color[2][]{}%
  }%
  \providecommand\transparent[1]{%
    \errmessage{(Inkscape) Transparency is used (non-zero) for the text in Inkscape, but the package 'transparent.sty' is not loaded}%
    \renewcommand\transparent[1]{}%
  }%
  \providecommand\rotatebox[2]{#2}%
  \newcommand*\fsize{\dimexpr\f@size pt\relax}%
  \newcommand*\lineheight[1]{\fontsize{\fsize}{#1\fsize}\selectfont}%
  \ifx\svgwidth\undefined%
    \setlength{\unitlength}{51.37902103bp}%
    \ifx\svgscale\undefined%
      \relax%
    \else%
      \setlength{\unitlength}{\unitlength * \real{\svgscale}}%
    \fi%
  \else%
    \setlength{\unitlength}{\svgwidth}%
  \fi%
  \global\let\svgwidth\undefined%
  \global\let\svgscale\undefined%
  \makeatother%
  \begin{picture}(1,1.63029008)%
    \lineheight{1}%
    \setlength\tabcolsep{0pt}%
    \put(0,0){\includegraphics[width=\unitlength,page=1]{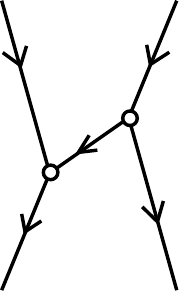}}%
  \end{picture}%
\endgroup%
}}}~~.\label{eq32}
\end{align}
A special (i.e. $\mu\mu^*\in\mathbb C\id_a$) $C^*$-Frobenius algebra is called a \textbf{Q-system}.\footnote{We warn the reader that in the literature there is no agreement  on whether standardness is required in the definition of Q-systems. For example, the Q-systems in \cite{BKLR15} are in fact standard Q-systems in our paper.} We remark that the $\mc C$-algebra $A$ is a Q-system if and only if it is special. In other words, the Frobenius relations \eqref{eq32} are consequences of the unit property, the associativity, and the specialness of $A$; see \cite{LR97} or \cite{BKLR15} lemma 3.7. 

The main goal of this section is to relate  (s-)unitarity to Frobenius property. More precisely, we shall show 
\begin{align}
\text{Unitary $\mc C$-algebras}~&=~\text{$C^*$-Frobenius algebras}\nonumber\\
&~\cup\nonumber\\
\text{Special unitary $\mc C$-algebras}~&=~\text{Q-systems}\nonumber\\
&~\cup\nonumber\\
\text{Special s-unitary $\mc C$-algebras}~&=~\text{standard Q-systems}.\label{eq38}
\end{align}
Moreover, under the assumption of haploid condition, all these notions are equivalent. In the process of the proof we shall also see that \eqref{eq24} is a consequence of \eqref{eq23}. Thus the definition of reflection operator can be simplified to assume only \eqref{eq23}.

To begin with, let us fix a dual $W_{\ovl a}$ of $W_a$ together with evaluations $\ev_{a,\ovl a},\ev_{\ovl a,a}$ of $W_a,W_{\ovl a}$. Choose a unitary $\rfl\in\Hom(W_a,W_{\ovl a})$.
\begin{pp}
If $\rfl$  satisfies \eqref{eq23}, then $\ev_{\ovl a,a}(\rfl\otimes\id_a)=\ev_{a,\ovl a}(\id_a\otimes\rfl)$; equivalently,
\begin{align}
\vcenter{\hbox{{\def\svgscale{0.8}
\begingroup%
  \makeatletter%
  \providecommand\color[2][]{%
    \errmessage{(Inkscape) Color is used for the text in Inkscape, but the package 'color.sty' is not loaded}%
    \renewcommand\color[2][]{}%
  }%
  \providecommand\transparent[1]{%
    \errmessage{(Inkscape) Transparency is used (non-zero) for the text in Inkscape, but the package 'transparent.sty' is not loaded}%
    \renewcommand\transparent[1]{}%
  }%
  \providecommand\rotatebox[2]{#2}%
  \newcommand*\fsize{\dimexpr\f@size pt\relax}%
  \newcommand*\lineheight[1]{\fontsize{\fsize}{#1\fsize}\selectfont}%
  \ifx\svgwidth\undefined%
    \setlength{\unitlength}{38.49520826bp}%
    \ifx\svgscale\undefined%
      \relax%
    \else%
      \setlength{\unitlength}{\unitlength * \real{\svgscale}}%
    \fi%
  \else%
    \setlength{\unitlength}{\svgwidth}%
  \fi%
  \global\let\svgwidth\undefined%
  \global\let\svgscale\undefined%
  \makeatother%
  \begin{picture}(1,0.52580736)%
    \lineheight{1}%
    \setlength\tabcolsep{0pt}%
    \put(0,0){\includegraphics[width=\unitlength,page=1]{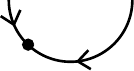}}%
  \end{picture}%
\endgroup%
}}}~~~=~~\vcenter{\hbox{{\def\svgscale{0.8}
\begingroup%
  \makeatletter%
  \providecommand\color[2][]{%
    \errmessage{(Inkscape) Color is used for the text in Inkscape, but the package 'color.sty' is not loaded}%
    \renewcommand\color[2][]{}%
  }%
  \providecommand\transparent[1]{%
    \errmessage{(Inkscape) Transparency is used (non-zero) for the text in Inkscape, but the package 'transparent.sty' is not loaded}%
    \renewcommand\transparent[1]{}%
  }%
  \providecommand\rotatebox[2]{#2}%
  \newcommand*\fsize{\dimexpr\f@size pt\relax}%
  \newcommand*\lineheight[1]{\fontsize{\fsize}{#1\fsize}\selectfont}%
  \ifx\svgwidth\undefined%
    \setlength{\unitlength}{38.49520815bp}%
    \ifx\svgscale\undefined%
      \relax%
    \else%
      \setlength{\unitlength}{\unitlength * \real{\svgscale}}%
    \fi%
  \else%
    \setlength{\unitlength}{\svgwidth}%
  \fi%
  \global\let\svgwidth\undefined%
  \global\let\svgscale\undefined%
  \makeatother%
  \begin{picture}(1,0.52580736)%
    \lineheight{1}%
    \setlength\tabcolsep{0pt}%
    \put(0,0){\includegraphics[width=\unitlength,page=1]{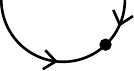}}%
  \end{picture}%
\endgroup%
}}}~~.\label{eq34}
\end{align}
\end{pp}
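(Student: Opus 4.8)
The plan is to reduce \eqref{eq34} to the single identity
\begin{align*}
\ev_{\ovl a,a}(\rfl\otimes\rfl^*)=\ev_{a,\ovl a},
\end{align*}
which is equivalent to it once we use that $\rfl$ is unitary: precomposing this identity with $\id_a\otimes\rfl$ turns the left side into $\ev_{\ovl a,a}(\rfl\otimes\rfl^*\rfl)=\ev_{\ovl a,a}(\rfl\otimes\id_a)$ (here $\rfl^*\rfl=\id_a$) and the right side into $\ev_{a,\ovl a}(\id_a\otimes\rfl)$, which is exactly \eqref{eq34}. The essential input is \eqref{eq23} together with its adjoint, and the whole argument is a short piece of graphical calculus; notably it does not require the associativity of $\mu$.

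First I would take the adjoint of \eqref{eq23}. Since $\coev_{\ovl a,a}=\ev_{\ovl a,a}^*$ by the conventions of Section \ref{lb14}, relation \eqref{eq23} dualizes to
\begin{align*}
\mu^*=(\rfl^*\otimes\mu)(\coev_{\ovl a,a}\otimes\id_a).
\end{align*}
Substituting this expression for $\mu^*$ back into the right-hand side of \eqref{eq23} gives
\begin{align*}
\mu=(\ev_{\ovl a,a}\otimes\id_a)(\rfl\otimes\rfl^*\otimes\mu)(\id_a\otimes\coev_{\ovl a,a}\otimes\id_a)=\mu(\xi\otimes\id_a),
\end{align*}
where $\xi=(\ev_{\ovl a,a}(\rfl\otimes\rfl^*)\otimes\id_a)(\id_a\otimes\coev_{\ovl a,a})\in\End(W_a)$ is the endomorphism obtained by contracting the first two legs while letting $\mu$ act on the last two. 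Composing the identity $\mu(\xi\otimes\id_a)=\mu$ with $\id_a\otimes\iota$ and invoking the unit property $\mu(\id_a\otimes\iota)=\id_a$ collapses it to $\xi=\id_a$, that is,
\begin{align*}
(\ev_{\ovl a,a}(\rfl\otimes\rfl^*)\otimes\id_a)(\id_a\otimes\coev_{\ovl a,a})=\id_a.
\end{align*}

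Finally I would compare this with the snake relation \eqref{eq17}, which in the present notation reads $(\ev_{a,\ovl a}\otimes\id_a)(\id_a\otimes\coev_{\ovl a,a})=\id_a$. Thus both $\ev_{\ovl a,a}(\rfl\otimes\rfl^*)$ and $\ev_{a,\ovl a}$ are sent to $\id_a$ by the bending map $h\mapsto(h\otimes\id_a)(\id_a\otimes\coev_{\ovl a,a})$ on $\Hom(W_a\boxtimes W_{\ovl a},W_0)$, which is a bijection by rigidity; hence $\ev_{\ovl a,a}(\rfl\otimes\rfl^*)=\ev_{a,\ovl a}$, and \eqref{eq34} follows as above. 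I expect the only delicate point to be the bookkeeping of orientations: forming the adjoint of \eqref{eq23} correctly (the order of $\rfl^*$ and $\mu$, and the appearance of $\coev_{\ovl a,a}$ rather than $\coev_{a,\ovl a}$), and making sure that the two legs contracted into $\xi$ are precisely the pair matched by \eqref{eq17} rather than \eqref{eq18}. Once the orientations are tracked consistently, the cancellations via the unit property and rigidity are routine.
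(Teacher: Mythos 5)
Your proof is correct and takes essentially the same route as the paper's: both combine \eqref{eq23} with its adjoint $\mu^*=(\rfl^*\otimes\mu)(\coev_{\ovl a,a}\otimes\id_a)$, cancel via the unitarity of $\rfl$, and then strip off the multiplication using the unit property together with the snake relations \eqref{eq17}--\eqref{eq18}. Your only cosmetic difference is to package the conclusion as the intermediate identity $\ev_{\ovl a,a}(\rfl\otimes\rfl^*)=\ev_{a,\ovl a}$, extracted via the injectivity of the bending map, whereas the paper applies $\iota^*$ (the adjoint of the unit property) directly to its two expressions for $\mu$; the two finishes are interchangeable.
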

\begin{proof}
Take the adjoint of \eqref{eq23} and apply $(\rfl\otimes\id_a)$ to the bottom, we have
\begin{align*}
\vcenter{\hbox{{\def\svgscale{0.6}
\begingroup%
  \makeatletter%
  \providecommand\color[2][]{%
    \errmessage{(Inkscape) Color is used for the text in Inkscape, but the package 'color.sty' is not loaded}%
    \renewcommand\color[2][]{}%
  }%
  \providecommand\transparent[1]{%
    \errmessage{(Inkscape) Transparency is used (non-zero) for the text in Inkscape, but the package 'transparent.sty' is not loaded}%
    \renewcommand\transparent[1]{}%
  }%
  \providecommand\rotatebox[2]{#2}%
  \newcommand*\fsize{\dimexpr\f@size pt\relax}%
  \newcommand*\lineheight[1]{\fontsize{\fsize}{#1\fsize}\selectfont}%
  \ifx\svgwidth\undefined%
    \setlength{\unitlength}{67.12987308bp}%
    \ifx\svgscale\undefined%
      \relax%
    \else%
      \setlength{\unitlength}{\unitlength * \real{\svgscale}}%
    \fi%
  \else%
    \setlength{\unitlength}{\svgwidth}%
  \fi%
  \global\let\svgwidth\undefined%
  \global\let\svgscale\undefined%
  \makeatother%
  \begin{picture}(1,1.25712581)%
    \lineheight{1}%
    \setlength\tabcolsep{0pt}%
    \put(0,0){\includegraphics[width=\unitlength,page=1]{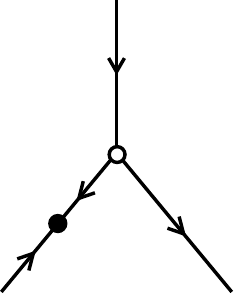}}%
  \end{picture}%
\endgroup%
}}}~~=~~\vcenter{\hbox{{\def\svgscale{0.6}
\begingroup%
  \makeatletter%
  \providecommand\color[2][]{%
    \errmessage{(Inkscape) Color is used for the text in Inkscape, but the package 'color.sty' is not loaded}%
    \renewcommand\color[2][]{}%
  }%
  \providecommand\transparent[1]{%
    \errmessage{(Inkscape) Transparency is used (non-zero) for the text in Inkscape, but the package 'transparent.sty' is not loaded}%
    \renewcommand\transparent[1]{}%
  }%
  \providecommand\rotatebox[2]{#2}%
  \newcommand*\fsize{\dimexpr\f@size pt\relax}%
  \newcommand*\lineheight[1]{\fontsize{\fsize}{#1\fsize}\selectfont}%
  \ifx\svgwidth\undefined%
    \setlength{\unitlength}{67.89149164bp}%
    \ifx\svgscale\undefined%
      \relax%
    \else%
      \setlength{\unitlength}{\unitlength * \real{\svgscale}}%
    \fi%
  \else%
    \setlength{\unitlength}{\svgwidth}%
  \fi%
  \global\let\svgwidth\undefined%
  \global\let\svgscale\undefined%
  \makeatother%
  \begin{picture}(1,1.24302315)%
    \lineheight{1}%
    \setlength\tabcolsep{0pt}%
    \put(0,0){\includegraphics[width=\unitlength,page=1]{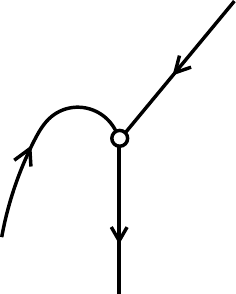}}%
  \end{picture}%
\endgroup%
}}}~~.
\end{align*}
Bending the left legs to the top proves that $\mu$ equals
\begin{align}
\vcenter{\hbox{{\def\svgscale{0.6}
\begingroup%
  \makeatletter%
  \providecommand\color[2][]{%
    \errmessage{(Inkscape) Color is used for the text in Inkscape, but the package 'color.sty' is not loaded}%
    \renewcommand\color[2][]{}%
  }%
  \providecommand\transparent[1]{%
    \errmessage{(Inkscape) Transparency is used (non-zero) for the text in Inkscape, but the package 'transparent.sty' is not loaded}%
    \renewcommand\transparent[1]{}%
  }%
  \providecommand\rotatebox[2]{#2}%
  \newcommand*\fsize{\dimexpr\f@size pt\relax}%
  \newcommand*\lineheight[1]{\fontsize{\fsize}{#1\fsize}\selectfont}%
  \ifx\svgwidth\undefined%
    \setlength{\unitlength}{81.32373538bp}%
    \ifx\svgscale\undefined%
      \relax%
    \else%
      \setlength{\unitlength}{\unitlength * \real{\svgscale}}%
    \fi%
  \else%
    \setlength{\unitlength}{\svgwidth}%
  \fi%
  \global\let\svgwidth\undefined%
  \global\let\svgscale\undefined%
  \makeatother%
  \begin{picture}(1,1.03771298)%
    \lineheight{1}%
    \setlength\tabcolsep{0pt}%
    \put(0,0){\includegraphics[width=\unitlength,page=1]{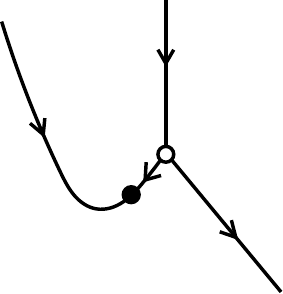}}%
  \end{picture}%
\endgroup%
}}}~~.\label{eq33}
\end{align}
We thus see that the right hand side of \eqref{eq23} equals \eqref{eq33}. Finally, we apply $\iota^*$ to their bottoms and use the unit property. This proves equation \eqref{eq34}.
\end{proof}

\begin{co}\label{lb15}
If $\rfl$ and the evaluations $\ev_{a,\ovl a},\ev_{\ovl a,a}$ of $W_a,W_{\ovl a}$ satisfy \eqref{eq23}, then there exists an evaluation $\wtd\ev_{a,a}$ for the self-dual object $W_a$ such that $\wtd\rfl:=\id_a\in\End(W_a)$ and $\wtd\ev_{a,a}$ also satisfy \eqref{eq23}. Moreover, if $\ev_{a,\ovl a},\ev_{\ovl a,a}$ are standard, then one can also choose $\wtd\ev_{a,a}$ to be standard.
\end{co}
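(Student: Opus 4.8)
The plan is to transport the duality data for the pair $(W_a,W_{\ovl a})$ along the unitary $\rfl$ so that $W_a$ becomes its own dual. Concretely, I would set
\[
\wtd\ev_{a,a}:=\ev_{a,\ovl a}(\id_a\otimes\rfl),
\]
and observe that, by the proposition just proved (equation \eqref{eq34}), this coincides with $\ev_{\ovl a,a}(\rfl\otimes\id_a)$; keeping both expressions available is what makes the subsequent verifications symmetric and clean. I then put $\wtd\coev_{a,a}:=\wtd\ev_{a,a}^*$, which by taking adjoints and using $\coev_{a,\ovl a}=\ev_{a,\ovl a}^*$, $\coev_{\ovl a,a}=\ev_{\ovl a,a}^*$ equals both $(\id_a\otimes\rfl^*)\coev_{a,\ovl a}$ and $(\rfl^*\otimes\id_a)\coev_{\ovl a,a}$.

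First I would check that $\wtd\ev_{a,a}$ is genuinely an evaluation for the self-dual $W_a$, i.e.\ that $(\wtd\ev_{a,a}\otimes\id_a)(\id_a\otimes\wtd\coev_{a,a})=\id_a$. Substituting the definitions, the incoming leg picks up a $\rfl$ and the returning leg a $\rfl^*$; after rewriting via \eqref{eq34} the composite factors as $\rfl^*\circ\big[(\ev_{\ovl a,a}\otimes\id_{\ovl a})(\id_{\ovl a}\otimes\coev_{a,\ovl a})\big]\circ\rfl$, the inner bracket is the zig-zag identity \eqref{eq18} and equals $\id_{\ovl a}$, so the whole thing collapses to $\rfl^*\rfl=\id_a$ by unitarity of $\rfl$. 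This is the one slightly fiddly computation, but it is pure graphical calculus.

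Next, \eqref{eq23} for $\wtd\rfl:=\id_a$ reads $\mu=(\wtd\ev_{a,a}\otimes\id_a)(\id_a\otimes\mu^*)$. Substituting $\wtd\ev_{a,a}=\ev_{\ovl a,a}(\rfl\otimes\id_a)$ and sliding $\rfl$ to the left past $\mu^*$ yields exactly $(\ev_{\ovl a,a}\otimes\id_a)(\rfl\otimes\mu^*)$, which is $\mu$ by the hypothesis \eqref{eq23}. Thus \eqref{eq23} holds verbatim with the identity morphism in place of the reflection operator, as claimed.

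Finally, for the moreover statement I would show that standardness is inherited automatically, with no need to rescale. Writing $\wtd\Tr_L,\wtd\Tr_R$ for the left and right traces built from $\wtd\ev_{a,a},\wtd\coev_{a,a}$, a one-line computation using the form $\wtd\ev_{a,a}=\ev_{a,\ovl a}(\id_a\otimes\rfl)$ together with $\rfl\rfl^*=\id_{\ovl a}$ gives $\wtd\Tr_L(F)\id_0=\ev_{a,\ovl a}(F\otimes\id_{\ovl a})\coev_{a,\ovl a}=\Tr_L(F)\id_0$; symmetrically, the form $\wtd\ev_{a,a}=\ev_{\ovl a,a}(\rfl\otimes\id_a)$ gives $\wtd\Tr_R=\Tr_R$. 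Hence if $\ev_{a,\ovl a},\ev_{\ovl a,a}$ are standard, so that $\Tr_L=\Tr_R$, then $\wtd\Tr_L=\wtd\Tr_R$ and $\wtd\ev_{a,a}$ is standard. The only thing one really has to get right throughout is the bookkeeping of which leg carries $\rfl$ versus $\rfl^*$; the unitarity of $\rfl$ is precisely what makes every stray factor cancel, so I do not expect a genuine conceptual obstacle here.
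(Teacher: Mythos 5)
Your proposal is correct and follows essentially the same route as the paper: it defines $\wtd\ev_{a,a}$ as the common value $\ev_{\ovl a,a}(\rfl\otimes\id_a)=\ev_{a,\ovl a}(\id_a\otimes\rfl)$ from \eqref{eq34}, verifies the zig-zag identity and \eqref{eq23} with $\wtd\rfl=\id_a$, and deduces standardness from the unitarity of $\rfl$. The only difference is that you spell out the computations the paper leaves as ``one easily checks,'' and your trace calculations $\wtd\Tr_L=\Tr_L$, $\wtd\Tr_R=\Tr_R$ are exactly the intended justification.
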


\begin{proof}
We remind the reader that the definition of the evaluations of a self-dual object is given at the beginning of section \ref{lb14}. Assuming that $\rfl:W_a\rightarrow W_{\ovl a}$ satisfies \eqref{eq23}, we simply define $\wtd\ev_{a,a}\in\Hom(W_a\boxtimes W_a,W_0)$ to be the left and also the right hand side of \eqref{eq34}, i.e.,
\begin{align}
\ev_{a,a}:=\ev_{\ovl a,a}(\rfl\otimes\id_a)=\ev_{a,\ovl a}(\id_a\otimes\rfl).\label{eq39}
\end{align}
Then one easily checks that  $(\wtd\ev_{a,a}\otimes\id_a)(\id_a\otimes\wtd\coev_{a,a})=\id_a$ where $\wtd\coev_{a,a}:=(\wtd\ev_{a,a})^*$, and that $\id_a$ and $\wtd\ev_{a,a}$ also satisfy \eqref{eq23}. If the $\ev$ for $W_a,W_{\ovl a}$ are standard, then $\wtd\ev_{a,a}$ is also standard by the unitarity of $\rfl$.
\end{proof}

We shall write $\wtd\ev_{a,a}$ as $\ev_{a,a}$ instead. Then the above corollary says that when \eqref{eq23} holds, we may well assume that $\ovl a=a$, $\ev_{a,\ovl a}=\ev_{\ovl a,a}$ (which is written as $\ev_{a,a}$), and $\rfl=\id_a$. Pictorially, we may remove the arrows and the $\bullet$ on the strings to simplify calculations. Moreover, by \eqref{eq23} and the unit property one has $\ev_{a,a}=\iota^*\mu$:
\begin{align}
\vcenter{\hbox{{\def\svgscale{0.6}
\begingroup%
  \makeatletter%
  \providecommand\color[2][]{%
    \errmessage{(Inkscape) Color is used for the text in Inkscape, but the package 'color.sty' is not loaded}%
    \renewcommand\color[2][]{}%
  }%
  \providecommand\transparent[1]{%
    \errmessage{(Inkscape) Transparency is used (non-zero) for the text in Inkscape, but the package 'transparent.sty' is not loaded}%
    \renewcommand\transparent[1]{}%
  }%
  \providecommand\rotatebox[2]{#2}%
  \newcommand*\fsize{\dimexpr\f@size pt\relax}%
  \newcommand*\lineheight[1]{\fontsize{\fsize}{#1\fsize}\selectfont}%
  \ifx\svgwidth\undefined%
    \setlength{\unitlength}{36.58982956bp}%
    \ifx\svgscale\undefined%
      \relax%
    \else%
      \setlength{\unitlength}{\unitlength * \real{\svgscale}}%
    \fi%
  \else%
    \setlength{\unitlength}{\svgwidth}%
  \fi%
  \global\let\svgwidth\undefined%
  \global\let\svgscale\undefined%
  \makeatother%
  \begin{picture}(1,0.49999181)%
    \lineheight{1}%
    \setlength\tabcolsep{0pt}%
    \put(0,0){\includegraphics[width=\unitlength,page=1]{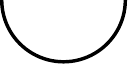}}%
  \end{picture}%
\endgroup%
}}}~~~=~~\vcenter{\hbox{{\def\svgscale{0.6}
\begingroup%
  \makeatletter%
  \providecommand\color[2][]{%
    \errmessage{(Inkscape) Color is used for the text in Inkscape, but the package 'color.sty' is not loaded}%
    \renewcommand\color[2][]{}%
  }%
  \providecommand\transparent[1]{%
    \errmessage{(Inkscape) Transparency is used (non-zero) for the text in Inkscape, but the package 'transparent.sty' is not loaded}%
    \renewcommand\transparent[1]{}%
  }%
  \providecommand\rotatebox[2]{#2}%
  \newcommand*\fsize{\dimexpr\f@size pt\relax}%
  \newcommand*\lineheight[1]{\fontsize{\fsize}{#1\fsize}\selectfont}%
  \ifx\svgwidth\undefined%
    \setlength{\unitlength}{32.94943613bp}%
    \ifx\svgscale\undefined%
      \relax%
    \else%
      \setlength{\unitlength}{\unitlength * \real{\svgscale}}%
    \fi%
  \else%
    \setlength{\unitlength}{\svgwidth}%
  \fi%
  \global\let\svgwidth\undefined%
  \global\let\svgscale\undefined%
  \makeatother%
  \begin{picture}(1,0.83322139)%
    \lineheight{1}%
    \setlength\tabcolsep{0pt}%
    \put(0,0){\includegraphics[width=\unitlength,page=1]{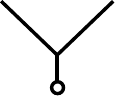}}%
  \end{picture}%
\endgroup%
}}}~~~.\label{eq35}
\end{align}
We now prove the main results of this section. Recall that $\mc C$ is a $C^*$-tensor category with simple $W_0$ and $A=(W_a,\mu,\iota)$ is a $\mc C$-algebra.

\begin{thm}\label{lb16}
$A$ is a unitary $\mc C$-algebra if and only if $A$ is a $C^*$-Frobenius algebra in $\mc C$.
\end{thm}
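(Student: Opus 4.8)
The plan is to prove both implications by reducing everything, via corollary \ref{lb15}, to the self-dual situation in which $W_{\ovl a}=W_a$, the two evaluations coincide ($\ev_{a,\ovl a}=\ev_{\ovl a,a}=:\ev_{a,a}$), and the reflection operator is trivial, $\rfl=\id_a$. In this normalization \eqref{eq35} identifies the evaluation as $\ev_{a,a}=\iota^*\mu$, and the first defining equation \eqref{eq23} of a reflection operator collapses to the single relation
\begin{align}
\mu=(\ev_{a,a}\otimes\id_a)(\id_a\otimes\mu^*).\label{planR1}
\end{align}
Everything then becomes graphical calculus built on \eqref{planR1}, its adjoint, associativity, the unit property, and the snake identities \eqref{eq17}--\eqref{eq18}. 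The key observation is that \eqref{planR1} says precisely that $\mu$ is recovered from $\mu^*$ through the nondegenerate pairing $\ev_{a,a}$, so $\mu^*$ is forced to be the canonical Frobenius comultiplication attached to the invariant form $\iota^*\mu$.

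For the forward implication, assume $A$ is unitary, so \eqref{planR1} holds; taking adjoints gives $\mu^*=(\id_a\otimes\mu)(\coev_{a,a}\otimes\id_a)$ with $\coev_{a,a}=\ev_{a,a}^*$, exhibiting $\mu^*$ as the comultiplication induced by $\ev_{a,a}=\iota^*\mu$. This pairing is invariant, $\ev_{a,a}(\mu\otimes\id_a)=\ev_{a,a}(\id_a\otimes\mu)$, which is immediate from associativity and $\ev_{a,a}=\iota^*\mu$, and it is nondegenerate because it is an evaluation. The standard graphical lemma for an associative algebra with a nondegenerate invariant form then yields the Frobenius relations \eqref{eq32}: invariance of $\ev_{a,a}$ passes, by bending a leg with $\coev_{a,a}$ and applying the snake identities, to the co-invariance move $(\mu\otimes\id_a)(\id_a\otimes\coev_{a,a})=(\id_a\otimes\mu)(\coev_{a,a}\otimes\id_a)$, and combining this with a single use of associativity gives $(\mu\otimes\id_a)(\id_a\otimes\mu^*)=\mu^*\mu$. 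Hence $A$ is a $C^*$-Frobenius algebra.

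For the converse, assume $A$ is a $C^*$-Frobenius algebra. I would set $W_{\ovl a}:=W_a$, $\rfl:=\id_a$, and
\begin{align}
\ev_{a,a}:=\iota^*\mu,\qquad \coev_{a,a}:=\mu^*\iota,\label{planev}
\end{align}
and first check these are genuine evaluations for a self-dual $W_a$: the snake identity $(\ev_{a,a}\otimes\id_a)(\id_a\otimes\coev_{a,a})=\id_a$ unwinds, using \eqref{eq32}, to $(\iota^*\otimes\id_a)\mu^*\mu(\id_a\otimes\iota)=\id_a$, whose two outer factors collapse by the unit property $\mu(\iota\otimes\id_a)=\id_a=\mu(\id_a\otimes\iota)$. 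In particular $W_a$ is dualizable. The same computation, stopped one step earlier, verifies \eqref{planR1}, i.e.\ \eqref{eq23} for $\rfl=\id_a$. Thus $A$ carries a reflection operator and is unitary.

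It remains to address the second relation \eqref{eq24}, and here I would prove the sharper claim promised in the text, that \eqref{eq24} is automatic once \eqref{eq23} holds (note \eqref{eq23} already forces the Frobenius relations by the forward argument). In the normalized picture \eqref{eq24} reads $\mu=(\mu^*)^\vee$, the transpose \eqref{eq25} being taken with respect to $\ev_{a,a},\coev_{a,a}$. Rotating $\mu^*$ by the zig-zag and substituting $\mu^*=(\id_a\otimes\mu)(\coev_{a,a}\otimes\id_a)$ reduces $(\mu^*)^\vee$ to $\mu$ via the snake identities together with \eqref{planR1} and its mirror $\mu=(\id_a\otimes\ev_{a,a})(\mu^*\otimes\id_a)$ (a consequence of the Frobenius relations and the unit). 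I expect the main obstacle to be exactly the graphical lemma of the forward direction — establishing the co-invariance move and assembling it with associativity into the Frobenius relation — since this is where nondegeneracy of $\ev_{a,a}$ must be used with care, and where one must ensure that no standardness of the evaluations is secretly assumed, only dualizability being available for a merely unitary (as opposed to s-unitary) algebra.
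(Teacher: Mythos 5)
Your proposal is correct and takes essentially the same route as the paper's own proof: both directions pass through the normalization of corollary \ref{lb15} (so that $\rfl=\id_a$ and $\ev_{a,a}=\iota^*\mu$), the forward implication extracts the Frobenius relations \eqref{eq32} from \eqref{eq23} together with associativity and the unit (your co-invariance move $(\mu\otimes\id_a)(\id_a\otimes\coev_{a,a})=(\id_a\otimes\mu)(\coev_{a,a}\otimes\id_a)$ is precisely what the paper obtains by taking adjoints of \eqref{eq36} before bending legs), and the converse verifies the snake identity and \eqref{eq23} by applying $\iota$ and $\iota^*$ to the Frobenius relations and invoking the unit property. Your claim that \eqref{eq24} is automatic once \eqref{eq23} holds — settled via the mirror identity $\mu=(\id_a\otimes\ev_{a,a})(\mu^*\otimes\id_a)$ and the snake identities — coincides with the paper's derivation of the one-click rotation invariance \eqref{eq37}, so no gap remains.
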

\begin{thm}
If there exists $W_{\ovl a}$ dual to $W_a$, evaluations $\ev_{a,\ovl a},\ev_{\ovl a,a}$ of $W_a,W_{\ovl a}$, and a unitary $\rfl\in\Hom(W_a,W_{\ovl a})$ satisfying \eqref{eq23}, then $\rfl$ also satisfies \eqref{eq24}. Consequently, $\rfl$ is a reflection operator, and $A$ is unitary. 
\end{thm}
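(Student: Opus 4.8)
The plan is to reduce, via Corollary \ref{lb15} and the remark following it, to the self-dual normalization in which $\ovl a=a$, $\ev_{a,\ovl a}=\ev_{\ovl a,a}=\ev_{a,a}=\iota^*\mu$ (so that $\coev_{a,a}=\ev_{a,a}^*$), and $\rfl=\id_a$. In this normalization the hypothesis \eqref{eq23} becomes the single relation $\mu=(\ev_{a,a}\otimes\id_a)(\id_a\otimes\mu^*)$, while the desired \eqref{eq24} collapses to the bare statement $\mu=(\mu^*)^\vee$. So the whole theorem comes down to proving this one equality of morphisms $W_a\boxtimes W_a\to W_a$. The transfer back to the original data is routine: by \eqref{eq39} the self-dual $\ev_{a,a}$ differs from $\ev_{a,\ovl a},\ev_{\ovl a,a}$ only by the unitary $\rfl$, so conjugating the reduced identity by $\rfl$ (which is unitary) recovers $\rfl\mu(\rfl^*\otimes\rfl^*)=(\mu^*)^\vee$ for the original evaluations.

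First I would expand $(\mu^*)^\vee$ using the definition of the transpose \eqref{eq25} together with the tensor-product formula \eqref{eq22}, writing $\ev_{a\boxtimes a,a\boxtimes a}=\ev_{a,a}(\id_a\otimes\ev_{a,a}\otimes\id_a)$ and $\coev_{a,a}=\ev_{a,a}^*$. Reading off the resulting diagram, $(\mu^*)^\vee$ acts on the two input legs as follows: produce a $\coev_{a,a}$-pair, split its first leg by $\mu^*$, and then cap the two input legs against the two split legs through the nested evaluation (in crossed order). The output is the second leg of the $\coev_{a,a}$-pair.

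The heart of the argument is to collapse this diagram in three moves. First, the split of the coevaluation leg by $\mu^*$, paired against the second input leg through one of the evaluations, recombines by the reduced \eqref{eq23} into a single multiplication $\mu$ of the second input with the (unsplit) coevaluation leg. Next I use \eqref{eq35}, namely $\ev_{a,a}=\iota^*\mu$, to rewrite the remaining evaluation as $\iota^*\mu$, so that associativity of $A$ re-brackets the nested products $\mu(\,\cdot\,,\mu(\,\cdot\,,-))$ as $\mu(\mu(\,\cdot\,,\,\cdot\,),-)$. Finally, the leftover $\ev_{a,a}$–$\coev_{a,a}$ configuration is eliminated by the snake identity $(\ev_{a,a}\otimes\id_a)(\id_a\otimes\coev_{a,a})=\id_a$, leaving exactly $\mu$. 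This proves $\mu=(\mu^*)^\vee$, hence \eqref{eq24}; since $\rfl$ then satisfies both relations \eqref{eq30}, it is a reflection operator and $A$ is unitary.

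The step I expect to be most delicate is the bookkeeping in the collapse: because the nested evaluation from \eqref{eq22} pairs the inputs against the split legs in a crossed fashion, one must feed \eqref{eq23} into precisely the pairing that recombines with the correct coevaluation leg, since feeding it into the other pairing stalls the reduction. A secondary point to verify carefully is that the passage between the reduced identity and the original $\rfl$ is genuinely an equivalence and not merely an implication in one direction; this rests on \eqref{eq39} and the unitarity of $\rfl$, and is the reason the normalization $\rfl=\id_a$ is legitimate without loss of generality.
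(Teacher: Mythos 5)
Your proof is correct, but it follows a genuinely different and more direct route than the paper's. You share with the paper the two bookkeeping steps: the reduction via corollary \ref{lb15} to the normalization $\ovl a=a$, $\ev_{a,a}=\iota^*\mu$, $\rfl=\id_a$ (where \eqref{eq23} becomes $\mu=(\ev_{a,a}\otimes\id_a)(\id_a\otimes\mu^*)$ and \eqref{eq24} becomes $\mu=(\mu^*)^\vee$), and the transfer back through \eqref{eq39}, which the paper performs in exactly the same way in the last lines of its proof. The middle is where you diverge. The paper never proves $\mu=(\mu^*)^\vee$ in one pass: its Step 1 applies \eqref{eq23} twice, with associativity in between, to establish the Frobenius relations \eqref{eq32} (simultaneously proving theorem \ref{lb16}), and its Step 2 then extracts from the Frobenius relations the two-sided identity \eqref{eq36} and the invariance of $\mu$ under 1-click rotations \eqref{eq37}, from which \eqref{eq24} follows by rotating the left-hand side of \eqref{eq36}. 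You instead expand $(\mu^*)^\vee$ via \eqref{eq25} and \eqref{eq22} and collapse it directly, and your collapse checks out: writing the inputs as $x\otimes y$, the coevaluation pair as $(u,v)$, and $\mu^*u=(u_1,u_2)$, the subdiagram consisting of $\mu^*$ on $u$ followed by the inner evaluation pairing $y$ with $u_1$ (with $u_2$ as output) is literally the reduced \eqref{eq23} — so your worry about the crossed pairing is well placed but resolves correctly — after which $\ev_{a,a}=\iota^*\mu$ from \eqref{eq35} plus associativity re-brackets to $\ev_{a,a}(\mu(x\otimes y)\otimes u)\,v$, and the snake identity $(\ev_{a,a}\otimes\id_a)(\id_a\otimes\coev_{a,a})=\id_a$, guaranteed by corollary \ref{lb15}, leaves $\mu(x\otimes y)$. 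Notably, your argument uses only the given left-handed form of \eqref{eq23}, whereas the paper's rotation argument relies on first deriving both forms in \eqref{eq36}. What your route buys is brevity and independence from the Frobenius machinery for this single implication; what the paper's route buys is the Frobenius relations and the rotation invariance of $\mu$ as by-products, which it needs anyway since theorem \ref{lb16} is proved in the same breath.
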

We prove the two theorems simultaneously.
\begin{proof}
Step 1. Suppose there exists a dual object $W_{\ovl a}$, evaluations of $W_a,W_{\ovl a}$, and a unitary morphism $\rfl:W_a\rightarrow W_{\ovl a}$ satisfying \eqref{eq23}. We
assume that $\ovl a=a$, $\ev_{a,a}:=\ev_{a,\ovl a}=\ev_{\ovl a,a}$ is an evaluation of $W_a$, and $\rfl=\id_a$. Then
\begin{align*}
\vcenter{\hbox{{\def\svgscale{0.6}
\begingroup%
  \makeatletter%
  \providecommand\color[2][]{%
    \errmessage{(Inkscape) Color is used for the text in Inkscape, but the package 'color.sty' is not loaded}%
    \renewcommand\color[2][]{}%
  }%
  \providecommand\transparent[1]{%
    \errmessage{(Inkscape) Transparency is used (non-zero) for the text in Inkscape, but the package 'transparent.sty' is not loaded}%
    \renewcommand\transparent[1]{}%
  }%
  \providecommand\rotatebox[2]{#2}%
  \newcommand*\fsize{\dimexpr\f@size pt\relax}%
  \newcommand*\lineheight[1]{\fontsize{\fsize}{#1\fsize}\selectfont}%
  \ifx\svgwidth\undefined%
    \setlength{\unitlength}{51.37902103bp}%
    \ifx\svgscale\undefined%
      \relax%
    \else%
      \setlength{\unitlength}{\unitlength * \real{\svgscale}}%
    \fi%
  \else%
    \setlength{\unitlength}{\svgwidth}%
  \fi%
  \global\let\svgwidth\undefined%
  \global\let\svgscale\undefined%
  \makeatother%
  \begin{picture}(1,1.63029008)%
    \lineheight{1}%
    \setlength\tabcolsep{0pt}%
    \put(0,0){\includegraphics[width=\unitlength,page=1]{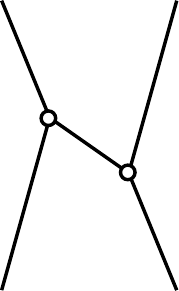}}%
  \end{picture}%
\endgroup%
}}}~~~=~~\vcenter{\hbox{{\def\svgscale{0.6}
\begingroup%
  \makeatletter%
  \providecommand\color[2][]{%
    \errmessage{(Inkscape) Color is used for the text in Inkscape, but the package 'color.sty' is not loaded}%
    \renewcommand\color[2][]{}%
  }%
  \providecommand\transparent[1]{%
    \errmessage{(Inkscape) Transparency is used (non-zero) for the text in Inkscape, but the package 'transparent.sty' is not loaded}%
    \renewcommand\transparent[1]{}%
  }%
  \providecommand\rotatebox[2]{#2}%
  \newcommand*\fsize{\dimexpr\f@size pt\relax}%
  \newcommand*\lineheight[1]{\fontsize{\fsize}{#1\fsize}\selectfont}%
  \ifx\svgwidth\undefined%
    \setlength{\unitlength}{63.46604988bp}%
    \ifx\svgscale\undefined%
      \relax%
    \else%
      \setlength{\unitlength}{\unitlength * \real{\svgscale}}%
    \fi%
  \else%
    \setlength{\unitlength}{\svgwidth}%
  \fi%
  \global\let\svgwidth\undefined%
  \global\let\svgscale\undefined%
  \makeatother%
  \begin{picture}(1,1.31893444)%
    \lineheight{1}%
    \setlength\tabcolsep{0pt}%
    \put(0,0){\includegraphics[width=\unitlength,page=1]{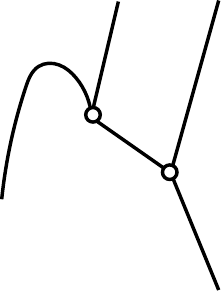}}%
  \end{picture}%
\endgroup%
}}}~~~=~~\vcenter{\hbox{{\def\svgscale{0.6}
\begingroup%
  \makeatletter%
  \providecommand\color[2][]{%
    \errmessage{(Inkscape) Color is used for the text in Inkscape, but the package 'color.sty' is not loaded}%
    \renewcommand\color[2][]{}%
  }%
  \providecommand\transparent[1]{%
    \errmessage{(Inkscape) Transparency is used (non-zero) for the text in Inkscape, but the package 'transparent.sty' is not loaded}%
    \renewcommand\transparent[1]{}%
  }%
  \providecommand\rotatebox[2]{#2}%
  \newcommand*\fsize{\dimexpr\f@size pt\relax}%
  \newcommand*\lineheight[1]{\fontsize{\fsize}{#1\fsize}\selectfont}%
  \ifx\svgwidth\undefined%
    \setlength{\unitlength}{63.46847858bp}%
    \ifx\svgscale\undefined%
      \relax%
    \else%
      \setlength{\unitlength}{\unitlength * \real{\svgscale}}%
    \fi%
  \else%
    \setlength{\unitlength}{\svgwidth}%
  \fi%
  \global\let\svgwidth\undefined%
  \global\let\svgscale\undefined%
  \makeatother%
  \begin{picture}(1,1.32964521)%
    \lineheight{1}%
    \setlength\tabcolsep{0pt}%
    \put(0,0){\includegraphics[width=\unitlength,page=1]{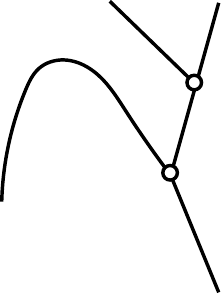}}%
  \end{picture}%
\endgroup%
}}}~~~=~~\vcenter{\hbox{{\def\svgscale{0.6}
\begingroup%
  \makeatletter%
  \providecommand\color[2][]{%
    \errmessage{(Inkscape) Color is used for the text in Inkscape, but the package 'color.sty' is not loaded}%
    \renewcommand\color[2][]{}%
  }%
  \providecommand\transparent[1]{%
    \errmessage{(Inkscape) Transparency is used (non-zero) for the text in Inkscape, but the package 'transparent.sty' is not loaded}%
    \renewcommand\transparent[1]{}%
  }%
  \providecommand\rotatebox[2]{#2}%
  \newcommand*\fsize{\dimexpr\f@size pt\relax}%
  \newcommand*\lineheight[1]{\fontsize{\fsize}{#1\fsize}\selectfont}%
  \ifx\svgwidth\undefined%
    \setlength{\unitlength}{34.71508591bp}%
    \ifx\svgscale\undefined%
      \relax%
    \else%
      \setlength{\unitlength}{\unitlength * \real{\svgscale}}%
    \fi%
  \else%
    \setlength{\unitlength}{\svgwidth}%
  \fi%
  \global\let\svgwidth\undefined%
  \global\let\svgscale\undefined%
  \makeatother%
  \begin{picture}(1,2.45819983)%
    \lineheight{1}%
    \setlength\tabcolsep{0pt}%
    \put(0,0){\includegraphics[width=\unitlength,page=1]{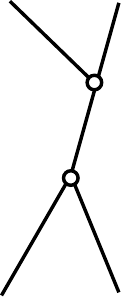}}%
  \end{picture}%
\endgroup%
}}}~~~,
\end{align*}
where we have used successively equation \eqref{eq23}, associativity, and again equation \eqref{eq23} in the above equations. This proves the first and hence also the second equation of \eqref{eq32}. We conclude that $A$ is a $C^*$-Frobenius algebra.

Step 2. Assume that $A$ is a $C^*$-Frobenius algebra in $\mc C$. We shall show that $W_a$ is self-dual, and construct a reflection operator. Define $\ev_{a,a}\in\Hom(W_a\boxtimes W_a,W_0)$ to be $\ev_{a,a}=\iota^*\mu$ (see  \eqref{eq35}). One then easily verifies $(\ev_{a,a}\otimes\id_a)(\id_a\otimes(\ev_{a,a})^*)=\id_a$ by applying respectively $\id_a\otimes\iota$ and $\iota^*\otimes\id_a$ to the top and the bottom of the second equation of \eqref{eq32}, and then applying the unit property. This shows that $W_a$ is self-dual and $\ev_{a,a}=\iota^*\mu$  is an evaluation of $W_a$. Therefore we can also omit arrows. Apply $\iota^*\otimes\id_a$ and $\id_a\otimes\iota^*$ to the bottoms of the second and the first equation of \eqref{eq32} respectively, and then use the unit property and equation \eqref{eq35}, we obtain
\begin{align}
\vcenter{\hbox{{\def\svgscale{0.6}
\begingroup%
  \makeatletter%
  \providecommand\color[2][]{%
    \errmessage{(Inkscape) Color is used for the text in Inkscape, but the package 'color.sty' is not loaded}%
    \renewcommand\color[2][]{}%
  }%
  \providecommand\transparent[1]{%
    \errmessage{(Inkscape) Transparency is used (non-zero) for the text in Inkscape, but the package 'transparent.sty' is not loaded}%
    \renewcommand\transparent[1]{}%
  }%
  \providecommand\rotatebox[2]{#2}%
  \newcommand*\fsize{\dimexpr\f@size pt\relax}%
  \newcommand*\lineheight[1]{\fontsize{\fsize}{#1\fsize}\selectfont}%
  \ifx\svgwidth\undefined%
    \setlength{\unitlength}{66.23684144bp}%
    \ifx\svgscale\undefined%
      \relax%
    \else%
      \setlength{\unitlength}{\unitlength * \real{\svgscale}}%
    \fi%
  \else%
    \setlength{\unitlength}{\svgwidth}%
  \fi%
  \global\let\svgwidth\undefined%
  \global\let\svgscale\undefined%
  \makeatother%
  \begin{picture}(1,1.27407488)%
    \lineheight{1}%
    \setlength\tabcolsep{0pt}%
    \put(0,0){\includegraphics[width=\unitlength,page=1]{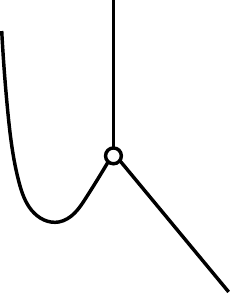}}%
  \end{picture}%
\endgroup%
}}}~~=~~\vcenter{\hbox{{\def\svgscale{0.6}
\begingroup%
  \makeatletter%
  \providecommand\color[2][]{%
    \errmessage{(Inkscape) Color is used for the text in Inkscape, but the package 'color.sty' is not loaded}%
    \renewcommand\color[2][]{}%
  }%
  \providecommand\transparent[1]{%
    \errmessage{(Inkscape) Transparency is used (non-zero) for the text in Inkscape, but the package 'transparent.sty' is not loaded}%
    \renewcommand\transparent[1]{}%
  }%
  \providecommand\rotatebox[2]{#2}%
  \newcommand*\fsize{\dimexpr\f@size pt\relax}%
  \newcommand*\lineheight[1]{\fontsize{\fsize}{#1\fsize}\selectfont}%
  \ifx\svgwidth\undefined%
    \setlength{\unitlength}{67.12987308bp}%
    \ifx\svgscale\undefined%
      \relax%
    \else%
      \setlength{\unitlength}{\unitlength * \real{\svgscale}}%
    \fi%
  \else%
    \setlength{\unitlength}{\svgwidth}%
  \fi%
  \global\let\svgwidth\undefined%
  \global\let\svgscale\undefined%
  \makeatother%
  \begin{picture}(1,1.25712581)%
    \lineheight{1}%
    \setlength\tabcolsep{0pt}%
    \put(0,0){\includegraphics[width=\unitlength,page=1]{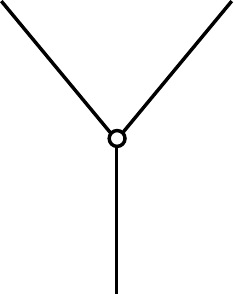}}%
  \end{picture}%
\endgroup%
}}}~~~=~~\vcenter{\hbox{{\def\svgscale{0.6}
\begingroup%
  \makeatletter%
  \providecommand\color[2][]{%
    \errmessage{(Inkscape) Color is used for the text in Inkscape, but the package 'color.sty' is not loaded}%
    \renewcommand\color[2][]{}%
  }%
  \providecommand\transparent[1]{%
    \errmessage{(Inkscape) Transparency is used (non-zero) for the text in Inkscape, but the package 'transparent.sty' is not loaded}%
    \renewcommand\transparent[1]{}%
  }%
  \providecommand\rotatebox[2]{#2}%
  \newcommand*\fsize{\dimexpr\f@size pt\relax}%
  \newcommand*\lineheight[1]{\fontsize{\fsize}{#1\fsize}\selectfont}%
  \ifx\svgwidth\undefined%
    \setlength{\unitlength}{66.23684144bp}%
    \ifx\svgscale\undefined%
      \relax%
    \else%
      \setlength{\unitlength}{\unitlength * \real{\svgscale}}%
    \fi%
  \else%
    \setlength{\unitlength}{\svgwidth}%
  \fi%
  \global\let\svgwidth\undefined%
  \global\let\svgscale\undefined%
  \makeatother%
  \begin{picture}(1,1.27407488)%
    \lineheight{1}%
    \setlength\tabcolsep{0pt}%
    \put(0,0){\includegraphics[width=\unitlength,page=1]{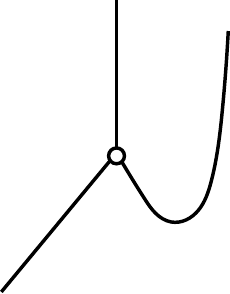}}%
  \end{picture}%
\endgroup%
}}}~~~,\label{eq36}
\end{align}
which proves that $\id_a$ and $\ev_{a,a}$ satisfy equation \eqref{eq23}. Since the first and the third items of \eqref{eq36} are equal, we take the adjoint of them and bend their left or right legs to the top to obtain 
\begin{align}
\vcenter{\hbox{{\def\svgscale{0.6}
\begingroup%
  \makeatletter%
  \providecommand\color[2][]{%
    \errmessage{(Inkscape) Color is used for the text in Inkscape, but the package 'color.sty' is not loaded}%
    \renewcommand\color[2][]{}%
  }%
  \providecommand\transparent[1]{%
    \errmessage{(Inkscape) Transparency is used (non-zero) for the text in Inkscape, but the package 'transparent.sty' is not loaded}%
    \renewcommand\transparent[1]{}%
  }%
  \providecommand\rotatebox[2]{#2}%
  \newcommand*\fsize{\dimexpr\f@size pt\relax}%
  \newcommand*\lineheight[1]{\fontsize{\fsize}{#1\fsize}\selectfont}%
  \ifx\svgwidth\undefined%
    \setlength{\unitlength}{75.1675434bp}%
    \ifx\svgscale\undefined%
      \relax%
    \else%
      \setlength{\unitlength}{\unitlength * \real{\svgscale}}%
    \fi%
  \else%
    \setlength{\unitlength}{\svgwidth}%
  \fi%
  \global\let\svgwidth\undefined%
  \global\let\svgscale\undefined%
  \makeatother%
  \begin{picture}(1,1.06929635)%
    \lineheight{1}%
    \setlength\tabcolsep{0pt}%
    \put(0,0){\includegraphics[width=\unitlength,page=1]{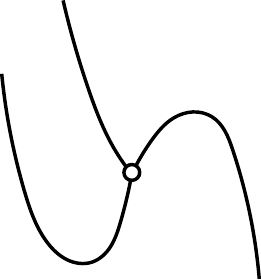}}%
  \end{picture}%
\endgroup%
}}}~~~=~~\vcenter{\hbox{{\def\svgscale{0.6}
			}}}~~~=~~\vcenter{\hbox{{\def\svgscale{0.6}
\begingroup%
  \makeatletter%
  \providecommand\color[2][]{%
    \errmessage{(Inkscape) Color is used for the text in Inkscape, but the package 'color.sty' is not loaded}%
    \renewcommand\color[2][]{}%
  }%
  \providecommand\transparent[1]{%
    \errmessage{(Inkscape) Transparency is used (non-zero) for the text in Inkscape, but the package 'transparent.sty' is not loaded}%
    \renewcommand\transparent[1]{}%
  }%
  \providecommand\rotatebox[2]{#2}%
  \newcommand*\fsize{\dimexpr\f@size pt\relax}%
  \newcommand*\lineheight[1]{\fontsize{\fsize}{#1\fsize}\selectfont}%
  \ifx\svgwidth\undefined%
    \setlength{\unitlength}{75.1675434bp}%
    \ifx\svgscale\undefined%
      \relax%
    \else%
      \setlength{\unitlength}{\unitlength * \real{\svgscale}}%
    \fi%
  \else%
    \setlength{\unitlength}{\svgwidth}%
  \fi%
  \global\let\svgwidth\undefined%
  \global\let\svgscale\undefined%
  \makeatother%
  \begin{picture}(1,1.06929635)%
    \lineheight{1}%
    \setlength\tabcolsep{0pt}%
    \put(0,0){\includegraphics[width=\unitlength,page=1]{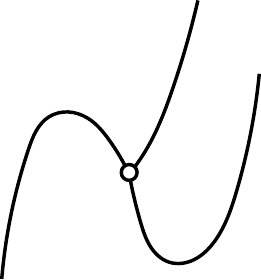}}%
  \end{picture}%
\endgroup%
}}}~~~.\label{eq37}
\end{align}
In other words, $\mu$ is invariant under clockwise and anticlockwise ``1-click rotations". Thus $\mu$ equals the clockwise 1-click rotation of the left hand side of \eqref{eq36}, which proves \eqref{eq24} for $\id_a$ and $\ev_{a,a}$. By \eqref{eq39}, $\rfl$ and the original evaluations $\ev_{a,\ovl a},\ev_{\ovl a,a}$ also satisfy equation \eqref{eq24}. Therefore $\rfl$ is a reflection operator of $A$ with respect to $W_{\ovl a}$ and the given evaluations of $W_a,W_{\ovl a}$. This finishes the proof of the two theorems.
\end{proof}

\begin{co}
$A$ is a special unitary $\mc C$-algebra if and only if $A$ is a Q-system.
\end{co}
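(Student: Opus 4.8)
The plan is to deduce this corollary directly from Theorem \ref{lb16}, since the only new ingredient is the specialness condition, which is a property of the pair $(\mu,\mu^*)$ making no reference to the dual $W_{\ovl a}$ or to a reflection operator. By definition a \emph{special unitary} $\mc C$-algebra is one that is simultaneously special (i.e.\ $\mu\mu^*\in\mathbb C\id_a$) and unitary, while a \emph{Q-system} is by definition a special $C^*$-Frobenius algebra. Thus both sides of the asserted equivalence are obtained by imposing the \emph{same} extra condition---specialness---on the two sides of the equivalence already established in Theorem \ref{lb16}.

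First I would record that the specialness condition $\mu\mu^*\in\mathbb C\id_a$ is intrinsic to the data $A=(W_a,\mu,\iota)$ together with the $*$-operation of $\mc C$; in particular it does not depend on any choice of dual object, evaluation, or reflection operator. Hence ``$A$ is special'' is a well-defined predicate that can be conjoined unambiguously with either characterization. The main argument is then a one-line intersection: by Theorem \ref{lb16}, $A$ is a unitary $\mc C$-algebra if and only if $A$ is a $C^*$-Frobenius algebra in $\mc C$; conjoining ``$A$ is special'' to both sides gives that $A$ is special and unitary if and only if $A$ is special and $C^*$-Frobenius. By the two definitions recalled above, the left-hand condition is precisely ``$A$ is a special unitary $\mc C$-algebra'' and the right-hand condition is precisely ``$A$ is a Q-system,'' which is the assertion.

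I do not expect any genuine obstacle here; the content of the corollary lies entirely in Theorem \ref{lb16}, and the remaining work is only the bookkeeping that specialness is the common refinement applied to both characterizations. The one point worth a moment's care is confirming that the word ``special'' in the phrase ``special unitary $\mc C$-algebra'' denotes literally the same condition $\mu\mu^*\in\mathbb C\id_a$ that appears in the definition of a Q-system, so that the two refinements coincide exactly; this is immediate from the definitions given earlier in this chapter, and it is also exactly the middle line of the schematic equivalence \eqref{eq38}.
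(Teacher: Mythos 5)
Your proof is correct and is essentially the paper's own argument: the paper's proof consists of the single remark that Q-systems are by definition special $C^*$-Frobenius algebras, with the rest implicitly supplied by Theorem \ref{lb16} exactly as you spell out. Your additional observation that specialness ($\mu\mu^*\in\mathbb C\id_a$) is intrinsic to $(W_a,\mu,\iota)$ and independent of any choice of dual or reflection operator is a reasonable piece of bookkeeping that the paper leaves tacit.
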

\begin{proof}
Q-systems are by definition special $C^*$-Frobenius algebras.
\end{proof}

We now relate s-unitarity and standardness. We have seen that if $A$ is unitary then $\ev_{a,a}=\iota^*\mu$ is an evaluation of $W_a$. Therefore, setting $\coev_{a,a}=\ev_{a,a}^*$, we have $\ev_{a,a}\coev_{a,a}=D_A\id_0$ by the definition of $D_A$. By the minimizing property of standard evaluations, we have $D_A\geq d_a$, with equality holds if and only if $\ev_{a,a}$ is standard. Note that $\id_a$ is a reflection operator with respect to $\ev_{a,a}$. Therefore we have the following:

\begin{pp}
Let $A$ be unitary. Then $\ev_{a,a}:=\iota^*\mu$ is an evaluation of the self-dual object $W_a$, and $D_A\geq d_a$. Moreover, we have $D_A=d_a$ if and only if $A$ is s-unitary.
\end{pp}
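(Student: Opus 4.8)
The plan is to reduce the equivalence to a single comparison of evaluations, since the two numerical assertions are essentially assembled already. Because $A$ is unitary, Theorem \ref{lb16} realizes it as a $C^*$-Frobenius algebra, and the accompanying proof shows that $W_a$ is self-dual with $\ev_{a,a}:=\iota^*\mu$ an evaluation and with $\id_a$ a reflection operator relative to it; this is the first assertion, and it is the content recorded in the discussion just before the statement. Setting $\coev_{a,a}=\ev_{a,a}^*$ one computes $\ev_{a,a}\coev_{a,a}=\iota^*\mu\mu^*\iota=D_A\id_0$, so $D_A$ is exactly the quantum dimension attached to the (a priori non-standard) evaluation $\ev_{a,a}$. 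Here $\ovl a=a$ and the symmetric choice forces both the left and right dimensions to equal $D_A$, so the minimizing characterization of standard $\ev$ recalled before Proposition \ref{lb60} yields $D_A\ge d_a$, with equality precisely when $\ev_{a,a}$ is standard.

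It then remains to prove that $\ev_{a,a}$ is standard if and only if $A$ is s-unitary. First I would dispatch the easy direction: if $\ev_{a,a}$ is standard, then taking $W_{\ovl a}:=W_a$, $\ev_{a,\ovl a}=\ev_{\ovl a,a}:=\ev_{a,a}$, and $\rfl:=\id_a$ exhibits $A$ together with standard $\ev$ carrying a reflection operator, which is by definition s-unitarity. For the converse I would show that s-unitarity forces the \emph{intrinsic} evaluation $\iota^*\mu$ itself to be standard. Assuming $A$ is s-unitary, choose a dual $W_{\ovl a}$, standard evaluations $\ev_{a,\ovl a},\ev_{\ovl a,a}$, and a reflection operator $\rfl$. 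Corollary \ref{lb15} manufactures, via \eqref{eq39}, a standard evaluation of the self-dual $W_a$, namely $\wtd\ev_{a,a}=\ev_{\ovl a,a}(\rfl\otimes\id_a)$. The crucial identification is that this coincides with $\iota^*\mu$: applying $\iota^*$ to the bottom of \eqref{eq23} and using the unit property (exactly as in the proof that the reflection operator is unique) gives $\ev_{\ovl a,a}(\rfl\otimes\id_a)=\iota^*\mu=\ev_{a,a}$. Hence $\ev_{a,a}=\wtd\ev_{a,a}$ is standard, and the equality clause above gives $D_A=d_a$.

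The step I expect to be the main obstacle is precisely this last identification. The difficulty is conceptual rather than computational: s-unitarity only asserts the existence of \emph{some} standard dual equipped with a reflection operator, whereas $D_A$ is pinned to the single canonical evaluation $\iota^*\mu$ of $A$. One must therefore verify that the standard evaluation produced from an arbitrary standard choice through Corollary \ref{lb15} is in fact $\iota^*\mu$, so that the choice-dependent standardness transfers to the intrinsic evaluation governing $D_A$. Once that bridge is in place, the chain $D_A=d_a\iff \ev_{a,a}\text{ standard}\iff A\text{ s-unitary}$ closes, and everything else is routine bookkeeping with facts already established.
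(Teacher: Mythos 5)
Your proposal is correct and takes essentially the same route as the paper: the paper also computes $\ev_{a,a}\coev_{a,a}=\iota^*\mu\mu^*\iota=D_A\id_0$, invokes the minimizing characterization of standard evaluations to get $D_A\geq d_a$ with equality iff $\ev_{a,a}$ is standard, and notes that $\id_a$ is a reflection operator with respect to $\ev_{a,a}$. The ``bridge'' you flag as the main obstacle --- that any reflection operator for standard $\ev$ yields, via corollary \ref{lb15}, a standard evaluation of $W_a$ equal to the intrinsic one $\iota^*\mu$ --- is already secured in the paper by the identity $\ev_{\ovl a,a}(\rfl\otimes\id_a)=\iota^*\mu$ from \eqref{eq26} together with \eqref{eq39} and \eqref{eq35}, so nothing is missing.
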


\begin{co}
$A$ is a special s-unitary $\mc C$-algebra if and only if $A$ is a standard Q-system.
\end{co}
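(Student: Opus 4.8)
The plan is to assemble this statement directly from the two results immediately preceding it: the corollary identifying special unitary $\mc C$-algebras with Q-systems, and the proposition showing that for a unitary $A$ one has $D_A\geq d_a$ with equality exactly when $A$ is s-unitary. The key conceptual point I would stress is that ``standard Q-system'' factors into two independent clauses — being a Q-system (a special $C^*$-Frobenius algebra) and being standard as a $\mc C$-algebra (namely $W_a$ dualizable, $A$ special, and $D_A=d_a$) — and that these match, respectively, the $C^*$-Frobenius and the dimension-equality content already extracted in Theorem \ref{lb16} and in the preceding proposition.

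First I would prove the forward implication. Assuming $A$ is special and s-unitary, I would note that s-unitarity implies unitarity, so Theorem \ref{lb16} makes $A$ a $C^*$-Frobenius algebra; together with specialness this gives a Q-system. For standardness, since $A$ is unitary the preceding proposition applies and tells us that s-unitarity forces $D_A=d_a$; combined with the dualizability of $W_a$ (built into the definition of a unitary $\mc C$-algebra) and with specialness, this is precisely the condition that $A$ be a standard $\mc C$-algebra, hence a standard Q-system. For the converse direction I would suppose $A$ is a standard Q-system: being a Q-system it is a special $C^*$-Frobenius algebra, so the preceding corollary makes it special unitary, and standardness furnishes $D_A=d_a$, whereupon the preceding proposition (applicable since $A$ is now known unitary) yields s-unitarity. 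Thus $A$ is special and s-unitary.

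I do not anticipate a real obstacle, since all the categorical and analytic work has been absorbed into Theorem \ref{lb16} and the two preceding results; the corollary is genuinely a bookkeeping assembly. The only step needing care is confirming that the three clauses defining a standard $\mc C$-algebra — specialness, dualizability of $W_a$, and $D_A=d_a$ — are each supplied by the hypotheses on the relevant side, with dualizability coming for free from unitarity (equivalently from the $C^*$-Frobenius structure, as established within the proof of Theorem \ref{lb16}), so that no condition is silently dropped when passing between the two characterizations.
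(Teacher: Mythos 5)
Your proof is correct and follows essentially the same route as the paper: the paper likewise reduces the statement to Theorem \ref{lb16} (unitary $=$ $C^*$-Frobenius) together with the preceding proposition ($D_A=d_a$ iff s-unitary, for unitary $A$), observing that a standard Q-system is by definition a special unitary $\mc C$-algebra with $D_A=d_a$. Your version merely spells out the two implications that the paper compresses into one sentence, and your bookkeeping (dualizability of $W_a$ coming from the definition of unitarity, s-unitarity implying unitarity) is accurate.
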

\begin{proof}
By theorem \ref{lb16} and the definition of Q-systems, $A$ is a standard Q-system if and only if $A$ is a special unitary $\mc C$-algebra satisfying $D_A=d_a$. Thus the corollary follows immediately from the above proposition.
\end{proof}

Thus we've finished proving the relations \eqref{eq38} given at the beginning of this section.

\begin{pp}\label{lb36}
Assuming haploid condition, the six notions in \eqref{eq38} are equivalent.
\end{pp}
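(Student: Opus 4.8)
The plan is to reduce the sixfold equivalence to a single implication and then prove it. Reading \eqref{eq38} downward, the vertical inclusions give at once that a special s-unitary algebra is special unitary and that a special unitary algebra is unitary; dually, a standard Q-system is a Q-system and a Q-system is a $C^*$-Frobenius algebra. Since the horizontal identifications in each row of \eqref{eq38} have already been established (theorem \ref{lb16} and the two corollaries following it), it suffices to prove the reverse direction: \emph{a haploid $C^*$-Frobenius algebra $A=(W_a,\mu,\iota)$ is automatically special and s-unitary}, i.e.\ a standard Q-system. I may assume $A$ is normalized ($\iota^*\iota=\id_0$), and I work with the self-dual evaluation $\ev_{a,a}=\iota^*\mu$ of $W_a$ furnished by theorem \ref{lb16} (see \eqref{eq35}), together with the standard (spherical) trace $\Tr$ of $\mc C$.

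The key observation is that for every $F\in\End(W_a)$ the endomorphism $\mu(F\otimes\id_a)\mu^*\in\End(W_a)$ is a \emph{right} $A$-module map of the regular module. Indeed, using the Frobenius relation \eqref{eq32} in the form $\mu^*\mu=(\id_a\otimes\mu)(\mu^*\otimes\id_a)$ and then associativity, one computes
\begin{align*}
\mu(F\otimes\id_a)\mu^*\mu=\mu\big(\mu(F\otimes\id_a)\mu^*\otimes\id_a\big),
\end{align*}
which says precisely that $\mu(F\otimes\id_a)\mu^*$ commutes with right multiplication. Now haploidness enters: the algebra of right $A$-module endomorphisms of the regular module is $\{\mu(b\otimes\id_a):b\in\Hom(W_0,W_a)\}\cong\Hom(W_0,W_a)$, which is one-dimensional, hence equals $\mathbb C\id_a$. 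Therefore $\mu(F\otimes\id_a)\mu^*=c(F)\id_a$ for a scalar $c(F)$, and symmetrically $\mu(\id_a\otimes F)\mu^*=c'(F)\id_a$ by the mirror (left-module) computation. Taking $F=\id_a$ already gives $\mu\mu^*\in\mathbb C\id_a$, so $A$ is special, i.e.\ a Q-system.

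It remains to prove standardness, which by the proposition preceding this one means $D_A=d_a$, equivalently that $\ev_{a,a}=\iota^*\mu$ is a standard evaluation, i.e.\ $\Tr_L=\Tr_R$ on $\End(W_a)$. Here the scalarness just obtained is decisive. Since $\mu(F\otimes\id_a)\mu^*=c(F)\id_a$, applying $\iota^*(\cdot)\iota$ and using normalization gives $\Tr_L(F)=c(F)$, while applying the standard trace gives $c(F)=\tfrac1{d_a}\Tr\big(\mu(F\otimes\id_a)\mu^*\big)$; similarly $\Tr_R(F)=c'(F)=\tfrac1{d_a}\Tr\big(\mu(\id_a\otimes F)\mu^*\big)$. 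Thus $\Tr_L=\Tr_R$ will follow once the two ``theta-graph'' values $\Tr\big(\mu(F\otimes\id_a)\mu^*\big)$ and $\Tr\big(\mu(\id_a\otimes F)\mu^*\big)$ are shown to coincide.

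I expect this last point to be the main obstacle. It is exactly the cyclic (``$1$-click rotation'') symmetry of the trivalent vertex $\mu$ recorded in \eqref{eq37} which, together with cyclicity of the standard trace, lets one transport $F$ from one internal strand of the bubble to the other; I would prove it by the same graphical manipulation used to derive \eqref{eq37} in the proof of theorem \ref{lb16}. It is worth emphasizing that haploidness is genuinely used: without it $\mu(F\otimes\id_a)\mu^*$ need not be scalar, and then $\Tr_L(F)=\iota^*\mu(F\otimes\id_a)\mu^*\iota$ is only a ``vacuum matrix element'' rather than the normalized spherical trace, so the rotation symmetry \eqref{eq37} (which holds for \emph{every} unitary $\mc C$-algebra) fails to force $\Tr_L=\Tr_R$. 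This is consistent with the existence of non-standard Q-systems in the non-haploid case.
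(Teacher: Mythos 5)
Your reduction of the sixfold equivalence to the single implication ``haploid $C^*$-Frobenius algebra $\Rightarrow$ standard Q-system'' is exactly the paper's reduction, and your specialness argument is correct and complete: the identity $\mu(F\otimes\id_a)\mu^*\mu=\mu\big(\mu(F\otimes\id_a)\mu^*\otimes\id_a\big)$, obtained from the Frobenius relation $\mu^*\mu=(\id_a\otimes\mu)(\mu^*\otimes\id_a)$ and associativity, does show $\mu(F\otimes\id_a)\mu^*\in\End_{-,A}(W_a)$, and haploidness does give $\End_{-,A}(W_a)\cong\Hom(W_0,W_a)=\mathbb C\iota$ (a right-module endomorphism $T$ of the regular module is recovered from $T\iota$ via $T=\mu(T\iota\otimes\id_a)$). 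Taking $F=\id_a$ then yields specialness. This is in substance the proof of \cite{BKLR15} lemma 3.3, which is precisely what the paper cites for this half; so here you have written out an argument the paper outsources.

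The standardness half, however, contains a genuine gap, and it sits exactly where you flagged it. Since $\mu(F\otimes\id_a)\mu^*=\Tr_L(F)\id_a$ and $\mu(\id_a\otimes F)\mu^*=\Tr_R(F)\id_a$ are already scalars (here $\Tr_L,\Tr_R$ are taken with respect to the Frobenius evaluation $\ev_{a,a}=\iota^*\mu$), applying the standard trace $\Tr$ to them gains nothing: your ``theta-graph'' identity $\Tr\big(\mu(F\otimes\id_a)\mu^*\big)=\Tr\big(\mu(\id_a\otimes F)\mu^*\big)$ is verbatim $d_a\Tr_L(F)=d_a\Tr_R(F)$, i.e.\ it \emph{is} the standardness to be proved, not a reduction of it. Moreover, the proposed mechanism cannot close the loop: \eqref{eq37} is rotation invariance with respect to the Frobenius bends $(\iota^*\mu,\mu^*\iota)$, whereas the cyclicity and sphericality of $\Tr$ refer to the standard evaluations; the two duality structures differ a priori by an invertible $K\in\End(W_a)$, and comparing them is exactly what standardness asserts. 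Working purely within the Frobenius calculus, transporting $F$ around the bubble via \eqref{eq37} (equivalently $\mu=(\mu^*)^\vee$, with transposes taken for $\ev_{a,a}$) only yields $\Tr_L(F)=\Tr_R(F^\vee)$, an identity valid for \emph{every} evaluation of \emph{every} unitary $\mc C$-algebra, haploid or not; so no combination of \eqref{eq37} with traciality of $\Tr$ can force $\Tr_L=\Tr_R$. A genuinely new input is required at this point --- a positivity, minimality, or polar-decomposition argument --- and this is what the paper's proof supplies by citing \cite{LR97} section 6 (see also \cite{Mueg03} remark 5.6-3 or \cite{NY18a} theorem 2.9). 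To complete your proof you would need to reproduce one of those arguments, or cite them as the paper does.
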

\begin{proof}
Let $A$ be a haploid $C^*$-Frobenius algebra in $\mc C$. Then by \cite{BKLR15} lemma 3.3, $A$ is special. The standardness follows from \cite{LR97} section 6 (see also \cite{Mueg03} remark 5.6-3, or \cite{NY18a} theorem 2.9). Therefore $A$ is a standard Q-system.
\end{proof}

We can now restate the main result of the last section (theorem \ref{lb17}) in the following way:

\begin{thm}\label{lb18}
Let $V$ be a regular, CFT-type, and completely unitary VOA. Let $U$ be a CFT-type preunitary extension of $V$, and let $A_U=(W_a,\mu,\iota)$ be the haploid commutative $\RepV$-algebra  associated to $U$. Then $U$ is a unitary extension of $V$ if and only if   $A_U$ is   a $C^*$-Frobenius algebra. If this is true then $A_U$ is also a standard Q-system.
\end{thm}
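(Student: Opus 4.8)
The plan is to read this theorem off the dictionary assembled in this and the previous section, so that essentially no new computation is needed. First I would invoke Theorem~\ref{lb17}: under the present hypotheses ($V$ regular, CFT-type, completely unitary; $U$ a CFT-type preunitary extension; $A_U$ the associated haploid commutative $\RepV$-algebra), $U$ is a unitary extension of $V$ if and only if $A_U$ is s-unitary. Hence it suffices to prove that, for the \emph{haploid} algebra $A_U$, s-unitarity is equivalent to being a $C^*$-Frobenius algebra, and that either condition already forces $A_U$ to be a standard Q-system.

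For the forward implication I would argue as follows. If $A_U$ is s-unitary then it is in particular unitary, since s-unitarity only strengthens unitarity by demanding that the chosen evaluations be standard. Theorem~\ref{lb16} then gives that $A_U$ is a $C^*$-Frobenius algebra. For the converse I would bring in the haploid hypothesis together with Proposition~\ref{lb36}: if $A_U$ is a $C^*$-Frobenius algebra, then being one of the six notions in \eqref{eq38}, it coincides with all of them; in particular it is a special s-unitary $\mc C$-algebra, hence s-unitary. Composing this chain of implications with Theorem~\ref{lb17} yields the stated equivalence between unitarity of $U$ and the $C^*$-Frobenius property of $A_U$.

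The final assertion then drops out for free: once $U$ is unitary, $A_U$ is a haploid $C^*$-Frobenius algebra, and Proposition~\ref{lb36} identifies any such algebra as a standard Q-system. So no separate argument is required for the last sentence of the statement.

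I do not expect a genuine obstacle here, since all the substantive work lives in Theorems~\ref{lb17} and~\ref{lb16} and in Proposition~\ref{lb36}. The one point demanding care is the bookkeeping of the hierarchy \emph{unitary} $\supset$ \emph{s-unitary}, and \emph{special} $\cap$ \emph{(s-)unitary}: I must keep track of which notion each cited result supplies, and make sure the haploid hypothesis (guaranteed because $U$ is a CFT-type extension) is fed to Proposition~\ref{lb36} at precisely the step that upgrades a bare $C^*$-Frobenius algebra to a special s-unitary one.
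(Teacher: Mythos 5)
Your proposal is correct and coincides with the paper's own (implicit) argument: the paper presents Theorem \ref{lb18} as a restatement of Theorem \ref{lb17}, justified by exactly the chain you describe — Theorem \ref{lb16} for s-unitary $\Rightarrow$ unitary $\Rightarrow$ $C^*$-Frobenius, and Proposition \ref{lb36} (fed the haploidness coming from the CFT-type hypothesis) for the converse and for the final upgrade to a standard Q-system. Nothing is missing.
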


Let us give an application of this theorem.

\begin{co}\label{lb58}
The $c<1$ CFT-type   unitary VOAs are in one to one correspondence with the  irreducibile  conformal nets with the same central charge $c$. Their classifications are given by \cite{KL04} table 3.
\end{co}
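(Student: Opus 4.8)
The plan is to realize every $c<1$ CFT-type unitary VOA as a unitary conformal extension of the simple Virasoro VOA $L(c,0)$, and then to transport the classification back and forth between the VOA side and the conformal net side through an equivalence of unitary modular tensor categories.

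First I would argue that any $c<1$ CFT-type unitary VOA $U$ contains $L(c,0)$ as its Virasoro subalgebra: the conformal vector $\nu$ generates a unitary copy of the universal Virasoro VOA of central charge $c$, whose simple unitary quotient is $L(c,0)$ precisely when $c$ lies in the discrete unitary series $c=1-6/m(m+1)$, the only values admitting $c<1$ unitary models. Since $L(c,0)$ is rational, $U$ decomposes into finitely many irreducible $L(c,0)$-modules, so $U$ is a finite (hence regular) unitary extension of $V:=L(c,0)$, and conversely every such extension is a $c<1$ CFT-type unitary VOA. By proposition \ref{lb56}, $V$ is regular, CFT-type, and completely unitary, so the main result Theorem \ref{lb18} applies: the $c<1$ CFT-type unitary VOAs are in bijection (up to isomorphism) with the haploid commutative standard Q-systems of trivial twist in the unitary modular tensor category $\Rep^\uni(L(c,0))$.

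Next I would recall the parallel statement on the operator-algebraic side. The irreducible $c<1$ conformal nets are exactly the finite-index local irreducible extensions of the Virasoro net $\Vir_c$, and by Longo--Rehren theory \cite{LR95,BKLR15} these are classified by the haploid commutative standard Q-systems of trivial twist in $\Rep(\Vir_c)$; this is precisely the classification carried out in \cite{KL04} (table 3). The crucial bridge is that $\Rep^\uni(L(c,0))$ and $\Rep(\Vir_c)$ are equivalent as unitary modular tensor categories --- both being the $c<1$ minimal-model unitary MTC --- with the equivalence supplied by \cite{Gui20}. Since an equivalence of unitary ribbon MTCs carries algebra objects to algebra objects while preserving the Frobenius and $*$-structure, commutativity, haploidness, standardness, and the twist, it induces a bijection between the two families of Q-systems, and composing with the two bijections above yields the asserted one-to-one correspondence; as the net side is literally \cite{KL04} table 3, the VOA side is classified by the same table.

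Finally I would use this correspondence to supply the missing unitarity in the two exceptional Dong--Lin models. For the types $(A_{10},E_6)$ and $(A_{28},E_8)$ the extension is realized in \cite{KL04} by a commutative standard Q-system on the net side; pulling it back through the MTC equivalence gives a haploid commutative standard Q-system of trivial twist in $\Rep^\uni(L(c,0))$, which by Theorem \ref{lb18} is the Q-system of a genuinely \emph{unitary} CFT-type extension of $L(c,0)$. Matching fusion data identifies this extension with the corresponding Dong--Lin VOA of \cite{DL15}, proving the latter is unitary. The main obstacle is the MTC equivalence $\Rep^\uni(L(c,0))\simeq\Rep(\Vir_c)$: it is the only transcendental ingredient, and one must be careful that it is an equivalence of unitary \emph{ribbon} categories, so that in particular the twist --- and hence the VOA-versus-super-VOA dichotomy encoded by trivial twist --- is respected; granting this from \cite{Gui20}, the remaining steps are formal transfers of algebra objects across the equivalence.
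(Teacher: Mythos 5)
Your proposal follows essentially the same route as the paper's proof: identify $c<1$ CFT-type unitary VOAs with unitary extensions of $L(c,0)$ via \cite{DL14} theorem 5.1, pass to haploid commutative Q-systems through theorem \ref{lb18} (with \cite{HKL15} and the redundancy of trivial twist from theorem \ref{lb44}), and transport across the unitary MTC equivalence $\Rep^\uni(L(c,0))\simeq\Rep^\ssp(\mc A_c)$ of \cite{Gui20} to the net-side classification of \cite{KL04}. The only quibble is a citation slip: the complete unitarity of $L(c,0)$ comes from \cite{Gui19b} theorem 8.1, not from proposition \ref{lb56}, which concerns tensor products of completely unitary VOAs.
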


\begin{proof}
As shown in \cite{KL04} proposition 3.5, $c<1$ irreducible conformal nets are precisely irreducible finite-index extensions of the Virasoro net $\mc A_c$ with central charge $c$. Thus they are in 1-1 correspondence with the haploid  commutative Q-systems in $\Rep^\ssp(\mc A_c)$, where $\Rep^\ssp(\mc A_c)$ is the unitary modular tensor category of the semisimple representations of $\mc A_c$. By \cite{Gui20} theorem 5.1, $\Rep^\ssp(\mc A_c)$ is unitarily equivalent to $\Rep^\uni(V)$, where $V$ is the unitary Virasoro VOA with central charge $c$. By \cite{HKL15} theorem 3.2, haploid commutative $\Rep^\uni(V)$-algebras with trivial twist are in 1-1 correspondence with CFT-type extensions of $V$. Thus, by theorem \ref{lb18} and the equivalence of unitary modular tensor categories, CFT-type unitary extensions of $V$ $\Leftrightarrow$ haploid commutative Q-systems in $\Rep^\ssp(\mc A_c)$. (The trivial twist condition is redundant; see theorem \ref{lb44}.) But also CFT-type unitary extensions of $V$ $\Leftrightarrow$ unitary VOAs with central charge $c$ by \cite{DL14} theorem 5.1. This proves the desired result.
\end{proof}

\subsection{Strong unitarity of unitary VOA extensions}\label{lb38}

Starting from this section, $A=(W_a,\mu,\iota)$ is assumed to be a unitary $\mc C$-algebra, or equivalently, a $C^*$-Frobenius algebra in $\mc C$. We say that $(W_i,\mu_L)$ (resp. $(W_i,\mu_R)$)\footnote{Later we will write $\mu_L$ and $\mu_R$ as $\mu^i_L$ and $\mu^i_R$ to emphasize the dependence of $\mu_L,\mu_R$ on the $W_i$.}\index{Wi@$(W_i,\mu_L),(W_i,\mu_R)$}  is a \textbf{left $A$-module} (resp. \textbf{right $A$-module}), if $W_i$ is an object in $\mc C$, and $\mu_L\in\Hom(W_a\boxtimes W_i,W_i)$ (resp. $\mu_R\in\Hom(W_i\boxtimes W_a,W_i)$) satisfies the \emph{unit property}
\begin{align}
\mu_L(\iota\otimes\id_a)\qquad\text{resp.}\qquad \mu_R(\id_a\otimes\iota)=\id_a
\end{align}
and the \emph{associativity}:
\begin{align}
\mu_L(\id_a\otimes\mu_L)=\mu_L(\mu\otimes\id_i)\qquad\text{resp.}\qquad \mu_R(\mu_R\otimes\id_a)=\mu_R(\id_i\otimes\mu).
\end{align}
We write $\mu_L=\vcenter{\hbox{{\def\svgscale{0.4}
\begingroup%
  \makeatletter%
  \providecommand\color[2][]{%
    \errmessage{(Inkscape) Color is used for the text in Inkscape, but the package 'color.sty' is not loaded}%
    \renewcommand\color[2][]{}%
  }%
  \providecommand\transparent[1]{%
    \errmessage{(Inkscape) Transparency is used (non-zero) for the text in Inkscape, but the package 'transparent.sty' is not loaded}%
    \renewcommand\transparent[1]{}%
  }%
  \providecommand\rotatebox[2]{#2}%
  \newcommand*\fsize{\dimexpr\f@size pt\relax}%
  \newcommand*\lineheight[1]{\fontsize{\fsize}{#1\fsize}\selectfont}%
  \ifx\svgwidth\undefined%
    \setlength{\unitlength}{54.19421239bp}%
    \ifx\svgscale\undefined%
      \relax%
    \else%
      \setlength{\unitlength}{\unitlength * \real{\svgscale}}%
    \fi%
  \else%
    \setlength{\unitlength}{\svgwidth}%
  \fi%
  \global\let\svgwidth\undefined%
  \global\let\svgscale\undefined%
  \makeatother%
  \begin{picture}(1,1.50252776)%
    \lineheight{1}%
    \setlength\tabcolsep{0pt}%
    \put(0,0){\includegraphics[width=\unitlength,page=1]{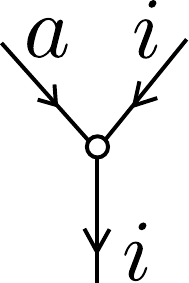}}%
  \end{picture}%
\endgroup%
}}}~~,~\mu_L^*=\vcenter{\hbox{{\def\svgscale{0.4}
\begingroup%
  \makeatletter%
  \providecommand\color[2][]{%
    \errmessage{(Inkscape) Color is used for the text in Inkscape, but the package 'color.sty' is not loaded}%
    \renewcommand\color[2][]{}%
  }%
  \providecommand\transparent[1]{%
    \errmessage{(Inkscape) Transparency is used (non-zero) for the text in Inkscape, but the package 'transparent.sty' is not loaded}%
    \renewcommand\transparent[1]{}%
  }%
  \providecommand\rotatebox[2]{#2}%
  \newcommand*\fsize{\dimexpr\f@size pt\relax}%
  \newcommand*\lineheight[1]{\fontsize{\fsize}{#1\fsize}\selectfont}%
  \ifx\svgwidth\undefined%
    \setlength{\unitlength}{65.8609381bp}%
    \ifx\svgscale\undefined%
      \relax%
    \else%
      \setlength{\unitlength}{\unitlength * \real{\svgscale}}%
    \fi%
  \else%
    \setlength{\unitlength}{\svgwidth}%
  \fi%
  \global\let\svgwidth\undefined%
  \global\let\svgscale\undefined%
  \makeatother%
  \begin{picture}(1,1.15378713)%
    \lineheight{1}%
    \setlength\tabcolsep{0pt}%
    \put(0,0){\includegraphics[width=\unitlength,page=1]{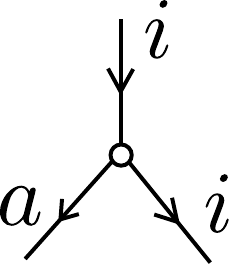}}%
  \end{picture}%
\endgroup%
}}}~~,~\mu_R=\vcenter{\hbox{{\def\svgscale{0.4}
\begingroup%
  \makeatletter%
  \providecommand\color[2][]{%
    \errmessage{(Inkscape) Color is used for the text in Inkscape, but the package 'color.sty' is not loaded}%
    \renewcommand\color[2][]{}%
  }%
  \providecommand\transparent[1]{%
    \errmessage{(Inkscape) Transparency is used (non-zero) for the text in Inkscape, but the package 'transparent.sty' is not loaded}%
    \renewcommand\transparent[1]{}%
  }%
  \providecommand\rotatebox[2]{#2}%
  \newcommand*\fsize{\dimexpr\f@size pt\relax}%
  \newcommand*\lineheight[1]{\fontsize{\fsize}{#1\fsize}\selectfont}%
  \ifx\svgwidth\undefined%
    \setlength{\unitlength}{54.19421239bp}%
    \ifx\svgscale\undefined%
      \relax%
    \else%
      \setlength{\unitlength}{\unitlength * \real{\svgscale}}%
    \fi%
  \else%
    \setlength{\unitlength}{\svgwidth}%
  \fi%
  \global\let\svgwidth\undefined%
  \global\let\svgscale\undefined%
  \makeatother%
  \begin{picture}(1,1.48858912)%
    \lineheight{1}%
    \setlength\tabcolsep{0pt}%
    \put(0,0){\includegraphics[width=\unitlength,page=1]{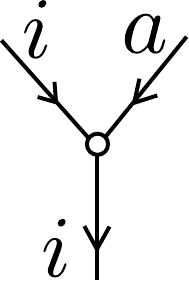}}%
  \end{picture}%
\endgroup%
}}}~~,~\mu_R^*=\vcenter{\hbox{{\def\svgscale{0.4}
\begingroup%
  \makeatletter%
  \providecommand\color[2][]{%
    \errmessage{(Inkscape) Color is used for the text in Inkscape, but the package 'color.sty' is not loaded}%
    \renewcommand\color[2][]{}%
  }%
  \providecommand\transparent[1]{%
    \errmessage{(Inkscape) Transparency is used (non-zero) for the text in Inkscape, but the package 'transparent.sty' is not loaded}%
    \renewcommand\transparent[1]{}%
  }%
  \providecommand\rotatebox[2]{#2}%
  \newcommand*\fsize{\dimexpr\f@size pt\relax}%
  \newcommand*\lineheight[1]{\fontsize{\fsize}{#1\fsize}\selectfont}%
  \ifx\svgwidth\undefined%
    \setlength{\unitlength}{67.39948185bp}%
    \ifx\svgscale\undefined%
      \relax%
    \else%
      \setlength{\unitlength}{\unitlength * \real{\svgscale}}%
    \fi%
  \else%
    \setlength{\unitlength}{\svgwidth}%
  \fi%
  \global\let\svgwidth\undefined%
  \global\let\svgscale\undefined%
  \makeatother%
  \begin{picture}(1,1.14538166)%
    \lineheight{1}%
    \setlength\tabcolsep{0pt}%
    \put(0,0){\includegraphics[width=\unitlength,page=1]{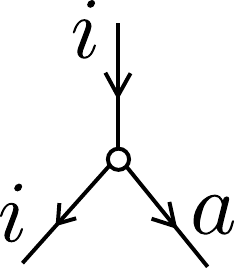}}%
  \end{picture}%
\endgroup%
}}}~~$.
If $(W_i,\mu_L)$ is a left $A$-module and $(W_i,\mu_R)$ is a right $A$-module, we say that $(W_i,\mu_L,\mu_R)$ is an \textbf{A-bimodule} if the following bimodule associativity holds:
\begin{align}
\mu_R(\mu_L\otimes\id_a)=\mu_L(\id_a\otimes\mu_R).
\end{align}
We leave it to the reader to draw the pictures of associativity and unit property. We abbreviate $(W_i,\mu_L)$, $(W_i,\mu_R)$, or $(W_i,\mu_L,\mu_R)$ to $W_i$ when no confusion arises.

Set $\ev_{a,a}=\iota^*\mu$ as in the last section. A left (resp. right) $A$-module $(W_i,\mu_L)$ (resp. $(W_i,\mu_R)$) is called \textbf{unitary}, if
\begin{align}
\mu_L=(\ev_{a,a}\otimes\id_i)(\id_a\otimes\mu_L^*)\qquad\text{resp.}\qquad \mu_R=(\id_i\otimes\ev_{a,a})(\mu_R^*\otimes\id_a).
\end{align}
An $A$-bimodule $(W_a,\mu_L,\mu_R)$ is called \textbf{unitary} if $(W_a,\mu_L)$ is a unitary left $A$-module and $(W_a,\mu_R)$ is a unitary right $A$-module. Unitarity can be stated for any evaluations and reflection operators:
\begin{pp}\label{lb20}
	Let $W_{\ovl a}$ be dual to $W_a$, $\ev_{a,\ovl a},\ev_{\ovl a,a}$ evaluations of $W_{a,\ovl a},W_{\ovl a,a}$, and $\rfl:W_a\rightarrow W_{\ovl a}$ a reflection operator. Then a left (resp. right) $A$-module $(W_a,\mu_L)$ (resp. $(W_a,\mu_R)$) is unitary if and only if
	\begin{align}
	\mu_L=(\ev_{\ovl a,a}\otimes\id_i)(\rfl\otimes\mu_L^*)\qquad\text{resp.}\qquad \mu_R=(\id_i\otimes\ev_{a,\ovl a})(\mu_R^*\otimes\rfl).
	\end{align}
	Graphically,
\begin{align}
\vcenter{\hbox{{\def\svgscale{0.6}
\begingroup%
  \makeatletter%
  \providecommand\color[2][]{%
    \errmessage{(Inkscape) Color is used for the text in Inkscape, but the package 'color.sty' is not loaded}%
    \renewcommand\color[2][]{}%
  }%
  \providecommand\transparent[1]{%
    \errmessage{(Inkscape) Transparency is used (non-zero) for the text in Inkscape, but the package 'transparent.sty' is not loaded}%
    \renewcommand\transparent[1]{}%
  }%
  \providecommand\rotatebox[2]{#2}%
  \newcommand*\fsize{\dimexpr\f@size pt\relax}%
  \newcommand*\lineheight[1]{\fontsize{\fsize}{#1\fsize}\selectfont}%
  \ifx\svgwidth\undefined%
    \setlength{\unitlength}{71.02084401bp}%
    \ifx\svgscale\undefined%
      \relax%
    \else%
      \setlength{\unitlength}{\unitlength * \real{\svgscale}}%
    \fi%
  \else%
    \setlength{\unitlength}{\svgwidth}%
  \fi%
  \global\let\svgwidth\undefined%
  \global\let\svgscale\undefined%
  \makeatother%
  \begin{picture}(1,1.29726949)%
    \lineheight{1}%
    \setlength\tabcolsep{0pt}%
    \put(0,0){\includegraphics[width=\unitlength,page=1]{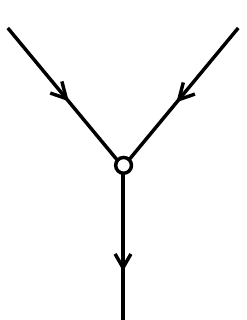}}%
    \put(-0.00426261,1.24365718){\color[rgb]{0,0,0}\makebox(0,0)[lt]{\lineheight{1.25}\smash{\begin{tabular}[t]{l}$a$\end{tabular}}}}%
    \put(0.91672779,1.25392834){\color[rgb]{0,0,0}\makebox(0,0)[lt]{\lineheight{1.25}\smash{\begin{tabular}[t]{l}$i$\end{tabular}}}}%
    \put(0.57128089,0.03610104){\color[rgb]{0,0,0}\makebox(0,0)[lt]{\lineheight{1.25}\smash{\begin{tabular}[t]{l}$i$\end{tabular}}}}%
  \end{picture}%
\endgroup%
}}}~~=~~\vcenter{\hbox{{\def\svgscale{0.6}
\begingroup%
  \makeatletter%
  \providecommand\color[2][]{%
    \errmessage{(Inkscape) Color is used for the text in Inkscape, but the package 'color.sty' is not loaded}%
    \renewcommand\color[2][]{}%
  }%
  \providecommand\transparent[1]{%
    \errmessage{(Inkscape) Transparency is used (non-zero) for the text in Inkscape, but the package 'transparent.sty' is not loaded}%
    \renewcommand\transparent[1]{}%
  }%
  \providecommand\rotatebox[2]{#2}%
  \newcommand*\fsize{\dimexpr\f@size pt\relax}%
  \newcommand*\lineheight[1]{\fontsize{\fsize}{#1\fsize}\selectfont}%
  \ifx\svgwidth\undefined%
    \setlength{\unitlength}{85.31408934bp}%
    \ifx\svgscale\undefined%
      \relax%
    \else%
      \setlength{\unitlength}{\unitlength * \real{\svgscale}}%
    \fi%
  \else%
    \setlength{\unitlength}{\svgwidth}%
  \fi%
  \global\let\svgwidth\undefined%
  \global\let\svgscale\undefined%
  \makeatother%
  \begin{picture}(1,1.0728456)%
    \lineheight{1}%
    \setlength\tabcolsep{0pt}%
    \put(0,0){\includegraphics[width=\unitlength,page=1]{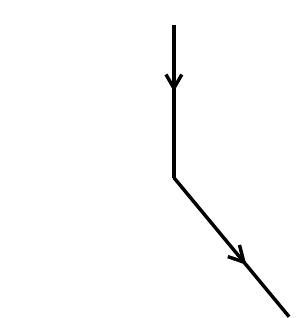}}%
    \put(-0.00354847,1.02998675){\color[rgb]{0,0,0}\makebox(0,0)[lt]{\lineheight{1.25}\smash{\begin{tabular}[t]{l}$a$\end{tabular}}}}%
    \put(0.54709831,1.03676569){\color[rgb]{0,0,0}\makebox(0,0)[lt]{\lineheight{1.25}\smash{\begin{tabular}[t]{l}$i$\end{tabular}}}}%
    \put(0.93067895,0.09557453){\color[rgb]{0,0,0}\makebox(0,0)[lt]{\lineheight{1.25}\smash{\begin{tabular}[t]{l}$i$\end{tabular}}}}%
    \put(0,0){\includegraphics[width=\unitlength,page=2]{unitary-module-2.pdf}}%
  \end{picture}%
\endgroup%
}}}~~\qquad\text{resp.}\qquad~~\vcenter{\hbox{{\def\svgscale{0.6}
\begingroup%
  \makeatletter%
  \providecommand\color[2][]{%
    \errmessage{(Inkscape) Color is used for the text in Inkscape, but the package 'color.sty' is not loaded}%
    \renewcommand\color[2][]{}%
  }%
  \providecommand\transparent[1]{%
    \errmessage{(Inkscape) Transparency is used (non-zero) for the text in Inkscape, but the package 'transparent.sty' is not loaded}%
    \renewcommand\transparent[1]{}%
  }%
  \providecommand\rotatebox[2]{#2}%
  \newcommand*\fsize{\dimexpr\f@size pt\relax}%
  \newcommand*\lineheight[1]{\fontsize{\fsize}{#1\fsize}\selectfont}%
  \ifx\svgwidth\undefined%
    \setlength{\unitlength}{72.79374784bp}%
    \ifx\svgscale\undefined%
      \relax%
    \else%
      \setlength{\unitlength}{\unitlength * \real{\svgscale}}%
    \fi%
  \else%
    \setlength{\unitlength}{\svgwidth}%
  \fi%
  \global\let\svgwidth\undefined%
  \global\let\svgscale\undefined%
  \makeatother%
  \begin{picture}(1,1.26603079)%
    \lineheight{1}%
    \setlength\tabcolsep{0pt}%
    \put(0,0){\includegraphics[width=\unitlength,page=1]{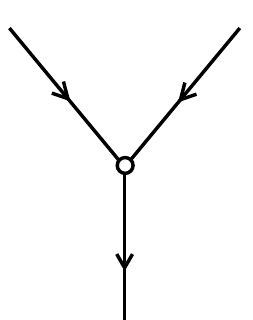}}%
    \put(0.90083283,1.22374522){\color[rgb]{0,0,0}\makebox(0,0)[lt]{\lineheight{1.25}\smash{\begin{tabular}[t]{l}$a$\end{tabular}}}}%
    \put(-0.0041588,1.21093628){\color[rgb]{0,0,0}\makebox(0,0)[lt]{\lineheight{1.25}\smash{\begin{tabular}[t]{l}$i$\end{tabular}}}}%
    \put(0.34162516,0.03314629){\color[rgb]{0,0,0}\makebox(0,0)[lt]{\lineheight{1.25}\smash{\begin{tabular}[t]{l}$i$\end{tabular}}}}%
  \end{picture}%
\endgroup%
}}}~~=~~\vcenter{\hbox{{\def\svgscale{0.6}
\begingroup%
  \makeatletter%
  \providecommand\color[2][]{%
    \errmessage{(Inkscape) Color is used for the text in Inkscape, but the package 'color.sty' is not loaded}%
    \renewcommand\color[2][]{}%
  }%
  \providecommand\transparent[1]{%
    \errmessage{(Inkscape) Transparency is used (non-zero) for the text in Inkscape, but the package 'transparent.sty' is not loaded}%
    \renewcommand\transparent[1]{}%
  }%
  \providecommand\rotatebox[2]{#2}%
  \newcommand*\fsize{\dimexpr\f@size pt\relax}%
  \newcommand*\lineheight[1]{\fontsize{\fsize}{#1\fsize}\selectfont}%
  \ifx\svgwidth\undefined%
    \setlength{\unitlength}{94.759343bp}%
    \ifx\svgscale\undefined%
      \relax%
    \else%
      \setlength{\unitlength}{\unitlength * \real{\svgscale}}%
    \fi%
  \else%
    \setlength{\unitlength}{\svgwidth}%
  \fi%
  \global\let\svgwidth\undefined%
  \global\let\svgscale\undefined%
  \makeatother%
  \begin{picture}(1,0.97228552)%
    \lineheight{1}%
    \setlength\tabcolsep{0pt}%
    \put(0,0){\includegraphics[width=\unitlength,page=1]{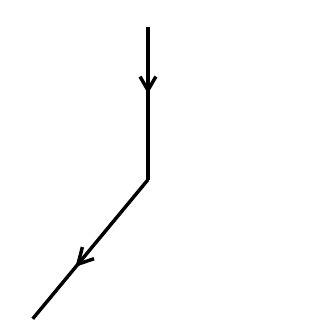}}%
    \put(0.92382018,0.93210428){\color[rgb]{0,0,0}\makebox(0,0)[lt]{\lineheight{1.25}\smash{\begin{tabular}[t]{l}$a$\end{tabular}}}}%
    \put(0.4199299,0.93980191){\color[rgb]{0,0,0}\makebox(0,0)[lt]{\lineheight{1.25}\smash{\begin{tabular}[t]{l}$i$\end{tabular}}}}%
    \put(-0.00319477,0.04778396){\color[rgb]{0,0,0}\makebox(0,0)[lt]{\lineheight{1.25}\smash{\begin{tabular}[t]{l}$i$\end{tabular}}}}%
    \put(0,0){\includegraphics[width=\unitlength,page=2]{unitary-module-4.pdf}}%
  \end{picture}%
\endgroup%
}}}~~.\label{eq40}
\end{align}	
\end{pp}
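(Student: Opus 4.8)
The plan is to reduce the stated characterization to the definition of unitarity already recorded earlier in this section, which is phrased in terms of the canonical evaluation $\ev_{a,a}=\iota^*\mu$ of the self-dual object $W_a$. Recall that by definition $(W_i,\mu_L)$ is unitary iff $\mu_L=(\ev_{a,a}\otimes\id_i)(\id_a\otimes\mu_L^*)$, and $(W_i,\mu_R)$ is unitary iff $\mu_R=(\id_i\otimes\ev_{a,a})(\mu_R^*\otimes\id_a)$. The whole content of the proposition is to replace the single canonical datum $\ev_{a,a}$ by the chosen triple $(\ev_{a,\ovl a},\ev_{\ovl a,a},\rfl)$. The crucial input is Corollary \ref{lb15}, specifically equation \eqref{eq39}, which states that for a reflection operator $\rfl$ with respect to the chosen evaluations,
\begin{align}
\ev_{a,a}=\ev_{\ovl a,a}(\rfl\otimes\id_a)=\ev_{a,\ovl a}(\id_a\otimes\rfl).
\end{align}
So the proof is essentially a substitution followed by the interchange law for tensor products of morphisms.

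For the left-module case, I would substitute the first form of \eqref{eq39} into the definition. This gives $(\ev_{a,a}\otimes\id_i)(\id_a\otimes\mu_L^*)=(\ev_{\ovl a,a}\otimes\id_i)(\rfl\otimes\id_a\otimes\id_i)(\id_a\otimes\mu_L^*)$. The point is then to recognize, via functoriality of $\boxtimes$ (the interchange law $f\otimes g=(f\otimes\id)(\id\otimes g)$ applied to $f=\rfl$ and $g=\mu_L^*$), that $(\rfl\otimes\id_a\otimes\id_i)(\id_a\otimes\mu_L^*)=\rfl\otimes\mu_L^*$. Hence the canonical unitarity condition becomes exactly $\mu_L=(\ev_{\ovl a,a}\otimes\id_i)(\rfl\otimes\mu_L^*)$, the left equation of \eqref{eq40}. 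Since \eqref{eq39} is an identity of morphisms, each substitution step is reversible, so the equivalence holds in both directions.

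The right-module case is handled symmetrically: I would substitute the second form $\ev_{a,a}=\ev_{a,\ovl a}(\id_a\otimes\rfl)$ into $\mu_R=(\id_i\otimes\ev_{a,a})(\mu_R^*\otimes\id_a)$ and again invoke the interchange law, this time to identify $(\id_i\otimes\id_a\otimes\rfl)(\mu_R^*\otimes\id_a)=\mu_R^*\otimes\rfl$, producing the right equation of \eqref{eq40}. There is no serious obstacle here; the only thing that requires care is the bookkeeping of tensor factors and confirming that the reflection operator $\rfl$ appearing in \eqref{eq39} is genuinely the one assumed in the hypothesis (so that the characterization is truly independent of the chosen dual and evaluations). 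This independence is exactly what \eqref{eq39}, together with the uniqueness of $\rfl$ relative to the chosen $\ev$, guarantees. A clean way to present all of this would be graphically, removing arrows via the convention of Corollary \ref{lb15} and simply sliding the $\rfl$ node along the string, which makes the interchange step visually immediate.
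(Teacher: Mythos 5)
Your proposal is correct and is exactly the paper's argument: the paper's proof consists of the single remark that the claim follows from equation \eqref{eq39}, i.e.\ $\ev_{a,a}=\ev_{\ovl a,a}(\rfl\otimes\id_a)=\ev_{a,\ovl a}(\id_a\otimes\rfl)$, which you substitute into the definition of unitarity and simplify via the interchange law. Your additional check that the canonical $\ev_{a,a}=\iota^*\mu$ agrees with the evaluation built from the hypothesized $\rfl$ (via applying $\iota^*$ to \eqref{eq23}) is the right bookkeeping and is implicit in the paper's one-line proof.
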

\begin{proof}
This is obvious since we have equations \eqref{eq39}.
\end{proof}

If $W_i$ and $W_j$ are left (resp. right) $A$-modules, then a morphism $F\in\Hom(W_i,W_j)$ of $\mc C$ is called a \textbf{left (resp. right) $A$-module morphism}, if
\begin{align}
\mu_L(\id_a\otimes F)=F\mu_L\qquad \text{resp.}\qquad \mu_R(F\otimes\id_a)=F\mu_R,
\end{align} 
pictorially,
\begin{align}
\vcenter{\hbox{{\def\svgscale{0.6}
\begingroup%
  \makeatletter%
  \providecommand\color[2][]{%
    \errmessage{(Inkscape) Color is used for the text in Inkscape, but the package 'color.sty' is not loaded}%
    \renewcommand\color[2][]{}%
  }%
  \providecommand\transparent[1]{%
    \errmessage{(Inkscape) Transparency is used (non-zero) for the text in Inkscape, but the package 'transparent.sty' is not loaded}%
    \renewcommand\transparent[1]{}%
  }%
  \providecommand\rotatebox[2]{#2}%
  \newcommand*\fsize{\dimexpr\f@size pt\relax}%
  \newcommand*\lineheight[1]{\fontsize{\fsize}{#1\fsize}\selectfont}%
  \ifx\svgwidth\undefined%
    \setlength{\unitlength}{71.02084401bp}%
    \ifx\svgscale\undefined%
      \relax%
    \else%
      \setlength{\unitlength}{\unitlength * \real{\svgscale}}%
    \fi%
  \else%
    \setlength{\unitlength}{\svgwidth}%
  \fi%
  \global\let\svgwidth\undefined%
  \global\let\svgscale\undefined%
  \makeatother%
  \begin{picture}(1,1.29726949)%
    \lineheight{1}%
    \setlength\tabcolsep{0pt}%
    \put(0,0){\includegraphics[width=\unitlength,page=1]{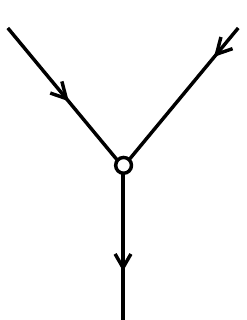}}%
    \put(-0.00426261,1.24365718){\color[rgb]{0,0,0}\makebox(0,0)[lt]{\lineheight{1.25}\smash{\begin{tabular}[t]{l}$a$\end{tabular}}}}%
    \put(0.91672779,1.25392834){\color[rgb]{0,0,0}\makebox(0,0)[lt]{\lineheight{1.25}\smash{\begin{tabular}[t]{l}$i$\end{tabular}}}}%
    \put(0.57128089,0.03610104){\color[rgb]{0,0,0}\makebox(0,0)[lt]{\lineheight{1.25}\smash{\begin{tabular}[t]{l}$j$\end{tabular}}}}%
    \put(0,0){\includegraphics[width=\unitlength,page=2]{morphism.pdf}}%
    \put(0.65105182,0.55523327){\color[rgb]{0,0,0}\makebox(0,0)[lt]{\lineheight{1.25}\smash{\begin{tabular}[t]{l}$j$\end{tabular}}}}%
  \end{picture}%
\endgroup%
}}}~~=~~\vcenter{\hbox{{\def\svgscale{0.6}
\begingroup%
  \makeatletter%
  \providecommand\color[2][]{%
    \errmessage{(Inkscape) Color is used for the text in Inkscape, but the package 'color.sty' is not loaded}%
    \renewcommand\color[2][]{}%
  }%
  \providecommand\transparent[1]{%
    \errmessage{(Inkscape) Transparency is used (non-zero) for the text in Inkscape, but the package 'transparent.sty' is not loaded}%
    \renewcommand\transparent[1]{}%
  }%
  \providecommand\rotatebox[2]{#2}%
  \newcommand*\fsize{\dimexpr\f@size pt\relax}%
  \newcommand*\lineheight[1]{\fontsize{\fsize}{#1\fsize}\selectfont}%
  \ifx\svgwidth\undefined%
    \setlength{\unitlength}{71.02084401bp}%
    \ifx\svgscale\undefined%
      \relax%
    \else%
      \setlength{\unitlength}{\unitlength * \real{\svgscale}}%
    \fi%
  \else%
    \setlength{\unitlength}{\svgwidth}%
  \fi%
  \global\let\svgwidth\undefined%
  \global\let\svgscale\undefined%
  \makeatother%
  \begin{picture}(1,1.29726949)%
    \lineheight{1}%
    \setlength\tabcolsep{0pt}%
    \put(0,0){\includegraphics[width=\unitlength,page=1]{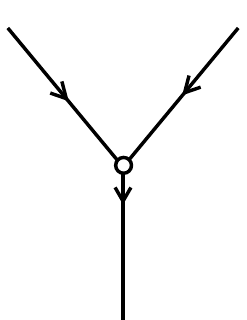}}%
    \put(-0.00426261,1.24365718){\color[rgb]{0,0,0}\makebox(0,0)[lt]{\lineheight{1.25}\smash{\begin{tabular}[t]{l}$a$\end{tabular}}}}%
    \put(0.91672779,1.25392834){\color[rgb]{0,0,0}\makebox(0,0)[lt]{\lineheight{1.25}\smash{\begin{tabular}[t]{l}$i$\end{tabular}}}}%
    \put(0.57128089,0.03610104){\color[rgb]{0,0,0}\makebox(0,0)[lt]{\lineheight{1.25}\smash{\begin{tabular}[t]{l}$j$\end{tabular}}}}%
    \put(0,0){\includegraphics[width=\unitlength,page=2]{morphism-2.pdf}}%
    \put(0.60893459,0.51010778){\color[rgb]{0,0,0}\makebox(0,0)[lt]{\lineheight{1.25}\smash{\begin{tabular}[t]{l}$i$\end{tabular}}}}%
    \put(0,0){\includegraphics[width=\unitlength,page=3]{morphism-2.pdf}}%
  \end{picture}%
\endgroup%
}}}~~\qquad\text{resp.}\qquad~~\vcenter{\hbox{{\def\svgscale{0.6}
\begingroup%
  \makeatletter%
  \providecommand\color[2][]{%
    \errmessage{(Inkscape) Color is used for the text in Inkscape, but the package 'color.sty' is not loaded}%
    \renewcommand\color[2][]{}%
  }%
  \providecommand\transparent[1]{%
    \errmessage{(Inkscape) Transparency is used (non-zero) for the text in Inkscape, but the package 'transparent.sty' is not loaded}%
    \renewcommand\transparent[1]{}%
  }%
  \providecommand\rotatebox[2]{#2}%
  \newcommand*\fsize{\dimexpr\f@size pt\relax}%
  \newcommand*\lineheight[1]{\fontsize{\fsize}{#1\fsize}\selectfont}%
  \ifx\svgwidth\undefined%
    \setlength{\unitlength}{71.73619481bp}%
    \ifx\svgscale\undefined%
      \relax%
    \else%
      \setlength{\unitlength}{\unitlength * \real{\svgscale}}%
    \fi%
  \else%
    \setlength{\unitlength}{\svgwidth}%
  \fi%
  \global\let\svgwidth\undefined%
  \global\let\svgscale\undefined%
  \makeatother%
  \begin{picture}(1,1.30575533)%
    \lineheight{1}%
    \setlength\tabcolsep{0pt}%
    \put(0,0){\includegraphics[width=\unitlength,page=1]{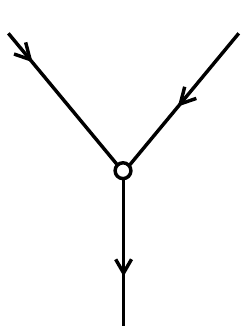}}%
    \put(0.89937088,1.26284638){\color[rgb]{0,0,0}\makebox(0,0)[lt]{\lineheight{1.25}\smash{\begin{tabular}[t]{l}$a$\end{tabular}}}}%
    \put(-0.00422011,1.24984861){\color[rgb]{0,0,0}\makebox(0,0)[lt]{\lineheight{1.25}\smash{\begin{tabular}[t]{l}$i$\end{tabular}}}}%
    \put(0.3360416,0.04205874){\color[rgb]{0,0,0}\makebox(0,0)[lt]{\lineheight{1.25}\smash{\begin{tabular}[t]{l}$j$\end{tabular}}}}%
    \put(0,0){\includegraphics[width=\unitlength,page=2]{morphism-3.pdf}}%
    \put(0.22767012,0.5707569){\color[rgb]{0,0,0}\makebox(0,0)[lt]{\lineheight{1.25}\smash{\begin{tabular}[t]{l}$j$\end{tabular}}}}%
  \end{picture}%
\endgroup%
}}}~~=~~\vcenter{\hbox{{\def\svgscale{0.6}
\begingroup%
  \makeatletter%
  \providecommand\color[2][]{%
    \errmessage{(Inkscape) Color is used for the text in Inkscape, but the package 'color.sty' is not loaded}%
    \renewcommand\color[2][]{}%
  }%
  \providecommand\transparent[1]{%
    \errmessage{(Inkscape) Transparency is used (non-zero) for the text in Inkscape, but the package 'transparent.sty' is not loaded}%
    \renewcommand\transparent[1]{}%
  }%
  \providecommand\rotatebox[2]{#2}%
  \newcommand*\fsize{\dimexpr\f@size pt\relax}%
  \newcommand*\lineheight[1]{\fontsize{\fsize}{#1\fsize}\selectfont}%
  \ifx\svgwidth\undefined%
    \setlength{\unitlength}{72.05111099bp}%
    \ifx\svgscale\undefined%
      \relax%
    \else%
      \setlength{\unitlength}{\unitlength * \real{\svgscale}}%
    \fi%
  \else%
    \setlength{\unitlength}{\svgwidth}%
  \fi%
  \global\let\svgwidth\undefined%
  \global\let\svgscale\undefined%
  \makeatother%
  \begin{picture}(1,1.30378442)%
    \lineheight{1}%
    \setlength\tabcolsep{0pt}%
    \put(0,0){\includegraphics[width=\unitlength,page=1]{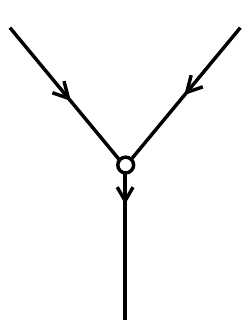}}%
    \put(0.89981071,1.26106301){\color[rgb]{0,0,0}\makebox(0,0)[lt]{\lineheight{1.25}\smash{\begin{tabular}[t]{l}$a$\end{tabular}}}}%
    \put(-0.00420166,1.25636075){\color[rgb]{0,0,0}\makebox(0,0)[lt]{\lineheight{1.25}\smash{\begin{tabular}[t]{l}$i$\end{tabular}}}}%
    \put(0.27208546,0.01146647){\color[rgb]{0,0,0}\makebox(0,0)[lt]{\lineheight{1.25}\smash{\begin{tabular}[t]{l}$j$\end{tabular}}}}%
    \put(0,0){\includegraphics[width=\unitlength,page=2]{morphism-4.pdf}}%
    \put(0.26768576,0.48759198){\color[rgb]{0,0,0}\makebox(0,0)[lt]{\lineheight{1.25}\smash{\begin{tabular}[t]{l}$i$\end{tabular}}}}%
    \put(0,0){\includegraphics[width=\unitlength,page=3]{morphism-4.pdf}}%
  \end{picture}%
\endgroup%
}}}~~.\label{eq41}
\end{align}
We let $\Hom_{A,-}(W_i,W_j)$ (resp. $\Hom_{-,A}(W_i,W_j)$) be the vector space of left (resp. right) $A$-module morphisms. If $W_i,W_j$ are $A$-bimodules, then we set $\Hom_A(W_i,W_j)=\Hom_{A,-}(W_i,W_j)\cap\Hom_{-,A}(W_i,W_j)$ to be the vector space of \textbf{A-bimodule morphisms}.\index{Hom@$\Hom_{A,-}(W_i,W_j),\Hom_{-,A}(W_i,W_j)$}\index{Hom@$\Hom_A(W_i,W_j)$} In the case $W_i=W_j$, we write these spaces of morphisms as $\End_{A,-}(W_i),\End_{-,A}(W_i),\End_A(W_i)$ respectively. $W_i$ is called a \textbf{simple} or \textbf{irreducible} left $A$-module (resp. right $A$-module, $A$-bimodule) if $\End_{A,-}(W_i)$ (resp. $\End_{-,A}(W_i)$, $\End_A(W_i)$) is spanned by $\id_i$. The following proposition is worth noting:

\begin{pp}[cf. \cite{NY16} section 6.1]\label{lb24}
The category of unitary left $A$-modules (resp. right $A$-modules, $A$-bimodules) is a $C^*$-category whose $*$-structure inherits from that of $\mc C$. In particular, this category is closed under finite orthogonal direct sums and subobjects.
\end{pp}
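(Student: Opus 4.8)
The plan is to obtain the $C^*$-category structure by restricting the $*$-operation of $\mc C$ to the morphism spaces $\Hom_{A,-}(W_i,W_j)$, $\Hom_{-,A}(W_i,W_j)$, and $\Hom_A(W_i,W_j)$, and then to deduce closure under finite orthogonal direct sums and orthogonal subobjects from the same properties of $\mc C$ assumed at the beginning of section \ref{lb14}.

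The decisive point is that these morphism spaces are stable under the adjoint $*$ of $\mc C$. Each is a norm-closed subspace of $\Hom(W_i,W_j)$, being the kernel of a bounded linear map such as $F\mapsto \mu_L^j(\id_a\otimes F)-F\mu_L^i$; what needs proof is that $F\in\Hom_{A,-}(W_i,W_j)$ implies $F^*\in\Hom_{A,-}(W_j,W_i)$. This is where unitarity of the modules enters. Taking adjoints of the morphism identity $\mu_L^j(\id_a\otimes F)=F\mu_L^i$ gives $(\mu_L^i)^*F^*=(\id_a\otimes F^*)(\mu_L^j)^*$. I would then start from $\mu_L^i(\id_a\otimes F^*)$, rewrite $\mu_L^i$ via the unitarity relation $\mu_L^i=(\ev_{a,a}\otimes\id_i)(\id_a\otimes(\mu_L^i)^*)$ of Proposition \ref{lb20}, insert this adjoint identity, move $F^*$ past $\ev_{a,a}\otimes(-)$ by the interchange law, and reassemble $\mu_L^j$ using unitarity of $W_j$. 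The upshot is
\begin{align*}
\mu_L^i(\id_a\otimes F^*)=F^*\mu_L^j,
\end{align*}
so $F^*$ is again a left $A$-module morphism. The right-module case is entirely symmetric, and the bimodule case is immediate because a bimodule morphism is simultaneously a left and a right morphism, whence so is its adjoint. Once $*$-stability is established, the $C^*$-identity is inherited verbatim from $\mc C$, so all three categories are $C^*$-categories; in particular each $\End_{A,-}(W_i)$ is a $C^*$-algebra.

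For closure under finite orthogonal direct sums, given unitary left $A$-modules $\{W_s\}$ I would realize their orthogonal direct sum $W_i$ in $\mc C$ with partial isometries $u_s\in\Hom(W_i,W_s)$ satisfying $u_tu_s^*=\delta_{s,t}\id_s$ and $\sum_s u_s^*u_s=\id_i$, and equip $W_i$ with $\mu_L^i=\sum_s u_s^*\mu_L^s(\id_a\otimes u_s)$. The unit and associativity axioms follow from those of the summands, unitarity of $\mu_L^i$ follows by moving $u_s^*$ through $\ev_{a,a}\otimes(-)$ and invoking unitarity of each $\mu_L^s$, and the $u_s$ are module morphisms by the orthogonality relations; the right and bimodule cases are analogous. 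For closure under orthogonal subobjects, let $p\in\End_{A,-}(W_i)$ be a projection, which makes sense since $\End_{A,-}(W_i)$ is a $C^*$-algebra. Using that $\mc C$ is closed under orthogonal subobjects, choose $W_j$ and $u\in\Hom(W_i,W_j)$ with $uu^*=\id_j$ and $u^*u=p$, and set $\mu_L^j=u\mu_L^i(\id_a\otimes u^*)$. Then $\mu_L^j(\id_a\otimes u)=u\mu_L^i(\id_a\otimes p)=up\mu_L^i=u\mu_L^i$, using that $p$ is a module morphism and $uu^*u=u$, so $u$ is a module morphism; the module axioms and unitarity of $\mu_L^j$ follow from those of $W_i$ together with $pu^*=u^*$.

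The main obstacle is the first step: in contrast to the non-unitary setting, the adjoint of a module morphism must be shown to be a module morphism, and this fails without the unitarity hypothesis on the modules. It is exactly this $*$-stability, powered by Proposition \ref{lb20} (equivalently, the defining relation of unitary $A$-modules), that makes the $*$-structure of $\mc C$ descend to the module categories and thereby produces the $C^*$-category structure.
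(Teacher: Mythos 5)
Your proposal is correct and takes essentially the same route as the paper's own proof: $*$-stability of the morphism spaces via the unitarity relation \eqref{eq40} of Proposition \ref{lb20} combined with the morphism identity \eqref{eq41}, direct sums inherited from $\mc C$, and subobjects realized by compressing the action along a partial isometry, $\mu^j_L=u\mu^i_L(\id_a\otimes u^*)$. The paper's proof is much terser but identical in substance, down to the observation that $p\in\End_{A,-}(W_i)$ intertwining the $A$-action is what makes the associativity of the compressed action go through.
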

\begin{proof}
If $W_i,W_j$ are unitary left $A$-modules and $F\in\Hom_{A,-}(W_i,W_j)$, one can easily check that $F^*\in\Hom_{A,-}(W_j,W_i)$ using  figures \eqref{eq40} and \eqref{eq41}. Hence, the $C^*$-ness of the category of left $A$-modules follows from that of $\mc C$. Existence of finite orthogonal direct sums follow from that of $\mc C$. If $p\in\End_{A,-}(W_i)$ is a projection of the unitary left $A$-module $(W_i,\mu^i_L)$, we choose an object $W_k$  in $\mc C$ and a partial isometry $u\in\Hom(W_i,W_k)$ such that $uu^*=\id_k,u^*u=p$. Then $(W_k,u\mu^i_Lu^*)$ is easily verified to be a unitary left $A$-module. Note that the fact that $p$ intertwines the left action of $A$ is used to verify the associativity.

The cases of right modules and bimodules are proved in a similar way. 
\end{proof}

A left $A$-module (resp. right $A$-module, $A$-bimodule) $W_i$  is called \textbf{$\mc C$-dualizable} if $W_i$ is a dualizable  object in $\mc C$. $W_i$ is called \textbf{unitarizable} if there exists a unitary left $A$-module (resp. right $A$-module, $A$-bimodule) $W_j$ and an invertible $F\in\Hom_{A,-}(W_i,W_j)$ (resp. $F\in\Hom_{-,A}(W_i,W_j)$, $F\in\Hom_A(W_i,W_j)$).
\begin{co}
The category of unitary $\mc C$-dualizable left $A$-modules (resp. right $A$-modules, $A$-bimodules) is a semisimple $C^*$-category whose $*$-structure inherits from that of $\mc C$.
\end{co}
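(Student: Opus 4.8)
The plan is to deduce the statement from Proposition \ref{lb24} together with the fact, already invoked in the construction of standard evaluations around \eqref{eq19}, that a dualizable object of $\mc C$ is semisimple with finite-dimensional endomorphism algebra. I will treat the case of left $A$-modules explicitly; the right-module and bimodule cases are verbatim the same with $\End_{-,A}$ and $\End_A$ in place of $\End_{A,-}$. First I would check that the full subcategory $\mc D$ of $\mc C$-dualizable objects inside the $C^*$-category of unitary left $A$-modules is again closed under finite orthogonal direct sums and orthogonal subobjects. Proposition \ref{lb24} already supplies these operations at the level of unitary left $A$-modules and shows the $*$-structure is inherited from $\mc C$; it remains only to observe that they preserve $\mc C$-dualizability. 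A finite orthogonal direct sum of dualizable objects of $\mc C$ is dualizable, and, by the argument in the footnote following \eqref{eq19}, any orthogonal subobject of a dualizable object of $\mc C$ is dualizable. Hence $\mc D$ is a $C^*$-category closed under finite orthogonal direct sums and orthogonal subobjects.

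Next I would establish finite-dimensionality of the morphism spaces. For a $\mc C$-dualizable unitary left $A$-module $W_i$, the space $\End_{A,-}(W_i)$ is, by Proposition \ref{lb24}, a $*$-closed subalgebra of $\End_{\mc C}(W_i)$. Since $W_i$ is dualizable in $\mc C$ it is semisimple, so $\End_{\mc C}(W_i)$ is a finite-dimensional $C^*$-algebra; consequently $\End_{A,-}(W_i)$, being a $*$-closed (hence norm-closed) subalgebra of a finite-dimensional $C^*$-algebra, is itself a finite-dimensional $C^*$-algebra.

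Semisimplicity then follows by the standard finite-dimensional $C^*$-algebra argument. I would write $\id_i=\sum_k p_k$ as a sum of pairwise orthogonal minimal projections of $\End_{A,-}(W_i)$. Using closure of $\mc D$ under orthogonal subobjects, for each $k$ I choose a unitary $\mc C$-dualizable left $A$-module $W_{i_k}$ and a partial isometry $u_k\in\Hom_{A,-}(W_i,W_{i_k})$ with $u_ku_k^*=\id_{i_k}$ and $u_k^*u_k=p_k$. Then $\End_{A,-}(W_{i_k})\cong p_k\End_{A,-}(W_i)p_k=\mathbb C p_k$, so $\End_{A,-}(W_{i_k})=\mathbb C\id_{i_k}$ and each $W_{i_k}$ is simple; thus $W_i\cong\bigoplus_k^\perp W_{i_k}$ exhibits $W_i$ as a finite orthogonal direct sum of simple objects of $\mc D$. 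This is precisely semisimplicity, and combined with the first paragraph it gives the claimed semisimple $C^*$-category.

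The main obstacle is conceptual rather than computational: it lies in securing the finite-dimensionality of $\End_{A,-}(W_i)$, which rests on the (nontrivial, but here available) fact that dualizable objects of a $C^*$-tensor category are semisimple with finite-dimensional endomorphism algebras. Once that is in hand, the passage to minimal projections and the splitting off of simple summands via closure under subobjects is entirely routine.
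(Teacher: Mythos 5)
Your proof is correct and follows essentially the same route as the paper: the paper likewise observes that $\End_{A,-}(W_i)$ is a $C^*$-subalgebra of the finite-dimensional $C^*$-algebra $\End(W_i)$, hence a direct sum of matrix algebras, and decomposes $W_i$ into irreducible summands via minimal projections and Proposition \ref{lb24}. Your write-up merely makes explicit the steps the paper leaves implicit (closure of the dualizable subcategory under orthogonal subobjects and direct sums, and the splitting-off of simple summands), which is fine.
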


\begin{proof}
Suppose $W_i$ is a $\mc C$-dualizable left $A$-module. Then $\End_A(W_i)$ is a $C^*$-subalgebra of the finite dimensional  $C^*$-algebra $E(W_i)$. Thus $\End_A(W_i)$ is a direct sum of matrix algebras which implies that $W_i$ is a finite orthogonal direct sum  of irreducible left $A$-modules. The other types of modules are treated in a similar way.
\end{proof}

We are now going to prove the first main result of this section, that any $\mc C$-dualizable module is unitarizable. First we need a lemma.

\begin{lm}\label{lb19}
Let $W_i,W_k$ be $\mc C$-dualizable left $A$-modules (resp. right $A$-modules, $A$-bimodules). If $W_k$ is unitary, and there exists a surjective $F\in\Hom_{A,-}(W_k,W_i)$ (resp. $F\in\Hom_{-,A}(W_k,W_i)$, $F\in\Hom_A(W_k,W_i)$), then $W_i$ is unitarizable. In particular, $W_i$ is semisimple as a left $A$-module (resp. right $A$-module, $A$-bimodule).
\end{lm}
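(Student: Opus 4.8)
The plan is to treat the case of left $A$-modules in detail; the statements for right $A$-modules and for $A$-bimodules follow verbatim by applying the same argument to the right action, respectively to both actions at once. All adjoints below are taken in the $C^*$-category $\mc C$. Since $W_i,W_k$ are $\mc C$-dualizable they are semisimple objects of $\mc C$, so $\End_{\mc C}(W_i)$ is a finite dimensional $C^*$-algebra, and the surjectivity of $F$ means exactly that $F$ is an epimorphism in $\mc C$, i.e. that the positive morphism $FF^*\in\End_{\mc C}(W_i)$ is invertible. I would then introduce the projections
$$e=F^*(FF^*)^{-1}F\in\End_{\mc C}(W_k),\qquad q=\id_k-e,$$
and record the elementary identities $e=e^*=e^2$, $Fe=F$, and $eF^*=F^*$, so that $q$ is the orthogonal projection onto the ``kernel of $F$'' and $e$ the orthogonal projection onto its complement.

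The \emph{main obstacle} is to upgrade $e$ from a morphism of $\mc C$ to a morphism of left $A$-modules, $e\in\End_{A,-}(W_k)$; this is the only place where the unitarity of $W_k$ enters. I would prove equivalently that $q\in\End_{A,-}(W_k)$. One inclusion is cheap: because $F$ is a module map and $Fq=F-Fe=0$, we get $F\mu_L^k(\id_a\otimes q)=\mu_L^i(\id_a\otimes Fq)=0$, whence $e\mu_L^k(\id_a\otimes q)=F^*(FF^*)^{-1}F\mu_L^k(\id_a\otimes q)=0$ and therefore $q\mu_L^k(\id_a\otimes q)=\mu_L^k(\id_a\otimes q)$; this says the kernel of $F$ is $A$-invariant and uses nothing about unitarity. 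The genuinely nontrivial point is the invariance of the \emph{complement}, i.e. $q\mu_L^k(\id_a\otimes e)=0$. Here I would pass to the adjoint identity $(\id_a\otimes e)\mu_L^{k*}q=0$ and rewrite $\mu_L^{k*}$ through the adjoint of the unitary-module relation, namely $\mu_L^{k*}=(\id_a\otimes\mu_L^k)(\coev_{a,a}\otimes\id_k)$. A one-line manipulation then isolates the factor $e\mu_L^k(\id_a\otimes q)$, which already vanishes, so $(\id_a\otimes e)\mu_L^{k*}q=0$; taking adjoints gives $q\mu_L^k(\id_a\otimes e)=0$. Adding the two invariance identities yields $q\mu_L^k=\mu_L^k(\id_a\otimes q)$, as desired.

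Once $e\in\End_{A,-}(W_k)$ is a projection, Proposition \ref{lb24} (closure of unitary left $A$-modules under subobjects) furnishes a unitary left $A$-module $W_{k'}$ with action $u\mu_L^k(\id_a\otimes u^*)$ together with a partial isometry $u\in\Hom(W_k,W_{k'})$ satisfying $uu^*=\id_{k'}$ and $u^*u=e$; as a subobject of the $\mc C$-dualizable $W_k$, the module $W_{k'}$ is itself $\mc C$-dualizable. Since $u^*\in\Hom_{A,-}(W_{k'},W_k)$ by the $*$-invariance of module morphisms between unitary modules, the composite $G:=Fu^*$ lies in $\Hom_{A,-}(W_{k'},W_i)$. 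I would finish by verifying that $G$ is invertible with inverse $uF^*(FF^*)^{-1}$, using $eF^*=F^*$ together with $u^*u=e$ and $uu^*=\id_{k'}$:
$$Fu^*\cdot uF^*(FF^*)^{-1}=FeF^*(FF^*)^{-1}=\id_i,\qquad uF^*(FF^*)^{-1}\cdot Fu^*=ueu^*=\id_{k'}.$$
Thus $G$ is an invertible left $A$-module morphism from the unitary module $W_{k'}$ onto $W_i$, which is precisely unitarizability of $W_i$. Finally, since $W_i\cong W_{k'}$ is a unitary $\mc C$-dualizable left $A$-module, the corollary preceding this lemma shows it is semisimple, establishing the last assertion.
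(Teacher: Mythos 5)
Your proof is correct and follows essentially the same route as the paper's: both construct the projection onto the orthogonal complement of the kernel of $F$ (your $e$ is exactly the paper's range projection $P$ of $F^*$), show it lies in $\End_{A,-}(W_k)$ by combining the module property of $F$ with the adjoint of the unitarity relation $\mu_L^k=(\ev_{a,a}\otimes\id_k)(\id_a\otimes(\mu_L^k)^*)$, and then cut down via proposition \ref{lb24} and transport the unitary structure along the resulting invertible module morphism, with semisimplicity following from the preceding corollary. The only difference is technical: you exploit surjectivity at the outset to write $e=F^*(FF^*)^{-1}F$ explicitly and cancel $F$ directly, whereas the paper realizes the same projection as $\lim_{n}(F^*F)^{1/n}$ and proves $P\mu_L^k=P\mu_L^k(\id_a\otimes P)$ by a polynomial-interpolation trick; your shortcut is legitimate, since $\mc C$-dualizability makes $\End(W_i)$ a finite-dimensional $C^*$-algebra in which an epimorphism $F$ forces $FF^*$ to be invertible.
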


We remark that this lemma is obvious when $W_i$ is already known to be semisimple as a left, right or bi $A$-module, which is enough for our application to representations of VOA extensions.  (Indeed, the extension $U$ of $V$ considered in this paper is always regular, hence its modules are semisimple.) Those who are only interested in the application to VOAs can skip the following proof.

\begin{proof}
We only prove this for left modules, since the other cases can be proved similarly. Write the two modules as $(W_i,\mu_L^i),(W_k,\mu_L^k)$. Note that $F^*F$ is a positive element in the finite dimensional $C^*$-algebra $\End(W_k)$. So $\lim_{n\rightarrow \infty}(F^*F)^{1/n}$ converges under the $C^*$-norm to a projection $P\in\End(W_k)$  which is the range projection of $F^*$.  Set $G=P\mu_L^k$ and $H=P\mu_L^k(\id_a\otimes P)$  which are morphisms in $\Hom(W_a\boxtimes W_k,W_k)$. Then using \eqref{eq41} and the fact that $F=FP$, we obtain $FG=FH$, since both equal $F\mu_L^k$. Therefore $(F^*F)^nG=(F^*F)^nH$ for any integer $n>0$, and hence $(F^*F)^{1/n}G=(F^*F)^{1/n}H$ by polynomial interpolation. Thus $G=PG=PH=H$. We conclude
\begin{align}
P\mu_L^k=P\mu_L^k(\id_a\otimes P).\label{eq42}
\end{align}
This equation, together with  \eqref{eq40}, shows $(\id_a\otimes P)(\mu_L^k)^*=(\id_a\otimes P)(\mu_L^k)^*P$, whose adjoint is
\begin{align}
\mu_L^k(\id_a\otimes P)=P\mu_L^k(\id_a\otimes P).\label{eq43}
\end{align}
We can therefore combine \eqref{eq42} and \eqref{eq43} to get $P\mu_L^k=\mu_L^k(\id_a\otimes P)$. In other words, $P$ is a projection in $\End_{A,-}(W_k)$. Thus, by proposition \ref{lb24}, one can find a unitary left $A$-module $W_j$ and a partial isometry $K\in\Hom_{A,-}(W_j,W_k)$ satisfying $K^*K=\id_j$ and $KK^*=P$. Therefore the left $A$-module $W_i$ is equivalent to $W_j$ since $FK\in\Hom_{A,-}(W_j,W_i)$ is invertible. 
\end{proof}

\begin{thm}\label{lb21}
$\mc C$-dualizable left $A$-modules, right $A$-modules, and $A$-bimodules are unitarizable.
\end{thm}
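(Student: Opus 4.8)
The plan is to reduce the statement entirely to Lemma \ref{lb19}: for each $\mc C$-dualizable module I would exhibit a unitary $\mc C$-dualizable module of the same type together with a surjective module morphism onto the given one. The natural choice is the induced (``free'') module built from $A$, whose unitarity is forced by the $C^*$-Frobenius structure of $A$.

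First I would treat left modules. Given a $\mc C$-dualizable left $A$-module $(W_i,\mu_L^i)$, consider the induced module $(W_a\boxtimes W_i,\mu\otimes\id_i)$. There are three things to check. For unitarity, note that its action is $\mu\otimes\id_i$ with adjoint $\mu^*\otimes\id_i$, so the left-module unitarity condition becomes $(\ev_{a,a}\otimes\id_a\otimes\id_i)(\id_a\otimes\mu^*\otimes\id_i)=[(\ev_{a,a}\otimes\id_a)(\id_a\otimes\mu^*)]\otimes\id_i$, and the bracketed factor equals $\mu$ by \eqref{eq23} with $\rfl=\id_a$ and $\ev_{a,a}=\ev_{\ovl a,a}$, which holds since $A$ is a $C^*$-Frobenius algebra. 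For $\mc C$-dualizability, $W_a$ is dualizable (because $A$ is a unitary $\mc C$-algebra) and $W_i$ is dualizable by hypothesis, so $W_a\boxtimes W_i$ is dualizable via \eqref{eq22}. Finally, the action $\mu_L^i\colon W_a\boxtimes W_i\to W_i$ is a left $A$-module morphism from the induced module onto $(W_i,\mu_L^i)$, since the intertwining relation $\mu_L^i(\id_a\otimes\mu_L^i)=\mu_L^i(\mu\otimes\id_i)$ is exactly the module associativity; and it is surjective because $\mu_L^i(\iota\otimes\id_i)=\id_i$ provides a section. Lemma \ref{lb19} then gives that $W_i$ is unitarizable.

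The right-module case is the mirror image, using $(W_i\boxtimes W_a,\id_i\otimes\mu)$ and $\mu_R^i$, with unitarity coming from the right-handed analogue of \eqref{eq23} (the rotation invariance \eqref{eq37} of $\mu$ established in the proof of Theorem \ref{lb16}). For bimodules I would induce on both sides, forming $(W_a\boxtimes W_i\boxtimes W_a,\ \mu\otimes\id_i\otimes\id_a,\ \id_a\otimes\id_i\otimes\mu)$: the two actions commute because they act on disjoint tensor legs, each is unitary as above, and the object is dualizable as a triple tensor product. The surjection onto $W_i$ is $\Phi=\mu_R^i(\mu_L^i\otimes\id_a)=\mu_L^i(\id_a\otimes\mu_R^i)$, the two expressions agreeing by bimodule associativity; a short computation using the left- and right-module associativities shows $\Phi$ intertwines both the left action $\mu\otimes\id_i\otimes\id_a$ and the right action $\id_a\otimes\id_i\otimes\mu$, and $\Phi(\iota\otimes\id_i\otimes\iota)=\id_i$ gives surjectivity. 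A final application of Lemma \ref{lb19} finishes the bimodule case.

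I expect no serious obstacle here, since the substantive analytic argument is already contained in Lemma \ref{lb19} (the convergence $(F^*F)^{1/n}\to P$ and the identification of $P$ as a module projection). What remains is the construction of the induced modules and the routine verifications that they are unitary and dualizable. The only mildly delicate point is the two-sided case, where one must confirm that the doubly-induced object is genuinely a \emph{unitary $A$-bimodule} and that $\Phi$ respects both actions simultaneously; but both reduce cleanly to the one-sided statements together with the associativity axioms, so I anticipate only bookkeeping rather than any real difficulty.
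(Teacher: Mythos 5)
Your proof is correct and is essentially the paper's own argument: both use the induced module $(W_a\boxtimes W_i,\mu\otimes\id_i)$ (and, for bimodules, the doubly-induced $(W_a\boxtimes W_i\boxtimes W_a,\mu\otimes\id_i\otimes\id_a,\id_a\otimes\id_i\otimes\mu)$), check its unitarity from the $C^*$-Frobenius structure of $A$, take the action morphism $\mu_L$ (resp.\ $\mu_R(\mu_L\otimes\id_a)$) as the surjection, with surjectivity from the unit property, and conclude via Lemma \ref{lb19}. The only difference is that you write out the routine verifications (unitarity of the induced module, dualizability of the tensor product, the intertwining relations) that the paper declares ``easily checked''.
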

In particular, when $\mc C$ is rigid,  any left $A$-module, right $A$-module, or $A$-bimodule is unitarizable.
\begin{proof}
For any $\mc C$-dualizable object $W_i$ in $\mc C$ the induced left $A$-module $(W_a\boxtimes W_i, \mu\otimes\id_i)$, abbreviated to $W_a\boxtimes W_i$, is clearly $\mc C$-dualizable. By the unitarity of $A$, one easily checks that $W_a\boxtimes W_i$ is a unitary left $A$-module. Now assume that $(W_i,\mu_L)$ is  a left $A$-module. Then $\mu_L\in\Hom_{A,-}(W_a\boxtimes W_i,W_i)$. Moreover, $\mu_L$ is surjective since $(\iota\otimes\id_i)\mu_L=\id_i$ is surjective. Therefore $W_i$ is unitarizable by lemma \ref{lb19}. The case of right modules is proved in a similar way. In the case that $(W_i,\mu_L,\mu_R)$ is a $\mc C$-dualizable $A$-bimodule, we notice that $(W_a\boxtimes W_i\boxtimes W_a,\mu\otimes\id_i\otimes\id_a,\id_a\otimes\id_i\otimes\mu)$ is a unitary $A$-bimodule, and $\mu_R(\mu_L\otimes\id_a)=\mu_L(\id_a\otimes\mu_R)\in\Hom_A(W_a\boxtimes W_i\boxtimes W_a,W_i)$ whose surjectivity follows again from the unit property. Thus, again, $W_i$ is a unitarizable $A$-bimodule.
\end{proof}

In the case that $A$ is special, there is another proof of unitarizability due to \cite{BKLR15} which does not require dualizability. To begin with, we say that a left  $A$-module $(W_i,\mu_L)$  is \textbf{standard} if $\mu_L\mu_L^*\in\mathbb C\id_i$. We now follow the argument of \cite{BKLR15} lemma 3.22. For any left $A$-module $(W_i,\mu_L)$,   $\Delta:=(\mu_L\mu_L^*)^{1/2}$ is invertible. Therefore $(W_i,\mu_L)$ is equivalent to $(W_i,\wtd\mu_L)$ where $\wtd\mu_L=\Delta^{-1}\mu_L(\id_a\otimes\Delta)$. Using associativity and the fact that $\mu\mu^*=d_A\id_a$, one can check that $(W_i,\wtd\mu_L)$ is standard and $\wtd\mu_L\wtd\mu_L^*=d_A\id_i$. In particular, if $(W_i,\mu_L)$ is standard then $\Delta$ is a constant and hence $\wtd\mu_L=\mu_L$. Thus we must have $\mu_L\mu_L^*=d_A\id_i$. This proves that any left $A$-module is equivalent to a standard left $A$-module, that any standard left $A$-module must satisfy $\mu_L\mu_L^*=d_A\id_a$. Moreover, by \cite{BKLR15} formula (3.4.5), any standard left $A$-module is unitary. Conversely, if $(W_i,\mu_L)$ is unitary, one can check that $\mu_L\mu_L^*\in\End_{A,-}(W_i)$ and hence $\Delta\in\End_{A,-}(W_i)$. This proves that $\wtd\mu_L=\mu_L$ and hence that $(W_i,\mu_L)$ is standard. Right $A$-modules can be proved in a similar way. When $(W_i,\mu_L,\mu_R)$ is an $A$-bimodule, we define $\mu_{LR}=\mu_R(\mu_L\otimes\id_a)=\mu_L(\id_a\otimes\mu_R)$, and say that the $A$-bimodule $W_i$ is \textbf{standard} if $\mu_{LR}\mu_{LR}^*\in\mathbb C\id_i$ (cf. \cite{BKLR15} section 3.6). With the help of $\Delta:=(\mu_{LR}\mu_{LR}^*)^{1/2}$ one can prove similar results as of left $A$-modules, with the only exception being that $\mu_{LR}\mu_{LR}^*=d_A^2\id_i$. We summarize the discussion in the following theorems:

\begin{thm}\label{lb22}
If $A$ is a Q-system in $\mc C$, then all left $A$-modules, right $A$-modules, and $A$-bimodules are unitarizable.
\end{thm}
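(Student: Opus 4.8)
The plan is to show that every module is \emph{equivalent} to a \textbf{standard} module (in the sense defined just above) and that standard modules are automatically unitary; unitarizability then follows at once. Since $A$ is a Q-system, specialness gives $\mu\mu^*=d_A\id_a$, and it is this scalar relation that drives the whole argument, exactly as in \cite{BKLR15} lemma 3.22. So the proof is really a matter of organizing the preceding discussion into the three cases.

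First I would handle left modules. Given a left $A$-module $(W_i,\mu_L)$, the unit property $\mu_L(\iota\otimes\id_i)=\id_i$ forces $\mu_L$ to be a split epimorphism, so the positive element $\mu_L\mu_L^*\in\End(W_i)$ is invertible. Set $\Delta=(\mu_L\mu_L^*)^{1/2}$, which is positive and invertible, and define $\wtd\mu_L=\Delta^{-1}\mu_L(\id_a\otimes\Delta)$. Because $\Delta$ is an invertible endomorphism of $W_i$, it implements an isomorphism $(W_i,\mu_L)\simeq(W_i,\wtd\mu_L)$ of left $A$-modules. The crucial computation is that $\wtd\mu_L$ is standard, with $\wtd\mu_L\wtd\mu_L^*=d_A\id_i$: one expands $\wtd\mu_L\wtd\mu_L^*$, pushes the factors $\Delta$ through using the associativity of the left action, and collapses the resulting $\mu\mu^*$ into the scalar $d_A$ via specialness.

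Having produced an equivalent standard module, I would invoke \cite{BKLR15} formula (3.4.5), which shows that any standard left $A$-module is unitary; hence $(W_i,\mu_L)$ is unitarizable. Right modules are treated by the mirror-image argument. For a bimodule $(W_i,\mu_L,\mu_R)$ I would run the same scheme with the combined action $\mu_{LR}=\mu_R(\mu_L\otimes\id_a)=\mu_L(\id_a\otimes\mu_R)$ in place of $\mu_L$, calling the bimodule standard when $\mu_{LR}\mu_{LR}^*\in\mathbb C\id_i$ and obtaining $\mu_{LR}\mu_{LR}^*=d_A^2\id_i$ after twisting by $\Delta=(\mu_{LR}\mu_{LR}^*)^{1/2}$.

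The main obstacle is the algebraic verification that the twisted action is standard, i.e.\ the identity $\wtd\mu_L\wtd\mu_L^*=d_A\id_i$ and its bimodule analogue: this is the one step with genuine content, where associativity and the scalar relation $\mu\mu^*=d_A\id_a$ must be combined correctly. Everything else --- invertibility of $\Delta$, the equivalence of $(W_i,\mu_L)$ with $(W_i,\wtd\mu_L)$, and the passage among the three module types --- is routine bookkeeping. I would stress that, in contrast with the proof of theorem \ref{lb21}, dualizability of $W_i$ is \emph{not} required here; the specialness of $A$ substitutes for it.
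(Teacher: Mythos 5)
Your proposal is correct and follows essentially the same route as the paper, which likewise reproduces the argument of \cite{BKLR15} lemma 3.22: twist the action by $\Delta=(\mu_L\mu_L^*)^{1/2}$ (resp. $\Delta=(\mu_{LR}\mu_{LR}^*)^{1/2}$ for bimodules) to obtain an equivalent standard module with $\wtd\mu_L\wtd\mu_L^*=d_A\id_i$ (resp. $\mu_{LR}\mu_{LR}^*=d_A^2\id_i$), then invoke \cite{BKLR15} formula (3.4.5) to conclude that standard modules are unitary. Your closing remark that specialness substitutes for dualizability is precisely the point the paper makes when contrasting this proof with that of theorem \ref{lb21}.
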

\begin{thm}\label{lb25}
Let $A$ be a $Q$-system in $\mc C$, and $(W_i,\mu_L)$ (resp. $(W_i,\mu_R),(W_i,\mu_L,\mu_R)$) a left $A$-module (resp. right $A$-module, $A$-bimodule). Then the following statements are equivalent.
\begin{itemize}
	\item $W_i$ is unitary.
	\item $W_i$ is standard.
	\item $\mu_L\mu_L^*=d_A\id_i$ (left $A$-module case), or $\mu_R\mu_R^*=d_A\id_i$ (right $A$-module case), or $\mu_{LR}\mu_{LR}^*=d_A^2\id_i$ where $\mu_{LR}=\mu_R(\mu_L\otimes\id_a)=\mu_L(\id_a\otimes\mu_R)$ ($A$-bimodule case).
\end{itemize}
\end{thm}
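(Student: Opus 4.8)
The plan is to exploit the computations already carried out in the discussion preceding Theorem \ref{lb22}, and simply to organize them into a cycle of implications for each of the three module types. I would treat the left $A$-module case in full; the right $A$-module case is completely symmetric, and the bimodule case follows the same pattern with $\mu_{LR}$ in place of $\mu_L$ and $d_A^2$ in place of $d_A$.

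First I would record the basic facts that make the argument run. By the unit property, $\iota\otimes\id_i$ is a right inverse of $\mu_L$, so $\mu_L$ is epic and $\mu_L\mu_L^*$ is a positive invertible element of $\End(W_i)$; hence $\Delta:=(\mu_L\mu_L^*)^{1/2}$ is well defined and invertible, and $(W_i,\mu_L)$ is equivalent to $(W_i,\wtd\mu_L)$ with $\wtd\mu_L=\Delta^{-1}\mu_L(\id_a\otimes\Delta)$. The one genuinely nontrivial identity, which I would establish by the graphical argument of \cite{BKLR15} lemma 3.22 using the Frobenius relation \eqref{eq32} together with the specialness $\mu\mu^*=d_A\id_a$, is that this standardized action satisfies $\wtd\mu_L\wtd\mu_L^*=d_A\id_i$.

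Granting this, the three conditions become equivalent as follows. The implication \emph{standard} $\Rightarrow$ ($\mu_L\mu_L^*=d_A\id_i$) holds because standardness means $\mu_L\mu_L^*\in\mathbb C\id_i$, so $\Delta$ is a scalar, $\wtd\mu_L=\mu_L$, and the key identity reads $\mu_L\mu_L^*=d_A\id_i$; the converse is immediate. For \emph{standard} $\Rightarrow$ \emph{unitary} I would invoke \cite{BKLR15} formula (3.4.5), which guarantees that a standard module is automatically unitary; alternatively one verifies the relation $\mu_L=(\ev_{a,a}\otimes\id_i)(\id_a\otimes\mu_L^*)$ of Proposition \ref{lb20} directly from $\mu_L\mu_L^*=d_A\id_i$ and \eqref{eq39}. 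Finally, for \emph{unitary} $\Rightarrow$ \emph{standard}, I would show by graphical calculus (using the unitarity relation and \eqref{eq32}) that $\mu_L\mu_L^*\in\End_{A,-}(W_i)$; then $\Delta\in\End_{A,-}(W_i)$ as well, so $\mu_L(\id_a\otimes\Delta)=\Delta\mu_L$ and therefore $\wtd\mu_L=\mu_L$, forcing $\mu_L\mu_L^*=\wtd\mu_L\wtd\mu_L^*=d_A\id_i$, which is standard. This closes the cycle.

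The main obstacle will be the identity $\wtd\mu_L\wtd\mu_L^*=d_A\id_i$: it is the only place where specialness is genuinely used, and it is cleanest to verify diagrammatically rather than by manipulating scalars. The only additional bookkeeping is in the bimodule case, where one sets $\mu_{LR}=\mu_R(\mu_L\otimes\id_a)=\mu_L(\id_a\otimes\mu_R)$ and must check $\mu_{LR}\mu_{LR}^*=d_A^2\id_i$; here the factor $d_A^2$ arises because the left and the right actions each contribute a factor $d_A$, and one applies the left- and right-module computations in succession, using the bimodule associativity to interchange the two actions.
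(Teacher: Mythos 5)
Your proposal is correct and follows essentially the same route as the paper, which likewise organizes the argument of \cite{BKLR15} lemma 3.22 around the standardized action $\wtd\mu_L=\Delta^{-1}\mu_L(\id_a\otimes\Delta)$ with $\Delta=(\mu_L\mu_L^*)^{1/2}$, the key identity $\wtd\mu_L\wtd\mu_L^*=d_A\id_i$, the appeal to \cite{BKLR15} formula (3.4.5) for \emph{standard} $\Rightarrow$ \emph{unitary}, and the observation that unitarity forces $\mu_L\mu_L^*\in\End_{A,-}(W_i)$ (hence $\wtd\mu_L=\mu_L$) for the converse. The paper also treats the right-module and bimodule cases exactly as you indicate, with $\mu_{LR}$ and the factor $d_A^2$.
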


If $\mc C$ is braided with braid operator $\ss$, and if $A$ is commutative, a left $A$-module $(W_i,\mu_L)$ is called \textbf{single-valued} if $\mu_L=\mu_L\ss^2$ (more precisely, $\mu_L=\mu_L\ss_{i,a}\ss_{a,i}$). If $(W_i,\mu_L)$ is a single-valued left $A$-module, then $(W_i,\mu_R)$ is a right $A$-module where $\mu_R=\mu_L\ss_{i,a}=\mu_L\ss_{a,i}^{-1}$. Moreover, by the associativity of $(W_i,\mu_L)$, $(W_i,\mu_L,\mu_R)$ is an $A$-bimodule. We summarize that when $\mc C$ is braided and $A$ is commutative, any single-valued left $A$-module is an $A$-bimodule. Moreover, if $W_i$ is a unitary single-valued left $A$-module, the it is also a unitary $A$-bimodule. The category of (unitary) single-valued left $A$-module is naturally a full subcategory of the ($C^*$-)categories of (unitary) left $A$-modules, (unitary) right $A$-modules, and (unitary) $A$-bimodules.\\

With the results obtained so far, we prove that any CFT-type unitary extension $U$ of $V$ is strongly unitary. Let $A_U=(W_a,\mu,\iota)$ be the Q-system associated to $U$. Suppose that $W_i$ is a $U$-module with vertex operator $\mc Y_{\mu_L}$. Then $W_i$ is also a $V$-module, thus me may fix an inner product $\bk{\cdot|\cdot}$ on the vector space $W_i$ under which $W_i$ becomes a unitary $V$-module. We call $W_i$, together with $\bk{\cdot|\cdot}$, a \textbf{preunitary $U$-module}.  Moreover, $\mc Y_{\mu_L}$ can be regarded as a unitary intertwining operator of $V$ of type $i\choose a~i$. Thus $\mu_L\in\Hom(W_a\boxtimes W_i,W_i)$. One can check that $(W_i,\mu_L)$ is a single-valued left $A_U$-module. Indeed, unit property is obvious, associativity follows from the Jacobi identity for $\mc Y_{\mu_L}$, and single-valued property follows from the fact that $\mc Y_{\mu_L}(\cdot,z)$ has only integer powers of $z$. Conversely, any preunitary $U$-module arises from a single-valued $A_U$-module. Thus the category of preunitary $U$-modules is naturally equivalent to the category of single-valued left $A_U$-modules. See \cite{HKL15} for more details. 

To discuss the unitarizability of $U$-modules, the following is needed:
\begin{thm}\label{lb23}
Assume that $V$ is a CFT-type, regular, and completely unitary VOA,  $U$ is a CFT-type unitary extension of $V$, and $W_i$ is a preunitary $U$-module. Then $W_i$ is a unitary $U$-module if and only if $W_i$ is a unitary left $A_U$-module. If this is true then $W_i$ is also a unitary $A_U$-bimodule.
\end{thm}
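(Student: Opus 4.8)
The plan is to reduce both conditions to one categorical identity, namely $\mu_L=\mu_L^\dagger(\rfl\otimes\id_i)$, where $\rfl\colon W_a\to W_{\ovl a}$ is the reflection operator of the Q-system $A_U$ and $\mu_L^\dagger$ is the adjoint intertwining operator of $\mc Y_{\mu_L}\in\mc V{i\choose a~i}$. Everything then turns on Theorem \ref{lb6}, which links the VOA-theoretic adjoint $\dagger$ to the categorical $*$-structure and does the real work.

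First I would prove the module analog of Lemma \ref{lb12}: the preunitary $U$-module $W_i$ (with its fixed $V$-inner product) is a unitary $U$-module if and only if
\begin{align*}
\mc Y_{\mu_L}(w^{(a)},z)=\mc Y_{\mu_L^\dagger}(\rfl w^{(a)},z)\qquad(\forall\,w^{(a)}\in W_a).
\end{align*}
This is obtained exactly as in the VOA case: the defining relation \eqref{eq8} of a unitary module, read with $v\in U=W_a$ and with $\Theta_U$ expressed through the reflection operator via $\rfl=\Co\,\Theta_U$, says precisely (after recognizing $-1=e^{-\im\pi}$) that $\mc Y_{\mu_L}$ equals its own adjoint intertwining operator with the charge twisted by $\rfl$. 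Crucially, whereas the unitarity of a VOA requires both relations \eqref{eq3} and \eqref{eq4} of Proposition \ref{lb9}, a module requires only the single adjoint relation, so no conjugate-operator condition intervenes. Translating through \eqref{eq31}, the displayed identity is equivalent to the morphism identity $\mu_L=\mu_L^\dagger(\rfl\otimes\id_i)$. Now Theorem \ref{lb6}, applied to $\mc Y_{\mu_L}$, gives $\mu_L^\dagger=(\ev_{\ovl a,a}\otimes\id_i)(\id_{\ovl a}\otimes\mu_L^*)$, and since $(\id_{\ovl a}\otimes\mu_L^*)(\rfl\otimes\id_i)=\rfl\otimes\mu_L^*$ we obtain
\begin{align*}
\mu_L^\dagger(\rfl\otimes\id_i)=(\ev_{\ovl a,a}\otimes\id_i)(\rfl\otimes\mu_L^*),
\end{align*}
which by Proposition \ref{lb20} is exactly the unitarity of $(W_i,\mu_L)$ as a left $A_U$-module. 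As every step is an equivalence, the first assertion follows in both directions.

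For the final claim, assume $W_i$ is a unitary left $A_U$-module. By Theorem \ref{lb18} the algebra $A_U$ is a (standard) Q-system, so Theorem \ref{lb25} gives $\mu_L\mu_L^*=d_{A_U}\id_i$. Since $A_U$ is commutative and $(W_i,\mu_L)$ is single-valued, the pair $(W_i,\mu_L,\mu_R)$ with $\mu_R=\mu_L\ss_{i,a}$ is an $A_U$-bimodule, as recalled before the theorem. Using $\ss_{i,a}^*=\ss_{i,a}^{-1}$,
\begin{align*}
\mu_R\mu_R^*=\mu_L\ss_{i,a}\ss_{i,a}^{-1}\mu_L^*=\mu_L\mu_L^*=d_{A_U}\id_i,
\end{align*}
so $(W_i,\mu_R)$ is a unitary right $A_U$-module by Theorem \ref{lb25} again; hence $(W_i,\mu_L,\mu_R)$ is a unitary $A_U$-bimodule.

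The main obstacle is the first step, i.e. the module version of Lemma \ref{lb12}: one must track the antilinear conjugation $\Co$, the PCT operator $\Theta_U$ of $U$ (which restricts to $\Theta_V$), and the reflection operator $\rfl$, and verify that the single adjoint relation genuinely amounts to $\mu_L=\mu_L^\dagger(\rfl\otimes\id_i)$. Once Theorem \ref{lb6} is invoked, the identification with left-$A_U$-module unitarity is purely formal, and the bimodule step is immediate from Theorem \ref{lb25} and the unitarity of the braiding.
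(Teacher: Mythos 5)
Your proposal is correct and follows essentially the same route as the paper's proof: equation \eqref{eq8} plus the definition of adjoint intertwining operators gives $\mc Y_{\mu_L}(w^{(a)},z)=\mc Y_{\mu_L^\dagger}(\rfl w^{(a)},z)$, which via \eqref{eq31}, theorem \ref{lb6}, and proposition \ref{lb20} is equivalent to unitarity of the left $A_U$-module. Your final step merely spells out, via theorem \ref{lb25} and unitarity of the braiding, the remark already recorded in section \ref{lb38} that a unitary single-valued left $A$-module is automatically a unitary $A$-bimodule, which is exactly what the paper cites.
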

\begin{proof}
Let $W_{\ovl a}$ be the contragredient module of $W_a$, $\ev_{a,\ovl a},\ev_{\ovl a,a}$ the evaluations of $W_a,W_{\ovl a}$ defined in chapter \ref{lb7}, and $\rfl:U=W_a\rightarrow \ovl U=W_{\ovl a}$ the reflection operator with respect to the chosen dual and evaluations. By equation \eqref{eq8} and the definition of adjoint intertwining operators, $W_i$ is unitary if and only if $\mc Y_{\mu_L}(w^{(a)},z)=\mc Y_{\mu_L^\dagger}(\rfl w^{(a)},z)$ for any $w^{(a)}\in W_a=U$. From equation \eqref{eq31} we know that $W_i$ is unitary if and only if $\mu_L=\mu_L^\dagger(\rfl\otimes\id_i)$. With the help of theorem \ref{lb6}, this equation is equivalent to $\mu_L=(\ev_{\ovl a,a}\otimes\id_i)(\rfl\otimes\mu_L^*)$, which by proposition \ref{lb20} means precisely the unitarity of the left $A_U$-module $W_i$. If we already have that $W_i$ is a unitary left $A_U$-module, then, since $W_i$ is single-valued, it is also a unitary $A$-bimodule. 
\end{proof}

We now prove the strong unitarity of $U$.

\begin{thm}\label{lb53}
If $V$ is a CFT-type, regular, and completely unitary VOA, and $U$ is a CFT-type unitary extension of $V$, then $U$ is strongly unitary, i.e., any $U$-module is unitarizable.
\end{thm}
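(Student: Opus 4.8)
The plan is to reduce the unitarizability of an arbitrary $U$-module to the unitarizability of modules over the $Q$-system $A_U$, which is already available from theorems \ref{lb22} and \ref{lb23}. Since $U$ is a unitary extension, theorem \ref{lb18} shows that $A_U=(W_a,\mu,\iota)$ is a (standard) $Q$-system in $\RepV$. Now let $W_i$ be any $U$-module. Regarding it first as a $V$-module, complete unitarity of $V$ (in particular strong unitarity) guarantees that $W_i$ is unitarizable as a $V$-module; fixing such an inner product makes $W_i$ a preunitary $U$-module, and via the correspondence recalled before theorem \ref{lb23} (following \cite{HKL15}) it becomes a single-valued left $A_U$-module $(W_i,\mu_L)$.

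Next I would apply theorem \ref{lb22}: because $A_U$ is a $Q$-system, the left $A_U$-module $(W_i,\mu_L)$ is unitarizable, so there is a unitary left $A_U$-module $(W_j,\mu_L^j)$ together with an invertible $F\in\Hom_{A_U,-}(W_i,W_j)$. (Alternatively one may use theorem \ref{lb21}, since $\RepV$ is rigid and hence every $W_i$ is $\mc C$-dualizable.) The one point that needs to be checked is that $W_j$ is again single-valued. This follows from naturality of the braiding together with the fact that $F$ is a left $A_U$-module map: one has $\ss_{j,a}(F\otimes\id_a)=(\id_a\otimes F)\ss_{i,a}$ and $\ss_{a,j}(\id_a\otimes F)=(F\otimes\id_a)\ss_{a,i}$, so that $\mu_L^j\ss_{j,a}\ss_{a,j}(\id_a\otimes F)=F\mu_L\ss_{i,a}\ss_{a,i}=F\mu_L=\mu_L^j(\id_a\otimes F)$, and cancelling the invertible $\id_a\otimes F$ yields $\mu_L^j\ss_{j,a}\ss_{a,j}=\mu_L^j$.

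Being single-valued, $W_j$ corresponds under the \cite{HKL15} equivalence to a preunitary $U$-module, and since it is moreover a unitary left $A_U$-module, theorem \ref{lb23} promotes it to a genuine unitary $U$-module. Under the same equivalence the isomorphism $F$ of single-valued left $A_U$-modules is an isomorphism of $U$-modules $W_i\overset{\simeq}{\longrightarrow}W_j$; pulling back the inner product of $W_j$ along $F$ then equips $W_i$ with an inner product under which $F$ is unitary and $W_i$ is a unitary $U$-module. Hence every $U$-module is unitarizable, and together with the hypothesis that $U$ itself is unitary this is exactly the strong unitarity of $U$.

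I expect the principal subtlety to lie in the passage between the $A_U$-module picture and the $U$-module picture, rather than in any hard estimate: one must verify both that single-valuedness survives left-module isomorphisms (the braiding computation above) and that the equivalence of \cite{HKL15} sends $A_U$-module isomorphisms to $U$-module isomorphisms, so that the abstract unitarizability produced by theorem \ref{lb22} genuinely yields a unitary $U$-module structure on $W_i$ and not merely on some abstractly equivalent object.
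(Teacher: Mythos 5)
Your proposal is correct and follows essentially the same route as the paper's own proof of theorem \ref{lb53}: preunitarize the $U$-module $W_i$ using the strong unitarity of $V$, unitarize it as a left $A_U$-module via theorem \ref{lb22} (or theorem \ref{lb21}), and transfer the unitary structure back to a unitary $U$-module structure through equation \eqref{eq31} and theorem \ref{lb23}. Your explicit naturality-of-braiding computation showing that single-valuedness survives the left $A_U$-module isomorphism $F$ is a detail the paper leaves implicit, and it is a correct verification rather than a departure in method.
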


\begin{proof}
Since $U$-modules are clearly preunitarizable, we choose a preunitary $U$-module $W_i$. Then by either theorem \ref{lb21} or theorem \ref{lb22}, $W_i$ is unitarizable as a left $A_U$-module. Thus, by equation \eqref{eq31} and theorem \ref{lb23}, $W_i$ is unitarizable as a $U$-module.
\end{proof}

\section{$C^*$-tensor categories associated to Q-systems and unitary VOA extensions}

\subsection{Unitary tensor products of unitary bimodules of Q-systems}

In this chapter, $\mc C$ is a $C^*$-tensor category with simple $W_0$ as before, and $A=(W_a,\mu,\iota)$ is a  Q-system (i.e., special $C^*$-Frobenius algebra) in $\mc C$. Set evaluation $\ev_{a,a}=\iota^*\mu$ as usual. We suppress the label $a$ in diagram calculus. Let $\BIMA$ be the $C^*$-category of unitary $A$-bimodules whose morphisms are $A$-bimodule morphisms. \index{BIMA@$\BIMA$} In this and the next sections, we review the construction of a $C^*$-tensor structure on $\BIMA$. See \cite{NY16,KO02,CKM17} for reference. Note that our setting is slightly more general than that of \cite{NY16}, since we do not assume $\mc C$ is rigid or $A$ is standard. Nevertheless, many ideas in \cite{NY16} still work in our setting. To make  our article self-contained, we include detailed proofs for all the relevant results. 

Choose unitary $A$-bimodules $(W_i,\mu^i_L,\mu^i_R),(W_j,\mu^j_L,\mu^j_R)$. Then $W_i\boxtimes W_j$ is a unitary $A$-bimodule with left action $\mu^i_L\otimes\id_j$ and right action $\id_i\otimes\mu^j_R$. Define  $\Pij\in\Hom_A(W_i\boxtimes W_a\boxtimes W_j,W_i\boxtimes W_j)$ \index{zz@$\Pij$} and $\Cij\in\End_A(W_i\boxtimes W_j)$ \index{zz@$\Cij$} to be
\begin{gather}
\Pij=\mu^i_R\otimes\id_j-\id_i\otimes\mu^j_L,\\
\Cij=(\id_i\otimes\mu^j_L)((\mu^i_R)^*\otimes\id_j)=(\mu^i_R\otimes\id_j)(\id_i\otimes(\mu^j_L)^*).
\end{gather}
 
\begin{df}\label{lb47}
Let $W_i,W_j$ be unitary $A$-bimodules. We say that $(W_{ij},\mu_{i,j})$\index{Wij@$(W_{ij},\mu_{i,j})$} (abbreviated to $W_{ij}$ when no confusion arises) is a \textbf{tensor product} of $W_i, W_j$ over $A$ (cf.  \cite{CKM17}), if
\begin{itemize}
\item  $W_{ij}=(W_{ij},\mu^{ij}_L,\mu^{ij}_R)$ is an $A$-bimodule,  $\mu_{i,j}\in\Hom_A(W_i\boxtimes W_j,W_{ij})$,\footnote{One should not confuse $i\boxtimes j$ and $ij$. By our notation,  $W_{i\boxtimes j}=W_i\boxtimes W_j$ is different from $W_{ij}$.} and $\mu_{i,j}\Pij=0$.
\item (Universal property) If $(W_k,\mu^k_L,\mu^k_R)$ is a unitary $A$-bimodule, $\alpha\in\Hom_A(W_i\boxtimes W_j,W_k)$, and $\alpha\Pij=0$,\footnote{Such $\alpha$ is called a categorical intertwining operator in \cite{CKM17}.} then there exists a unique $\wtd\alpha\in\Hom_A(W_{ij},W_k)$ satisfying $\alpha=\wtd \alpha\mu_{i,j}$. In this case, we say that $\wtd\alpha$ is \textbf{induced by} $\alpha$ via the tensor product $W_{ij}$.
\end{itemize}
The tensor product $(W_{ij},\mu_{i,j})$ is called \textbf{unitary} if $W_{ij}$ is a unitary $A$-bimodule and $\Cij=\mu_{i,j}^*\mu_{i,j}$. 
\end{df}
We write $\mu_{i,j}=\vcenter{\hbox{{\def\svgscale{0.4}
\begingroup%
  \makeatletter%
  \providecommand\color[2][]{%
    \errmessage{(Inkscape) Color is used for the text in Inkscape, but the package 'color.sty' is not loaded}%
    \renewcommand\color[2][]{}%
  }%
  \providecommand\transparent[1]{%
    \errmessage{(Inkscape) Transparency is used (non-zero) for the text in Inkscape, but the package 'transparent.sty' is not loaded}%
    \renewcommand\transparent[1]{}%
  }%
  \providecommand\rotatebox[2]{#2}%
  \newcommand*\fsize{\dimexpr\f@size pt\relax}%
  \newcommand*\lineheight[1]{\fontsize{\fsize}{#1\fsize}\selectfont}%
  \ifx\svgwidth\undefined%
    \setlength{\unitlength}{54.70314335bp}%
    \ifx\svgscale\undefined%
      \relax%
    \else%
      \setlength{\unitlength}{\unitlength * \real{\svgscale}}%
    \fi%
  \else%
    \setlength{\unitlength}{\svgwidth}%
  \fi%
  \global\let\svgwidth\undefined%
  \global\let\svgscale\undefined%
  \makeatother%
  \begin{picture}(1,1.56885026)%
    \lineheight{1}%
    \setlength\tabcolsep{0pt}%
    \put(0,0){\includegraphics[width=\unitlength,page=1]{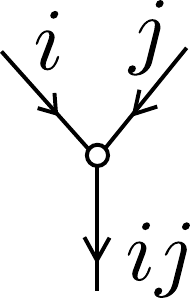}}%
  \end{picture}%
\endgroup%
}}}~~,~\mu_{i,j}^*=\vcenter{\hbox{{\def\svgscale{0.4}
\begingroup%
  \makeatletter%
  \providecommand\color[2][]{%
    \errmessage{(Inkscape) Color is used for the text in Inkscape, but the package 'color.sty' is not loaded}%
    \renewcommand\color[2][]{}%
  }%
  \providecommand\transparent[1]{%
    \errmessage{(Inkscape) Transparency is used (non-zero) for the text in Inkscape, but the package 'transparent.sty' is not loaded}%
    \renewcommand\transparent[1]{}%
  }%
  \providecommand\rotatebox[2]{#2}%
  \newcommand*\fsize{\dimexpr\f@size pt\relax}%
  \newcommand*\lineheight[1]{\fontsize{\fsize}{#1\fsize}\selectfont}%
  \ifx\svgwidth\undefined%
    \setlength{\unitlength}{65.70358555bp}%
    \ifx\svgscale\undefined%
      \relax%
    \else%
      \setlength{\unitlength}{\unitlength * \real{\svgscale}}%
    \fi%
  \else%
    \setlength{\unitlength}{\svgwidth}%
  \fi%
  \global\let\svgwidth\undefined%
  \global\let\svgscale\undefined%
  \makeatother%
  \begin{picture}(1,1.14148049)%
    \lineheight{1}%
    \setlength\tabcolsep{0pt}%
    \put(0,0){\includegraphics[width=\unitlength,page=1]{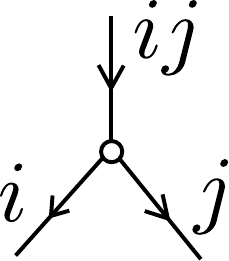}}%
  \end{picture}%
\endgroup%
}}}~~$. \index{Wij@$(W_{ij},\mu_{i,j})$!$\mu_{i,j}=\vcenter{\hbox{{\def\svgscale{0.4}
	}}}~~,~\mu_{i,j}^*=\vcenter{\hbox{{\def\svgscale{0.4}
	}}}~~$} Then the equation $\Cij=\mu_{i,j}^*\mu_{i,j}$ reads
\begin{align}
\vcenter{\hbox{{\def\svgscale{0.6}
\begingroup%
  \makeatletter%
  \providecommand\color[2][]{%
    \errmessage{(Inkscape) Color is used for the text in Inkscape, but the package 'color.sty' is not loaded}%
    \renewcommand\color[2][]{}%
  }%
  \providecommand\transparent[1]{%
    \errmessage{(Inkscape) Transparency is used (non-zero) for the text in Inkscape, but the package 'transparent.sty' is not loaded}%
    \renewcommand\transparent[1]{}%
  }%
  \providecommand\rotatebox[2]{#2}%
  \newcommand*\fsize{\dimexpr\f@size pt\relax}%
  \newcommand*\lineheight[1]{\fontsize{\fsize}{#1\fsize}\selectfont}%
  \ifx\svgwidth\undefined%
    \setlength{\unitlength}{57.98234608bp}%
    \ifx\svgscale\undefined%
      \relax%
    \else%
      \setlength{\unitlength}{\unitlength * \real{\svgscale}}%
    \fi%
  \else%
    \setlength{\unitlength}{\svgwidth}%
  \fi%
  \global\let\svgwidth\undefined%
  \global\let\svgscale\undefined%
  \makeatother%
  \begin{picture}(1,2.00826707)%
    \lineheight{1}%
    \setlength\tabcolsep{0pt}%
    \put(0,0){\includegraphics[width=\unitlength,page=1]{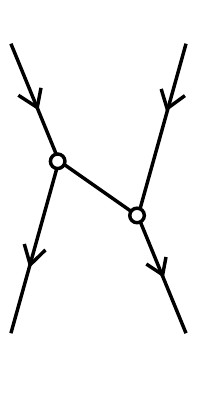}}%
    \put(0.02479523,1.9281425){\color[rgb]{0,0,0}\makebox(0,0)[lt]{\lineheight{1.25}\smash{\begin{tabular}[t]{l}$i$\end{tabular}}}}%
    \put(0.85636755,1.95123275){\color[rgb]{0,0,0}\makebox(0,0)[lt]{\lineheight{1.25}\smash{\begin{tabular}[t]{l}$j$\end{tabular}}}}%
    \put(0.84950541,0.02933658){\color[rgb]{0,0,0}\makebox(0,0)[lt]{\lineheight{1.25}\smash{\begin{tabular}[t]{l}$j$\end{tabular}}}}%
    \put(-0.00544197,0.01532669){\color[rgb]{0,0,0}\makebox(0,0)[lt]{\lineheight{1.25}\smash{\begin{tabular}[t]{l}$i$\end{tabular}}}}%
  \end{picture}%
\endgroup%
}}}~~~=~~\vcenter{\hbox{{\def\svgscale{0.6}
\begingroup%
  \makeatletter%
  \providecommand\color[2][]{%
    \errmessage{(Inkscape) Color is used for the text in Inkscape, but the package 'color.sty' is not loaded}%
    \renewcommand\color[2][]{}%
  }%
  \providecommand\transparent[1]{%
    \errmessage{(Inkscape) Transparency is used (non-zero) for the text in Inkscape, but the package 'transparent.sty' is not loaded}%
    \renewcommand\transparent[1]{}%
  }%
  \providecommand\rotatebox[2]{#2}%
  \newcommand*\fsize{\dimexpr\f@size pt\relax}%
  \newcommand*\lineheight[1]{\fontsize{\fsize}{#1\fsize}\selectfont}%
  \ifx\svgwidth\undefined%
    \setlength{\unitlength}{56.75315313bp}%
    \ifx\svgscale\undefined%
      \relax%
    \else%
      \setlength{\unitlength}{\unitlength * \real{\svgscale}}%
    \fi%
  \else%
    \setlength{\unitlength}{\svgwidth}%
  \fi%
  \global\let\svgwidth\undefined%
  \global\let\svgscale\undefined%
  \makeatother%
  \begin{picture}(1,2.05138488)%
    \lineheight{1}%
    \setlength\tabcolsep{0pt}%
    \put(0,0){\includegraphics[width=\unitlength,page=1]{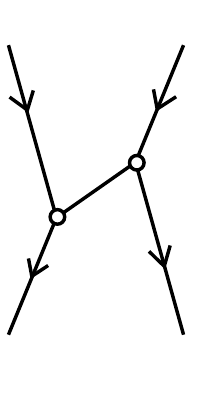}}%
    \put(0.00490101,1.98672735){\color[rgb]{0,0,0}\makebox(0,0)[lt]{\lineheight{1.25}\smash{\begin{tabular}[t]{l}$i$\end{tabular}}}}%
    \put(0.88404538,2.0015102){\color[rgb]{0,0,0}\makebox(0,0)[lt]{\lineheight{1.25}\smash{\begin{tabular}[t]{l}$j$\end{tabular}}}}%
    \put(-0.00540252,0.01226656){\color[rgb]{0,0,0}\makebox(0,0)[lt]{\lineheight{1.25}\smash{\begin{tabular}[t]{l}$i$\end{tabular}}}}%
    \put(0.8803968,0.02838214){\color[rgb]{0,0,0}\makebox(0,0)[lt]{\lineheight{1.25}\smash{\begin{tabular}[t]{l}$j$\end{tabular}}}}%
  \end{picture}%
\endgroup%
}}}~~~=~~\vcenter{\hbox{{\def\svgscale{0.6}
\begingroup%
  \makeatletter%
  \providecommand\color[2][]{%
    \errmessage{(Inkscape) Color is used for the text in Inkscape, but the package 'color.sty' is not loaded}%
    \renewcommand\color[2][]{}%
  }%
  \providecommand\transparent[1]{%
    \errmessage{(Inkscape) Transparency is used (non-zero) for the text in Inkscape, but the package 'transparent.sty' is not loaded}%
    \renewcommand\transparent[1]{}%
  }%
  \providecommand\rotatebox[2]{#2}%
  \newcommand*\fsize{\dimexpr\f@size pt\relax}%
  \newcommand*\lineheight[1]{\fontsize{\fsize}{#1\fsize}\selectfont}%
  \ifx\svgwidth\undefined%
    \setlength{\unitlength}{57.60221093bp}%
    \ifx\svgscale\undefined%
      \relax%
    \else%
      \setlength{\unitlength}{\unitlength * \real{\svgscale}}%
    \fi%
  \else%
    \setlength{\unitlength}{\svgwidth}%
  \fi%
  \global\let\svgwidth\undefined%
  \global\let\svgscale\undefined%
  \makeatother%
  \begin{picture}(1,2.00623005)%
    \lineheight{1}%
    \setlength\tabcolsep{0pt}%
    \put(0,0){\includegraphics[width=\unitlength,page=1]{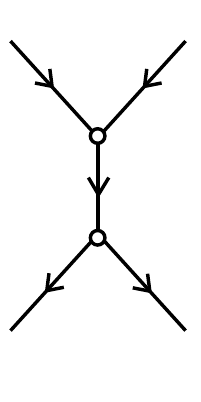}}%
    \put(0.00645279,1.91551817){\color[rgb]{0,0,0}\makebox(0,0)[lt]{\lineheight{1.25}\smash{\begin{tabular}[t]{l}$i$\end{tabular}}}}%
    \put(0.84873084,1.94101148){\color[rgb]{0,0,0}\makebox(0,0)[lt]{\lineheight{1.25}\smash{\begin{tabular}[t]{l}$j$\end{tabular}}}}%
    \put(-0.0070344,0.01493903){\color[rgb]{0,0,0}\makebox(0,0)[lt]{\lineheight{1.25}\smash{\begin{tabular}[t]{l}$i$\end{tabular}}}}%
    \put(0.85622598,0.02207274){\color[rgb]{0,0,0}\makebox(0,0)[lt]{\lineheight{1.25}\smash{\begin{tabular}[t]{l}$j$\end{tabular}}}}%
    \put(0.58454978,1.02386227){\color[rgb]{0,0,0}\makebox(0,0)[lt]{\lineheight{1.25}\smash{\begin{tabular}[t]{l}$ij$\end{tabular}}}}%
  \end{picture}%
\endgroup%
}}}~~~,\label{eq44}
\end{align}
which is a special case of the Frobenius relations for unitary tensor products to be proved later (theorem \ref{lb32}). (Note that the first equality of \eqref{eq44} follows from the unitarity of $W_i$ and $W_j$.)

The existence of a tensor product is clear if one assumes moreover that $\mc C$ is abelian: Let $W_{ij}$ be a cokernel of $\Pij$, and define the bimodule structure on $W_{ij}$ using that of $W_i\boxtimes W_j$. Then $W_{ij}$ becomes a tensor product over $A$ of $W_i$ and $W_j$; see \cite{KO02} for more details. However, to make the tensor product unitary one has to be more careful when choosing the cokernel. In the following, we proceed in a slightly different way motivated by  \cite{BKLR15} section 3.7, and we do not require the abelianess of $\mc C$. To begin with,  using $\mu\mu^*=d_A\id_a$ one verifies easily that $\Cij^2=d_A\Cij$. Therefore,
\begin{lm}[\cite{BKLR15} lemma 3.36]\label{lb26}
$d_A^{-1}\Cij\in\End_A(W_i\boxtimes W_j)$ is a projection.
\end{lm}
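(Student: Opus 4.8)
The statement asks for three things at once: that $d_A^{-1}\Cij$ lies in $\End_A(W_i\boxtimes W_j)$, that it is self-adjoint, and that it is idempotent. Two of these are essentially at hand. That $\Cij$ is an $A$-bimodule endomorphism of $W_i\boxtimes W_j$ (whose left and right actions are $\mu^i_L\otimes\id_j$ and $\id_i\otimes\mu^j_R$) is a routine diagrammatic consequence of the bimodule associativity of $W_i$ and $W_j$, already recorded when $\Cij$ was introduced; I would simply recall that check. The idempotency relation $\Cij^2=d_A\Cij$ has been derived just above the lemma from the specialness $\mu\mu^*=d_A\id_a$ together with associativity, so I would invoke it directly: it gives $(d_A^{-1}\Cij)^2=d_A^{-2}\Cij^2=d_A^{-1}\Cij$.

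The real content of the proof is therefore self-adjointness, $\Cij^*=\Cij$. The plan is to exploit that $\Cij$ carries two descriptions, namely $\Cij=(\id_i\otimes\mu^j_L)((\mu^i_R)^*\otimes\id_j)$ and $\Cij=(\mu^i_R\otimes\id_j)(\id_i\otimes(\mu^j_L)^*)$, and that the second is precisely the adjoint of the first. Hence self-adjointness is equivalent to the equality of these two expressions. To prove that equality I would begin from the second expression and rewrite $(\mu^j_L)^*$ using the unitarity of the left $A$-module $W_j$: by Proposition \ref{lb20} (taken with $\rfl=\id_a$ and $\ev_{a,a}=\iota^*\mu$), one has $(\mu^j_L)^*=(\id_a\otimes\mu^j_L)(\coev_{a,a}\otimes\id_j)$, which inserts a coevaluation $\coev_{a,a}$ between the two copies of $W_a$.

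After this substitution the maps $\mu^i_R$ and $\mu^j_L$ act on disjoint tensor factors — on $W_i\boxtimes W_a$ and $W_a\boxtimes W_j$ respectively, separated by the coevaluation — so by the interchange law (functoriality of $\boxtimes$) they may be applied in the opposite order. Reorganizing then yields $(\id_i\otimes\mu^j_L)\bigl((\mu^i_R\otimes\id_a)(\id_i\otimes\coev_{a,a})\otimes\id_j\bigr)$, and I would finally recognize $(\mu^i_R\otimes\id_a)(\id_i\otimes\coev_{a,a})$ as $(\mu^i_R)^*$, again by the unitarity of $W_i$ as a right $A$-module (the adjoint of the relation in Proposition \ref{lb20}). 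This collapses the expression to the first description of $\Cij$, proving $\Cij=\Cij^*$. Combining self-adjointness with idempotency and the bimodule-morphism property, $d_A^{-1}\Cij$ is a self-adjoint idempotent in $\End_A(W_i\boxtimes W_j)$, i.e.\ a projection.

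The main obstacle is the self-adjointness step: everything hinges on using the module unitarity relations of Proposition \ref{lb20} to convert the adjoints $(\mu^i_R)^*,(\mu^j_L)^*$ into compositions involving $\ev_{a,a}$ and $\coev_{a,a}$, and then on the careful bookkeeping of the interchange law that matches the two descriptions of $\Cij$. By contrast the idempotency $\Cij^2=d_A\Cij$ and the fact that $\Cij\in\End_A(W_i\boxtimes W_j)$ are comparatively mechanical and are already supplied by the surrounding text.
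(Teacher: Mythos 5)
Your proposal is correct and follows essentially the same route as the paper, which derives $\Cij^2=d_A\Cij$ from the specialness $\mu\mu^*=d_A\id_a$ and cites \cite{BKLR15} lemma 3.36, where self-adjointness is exactly the adjoint commutativity of the unitary right action on $W_i$ and left action on $W_j$ — i.e.\ the equality of the two mutually adjoint expressions defining $\Cij$, which you verify via proposition \ref{lb20} and the interchange law. Your write-up merely makes explicit the details the paper delegates to \cite{BKLR15}, and the computation is sound.
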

By proposition \ref{lb24}, there exists a unitary $A$-bimodule $W_{ij}$ and a partial isometry $u_{i,j}\in\Hom_A(W_i\boxtimes W_j,W_{ij})$ satisfying $u_{i,j}u_{i,j}^*=\id_{ij}$ and $u_{i,j}^*u_{i,j}=d_A^{-1}\Cij$. Setting $\mu_{i,j}=\sqrt{d_A}u_{i,j}$, one obtains $\mu_{i,j}^*\mu_{i,j}=\Cij$ (equations \eqref{eq44}) and  $\mu_{i,j}\mu_{i,j}^*=d_A\id_{ij}$. We now show that $(W_{ij},\mu_{i,j})$ is a unitary tensor product.

\begin{pp}\label{lb27}
Let $W_{ij}$ be a unitary $A$-bimodule, and $\mu_{i,j}\in\Hom_A(W_i\boxtimes W_j,W_{ij})$. Then $(W_{ij},\mu_{i,j})$ is a unitary tensor product of $W_i,W_j$ over $A$ if and only if  $\mu_{i,j}^*\mu_{i,j}=\Cij$ and $\mu_{i,j}\mu_{i,j}^*=d_A\id_{ij}$.
\end{pp}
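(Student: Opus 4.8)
I prove the two implications, isolating first an identity valid whenever $\mu_{i,j}^*\mu_{i,j}=\Cij$. Recall $\Cij^2=d_A\Cij$ (noted before Lemma \ref{lb26}), so $(\Cij-d_A\id_{i\boxtimes j})\Cij=\Cij^2-d_A\Cij=0$. Hence the morphism $Y:=\mu_{i,j}(\Cij-d_A\id_{i\boxtimes j})$ satisfies $Y^*Y=(\Cij-d_A\id_{i\boxtimes j})\,\mu_{i,j}^*\mu_{i,j}\,(\Cij-d_A\id_{i\boxtimes j})=(\Cij-d_A\id_{i\boxtimes j})\Cij(\Cij-d_A\id_{i\boxtimes j})=0$, and by $C^*$-positivity $Y=0$, i.e.
\begin{equation}
\mu_{i,j}\Cij=d_A\mu_{i,j}.\tag{$\ast$}
\end{equation}
Now suppose $(W_{ij},\mu_{i,j})$ is a unitary tensor product. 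Then $\mu_{i,j}^*\mu_{i,j}=\Cij$ by Definition \ref{lb47}, so only $\mu_{i,j}\mu_{i,j}^*=d_A\id_{ij}$ requires proof. Put $T=\mu_{i,j}\mu_{i,j}^*$, which lies in $\End_A(W_{ij})$ since $\mu_{i,j},\mu_{i,j}^*$ are bimodule maps (Proposition \ref{lb24}). By $(\ast)$, $T\mu_{i,j}=\mu_{i,j}(\mu_{i,j}^*\mu_{i,j})=\mu_{i,j}\Cij=d_A\mu_{i,j}$. Finally, $\mu_{i,j}$ is right-cancellable among $A$-bimodule morphisms: if $\beta_1\mu_{i,j}=\beta_2\mu_{i,j}$ with $\beta_\ell\in\Hom_A(W_{ij},W_{ij})$, then $\alpha:=\beta_1\mu_{i,j}$ satisfies $\alpha\Psi_{i,j}=\beta_1(\mu_{i,j}\Psi_{i,j})=0$, and the uniqueness clause of the universal property forces $\beta_1=\beta_2$. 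Taking $\beta_1=T$ and $\beta_2=d_A\id_{ij}$ gives $T=d_A\id_{ij}$.

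For the converse, assume $\mu_{i,j}^*\mu_{i,j}=\Cij$ and $\mu_{i,j}\mu_{i,j}^*=d_A\id_{ij}$; since $W_{ij}$ is unitary and $\Cij=\mu_{i,j}^*\mu_{i,j}$, it remains to verify the two conditions of Definition \ref{lb47}. The crux is the identity $\Cij\Psi_{i,j}=0$. Writing $\Psi_{i,j}=\mu^i_R\otimes\id_j-\id_i\otimes\mu^j_L$ and inserting the two expressions for $\Cij$, this reduces to
\[
(\id_i\otimes\mu^j_L)\big((\mu^i_R)^*\mu^i_R\otimes\id_j\big)=(\mu^i_R\otimes\id_j)\big(\id_i\otimes(\mu^j_L)^*\mu^j_L\big).
\]
Because $W_i,W_j$ are unitary bimodules, the module Frobenius relations $(\mu^i_R)^*\mu^i_R=(\mu^i_R\otimes\id_a)(\id_i\otimes\mu^*)$ and $(\mu^j_L)^*\mu^j_L=(\id_a\otimes\mu^j_L)(\mu^*\otimes\id_j)$ hold; these are the bimodule analogues of the algebra relations \eqref{eq32}. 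Substituting them and using bifunctoriality of $\boxtimes$ (the interchange law), both sides collapse to $(\mu^i_R\otimes\mu^j_L)(\id_i\otimes\mu^*\otimes\id_j)$, establishing $\Cij\Psi_{i,j}=0$. Together with $(\ast)$ this yields $\mu_{i,j}\Psi_{i,j}=d_A^{-1}\mu_{i,j}\Cij\Psi_{i,j}=0$, which is the first requirement.

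It remains to check the universal property. Let $W_k$ be a unitary $A$-bimodule and $\alpha\in\Hom_A(W_i\boxtimes W_j,W_k)$ with $\alpha\Psi_{i,j}=0$; define $\tilde\alpha=d_A^{-1}\alpha\mu_{i,j}^*\in\Hom_A(W_{ij},W_k)$. From $\alpha\Psi_{i,j}=0$ we have $\alpha(\id_i\otimes\mu^j_L)=\alpha(\mu^i_R\otimes\id_j)$, whence $\alpha\Cij=\alpha(\mu^i_R\otimes\id_j)((\mu^i_R)^*\otimes\id_j)=\alpha\big(\mu^i_R(\mu^i_R)^*\otimes\id_j\big)=d_A\alpha$, using $\mu^i_R(\mu^i_R)^*=d_A\id_i$ (Theorem \ref{lb25}, the right action of $W_i$ being unitary). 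Hence $\tilde\alpha\mu_{i,j}=d_A^{-1}\alpha\,\mu_{i,j}^*\mu_{i,j}=d_A^{-1}\alpha\Cij=\alpha$; and if $\tilde\alpha'\mu_{i,j}=\alpha$ as well, right-multiplying by $\mu_{i,j}^*$ and using $\mu_{i,j}\mu_{i,j}^*=d_A\id_{ij}$ gives $\tilde\alpha'=\tilde\alpha$. Thus $(W_{ij},\mu_{i,j})$ is a unitary tensor product. The one genuinely computational step, and the expected main obstacle, is the graphical identity $\Cij\Psi_{i,j}=0$; everything else is $C^*$-algebra together with the universal property. Its proof relies on the module Frobenius relations for unitary bimodules, which I would invoke from the general $C^*$-Frobenius theory (cf. \cite{BKLR15}) or derive beforehand from unitarity of the module actions and associativity, exactly as \eqref{eq32} is obtained for $A$ itself.
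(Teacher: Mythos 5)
Your proof is correct, and in the ``only if'' direction it takes a genuinely different route from the paper. The paper proves only the ``if'' direction at this point and defers the converse: it first constructs one unitary tensor product explicitly from the projection $d_A^{-1}\Cij$ (via Proposition \ref{lb24}), which satisfies $\mu_{i,j}\mu_{i,j}^*=d_A\id_{ij}$ by construction, then proves uniqueness up to unitaries (Theorem \ref{lb28}), and concludes that $\mu_{i,j}\mu_{i,j}^*=d_A\id_{ij}$ holds for \emph{every} unitary tensor product because any two are unitarily conjugate. You instead argue directly inside the given tensor product: the $C^*$-positivity trick (from $\Cij^2=d_A\Cij$ and $\mu_{i,j}^*\mu_{i,j}=\Cij$, the morphism $Y=\mu_{i,j}(\Cij-d_A\id)$ has $Y^*Y=0$) gives $\mu_{i,j}\Cij=d_A\mu_{i,j}$, and right-cancellability of $\mu_{i,j}$, extracted from the uniqueness clause of the universal property, upgrades $T=\mu_{i,j}\mu_{i,j}^*$ --- a priori only $d_A$ times a projection --- to $d_A\id_{ij}$. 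Your version buys a self-contained proof that needs neither the existence construction nor Theorem \ref{lb28}; the paper's version buys economy, since Theorem \ref{lb28} must be proved anyway and the ``only if'' part then comes for free. Your ``if'' direction coincides with the paper's almost verbatim (the same $\wtd\alpha=d_A^{-1}\alpha\mu_{i,j}^*$, the same appeal to $\mu^i_R(\mu^i_R)^*=d_A\id_i$ from Theorem \ref{lb25}), except that you spell out $\Cij\Pij=0$, which the paper dismisses as clear: your reduction to $(\id_i\otimes\mu^j_L)((\mu^i_R)^*\mu^i_R\otimes\id_j)=(\mu^i_R\otimes\id_j)(\id_i\otimes(\mu^j_L)^*\mu^j_L)$ and the collapse of both sides to $(\mu^i_R\otimes\mu^j_L)(\id_i\otimes\mu^*\otimes\id_j)$ via the module Frobenius relations is correct, and those relations do follow from unitarity of the actions, module associativity, the unit property and \eqref{eq32}, exactly as you say (cf. \cite{BKLR15}). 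One half-sentence worth adding: the computation $Y^*Y=(\Cij-d_A\id)\Cij(\Cij-d_A\id)$ implicitly uses that $\Cij$ is self-adjoint, which is immediate since the two defining expressions for $\Cij$ are adjoint to each other.
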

\begin{proof}
``If'': Since $\mu_{i,j}^*\mu_{i,j}=\Cij$ and the unitarity of the $A$-bimodule $W_{ij}$ are assumed, it suffices to show that $W_{ij}$ is a tensor product. Since $\Cij\Pij$ clearly equals $0$, we compute
\begin{align}
\mu_{i,j}\Pij=d_A^{-1}\mu_{i,j}\mu_{i,j}^*\mu_{i,j}\Pij=d_A^{-1}\mu_{i,j}\Cij\Pij=0.
\end{align}
If $W_k$ is a unitary $A$-bimodule, $\alpha\in\Hom_A(W_i\boxtimes W_j,W_k)$, and $\alpha\Pij=0$, then one can set $\wtd\alpha=d_A^{-1}\alpha\mu_{i,j}^*$ and compute
\begin{align*}
\wtd\alpha\mu_{i,j}=d_A^{-1}\alpha\mu_{i,j}^*\mu_{i,j}=d_A^{-1}\alpha\Cij=d_A^{-1}\alpha(\mu^i_R(\mu^i_R)^*\otimes\id_j)=\alpha,
\end{align*}
where we have used $\alpha\Pij=0$ and
$\mu^i_R(\mu^i_R)^*=d_A\id_i$ (theorem \ref{lb25}) respectively to prove the third and the fourth equalities. If there is another $\wht\alpha$ satisfying also $\alpha=\wht\alpha\mu_{i,j}$, then $\wht\alpha=d_A^{-1}\wht\alpha\mu_{i,j}\mu_{i,j}^*=d_A^{-1}\alpha\mu_{i,j}^*=\wtd\alpha$. Thus the universal property is checked.

``Only if": This will be proved after the next theorem.
\end{proof}

\begin{thm}\label{lb28}
Let $W_i,W_j$ be unitary $A$-bimodules. Then unitary tensor products over $A$ of $W_i,W_j$ exist and are unique up to unitaries. More precisely,  uniqueness means that if $(W_{ij},\mu_{i,j})$ and $(W_{i\bullet j},\eta_{i,j})$ are unitary tensor products of $W_i,W_j$ over $A$, then there exists a (unique) unitary $u\in\Hom_A(W_{ij},W_{i\bullet j})$ such that $\eta_{i,j}=u\mu_{i,j}$.
\end{thm}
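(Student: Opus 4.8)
The plan is to construct one canonical unitary tensor product directly from the projection $d_A^{-1}\Cij$, and then obtain \emph{both} existence and uniqueness by showing that every unitary tensor product is unitarily isomorphic to this canonical model. The key leverage is that the canonical model's structure map is a scaled co-isometry, a property the abstract definition of a unitary tensor product does not a priori guarantee.

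For existence, recall from Lemma \ref{lb26} that $d_A^{-1}\Cij$ is a projection in $\End_A(W_i\boxtimes W_j)$. By Proposition \ref{lb24} (closure of the $C^*$-category of unitary $A$-bimodules under orthogonal subobjects) I would produce a unitary $A$-bimodule $\wht W$ together with a partial isometry $u\in\Hom_A(W_i\boxtimes W_j,\wht W)$ satisfying $uu^*=\id_{\wht W}$ and $u^*u=d_A^{-1}\Cij$. Setting $\wht\mu=\sqrt{d_A}\,u$ yields $\wht\mu^*\wht\mu=\Cij$ and $\wht\mu\wht\mu^*=d_A\id_{\wht W}$, so by the already-proven ``if'' direction of Proposition \ref{lb27} the pair $(\wht W,\wht\mu)$ is a unitary tensor product of $W_i,W_j$ over $A$.

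For uniqueness, I would first establish the claim that any unitary tensor product $(W,\theta)$ is unitarily isomorphic to $(\wht W,\wht\mu)$. Since $\theta\in\Hom_A(W_i\boxtimes W_j,W)$ with $\theta\Pij=0$, the universal property of $(\wht W,\wht\mu)$ gives a unique $s\in\Hom_A(\wht W,W)$ with $\theta=s\wht\mu$, and symmetrically the universal property of $(W,\theta)$ gives $t\in\Hom_A(W,\wht W)$ with $\wht\mu=t\theta$; the uniqueness clause forces $ts=\id_{\wht W}$ and $st=\id_W$, so $s$ is invertible. To upgrade invertibility to unitarity I would use $\theta^*\theta=\Cij=\wht\mu^*\wht\mu$ together with $\theta=s\wht\mu$ to get $\wht\mu^*(s^*s-\id_{\wht W})\wht\mu=0$; sandwiching this identity between $\wht\mu$ on the left and $\wht\mu^*$ on the right and invoking $\wht\mu\wht\mu^*=d_A\id_{\wht W}$ collapses it to $d_A^2(s^*s-\id_{\wht W})=0$, so $s^*s=\id_{\wht W}$. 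An invertible isometry is unitary. Finally, given two arbitrary unitary tensor products $(W_{ij},\mu_{i,j})$ and $(W_{i\bullet j},\eta_{i,j})$, the claim supplies unitaries $s_1\colon\wht W\to W_{ij}$ with $\mu_{i,j}=s_1\wht\mu$ and $s_2\colon\wht W\to W_{i\bullet j}$ with $\eta_{i,j}=s_2\wht\mu$; then $u:=s_2 s_1^{-1}\in\Hom_A(W_{ij},W_{i\bullet j})$ is unitary and satisfies $u\mu_{i,j}=s_2 s_1^{-1}s_1\wht\mu=\eta_{i,j}$, with uniqueness of $u$ coming from the universal property.

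The one genuine obstacle is the step from invertibility to unitarity of the comparison morphism. The definition of a unitary tensor product only provides $\theta^*\theta=\Cij$, never $\theta\theta^*=d_A\id$, so a direct comparison of two arbitrary tensor products has nothing to sandwich with and cannot conclude unitarity on its own. Routing the argument through the canonical model $(\wht W,\wht\mu)$, whose structure map does satisfy $\wht\mu\wht\mu^*=d_A\id_{\wht W}$ by construction, is exactly what makes the sandwiching trick available. As a byproduct, once uniqueness is in hand, the missing co-isometry identity $\theta\theta^*=d_A\id$ for any unitary tensor product follows at once by transporting $\wht\mu\wht\mu^*=d_A\id_{\wht W}$ along the unitary, which is the deferred ``only if'' direction of Proposition \ref{lb27}.
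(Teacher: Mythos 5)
Your proposal is correct, but your route to unitarity differs from the paper's. The paper proves uniqueness by comparing the two arbitrary tensor products directly: the universal property of $(W_{ij},\mu_{i,j})$ produces $u$ with $\eta_{i,j}=u\mu_{i,j}$, invertibility follows exactly as in your argument, and then unitarity is extracted purely formally from the \emph{uniqueness clause} of the universal property: since $\mu_{i,j}^*u^*u\,\mu_{i,j}=\eta_{i,j}^*\eta_{i,j}=\Cij=\mu_{i,j}^*\mu_{i,j}$, both $\mu_{i,j}^*u^*u$ and $\mu_{i,j}^*$ are induced by the same morphism $\Cij$ via $W_{ij}$, hence equal; taking adjoints gives $u^*u\,\mu_{i,j}=\mu_{i,j}$, and one more application of the universal property yields $u^*u=\id_{ij}$. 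This refutes the claim in your last paragraph that a direct comparison ``has nothing to sandwich with and cannot conclude unitarity on its own'': no sandwich is needed, because the equality $\theta^*\theta=\Cij$ feeds into the uniqueness of induced morphisms. By contrast, your argument funnels everything through the canonical model $(\wht W,\wht\mu)$ built from the projection $d_A^{-1}\Cij$, and exploits the extra identity $\wht\mu\wht\mu^*=d_A\id_{\wht W}$, which that model enjoys by construction, to run the sandwich $\wht\mu\,\bigl(\wht\mu^*(s^*s-\id_{\wht W})\wht\mu\bigr)\,\wht\mu^*=d_A^2(s^*s-\id_{\wht W})=0$. Both proofs are valid; yours costs carrying the concrete model through the uniqueness step but is computationally transparent, while the paper's is shorter and purely universal-property-theoretic, applying symmetrically to any two tensor products without privileging a model. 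Your closing observation is exactly right and matches the paper: the co-isometry identity $\theta\theta^*=d_A\id$ for an arbitrary unitary tensor product (the deferred ``only if'' direction of Proposition \ref{lb27}) is obtained in the paper precisely by transporting it from the canonical model along the unitary furnished by uniqueness.
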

\begin{proof}
Existence has already been proved. We now prove the uniqueness. Since $\eta_{i,j}\in\Hom_A(W_i\boxtimes W_j,W_{i\bullet j})$ is annihilated by $\Pij$, by the universal property for $(W_{ij},\mu_{i,j})$ there exists a unique $u\in\Hom_A(W_{ij},W_{i\bullet j})$ satisfying $\eta_{i,j}=u\mu_{i,j}$. In other words $u$ is the $A$-bimodule morphism induced by $\eta_{i,j}$. It remains to prove that $u$ is unitary.

We first show that $u$ is invertible. By the universal property for $(W_{i\bullet j},\eta_{i,j})$, there exists $v\in\Hom_A(W_{i\bullet j},W_{ij})$ such that $\mu_{i,j}=v\eta_{i,j}$. Thus $\eta_{i,j}=uv\eta_{i,j}$. Therefore,  $uv$ is induced by $\eta_{i,j}$ via the tensor product $W_{i\bullet j}$. But $\id_{i\bullet j}$ is clearly also induced by $\eta_{i,j}$ via $W_{i\bullet j}$. Therefore $uv=\id_{i\bullet j}$. Similarly $vu=\id_{ij}$. This proves that $u$ is invertible. 

We now calculate
\begin{align*}
\mu_{i,j}^*u^*u\mu_{i,j}=\eta_{i,j}^*\eta_{i,j}=\Cij=\mu_{i,j}^*\mu_{i,j}.
\end{align*}
By the universal property, $\mu_{i,j}^*u^*u$ and $\mu_{i,j}^*$ are equal since they are induced by the same morphism via $W_{ij}$. So $u^*u\mu_{i,j}=\mu_{i,j}$. By the universal property again, we have $u^*u=\id_{ij}$. Therefore $u$ is unitary.
\end{proof}

\begin{proof}[Proof of the ``only if" part of Proposition \ref{lb27}]
By the ``if" part of proposition \ref{lb27} and the paragraph before that, there exists a unitary tensor product $(W_{ij},\mu_{i,j})$ satisfying $\mu_{i,j}\mu_{i,j}^*=d_A\id_{ij}$. By uniqueness up to unitaries, this equation holds for any unitary tensor product.
\end{proof}

In the remaining part of this section, we generalize the notion of unitary tensor product to more than two unitary $A$-bimodules. For simplicity we only discuss the case of three bimodules. The more general cases can be treated in a similar fashion and are thus left to the reader. 

Choose unitary $A$-bimodules $W_i,W_j,W_k$ with left actions $\mu^i_L,\mu^j_L,\mu^k_L$ and right actions $\mu^i_R,\mu^j_R,\mu^k_R$ respectively. Then $(W_i\boxtimes W_j\boxtimes W_k,\mu^i_L\otimes\id_j\otimes\id_k,\id_i\otimes\id_j\otimes\mu^k_R)$ is a unitary $A$-bimodule.
\begin{lm}
$\Cij\otimes\id_k$ and $\id_i\otimes\Cjk$ commute. Define $\Cijk\in\End_A(W_i\boxtimes W_j\boxtimes W_k)$ \index{zz@$\Cijk$} to be their product. Then $d_A^{-2}\Cijk$ is a projection. 
\end{lm}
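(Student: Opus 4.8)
The plan is to separate the statement into its two assertions: that $\Cij\otimes\id_k$ and $\id_i\otimes\Cjk$ commute, and that their product is $d_A^2$ times a projection. The second assertion will be a formal consequence of the first, so essentially all the work goes into the commutation.

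First I would collect the algebraic facts already at hand. The adjoint of the first displayed expression for $\Cij$ is the second, so $\Cij^*=\Cij$, and Lemma \ref{lb26} gives $\Cij^2=d_A\Cij$; the same holds for $\Cjk$. Tensoring with an identity preserves self-adjointness and the relation $X^2=d_AX$, so $P:=\Cij\otimes\id_k$ and $Q:=\id_i\otimes\Cjk$ each satisfy $P^*=P,\ P^2=d_AP$ and $Q^*=Q,\ Q^2=d_AQ$. I would also note that both lie in $\End_A(W_i\boxtimes W_j\boxtimes W_k)$: $P$ intertwines the left action $\mu^i_L\otimes\id_j\otimes\id_k$ because $\Cij$ is a left $A$-module map, and it intertwines the right action $\id_i\otimes\id_j\otimes\mu^k_R$ because these act on disjoint legs; symmetrically for $Q$. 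Hence their product will automatically lie in $\End_A$.

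The core step is to bring $PQ$ and $QP$ to a common normal form. Writing
\[
P=(\id_i\otimes\mu^j_L\otimes\id_k)\big((\mu^i_R)^*\otimes\id_j\otimes\id_k\big),\qquad Q=(\id_i\otimes\mu^j_R\otimes\id_k)\big(\id_i\otimes\id_j\otimes(\mu^k_L)^*\big),
\]
I would repeatedly apply the interchange law to slide the coaction $(\mu^i_R)^*$ (acting only on $W_i$) and $(\mu^k_L)^*$ (acting only on $W_k$) past the $W_j$-actions, since they live on legs disjoint from the $W_j$ strand. Carried out for both orders of multiplication, this brings each of $PQ$ and $QP$ into the form
\[
\big(\id_i\otimes M\otimes\id_k\big)\big((\mu^i_R)^*\otimes\id_j\otimes(\mu^k_L)^*\big),
\]
with $M\colon W_a\boxtimes W_j\boxtimes W_a\to W_j$ equal to $\mu^j_L(\id_a\otimes\mu^j_R)$ in one case and to $\mu^j_R(\mu^j_L\otimes\id_a)$ in the other. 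These two morphisms coincide by the bimodule associativity of $W_j$, which is exactly the defining relation making $W_j$ an $A$-bimodule. Hence $PQ=QP$, and I set $\Cijk:=PQ$.

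Finally, with $p:=d_A^{-1}P$ and $q:=d_A^{-1}Q$ being commuting projections, their product is again a projection: $(pq)^*=q^*p^*=qp=pq$ and $(pq)^2=pqpq=p^2q^2=pq$. Thus $d_A^{-2}\Cijk=pq$ is a projection, as claimed. The main obstacle will be the normal-form step: the bookkeeping of which leg each factor acts on must be organized so that the only nonformal input is the bimodule associativity of $W_j$, with everything else reduced to the interchange law. Once the common normal form is reached the rest is automatic. Diagrammatically the point is transparent — the two pictures differ only by whether the $A$-strand entering $W_j$ from the left or from the right is absorbed first — so I would present the argument with string diagrams to keep the leg-bookkeeping readable.
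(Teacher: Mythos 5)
Your proposal is correct and follows essentially the same route as the paper: the commutation of $\Cij\otimes\id_k$ and $\id_i\otimes\Cjk$ is reduced (via the interchange law on disjoint legs) to the bimodule associativity $\mu^j_R(\mu^j_L\otimes\id_a)=\mu^j_L(\id_a\otimes\mu^j_R)$ of $W_j$, which is exactly the input the paper invokes, and the final step that a product of commuting projections is a projection is identical. Your write-up merely makes explicit the self-adjointness and leg-bookkeeping that the paper leaves implicit.
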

\begin{proof}
The commutativity of these two morphisms is verified using the commutativity of the left and right actions of $W_j$. Thus $d_A^{-1}\Cij\otimes\id_k$ and $d_A^{-1}\id_i\otimes\Cjk$ are commuting projections, whose product is therefore also a projection.
\end{proof}

\begin{df}
$(W_{ijk},\mu_{i,j,k})$  (or $W_{ijk}$ for short) is called a \textbf{unitary tensor product} of $W_i, W_j,W_k$ over $A$, if
\begin{itemize}
\item  $W_{ijk}=(W_{ijk},\mu^{ijk}_L,\mu^{ijk}_R)$ is a unitary $A$-bimodule,  $\mu_{i,j,k}\in\Hom_A(W_i\boxtimes W_j\boxtimes W_k,W_{ijk})$, and $\mu_{i,j,k}(\Pij\otimes\id_k)=\mu_{i,j,k}(\id_i\otimes \Pjk)=0$.
\item (Universal property) If $(W_l,\mu^l_L,\mu^l_R)$ is a unitary $A$-bimodule, $\alpha\in\Hom_A(W_i\boxtimes W_j\boxtimes W_k,W_l)$, and $\alpha(\Pij\otimes\id_k)=\alpha(\id_i\otimes\Pjk)=0$, then there exists a unique $\wtd\alpha\in\Hom_A(W_{ijk},W_l)$ satisfying $\alpha=\wtd \alpha\mu_{i,j,k}$. In this case, we say that $\wtd\alpha$ is \textbf{induced by} $\alpha$ via the tensor product $W_{ijk}$.
\item (Unitarity) $\Cijk=\mu_{i,j,k}\mu_{i,j,k}^*$.
\end{itemize}
\end{df}

\begin{pp}\label{lb29}
Let $W_{ijk}$ be a unitary $A$-bimodules, and $\mu_{i,j,k}\in\Hom_A(W_i\boxtimes W_j\boxtimes W_k,W_{ijk})$. Then $(W_{ijk},\mu_{i,j,k})$ is a unitary tensor product of $W_i,W_j,W_k$ over $A$ if and only if  $\mu_{i,j,k}^*\mu_{i,j,k}=\Cijk$ and $\mu_{i,j,k}\mu_{i,j,k}^*=d_A^2\id_{ijk}$.
\end{pp}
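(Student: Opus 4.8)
The plan is to follow the two-bimodule case verbatim, treating proposition \ref{lb27} together with theorem \ref{lb28} as the template. Before splitting into the two implications I would record the annihilation identities $\Cijk(\Pij\otimes\id_k)=0$ and $\Cijk(\id_i\otimes\Pjk)=0$. These are immediate from the two-variable facts $\Cij\Pij=0$ and $\Cjk\Pjk=0$ together with the commutativity of $\Cij\otimes\id_k$ and $\id_i\otimes\Cjk$ supplied by the lemma preceding the definition: writing $\Cijk=(\id_i\otimes\Cjk)(\Cij\otimes\id_k)$ kills the first, and $\Cijk=(\Cij\otimes\id_k)(\id_i\otimes\Cjk)$ kills the second.

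For the ``if'' direction I would verify the three clauses of the definition. The unitarity clause $\Cijk=\mu_{i,j,k}^*\mu_{i,j,k}$ is hypothesis. The annihilation $\mu_{i,j,k}(\Pij\otimes\id_k)=0$ follows from the standard trick $\mu_{i,j,k}(\Pij\otimes\id_k)=d_A^{-2}\mu_{i,j,k}\mu_{i,j,k}^*\mu_{i,j,k}(\Pij\otimes\id_k)=d_A^{-2}\mu_{i,j,k}\Cijk(\Pij\otimes\id_k)=0$, and likewise for $\id_i\otimes\Pjk$. For the universal property, given $\alpha\in\Hom_A(W_i\boxtimes W_j\boxtimes W_k,W_l)$ annihilated by both $\Pij\otimes\id_k$ and $\id_i\otimes\Pjk$, I would set $\wtd\alpha=d_A^{-2}\alpha\mu_{i,j,k}^*$ and check $\wtd\alpha\mu_{i,j,k}=d_A^{-2}\alpha\Cijk=\alpha$; uniqueness of $\wtd\alpha$ is the one-line computation $\wht\alpha=d_A^{-2}\wht\alpha\mu_{i,j,k}\mu_{i,j,k}^*=d_A^{-2}\alpha\mu_{i,j,k}^*=\wtd\alpha$, exactly as in proposition \ref{lb27}.

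The one genuinely new computation is $\alpha\Cijk=d_A^2\alpha$. I would peel off $\Cij\otimes\id_k$ first: the balancing relation $\alpha(\mu^i_R\otimes\id_j\otimes\id_k)=\alpha(\id_i\otimes\mu^j_L\otimes\id_k)$ coming from $\alpha(\Pij\otimes\id_k)=0$, applied to $\Cij=(\id_i\otimes\mu^j_L)((\mu^i_R)^*\otimes\id_j)$, together with $\mu^i_R(\mu^i_R)^*=d_A\id_i$ from theorem \ref{lb25}, gives $\alpha(\Cij\otimes\id_k)=d_A\alpha$; repeating with $\id_i\otimes\Cjk$, using $\alpha(\id_i\otimes\Pjk)=0$ and $\mu^j_R(\mu^j_R)^*=d_A\id_j$, produces the second factor of $d_A$. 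This is pure bookkeeping of the two balancing relations and offers no real obstruction.

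The ``only if'' direction is the part I expect to require the most care, since it is not self-contained. As in the two-variable case I would first build an explicit model: by the preceding lemma $d_A^{-2}\Cijk$ is a projection in $\End_A(W_i\boxtimes W_j\boxtimes W_k)$, so proposition \ref{lb24} yields a unitary $A$-bimodule $W_{ijk}$ and a partial isometry $u_{i,j,k}$ with $u_{i,j,k}u_{i,j,k}^*=\id_{ijk}$ and $u_{i,j,k}^*u_{i,j,k}=d_A^{-2}\Cijk$; setting $\mu_{i,j,k}=d_A u_{i,j,k}$ gives a unitary tensor product with both $\mu_{i,j,k}^*\mu_{i,j,k}=\Cijk$ and $\mu_{i,j,k}\mu_{i,j,k}^*=d_A^2\id_{ijk}$. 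The remaining input is the three-fold analog of theorem \ref{lb28} — uniqueness up to a unique unitary $A$-bimodule isomorphism — whose proof copies that of theorem \ref{lb28} (invertibility of the induced morphism from the universal property applied on both sides, then $u^*u=\id$ again by the universal property). Transporting $\mu_{i,j,k}\mu_{i,j,k}^*=d_A^2\id_{ijk}$ across such a unitary then yields the equation for an arbitrary unitary tensor product. The only genuine subtlety is organizing existence, uniqueness, and this proposition so that the ``only if'' part is deferred until after the uniqueness statement, precisely as the two-variable arguments are interleaved.
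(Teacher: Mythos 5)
Your proposal is correct and follows exactly the route the paper intends: the paper omits the proof of proposition \ref{lb29} with the remark that it can be proved ``in a similar way as proposition \ref{lb27} and theorem \ref{lb28}'', and your argument is a faithful, complete instantiation of that analogy --- the annihilation identities for $\Cijk$, the standard trick and $\wtd\alpha=d_A^{-2}\alpha\mu_{i,j,k}^*$ for the universal property, the genuinely new computation $\alpha\Cijk=d_A^2\alpha$ via the two balancing relations together with $\mu_R\mu_R^*=d_A\id$ from theorem \ref{lb25}, and the deferral of the ``only if'' part until after the three-fold uniqueness statement, mirroring the interleaving of proposition \ref{lb27} and theorem \ref{lb28}. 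You also silently (and correctly) read the unitarity clause of the three-fold definition as $\Cijk=\mu_{i,j,k}^*\mu_{i,j,k}$, fixing an evident typo in the paper's definition, where $\mu_{i,j,k}\mu_{i,j,k}^*$ would not even have the right source and target.
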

\begin{thm}\label{lb30}
Unitary tensor products of $W_i,W_j,W_k$ exist and are unique up to unitaries.
\end{thm}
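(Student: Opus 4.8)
The plan is to run the two-bimodule argument (Lemma \ref{lb26}, Proposition \ref{lb27}, Theorem \ref{lb28}) verbatim, with the projection $d_A^{-1}\Cij$ replaced by the projection $d_A^{-2}\Cijk$ supplied by the lemma preceding the definition. For existence, since $d_A^{-2}\Cijk$ is a projection in $\End_A(W_i\boxtimes W_j\boxtimes W_k)$, Proposition \ref{lb24} gives a unitary $A$-bimodule $W_{ijk}$ and a partial isometry $u_{i,j,k}\in\Hom_A(W_i\boxtimes W_j\boxtimes W_k,W_{ijk})$ with $u_{i,j,k}u_{i,j,k}^*=\id_{ijk}$ and $u_{i,j,k}^*u_{i,j,k}=d_A^{-2}\Cijk$. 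Setting $\mu_{i,j,k}=d_A\,u_{i,j,k}$ then yields $\mu_{i,j,k}^*\mu_{i,j,k}=\Cijk$ and $\mu_{i,j,k}\mu_{i,j,k}^*=d_A^2\id_{ijk}$, which are exactly the two norm identities of Proposition \ref{lb29}. I would then invoke the ``if'' direction of Proposition \ref{lb29} to conclude that $(W_{ijk},\mu_{i,j,k})$ is a unitary tensor product; since its proof is the substance of what is needed, I will in fact verify the two defining conditions directly as described next.

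First, the annihilation conditions $\mu_{i,j,k}(\Pij\otimes\id_k)=0=\mu_{i,j,k}(\id_i\otimes\Pjk)$ follow from $\mu_{i,j,k}=d_A^{-2}\mu_{i,j,k}\mu_{i,j,k}^*\mu_{i,j,k}=d_A^{-2}\mu_{i,j,k}\Cijk$ together with $\Cijk(\Pij\otimes\id_k)=0=\Cijk(\id_i\otimes\Pjk)$; the latter hold because $\Cij\Pij=0$ and $\Cjk\Pjk=0$ and the two factors $\Cij\otimes\id_k$, $\id_i\otimes\Cjk$ of $\Cijk$ commute. Second, for the universal property I set $\wtd\alpha=d_A^{-2}\alpha\mu_{i,j,k}^*$ for any $A$-bimodule morphism $\alpha$ with $\alpha(\Pij\otimes\id_k)=0=\alpha(\id_i\otimes\Pjk)$; the one computation that requires care is $\alpha\Cijk=d_A^2\alpha$, which I obtain by peeling the two commuting projection factors off one at a time. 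Concretely,
\begin{align*}
\alpha(\Cij\otimes\id_k)&=\alpha(\mu^i_R\otimes\id_j\otimes\id_k)\big((\mu^i_R)^*\otimes\id_j\otimes\id_k\big)=d_A\alpha,\\
\alpha(\id_i\otimes\Cjk)&=\alpha(\id_i\otimes\mu^j_R\otimes\id_k)\big(\id_i\otimes(\mu^j_R)^*\otimes\id_k\big)=d_A\alpha,
\end{align*}
where the first equalities use the kernel conditions on $\alpha$ (to replace $\id_i\otimes\mu^j_L\otimes\id_k$ by $\mu^i_R\otimes\id_j\otimes\id_k$, resp.\ $\id_i\otimes\id_j\otimes\mu^k_L$ by $\id_i\otimes\mu^j_R\otimes\id_k$) and the second equalities use the standardness identities $\mu^i_R(\mu^i_R)^*=d_A\id_i$ and $\mu^j_R(\mu^j_R)^*=d_A\id_j$ from Theorem \ref{lb25}. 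Composing gives $\alpha\Cijk=d_A^2\alpha$, whence $\wtd\alpha\mu_{i,j,k}=d_A^{-2}\alpha\Cijk=\alpha$; uniqueness of $\wtd\alpha$ follows from $\mu_{i,j,k}\mu_{i,j,k}^*=d_A^2\id_{ijk}$ as in Proposition \ref{lb27}.

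For uniqueness up to unitaries I would copy the proof of Theorem \ref{lb28}: given two unitary tensor products $(W_{ijk},\mu_{i,j,k})$ and $(W_{i\bullet j\bullet k},\eta_{i,j,k})$, the universal property of the first produces a unique $u\in\Hom_A(W_{ijk},W_{i\bullet j\bullet k})$ with $\eta_{i,j,k}=u\mu_{i,j,k}$; applying the two universal properties symmetrically shows $u$ is invertible, and then $\mu_{i,j,k}^*u^*u\mu_{i,j,k}=\eta_{i,j,k}^*\eta_{i,j,k}=\Cijk=\mu_{i,j,k}^*\mu_{i,j,k}$ forces $u^*u=\id_{ijk}$ by the universal property, so $u$ is unitary. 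I expect no genuine obstacle here: the whole argument is a faithful transcription of the two-bimodule case. The only points demanding attention are the two-step peeling in $\alpha\Cijk=d_A^2\alpha$ (keeping straight which kernel condition and which standardness identity is invoked at each factor) and the logical ordering — as in Proposition \ref{lb27}, the ``only if'' direction of Proposition \ref{lb29} should be deduced only after Theorem \ref{lb30}, to avoid circularity.
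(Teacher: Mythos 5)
Your proof is correct and follows precisely the route the paper intends: the paper omits the proof of this theorem with the remark that it goes "in a similar way as proposition \ref{lb27} and theorem \ref{lb28}," and your argument is exactly that transcription, with the projection $d_A^{-2}\Cijk$ in place of $d_A^{-1}\Cij$, the two-step peeling $\alpha\Cijk=d_A^2\alpha$ via the kernel conditions and $\mu^i_R(\mu^i_R)^*=d_A\id_i$, $\mu^j_R(\mu^j_R)^*=d_A\id_j$ carried out correctly, and the right logical ordering deferring the "only if" direction of Proposition \ref{lb29}. No gaps.
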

We omit the proofs of these two results since they can be proved in a similar way as proposition \ref{lb27} and theorem \ref{lb28}.

\subsection{$C^*$-tensor categories associated to Q-systems}\label{lb52}

We are now ready to define the unitary tensor structure on the $C^*$-category $\BIMA$ of unitary $A$-bimodules. The tensor bifunctor $\boxtimes_A$ is defined as follows. For any unitary $A$-bimodules $W_i,W_j$, we choose a unitary tensor product $(W_{ij},\mu_{i,j})$. Then $W_i\boxtimes_A W_j$ is just the unitary $A$-bimodule $W_{ij}$. To define tensor product of morphisms, we choose another pair of unitary $A$-bimodules $W_{i'},W_{j'}$, and choose any $F\in\Hom_A(W_i,W_{i'})$ and $G\in\Hom_A(W_j,W_{j'})$. Of course, there is also a chosen unitary tensor product $(W_{i'j'},\mu_{i',j'})$ of $W_{i'},W_{j'}$ over $A$. Since $F\otimes G:W_i\boxtimes W_j\rightarrow W_{i'}\boxtimes W_{j'}$ is clearly an $A$-bimodule morphism, we have $\mu_{i',j'}(F\otimes G)\in\Hom_A(W_i\boxtimes W_j,W_{i'j'})$, and one can easily show that  $\mu_{i',j'}(F\otimes G)\Pij=0$. Therefore, by universal property, there exists a unique morphism in $\Hom_A(W_{ij},W_{i'j'})$, denoted by $F\otimes_A G$, such that
\begin{align}
\mu_{i',j'}(F\otimes G)=(F\otimes_A G)\mu_{i,j}.\label{eq45}
\end{align}
This defines the tensor product of $F$ and $G$ in $\BIMA$. We now show that $\boxtimes_A$ is a $*$-bifunctor. Notice that $F^*\otimes_A G^*$ is defined by $\mu_{i,j}(F^*\otimes G^*)=(F^*\otimes_A G^*)\mu_{i',j'}$. Therefore, using $(F\otimes G)^*=F^*\otimes G^*$  we compute
\begin{align*}
&(F\otimes_A G)^*=d_A^{-1}\mu_{ij}\mu_{ij}^*(F\otimes_A G)^*=d_A^{-1}\mu_{ij}(F\otimes G)^*\mu_{i',j'}^*=d_A^{-1}\mu_{ij}(F^*\otimes G^*)\mu_{i',j'}^*\\
=&d_A^{-1}(F^*\otimes_A G^*)\mu_{i',j'}\mu_{i',j'}^*=F^*\otimes_A G^*.
\end{align*}

To construct associativity isomorphisms we need the following:

\begin{pp}\label{lb31}
Let $W_i,W_j,W_k$ be unitary $A$-bimodules. Then $(W_{(ij)k},\mu_{ij,k}(\mu_{i,j}\otimes\id_k))$ and $(W_{i(jk)},\mu_{i,jk}(\id_i\otimes\mu_{j,k}))$ are unitary tensor products of $W_i,W_j,W_k$ over $A$.
\end{pp}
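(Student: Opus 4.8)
The plan is to verify the characterization provided by Proposition \ref{lb29}. Writing $\Psi:=\mu_{ij,k}(\mu_{i,j}\otimes\id_k)\in\Hom(W_i\boxtimes W_j\boxtimes W_k,W_{(ij)k})$, it suffices to check that $W_{(ij)k}$ is a unitary $A$-bimodule (true by construction of the pairwise tensor product), that $\Psi$ is an $A$-bimodule morphism, and that $\Psi^*\Psi=\Cijk$ and $\Psi\Psi^*=d_A^2\id_{(ij)k}$. That $\Psi$ is a bimodule morphism is immediate: $\mu_{i,j}\otimes\id_k$ intertwines the outer $A$-actions because $\mu_{i,j}$ does and $\id_k$ is trivial, and $\mu_{ij,k}$ is a bimodule morphism by its definition. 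Throughout I will use that, by Proposition \ref{lb27} applied to the pair $(W_{ij},W_k)$, one has $\mu_{ij,k}^*\mu_{ij,k}=\chi_{ij,k}$ and $\mu_{ij,k}\mu_{ij,k}^*=d_A\id_{(ij)k}$, together with $\mu_{i,j}^*\mu_{i,j}=\Cij$ and $\mu_{i,j}\mu_{i,j}^*=d_A\id_{ij}$.

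The identity $\Psi\Psi^*=d_A^2\id_{(ij)k}$ is the easy one: since $(\mu_{i,j}\otimes\id_k)(\mu_{i,j}^*\otimes\id_k)=(\mu_{i,j}\mu_{i,j}^*)\otimes\id_k=d_A\,\id_{ij}\otimes\id_k$, we get $\Psi\Psi^*=d_A\,\mu_{ij,k}\mu_{ij,k}^*=d_A^2\id_{(ij)k}$.

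The substance lies in the identity $\Psi^*\Psi=\Cijk$, and this is where I expect the real work. Unfolding gives $\Psi^*\Psi=(\mu_{i,j}^*\otimes\id_k)\chi_{ij,k}(\mu_{i,j}\otimes\id_k)$ with $\chi_{ij,k}=(\id_{ij}\otimes\mu^k_L)((\mu^{ij}_R)^*\otimes\id_k)$, where $\mu^{ij}_R$ is the right $A$-action on $W_{ij}$. The key algebraic input is that $\mu_{i,j}$ intertwines the right actions, i.e. $\mu^{ij}_R(\mu_{i,j}\otimes\id_a)=\mu_{i,j}(\id_i\otimes\mu^j_R)$; composing with $\mu_{i,j}^*$ and using $\mu_{i,j}^*\mu_{i,j}=\Cij$ together with the self-adjointness of $\Cij$ (built into its two equal expressions in Definition \ref{lb47}) yields $(\mu_{i,j}^*\otimes\id_a)(\mu^{ij}_R)^*\mu_{i,j}=(\id_i\otimes(\mu^j_R)^*)\Cij$. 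Substituting this, and using functoriality of $\boxtimes$ to slide $\mu^k_L$ past $\mu_{i,j}^*$, the $W_{ij}$-internal action $\mu^{ij}_R$ is eliminated and I obtain $\Psi^*\Psi=(\id_i\otimes\Cjk)(\Cij\otimes\id_k)$, after recognizing $(\id_{ij}\otimes\mu^k_L)(\id_i\otimes(\mu^j_R)^*\otimes\id_k)=\id_i\otimes\Cjk$. Finally, by the commutation lemma preceding the three-fold definition, $\Cij\otimes\id_k$ and $\id_i\otimes\Cjk$ commute and their product is exactly $\Cijk$, so $\Psi^*\Psi=\Cijk$. The main obstacle is precisely this bookkeeping: correctly trading the internal right action $\mu^{ij}_R$ for $\mu^j_R$ and identifying the outcome as the commuting product $\Cijk$; it is cleanest to carry this step out in graphical calculus.

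For the second assertion, $(W_{i(jk)},\mu_{i,jk}(\id_i\otimes\mu_{j,k}))$ is treated by the mirror-image argument, interchanging the roles of the left and right $A$-actions and of the first and last tensor factors; no new ideas are needed, and one again concludes via Proposition \ref{lb29}.
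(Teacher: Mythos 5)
Your proof is correct and takes essentially the same route as the paper: the paper likewise reduces everything to Proposition \ref{lb29}, obtains $\mu_{i,j,k}\mu_{i,j,k}^*=d_A^2\id_{(ij)k}$ directly from $\mu_{i,j}\mu_{i,j}^*=d_A\id_{ij}$ and $\mu_{ij,k}\mu_{ij,k}^*=d_A\id_{(ij)k}$, and proves $\mu_{i,j,k}^*\mu_{i,j,k}=\Cijk$ by exactly the graphical version of your computation, namely trading the internal right action $\mu^{ij}_R$ for $\mu^j_R$ via the intertwining property of $\mu_{i,j}$ and then collapsing $\mu_{i,j}^*\mu_{i,j}$ to $\Cij$ to recognize $(\id_i\otimes\Cjk)(\Cij\otimes\id_k)=\Cijk$. (One cosmetic slip: in your final identification the identity on the first two strands should be $\id_i\otimes\id_j$ rather than $\id_{ij}$, since at that point you are working on $W_i\boxtimes W_j\boxtimes W_a\boxtimes W_k$, not $W_{ij}\boxtimes W_a\boxtimes W_k$.)
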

Note that here $W_{(ij)k}$ is understood as the unitary tensor product of $W_{ij}$ and $W_k$ over $A$, and $W_{i(jk)}$ is understood similarly.
\begin{proof}
The two cases can be treated in a similar way. So we only prove the first one. Set $\mu_{i,j,k}=\mu_{ij,k}(\mu_{i,j}\otimes\id_k)$. By proposition \ref{lb29}, it suffices to prove $\mu_{i,j,k}^*\mu_{i,j,k}=\Cijk$ and $\mu_{i,j,k}\mu_{i,j,k}^*=d_A^2\id_{ijk}$. The second equation follows directly from that $\mu_{i,j}\mu_{i,j}^*=d_A\id_{ij}$ and $\mu_{ij,k}\mu_{ij,k}^*=d_A\id_{(ij)k}$. To prove the first one, we compute (recalling that we have suppressed the label $a$)
\begin{align*}
&\mu_{i,j,k}^*\mu_{i,j,k}=~~~\vcenter{\hbox{{\def\svgscale{0.9}
\begingroup%
  \makeatletter%
  \providecommand\color[2][]{%
    \errmessage{(Inkscape) Color is used for the text in Inkscape, but the package 'color.sty' is not loaded}%
    \renewcommand\color[2][]{}%
  }%
  \providecommand\transparent[1]{%
    \errmessage{(Inkscape) Transparency is used (non-zero) for the text in Inkscape, but the package 'transparent.sty' is not loaded}%
    \renewcommand\transparent[1]{}%
  }%
  \providecommand\rotatebox[2]{#2}%
  \newcommand*\fsize{\dimexpr\f@size pt\relax}%
  \newcommand*\lineheight[1]{\fontsize{\fsize}{#1\fsize}\selectfont}%
  \ifx\svgwidth\undefined%
    \setlength{\unitlength}{55.26681141bp}%
    \ifx\svgscale\undefined%
      \relax%
    \else%
      \setlength{\unitlength}{\unitlength * \real{\svgscale}}%
    \fi%
  \else%
    \setlength{\unitlength}{\svgwidth}%
  \fi%
  \global\let\svgwidth\undefined%
  \global\let\svgscale\undefined%
  \makeatother%
  \begin{picture}(1,1.76175478)%
    \lineheight{1}%
    \setlength\tabcolsep{0pt}%
    \put(0,0){\includegraphics[width=\unitlength,page=1]{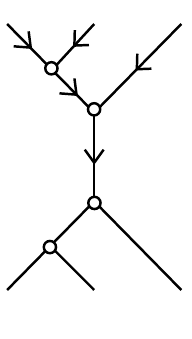}}%
    \put(0.00848576,1.71998299){\color[rgb]{0,0,0}\makebox(0,0)[lt]{\lineheight{1.25}\smash{\begin{tabular}[t]{l}$i$\end{tabular}}}}%
    \put(0.43785294,1.71995795){\color[rgb]{0,0,0}\makebox(0,0)[lt]{\lineheight{1.25}\smash{\begin{tabular}[t]{l}$j$\end{tabular}}}}%
    \put(0.90256783,1.71995795){\color[rgb]{0,0,0}\makebox(0,0)[lt]{\lineheight{1.25}\smash{\begin{tabular}[t]{l}$k$\end{tabular}}}}%
    \put(-0.00410827,0.01123663){\color[rgb]{0,0,0}\makebox(0,0)[lt]{\lineheight{1.25}\smash{\begin{tabular}[t]{l}$i$\end{tabular}}}}%
    \put(0.4252597,0.01121159){\color[rgb]{0,0,0}\makebox(0,0)[lt]{\lineheight{1.25}\smash{\begin{tabular}[t]{l}$j$\end{tabular}}}}%
    \put(0.88997459,0.01121159){\color[rgb]{0,0,0}\makebox(0,0)[lt]{\lineheight{1.25}\smash{\begin{tabular}[t]{l}$k$\end{tabular}}}}%
    \put(0,0){\includegraphics[width=\unitlength,page=2]{Associativity-proof.pdf}}%
    \put(0.58664624,0.90673017){\color[rgb]{0,0,0}\makebox(0,0)[lt]{\lineheight{1.25}\smash{\begin{tabular}[t]{l}$(ij)k$\end{tabular}}}}%
    \put(0.16334872,1.07596059){\color[rgb]{0,0,0}\makebox(0,0)[lt]{\lineheight{1.25}\smash{\begin{tabular}[t]{l}$ij$\end{tabular}}}}%
    \put(0.16336163,0.70556391){\color[rgb]{0,0,0}\makebox(0,0)[lt]{\lineheight{1.25}\smash{\begin{tabular}[t]{l}$ij$\end{tabular}}}}%
  \end{picture}%
\endgroup%
}}}~~~~~=~~~\vcenter{\hbox{{\def\svgscale{0.9}
\begingroup%
  \makeatletter%
  \providecommand\color[2][]{%
    \errmessage{(Inkscape) Color is used for the text in Inkscape, but the package 'color.sty' is not loaded}%
    \renewcommand\color[2][]{}%
  }%
  \providecommand\transparent[1]{%
    \errmessage{(Inkscape) Transparency is used (non-zero) for the text in Inkscape, but the package 'transparent.sty' is not loaded}%
    \renewcommand\transparent[1]{}%
  }%
  \providecommand\rotatebox[2]{#2}%
  \newcommand*\fsize{\dimexpr\f@size pt\relax}%
  \newcommand*\lineheight[1]{\fontsize{\fsize}{#1\fsize}\selectfont}%
  \ifx\svgwidth\undefined%
    \setlength{\unitlength}{55.26681141bp}%
    \ifx\svgscale\undefined%
      \relax%
    \else%
      \setlength{\unitlength}{\unitlength * \real{\svgscale}}%
    \fi%
  \else%
    \setlength{\unitlength}{\svgwidth}%
  \fi%
  \global\let\svgwidth\undefined%
  \global\let\svgscale\undefined%
  \makeatother%
  \begin{picture}(1,1.76175478)%
    \lineheight{1}%
    \setlength\tabcolsep{0pt}%
    \put(0,0){\includegraphics[width=\unitlength,page=1]{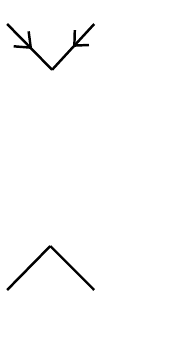}}%
    \put(0.00848576,1.71998299){\color[rgb]{0,0,0}\makebox(0,0)[lt]{\lineheight{1.25}\smash{\begin{tabular}[t]{l}$i$\end{tabular}}}}%
    \put(0.43785294,1.71995795){\color[rgb]{0,0,0}\makebox(0,0)[lt]{\lineheight{1.25}\smash{\begin{tabular}[t]{l}$j$\end{tabular}}}}%
    \put(0.90256783,1.71995795){\color[rgb]{0,0,0}\makebox(0,0)[lt]{\lineheight{1.25}\smash{\begin{tabular}[t]{l}$k$\end{tabular}}}}%
    \put(-0.00410827,0.01123663){\color[rgb]{0,0,0}\makebox(0,0)[lt]{\lineheight{1.25}\smash{\begin{tabular}[t]{l}$i$\end{tabular}}}}%
    \put(0.4252597,0.01121159){\color[rgb]{0,0,0}\makebox(0,0)[lt]{\lineheight{1.25}\smash{\begin{tabular}[t]{l}$j$\end{tabular}}}}%
    \put(0.88997459,0.01121159){\color[rgb]{0,0,0}\makebox(0,0)[lt]{\lineheight{1.25}\smash{\begin{tabular}[t]{l}$k$\end{tabular}}}}%
    \put(0,0){\includegraphics[width=\unitlength,page=2]{Associativity-proof-1.pdf}}%
    \put(0.01188822,1.17840716){\color[rgb]{0,0,0}\makebox(0,0)[lt]{\lineheight{1.25}\smash{\begin{tabular}[t]{l}$ij$\end{tabular}}}}%
    \put(0.01190114,0.70169773){\color[rgb]{0,0,0}\makebox(0,0)[lt]{\lineheight{1.25}\smash{\begin{tabular}[t]{l}$ij$\end{tabular}}}}%
    \put(0,0){\includegraphics[width=\unitlength,page=3]{Associativity-proof-1.pdf}}%
  \end{picture}%
\endgroup%
}}}~~~~=~~~\vcenter{\hbox{{\def\svgscale{0.9}
\begingroup%
  \makeatletter%
  \providecommand\color[2][]{%
    \errmessage{(Inkscape) Color is used for the text in Inkscape, but the package 'color.sty' is not loaded}%
    \renewcommand\color[2][]{}%
  }%
  \providecommand\transparent[1]{%
    \errmessage{(Inkscape) Transparency is used (non-zero) for the text in Inkscape, but the package 'transparent.sty' is not loaded}%
    \renewcommand\transparent[1]{}%
  }%
  \providecommand\rotatebox[2]{#2}%
  \newcommand*\fsize{\dimexpr\f@size pt\relax}%
  \newcommand*\lineheight[1]{\fontsize{\fsize}{#1\fsize}\selectfont}%
  \ifx\svgwidth\undefined%
    \setlength{\unitlength}{55.26681141bp}%
    \ifx\svgscale\undefined%
      \relax%
    \else%
      \setlength{\unitlength}{\unitlength * \real{\svgscale}}%
    \fi%
  \else%
    \setlength{\unitlength}{\svgwidth}%
  \fi%
  \global\let\svgwidth\undefined%
  \global\let\svgscale\undefined%
  \makeatother%
  \begin{picture}(1,1.76175478)%
    \lineheight{1}%
    \setlength\tabcolsep{0pt}%
    \put(0.00848576,1.71998299){\color[rgb]{0,0,0}\makebox(0,0)[lt]{\lineheight{1.25}\smash{\begin{tabular}[t]{l}$i$\end{tabular}}}}%
    \put(0.43785294,1.71995795){\color[rgb]{0,0,0}\makebox(0,0)[lt]{\lineheight{1.25}\smash{\begin{tabular}[t]{l}$j$\end{tabular}}}}%
    \put(0.90256783,1.71995795){\color[rgb]{0,0,0}\makebox(0,0)[lt]{\lineheight{1.25}\smash{\begin{tabular}[t]{l}$k$\end{tabular}}}}%
    \put(-0.00410827,0.01123663){\color[rgb]{0,0,0}\makebox(0,0)[lt]{\lineheight{1.25}\smash{\begin{tabular}[t]{l}$i$\end{tabular}}}}%
    \put(0.4252597,0.01121159){\color[rgb]{0,0,0}\makebox(0,0)[lt]{\lineheight{1.25}\smash{\begin{tabular}[t]{l}$j$\end{tabular}}}}%
    \put(0.88997459,0.01121159){\color[rgb]{0,0,0}\makebox(0,0)[lt]{\lineheight{1.25}\smash{\begin{tabular}[t]{l}$k$\end{tabular}}}}%
    \put(0.02282269,0.94331561){\color[rgb]{0,0,0}\makebox(0,0)[lt]{\lineheight{1.25}\smash{\begin{tabular}[t]{l}$ij$\end{tabular}}}}%
    \put(0,0){\includegraphics[width=\unitlength,page=1]{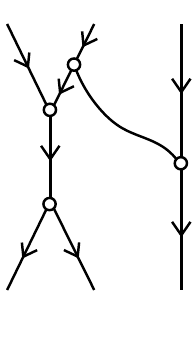}}%
  \end{picture}%
\endgroup%
}}}\\[3ex]
=&~~~\vcenter{\hbox{{\def\svgscale{0.9}
\begingroup%
  \makeatletter%
  \providecommand\color[2][]{%
    \errmessage{(Inkscape) Color is used for the text in Inkscape, but the package 'color.sty' is not loaded}%
    \renewcommand\color[2][]{}%
  }%
  \providecommand\transparent[1]{%
    \errmessage{(Inkscape) Transparency is used (non-zero) for the text in Inkscape, but the package 'transparent.sty' is not loaded}%
    \renewcommand\transparent[1]{}%
  }%
  \providecommand\rotatebox[2]{#2}%
  \newcommand*\fsize{\dimexpr\f@size pt\relax}%
  \newcommand*\lineheight[1]{\fontsize{\fsize}{#1\fsize}\selectfont}%
  \ifx\svgwidth\undefined%
    \setlength{\unitlength}{56.15678862bp}%
    \ifx\svgscale\undefined%
      \relax%
    \else%
      \setlength{\unitlength}{\unitlength * \real{\svgscale}}%
    \fi%
  \else%
    \setlength{\unitlength}{\svgwidth}%
  \fi%
  \global\let\svgwidth\undefined%
  \global\let\svgscale\undefined%
  \makeatother%
  \begin{picture}(1,1.73382126)%
    \lineheight{1}%
    \setlength\tabcolsep{0pt}%
    \put(0,0){\includegraphics[width=\unitlength,page=1]{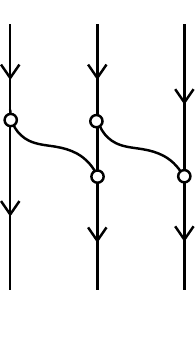}}%
    \put(0.02419935,1.69271147){\color[rgb]{0,0,0}\makebox(0,0)[lt]{\lineheight{1.25}\smash{\begin{tabular}[t]{l}$i$\end{tabular}}}}%
    \put(0.44676227,1.69268683){\color[rgb]{0,0,0}\makebox(0,0)[lt]{\lineheight{1.25}\smash{\begin{tabular}[t]{l}$j$\end{tabular}}}}%
    \put(0.90411194,1.69268683){\color[rgb]{0,0,0}\makebox(0,0)[lt]{\lineheight{1.25}\smash{\begin{tabular}[t]{l}$k$\end{tabular}}}}%
    \put(0.01178181,0.01105856){\color[rgb]{0,0,0}\makebox(0,0)[lt]{\lineheight{1.25}\smash{\begin{tabular}[t]{l}$i$\end{tabular}}}}%
    \put(0.43434474,0.01103391){\color[rgb]{0,0,0}\makebox(0,0)[lt]{\lineheight{1.25}\smash{\begin{tabular}[t]{l}$j$\end{tabular}}}}%
    \put(0.89169556,0.01103391){\color[rgb]{0,0,0}\makebox(0,0)[lt]{\lineheight{1.25}\smash{\begin{tabular}[t]{l}$k$\end{tabular}}}}%
  \end{picture}%
\endgroup%
}}}~~~=\Cijk.
\end{align*}
\end{proof}

\begin{co}
For any unitary $A$-bimodules $W_i,W_j,W_k$ there exists a (unique) unitary $\mathfrak A_{i,j,k}\in\Hom_A(W_{(ij)k},W_{i(jk)})$ \index{Aijk@$\mathfrak A_{i,j,k}$} satisfying
\begin{align}
\mu_{i,jk}(\id_i\otimes\mu_{j,k})=\mathfrak A_{i,j,k}\cdot\mu_{ij,k}(\mu_{i,j}\otimes\id_k).\label{eq46}
\end{align}
We define the unitary \textbf{associativity isomorphism} $W_{(ij)k}\rightarrow W_{i(jk)}$ to be $\mathfrak A_{i,j,k}$.  
\end{co}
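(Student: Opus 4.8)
The plan is to derive this corollary directly from Proposition \ref{lb31} together with the uniqueness clause of Theorem \ref{lb30}, so essentially no new computation is needed: all the real work has already been done. The guiding idea is that equation \eqref{eq46} simply records the canonical unitary comparing two realizations of the same universal object, namely the triple tensor product of $W_i,W_j,W_k$ over $A$.

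Concretely, first I would set $\mu_{i,j,k}:=\mu_{ij,k}(\mu_{i,j}\otimes\id_k)$ and $\eta_{i,j,k}:=\mu_{i,jk}(\id_i\otimes\mu_{j,k})$. By Proposition \ref{lb31}, both $(W_{(ij)k},\mu_{i,j,k})$ and $(W_{i(jk)},\eta_{i,j,k})$ are unitary tensor products of $W_i,W_j,W_k$ over $A$; that is, each satisfies the defining properties (bimodule structure, annihilation by $\Pij\otimes\id_k$ and $\id_i\otimes\Pjk$, the universal property, and the unitarity condition on $\mu^*\mu$ and $\mu\mu^*$ from Proposition \ref{lb29}). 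Then I would invoke the uniqueness-up-to-unitaries statement of Theorem \ref{lb30}, the three-fold analogue of Theorem \ref{lb28}: there exists a unique unitary $\mathfrak A_{i,j,k}\in\Hom_A(W_{(ij)k},W_{i(jk)})$ with $\eta_{i,j,k}=\mathfrak A_{i,j,k}\,\mu_{i,j,k}$. Unwinding the abbreviations, this is exactly
\begin{align*}
\mu_{i,jk}(\id_i\otimes\mu_{j,k})=\mathfrak A_{i,j,k}\cdot\mu_{ij,k}(\mu_{i,j}\otimes\id_k),
\end{align*}
which is \eqref{eq46}, and both existence and uniqueness of $\mathfrak A_{i,j,k}$ are delivered at once.

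For the uniqueness of $\mathfrak A_{i,j,k}$ I would note that it follows already from the universal property of $(W_{(ij)k},\mu_{i,j,k})$: since $\eta_{i,j,k}\in\Hom_A(W_i\boxtimes W_j\boxtimes W_k,W_{i(jk)})$ is annihilated by $\Pij\otimes\id_k$ and $\id_i\otimes\Pjk$, the induced morphism through $W_{(ij)k}$ is unique, and its unitarity is precisely the extra content supplied by Theorem \ref{lb30}. There is no genuine obstacle here; the only point requiring care is that the uniqueness clause of Theorem \ref{lb30} is the direct generalization of that of Theorem \ref{lb28}, so I would make sure its precise formulation (analogous to the statement ``there exists a unique unitary $u$ with $\eta=u\mu$'') has been recorded, since that is where all the substance of the corollary resides. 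If one wished to avoid citing Theorem \ref{lb30} in black-box form, the argument of Theorem \ref{lb28} transcribes verbatim: invertibility of $\mathfrak A_{i,j,k}$ comes from inducing morphisms in both directions and using uniqueness to identify the composites with identities, and unitarity follows from the identity $\mu_{i,j,k}^*\mathfrak A_{i,j,k}^*\mathfrak A_{i,j,k}\mu_{i,j,k}=\eta_{i,j,k}^*\eta_{i,j,k}=\Cijk=\mu_{i,j,k}^*\mu_{i,j,k}$ combined once more with the universal property.
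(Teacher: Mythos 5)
Your proposal is correct and follows essentially the same route as the paper, which likewise obtains $\mathfrak A_{i,j,k}$ immediately by combining Proposition \ref{lb31} (both composites realize a unitary tensor product of $W_i,W_j,W_k$ over $A$) with the uniqueness-up-to-unitaries clause of Theorem \ref{lb30}. Your optional transcription of the argument of Theorem \ref{lb28} to the three-fold case is sound but unnecessary, since the paper deliberately leaves Theorem \ref{lb30} as a black box proved by the same method.
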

\begin{proof}
This follows immediately from the above proposition and theorem \ref{lb30}.
\end{proof}

\begin{pp}[Pentagon axiom]\label{lb40}
Let $W_i,W_j,W_k,W_l$ be unitary $A$-bimodules. Then
\begin{align}
(\id_i\otimes_A\fk A_{j,k,l})\fk A_{i,jk,l}(\fk A_{i,j,k}\otimes_A\id_l)=\fk A_{i,j,kl}\fk A_{ij,k,l}\label{eq47}
\end{align}
\end{pp}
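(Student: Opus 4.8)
The plan is to deduce the pentagon identity from the uniqueness clause in the universal property of a four-fold unitary tensor product, exactly as the associativity isomorphisms themselves were extracted in the corollary preceding this proposition. First I would record the four-factor analogues of Proposition \ref{lb29}, Proposition \ref{lb31} and Theorem \ref{lb30} (the paper has already left the $n$-fold generalization ``to the reader''): iterating Proposition \ref{lb31} shows that for every parenthesization $P$ of $W_i,W_j,W_k,W_l$ the bimodule $W_P$, equipped with the canonical multiplication map $\mu^P\in\Hom_A(W_i\boxtimes W_j\boxtimes W_k\boxtimes W_l,W_P)$ obtained by composing the relevant two-fold maps $\mu_{\bullet,\bullet}$, is a four-fold unitary tensor product. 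In particular both $W_{((ij)k)l}$ and $W_{i(j(kl))}$ carry such canonical maps $\mu^{((ij)k)l}$ and $\mu^{i(j(kl))}$, and the four-factor universal property says that an $A$-bimodule morphism out of $W_{((ij)k)l}$ is determined by its precomposition with $\mu^{((ij)k)l}$.

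The heart of the argument is a collection of ``elementary move'' identities asserting that each associativity isomorphism, suitably tensored with identities, carries the canonical map of one parenthesization to that of the adjacent one. For the pure move this is immediate from the defining relation \eqref{eq46}: using \eqref{eq45} with $F=\fk A_{i,j,k},G=\id_l$ to move the $\otimes_A$-factor past $\mu_{(ij)k,l}$, and then applying \eqref{eq46} inside $\mu_{i(jk),l}\big((\fk A_{i,j,k}\mu_{ij,k}(\mu_{i,j}\otimes\id_k))\otimes\id_l\big)$, gives $(\fk A_{i,j,k}\otimes_A\id_l)\,\mu^{((ij)k)l}=\mu^{(i(jk))l}$. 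The analogous computations yield $\fk A_{i,jk,l}\,\mu^{(i(jk))l}=\mu^{i((jk)l)}$ and $(\id_i\otimes_A\fk A_{j,k,l})\,\mu^{i((jk)l)}=\mu^{i(j(kl))}$ for the upper leg, and $\fk A_{ij,k,l}\,\mu^{((ij)k)l}=\mu^{(ij)(kl)}$, $\fk A_{i,j,kl}\,\mu^{(ij)(kl)}=\mu^{i(j(kl))}$ for the lower leg; each uses \eqref{eq46} once for the pure factor together with \eqref{eq45} to push the $\otimes_A$-factor through, plus the harmless functoriality $(\mu_{i,j}\otimes\id_k)\otimes\id_l=\mu_{i,j}\otimes\id_k\otimes\id_l$ of $\boxtimes$ in $\mc C$.

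Chaining the three upper-leg identities shows that the left-hand side of \eqref{eq47}, precomposed with $\mu^{((ij)k)l}$, equals $\mu^{i(j(kl))}$; chaining the two lower-leg identities shows the right-hand side precomposed with $\mu^{((ij)k)l}$ equals the same $\mu^{i(j(kl))}$. Since both sides of \eqref{eq47} are $A$-bimodule morphisms $W_{((ij)k)l}\to W_{i(j(kl))}$ agreeing after precomposition with the universal map $\mu^{((ij)k)l}$, the uniqueness clause forces them to coincide. I expect the only real obstacle to be organizational rather than conceptual: setting up the four-factor tensor product and its universal property cleanly, and keeping careful track, in each elementary-move identity, of which $\mu_{\bullet,\bullet}$ occurs and of how \eqref{eq45} relabels the $\otimes_A$ of a morphism with an identity; the algebra itself is routine once the canonical maps $\mu^P$ are in place.
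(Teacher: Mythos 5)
Your proposal is correct and follows essentially the same route as the paper: the paper likewise generalizes the unitary tensor product to four factors, verifies via the argument of Proposition \ref{lb31} that $\big(W_{((ij)k)l},\,\mu_{(ij)k,l}(\mu_{ij,k}(\mu_{i,j}\otimes\id_k)\otimes\id_l)\big)$ is a four-fold unitary tensor product, and then computes both sides of \eqref{eq47} precomposed with this canonical map using \eqref{eq45} and \eqref{eq46}, concluding by the universal property. The only cosmetic difference is that you isolate each application of \eqref{eq45}--\eqref{eq46} as a named ``elementary move,'' whereas the paper chains them in two single computations.
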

\begin{proof}
One can define unitary tensor products of $W_i,W_j,W_k,W_l$ over $A$ in a similar way as those of three unitary $A$-bimodules. Moreover, using the argument of proposition \ref{lb31} one shows that $(W_{((ij)k)l},\mu_{(ij)k,l}(\mu_{ij,k}(\mu_{i,j}\otimes\id_k)\otimes\id_l))$ is a unitary tensor product of $W_i,W_j,W_k,W_l$ over $A$. We now compute
\begin{align*}
&~~~~~(\id_i\otimes_A\fk A_{j,k,l})\fk A_{i,jk,l}(\fk A_{i,j,k}\otimes_A\id_l)\cdot\mu_{(ij)k,l}(\mu_{ij,k}(\mu_{i,j}\otimes\id_k)\otimes\id_l)\\
&\xlongequal{\eqref{eq45}}(\id_i\otimes_A\fk A_{j,k,l})\fk A_{i,jk,l}\cdot\mu_{i(jk),l}(\fk A_{i,j,k}\otimes\id_l)(\mu_{ij,k}(\mu_{i,j}\otimes\id_k)\otimes\id_l)\\
&=(\id_i\otimes_A\fk A_{j,k,l})\fk A_{i,jk,l}\cdot\mu_{i(jk),l}((\fk A_{i,j,k}\cdot\mu_{ij,k}(\mu_{i,j}\otimes\id_k))\otimes\id_l)\\
&\xlongequal{\eqref{eq46}}(\id_i\otimes_A\fk A_{j,k,l})\fk A_{i,jk,l}\cdot\mu_{i(jk),l}(\mu_{i,jk}(\id_i\otimes\mu_{j,k})\otimes\id_l)\\
&=(\id_i\otimes_A\fk A_{j,k,l})\fk A_{i,jk,l}\cdot\mu_{i(jk),l}(\mu_{i,jk}\otimes\id_l)(\id_i\otimes\mu_{j,k}\otimes\id_l)\\
&\xlongequal{\eqref{eq46}}(\id_i\otimes_A\fk A_{j,k,l})\mu_{i,(jk)l}(\id_i\otimes\mu_{jk,l})(\id_i\otimes\mu_{j,k}\otimes\id_l)\\
&\xlongequal{\eqref{eq45}}\mu_{i,j(kl)}(\id_i\otimes\fk A_{j,k,l})(\id_i\otimes\mu_{jk,l})(\id_i\otimes\mu_{j,k}\otimes\id_l)\\
&=\mu_{i,j(kl)}(\id_i\otimes(\fk A_{j,k,l}\cdot\mu_{jk,l}(\mu_{j,k}\otimes\id_l)))\\
&\xlongequal{\eqref{eq46}}\mu_{i,j(kl)}(\id_i\otimes\mu_{j,kl}(\id_j\otimes\mu_{k,l})),
\end{align*}
and
\begin{align*}
&~~~~~\fk A_{i,j,kl}\fk A_{ij,k,l}\cdot\mu_{(ij)k,l}(\mu_{ij,k}(\mu_{i,j}\otimes\id_k)\otimes\id_l)\\
&=\fk A_{i,j,kl}\fk A_{ij,k,l}\cdot\mu_{(ij)k,l}(\mu_{ij,k}\otimes\id_l)(\mu_{i,j}\otimes\id_k\otimes\id_l)\\
&\xlongequal{\eqref{eq46}}\fk A_{i,j,kl}\cdot\mu_{ij,kl}(\id_{ij}\otimes\mu_{k,l})(\mu_{i,j}\otimes\id_k\otimes\id_l)\\
&=\fk A_{i,j,kl}\cdot\mu_{ij,kl}(\mu_{i,j}\otimes\id_{kl})(\id_i\otimes\id_j\otimes\mu_{k,l})\\
&\xlongequal{\eqref{eq46}}\mu_{i,j(kl)}(\id_i\otimes\mu_{j,kl})(\id_i\otimes\id_j\otimes\mu_{k,l})=\mu_{i,j(kl)}(\id_i\otimes\mu_{j,kl}(\id_j\otimes\mu_{k,l})).
\end{align*}
Thus equation \eqref{eq47} holds when both sides are multiplied by $\mu_{(ij)k,l}(\mu_{ij,k}(\mu_{i,j}\otimes\id_k)\otimes\id_l)$. Hence equation \eqref{eq47} is true by the universal property for the unitary tensor products of $W_i,W_j,W_k,W_l$.
\end{proof}

We choose the vacuum bimodule $(W_a,\mu,\mu)$ to be the identity object of $\BIMA$. Then by proposition \ref{lb27}, for any unitary $A$-bimodule $(W_i,\mu^i_L,\mu^i_R)$, we have that $(W_i,\mu^i_L)$ is a unitary tensor product of $W_a\boxtimes W_i$ and $(W_i,\mu^i_R)$ is a unitary tensor product of $W_i\boxtimes W_a$. By uniqueness up to unitaries, there exist unique unitary $\fk l_i\in\Hom_A(W_{ai},W_i)$ and $\fk r_i\in\Hom_A(W_{ia},W_i)$ satisfying
\begin{align}
\mu^i_L=\fk l_i\mu_{a,i},\qquad \mu^i_R=\fk r_i\mu_{i,a}.\label{eq52}
\end{align}
\begin{pp}[Triangle axiom]
For any unitary $A$-bimodules $W_i,W_j$ we have
\begin{align}
(\id_i\otimes_A\fk l_j)\fk A_{i,a,j}=\fk r_i\otimes_A\id_j.
\end{align}	
\end{pp}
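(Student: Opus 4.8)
The plan is to prove the identity by precomposing both sides with the canonical morphism $\Phi:=\mu_{ia,j}(\mu_{i,a}\otimes\id_j)$, which by proposition \ref{lb31} exhibits $W_{(ia)j}$ as a unitary tensor product of $W_i,W_a,W_j$ over $A$. Both sides of the triangle identity live in $\Hom_A(W_{(ia)j},W_{ij})$, and $\Phi$ satisfies $\Phi\Phi^*=d_A^2\id_{(ia)j}$ (this follows by tracking $\mu_{i,j}\mu_{i,j}^*=d_A\id_{ij}$ and $\mu_{ia,j}\mu_{ia,j}^*=d_A\id$ through the argument of proposition \ref{lb31}), so $\Phi$ is right-cancellable; equivalently one invokes the uniqueness clause of the universal property. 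Hence it suffices to check that the two sides agree after composition with $\Phi$.

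First I would compute the left-hand side composed with $\Phi$. The defining relation \eqref{eq46} for $\fk A_{i,a,j}$ collapses $\fk A_{i,a,j}\,\mu_{ia,j}(\mu_{i,a}\otimes\id_j)$ to $\mu_{i,aj}(\id_i\otimes\mu_{a,j})$. Then the functoriality relation \eqref{eq45} applied to $\id_i\otimes_A\fk l_j$ slides that morphism past $\mu_{i,aj}$, giving $(\id_i\otimes_A\fk l_j)\mu_{i,aj}=\mu_{i,j}(\id_i\otimes\fk l_j)$; combining with the left-unitor relation $\fk l_j\mu_{a,j}=\mu^j_L$ from \eqref{eq52}, the left-hand composite becomes $\mu_{i,j}(\id_i\otimes\mu^j_L)$. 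Symmetrically, using \eqref{eq45} for $\fk r_i\otimes_A\id_j$ to obtain $(\fk r_i\otimes_A\id_j)\mu_{ia,j}=\mu_{i,j}(\fk r_i\otimes\id_j)$ and then the right-unitor relation $\fk r_i\mu_{i,a}=\mu^i_R$ from \eqref{eq52}, the right-hand composite becomes $\mu_{i,j}(\mu^i_R\otimes\id_j)$.

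Finally, the two composites differ exactly by $\mu_{i,j}\Pij$, since $\Pij=\mu^i_R\otimes\id_j-\id_i\otimes\mu^j_L$, and this vanishes because $\mu_{i,j}\Pij=0$ by the definition of the unitary tensor product (definition \ref{lb47}). Therefore the two composites coincide, and right-cancelling $\Phi$ yields the triangle identity.

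I do not expect a genuine obstacle here: the argument is entirely formal once the three-fold presentation $\Phi$ is fixed. The only care needed is bookkeeping of sources and targets in the two instances of \eqref{eq45} — ensuring $\id_i\otimes_A\fk l_j$ is slid past $\mu_{i,aj}$ and $\fk r_i\otimes_A\id_j$ past $\mu_{ia,j}$ — and confirming that $\Phi$ is epic, which rests on the propagation of $\mu_{i,j}\mu_{i,j}^*=d_A\id_{ij}$ established in proposition \ref{lb31}.
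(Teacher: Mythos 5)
Your proof is correct and follows essentially the same route as the paper: precompose both sides with $\mu_{ia,j}(\mu_{i,a}\otimes\id_j)$, reduce the left side to $\mu_{i,j}(\id_i\otimes\mu^j_L)$ and the right side to $\mu_{i,j}(\mu^i_R\otimes\id_j)$ via \eqref{eq45}, \eqref{eq46}, \eqref{eq52}, observe that the difference is $\mu_{i,j}\Pij=0$, and cancel by the universal property of the three-fold unitary tensor product from proposition \ref{lb31}. Your extra remark that cancellability can also be seen from $\Phi\Phi^*=d_A^2\id_{(ia)j}$ is a harmless variant of the same uniqueness argument the paper invokes.
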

\begin{proof}
Similar to (and simpler than) the proof of pentagon axiom, we show that
\begin{align*}
(\id_i\otimes_A\fk l_j)\fk A_{i,a,j}\cdot\mu_{ia,j}(\mu_{i,a}\otimes\id_j)=\mu_{i,j}(\id_i\otimes\mu^L_j)=\mu_{i,j}(\mu^R_i\otimes\id_j)=(\fk r_i\otimes_A\id_j)\cdot\mu_{ia,j}(\mu_{i,a}\otimes\id_j),
\end{align*}
which proves triangle axiom by universal property.
\end{proof}

We conclude:
\begin{thm}
With the $*$-bifunctor $\boxtimes_A$, the associativity isomorphisms, the unit object, and the left and right multiplications by unit defined above, $\BIMA$ is a $C^*$-tensor category.
\end{thm}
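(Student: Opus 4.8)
The plan is to verify the remaining axioms of a $C^*$-tensor category, since several of the ingredients are already in place. By Proposition \ref{lb24} the category $\BIMA$ is a $C^*$-category; the $*$-bifunctor identity $(F\otimes_A G)^*=F^*\otimes_A G^*$ was checked above; the pentagon axiom is Proposition \ref{lb40}; and the triangle axiom is the preceding proposition. The structural morphisms $\fk A_{i,j,k}$, $\fk l_i$, $\fk r_i$ are unitary by construction (Theorems \ref{lb30} and \ref{lb28}). What remains is therefore (i) functoriality of $\boxtimes_A$ (compatibility with composition and identities), (ii) naturality of the associativity isomorphisms, and (iii) naturality of the unit isomorphisms. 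Each of these is a uniqueness argument driven by the universal property of unitary tensor products (Definition \ref{lb47}).

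First I would settle functoriality. To see $\id_i\otimes_A\id_j=\id_{ij}$, note that both sides $X$ satisfy $\mu_{i,j}(\id_i\otimes\id_j)=X\mu_{i,j}$, so they coincide by the uniqueness clause of the universal property. For compatibility with composition, given $F\in\Hom_A(W_i,W_{i'})$, $F'\in\Hom_A(W_{i'},W_{i''})$ and $G,G'$ analogously, I would precompose $(F'\otimes_A G')(F\otimes_A G)$ with $\mu_{i,j}$ and apply \eqref{eq45} twice, obtaining $\mu_{i'',j''}(F'F\otimes G'G)$; since this is precisely the defining relation for $(F'F)\otimes_A(G'G)$, uniqueness yields $(F'\otimes_A G')(F\otimes_A G)=(F'F)\otimes_A(G'G)$.

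Next, for the naturality of $\fk A$ with respect to bimodule morphisms $F,G,H$, I would precompose the two composites $W_{(ij)k}\to W_{i'(j'k')}$, namely $\fk A_{i',j',k'}((F\otimes_A G)\otimes_A H)$ and $(F\otimes_A(G\otimes_A H))\fk A_{i,j,k}$, with the map $\mu_{ij,k}(\mu_{i,j}\otimes\id_k)$, which by Proposition \ref{lb31} exhibits $W_{(ij)k}$ as a unitary tensor product of $W_i,W_j,W_k$ over $A$. Unfolding with \eqref{eq45} and the defining relation \eqref{eq46} of $\fk A$ — exactly the bookkeeping already carried out in the proof of Proposition \ref{lb40} — reduces both sides to a single canonical morphism out of $W_i\boxtimes W_j\boxtimes W_k$, whence the universal property for unitary tensor products of three bimodules (Theorem \ref{lb30}) forces equality. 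Naturality of $\fk l_i,\fk r_i$ is the same kind of computation, now using \eqref{eq52} together with the fact that $(W_i,\mu^i_L)$ and $(W_i,\mu^i_R)$ are the unitary tensor products $W_a\boxtimes_A W_i$ and $W_i\boxtimes_A W_a$.

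I expect no essential obstacle here: each assertion is reduced, by precomposition with the canonical structure maps $\mu_{\bullet}$, to an identity that is manifestly true at the level of the underlying objects $W_i\boxtimes W_j$ (or $W_i\boxtimes W_j\boxtimes W_k$) in $\mc C$, and is then transported back through the uniqueness half of the universal property. The only real care required is organizational — tracking which instance of \eqref{eq45} or \eqref{eq46} is invoked at each stage — and this is identical in spirit to the pentagon computation. The completeness and norm conditions of a $C^*$-category need no new argument, being inherited from $\mc C$ via Proposition \ref{lb24}.
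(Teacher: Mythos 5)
Your proposal is correct and follows the same route as the paper: the paper states this theorem as a summary of the preceding results (the $*$-bifunctor computation, Proposition \ref{lb40}, and the triangle axiom), leaving the routine functoriality and naturality checks implicit, and your verification of those checks uses exactly the paper's standard technique of precomposing with the canonical maps $\mu_{i,j}$ (or $\mu_{ij,k}(\mu_{i,j}\otimes\id_k)$, via Proposition \ref{lb31}), unfolding with \eqref{eq45}--\eqref{eq46}, and invoking the uniqueness clause of the universal property. Nothing is missing.
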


In the following, we identify different ways of unitary tensor products via associativity isomorphisms, and identify $W_{ai}$ with $W_i$ and $W_{ia}$ with $W_i$ via $\fk l_i$ and $\fk r_i$ respectively. Then $\BIMA$ can be treated as if it is a strict $C^*$-tensor category. We have $(ij)k=i(jk)$, both denoted by $ijk$, and also $ai=i=ia$. Thus the $\fk A_{i,j,k},\fk l_i,\fk r_i$ are all identity morphisms. Therefore $\mu^i_L=\mu_{a,i},\mu^i_R=\mu_{i,a}$, and in particular $\mu=\mu_{a,a}$. Moreover, equation \eqref{eq46} now reads
\begin{align}
\vcenter{\hbox{{\def\svgscale{0.4}
\begingroup%
  \makeatletter%
  \providecommand\color[2][]{%
    \errmessage{(Inkscape) Color is used for the text in Inkscape, but the package 'color.sty' is not loaded}%
    \renewcommand\color[2][]{}%
  }%
  \providecommand\transparent[1]{%
    \errmessage{(Inkscape) Transparency is used (non-zero) for the text in Inkscape, but the package 'transparent.sty' is not loaded}%
    \renewcommand\transparent[1]{}%
  }%
  \providecommand\rotatebox[2]{#2}%
  \newcommand*\fsize{\dimexpr\f@size pt\relax}%
  \newcommand*\lineheight[1]{\fontsize{\fsize}{#1\fsize}\selectfont}%
  \ifx\svgwidth\undefined%
    \setlength{\unitlength}{138.62082524bp}%
    \ifx\svgscale\undefined%
      \relax%
    \else%
      \setlength{\unitlength}{\unitlength * \real{\svgscale}}%
    \fi%
  \else%
    \setlength{\unitlength}{\svgwidth}%
  \fi%
  \global\let\svgwidth\undefined%
  \global\let\svgscale\undefined%
  \makeatother%
  \begin{picture}(1,1.25994442)%
    \lineheight{1}%
    \setlength\tabcolsep{0pt}%
    \put(0,0){\includegraphics[width=\unitlength,page=1]{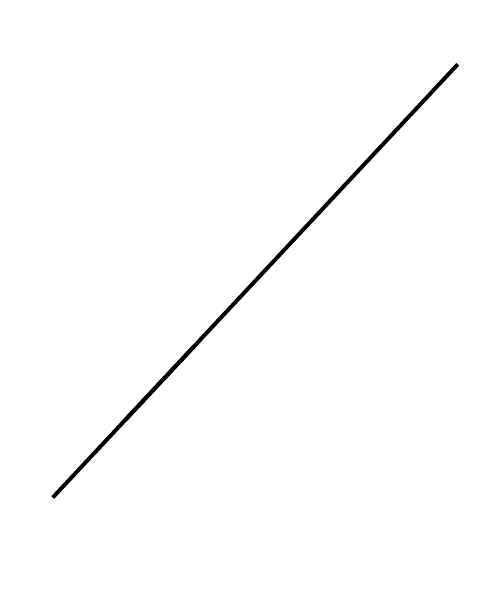}}%
    \put(0.09426907,1.19591602){\color[rgb]{0,0,0}\makebox(0,0)[lt]{\lineheight{1.25}\smash{\begin{tabular}[t]{l}$i$\end{tabular}}}}%
    \put(0.45828527,1.20463496){\color[rgb]{0,0,0}\makebox(0,0)[lt]{\lineheight{1.25}\smash{\begin{tabular}[t]{l}$j$\end{tabular}}}}%
    \put(0.89641257,1.2155337){\color[rgb]{0,0,0}\makebox(0,0)[lt]{\lineheight{1.25}\smash{\begin{tabular}[t]{l}$k$\end{tabular}}}}%
    \put(0,0){\includegraphics[width=\unitlength,page=2]{associativity-2.pdf}}%
    \put(-0.00310104,0.0096499){\color[rgb]{0,0,0}\makebox(0,0)[lt]{\lineheight{1.25}\smash{\begin{tabular}[t]{l}$ijk$\end{tabular}}}}%
    \put(0.17569066,0.68782502){\color[rgb]{0,0,0}\makebox(0,0)[lt]{\lineheight{1.25}\smash{\begin{tabular}[t]{l}$ij$\end{tabular}}}}%
  \end{picture}%
\endgroup%
}}}~~=~~\vcenter{\hbox{{\def\svgscale{0.4}
\begingroup%
  \makeatletter%
  \providecommand\color[2][]{%
    \errmessage{(Inkscape) Color is used for the text in Inkscape, but the package 'color.sty' is not loaded}%
    \renewcommand\color[2][]{}%
  }%
  \providecommand\transparent[1]{%
    \errmessage{(Inkscape) Transparency is used (non-zero) for the text in Inkscape, but the package 'transparent.sty' is not loaded}%
    \renewcommand\transparent[1]{}%
  }%
  \providecommand\rotatebox[2]{#2}%
  \newcommand*\fsize{\dimexpr\f@size pt\relax}%
  \newcommand*\lineheight[1]{\fontsize{\fsize}{#1\fsize}\selectfont}%
  \ifx\svgwidth\undefined%
    \setlength{\unitlength}{141.90014094bp}%
    \ifx\svgscale\undefined%
      \relax%
    \else%
      \setlength{\unitlength}{\unitlength * \real{\svgscale}}%
    \fi%
  \else%
    \setlength{\unitlength}{\svgwidth}%
  \fi%
  \global\let\svgwidth\undefined%
  \global\let\svgscale\undefined%
  \makeatother%
  \begin{picture}(1,1.25093346)%
    \lineheight{1}%
    \setlength\tabcolsep{0pt}%
    \put(0,0){\includegraphics[width=\unitlength,page=1]{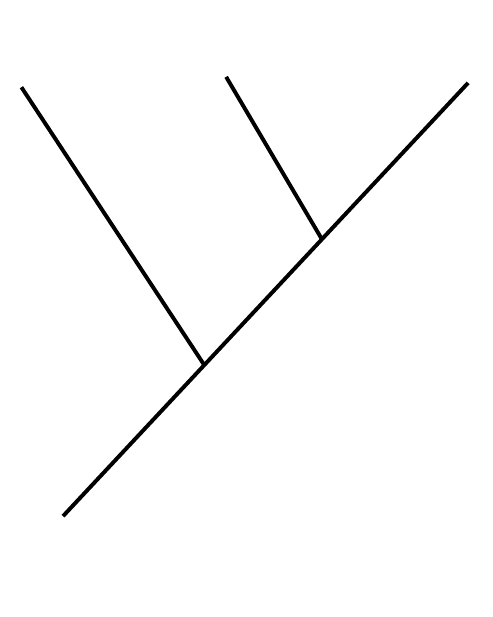}}%
    \put(0.00873238,1.19264333){\color[rgb]{0,0,0}\makebox(0,0)[lt]{\lineheight{1.25}\smash{\begin{tabular}[t]{l}$i$\end{tabular}}}}%
    \put(0.40479408,1.20329021){\color[rgb]{0,0,0}\makebox(0,0)[lt]{\lineheight{1.25}\smash{\begin{tabular}[t]{l}$j$\end{tabular}}}}%
    \put(0.89880648,1.20754908){\color[rgb]{0,0,0}\makebox(0,0)[lt]{\lineheight{1.25}\smash{\begin{tabular}[t]{l}$k$\end{tabular}}}}%
    \put(0,0){\includegraphics[width=\unitlength,page=2]{associativity.pdf}}%
    \put(0.55020175,0.54722674){\color[rgb]{0,0,0}\makebox(0,0)[lt]{\lineheight{1.25}\smash{\begin{tabular}[t]{l}$jk$\end{tabular}}}}%
    \put(-0.00302937,0.00942689){\color[rgb]{0,0,0}\makebox(0,0)[lt]{\lineheight{1.25}\smash{\begin{tabular}[t]{l}$ijk$\end{tabular}}}}%
    \put(0,0){\includegraphics[width=\unitlength,page=3]{associativity.pdf}}%
  \end{picture}%
\endgroup%
}}}~~,\label{eq48}
\end{align}
which means that the left action $i\curvearrowright j$ and the right action $j\curvearrowleft k$ commute. These two actions indeed commute \emph{adjointly}, as indicated below.

\begin{thm}[Frobenius relations]\label{lb32}
\begin{align}
\vcenter{\hbox{{\def\svgscale{0.6}
\begingroup%
  \makeatletter%
  \providecommand\color[2][]{%
    \errmessage{(Inkscape) Color is used for the text in Inkscape, but the package 'color.sty' is not loaded}%
    \renewcommand\color[2][]{}%
  }%
  \providecommand\transparent[1]{%
    \errmessage{(Inkscape) Transparency is used (non-zero) for the text in Inkscape, but the package 'transparent.sty' is not loaded}%
    \renewcommand\transparent[1]{}%
  }%
  \providecommand\rotatebox[2]{#2}%
  \newcommand*\fsize{\dimexpr\f@size pt\relax}%
  \newcommand*\lineheight[1]{\fontsize{\fsize}{#1\fsize}\selectfont}%
  \ifx\svgwidth\undefined%
    \setlength{\unitlength}{60.55577061bp}%
    \ifx\svgscale\undefined%
      \relax%
    \else%
      \setlength{\unitlength}{\unitlength * \real{\svgscale}}%
    \fi%
  \else%
    \setlength{\unitlength}{\svgwidth}%
  \fi%
  \global\let\svgwidth\undefined%
  \global\let\svgscale\undefined%
  \makeatother%
  \begin{picture}(1,1.93143974)%
    \lineheight{1}%
    \setlength\tabcolsep{0pt}%
    \put(0,0){\includegraphics[width=\unitlength,page=1]{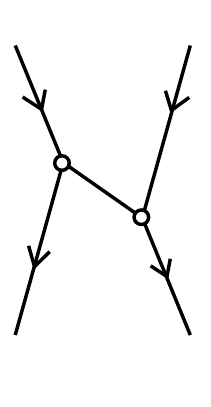}}%
    \put(-0.00521071,1.86737226){\color[rgb]{0,0,0}\makebox(0,0)[lt]{\lineheight{1.25}\smash{\begin{tabular}[t]{l}$ij$\end{tabular}}}}%
    \put(0.84041828,1.86831166){\color[rgb]{0,0,0}\makebox(0,0)[lt]{\lineheight{1.25}\smash{\begin{tabular}[t]{l}$k$\end{tabular}}}}%
    \put(0.77033877,0.03161815){\color[rgb]{0,0,0}\makebox(0,0)[lt]{\lineheight{1.25}\smash{\begin{tabular}[t]{l}$jk$\end{tabular}}}}%
    \put(0.01523287,0.01467536){\color[rgb]{0,0,0}\makebox(0,0)[lt]{\lineheight{1.25}\smash{\begin{tabular}[t]{l}$i$\end{tabular}}}}%
    \put(0.44291535,1.1929069){\color[rgb]{0,0,0}\makebox(0,0)[lt]{\lineheight{1.25}\smash{\begin{tabular}[t]{l}$j$\end{tabular}}}}%
    \put(0,0){\includegraphics[width=\unitlength,page=2]{Frobenius-7.pdf}}%
  \end{picture}%
\endgroup%
}}}~~~=~~\vcenter{\hbox{{\def\svgscale{0.6}
\begingroup%
  \makeatletter%
  \providecommand\color[2][]{%
    \errmessage{(Inkscape) Color is used for the text in Inkscape, but the package 'color.sty' is not loaded}%
    \renewcommand\color[2][]{}%
  }%
  \providecommand\transparent[1]{%
    \errmessage{(Inkscape) Transparency is used (non-zero) for the text in Inkscape, but the package 'transparent.sty' is not loaded}%
    \renewcommand\transparent[1]{}%
  }%
  \providecommand\rotatebox[2]{#2}%
  \newcommand*\fsize{\dimexpr\f@size pt\relax}%
  \newcommand*\lineheight[1]{\fontsize{\fsize}{#1\fsize}\selectfont}%
  \ifx\svgwidth\undefined%
    \setlength{\unitlength}{60.89424291bp}%
    \ifx\svgscale\undefined%
      \relax%
    \else%
      \setlength{\unitlength}{\unitlength * \real{\svgscale}}%
    \fi%
  \else%
    \setlength{\unitlength}{\svgwidth}%
  \fi%
  \global\let\svgwidth\undefined%
  \global\let\svgscale\undefined%
  \makeatother%
  \begin{picture}(1,1.88539327)%
    \lineheight{1}%
    \setlength\tabcolsep{0pt}%
    \put(0,0){\includegraphics[width=\unitlength,page=1]{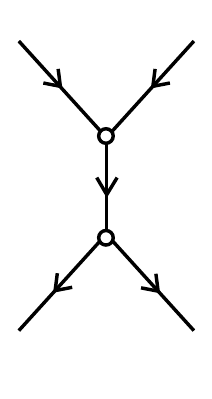}}%
    \put(-0.00665411,1.7995853){\color[rgb]{0,0,0}\makebox(0,0)[lt]{\lineheight{1.25}\smash{\begin{tabular}[t]{l}$ij$\end{tabular}}}}%
    \put(0.84219031,1.82370051){\color[rgb]{0,0,0}\makebox(0,0)[lt]{\lineheight{1.25}\smash{\begin{tabular}[t]{l}$k$\end{tabular}}}}%
    \put(0.59229128,0.95613371){\color[rgb]{0,0,0}\makebox(0,0)[lt]{\lineheight{1.25}\smash{\begin{tabular}[t]{l}$ijk$\end{tabular}}}}%
    \put(-0.00550634,0.0141314){\color[rgb]{0,0,0}\makebox(0,0)[lt]{\lineheight{1.25}\smash{\begin{tabular}[t]{l}$i$\end{tabular}}}}%
    \put(0.79123729,0.02832284){\color[rgb]{0,0,0}\makebox(0,0)[lt]{\lineheight{1.25}\smash{\begin{tabular}[t]{l}$jk$\end{tabular}}}}%
  \end{picture}%
\endgroup%
}}}~~~,\qquad~~\vcenter{\hbox{{\def\svgscale{0.6}
\begingroup%
  \makeatletter%
  \providecommand\color[2][]{%
    \errmessage{(Inkscape) Color is used for the text in Inkscape, but the package 'color.sty' is not loaded}%
    \renewcommand\color[2][]{}%
  }%
  \providecommand\transparent[1]{%
    \errmessage{(Inkscape) Transparency is used (non-zero) for the text in Inkscape, but the package 'transparent.sty' is not loaded}%
    \renewcommand\transparent[1]{}%
  }%
  \providecommand\rotatebox[2]{#2}%
  \newcommand*\fsize{\dimexpr\f@size pt\relax}%
  \newcommand*\lineheight[1]{\fontsize{\fsize}{#1\fsize}\selectfont}%
  \ifx\svgwidth\undefined%
    \setlength{\unitlength}{62.81581428bp}%
    \ifx\svgscale\undefined%
      \relax%
    \else%
      \setlength{\unitlength}{\unitlength * \real{\svgscale}}%
    \fi%
  \else%
    \setlength{\unitlength}{\svgwidth}%
  \fi%
  \global\let\svgwidth\undefined%
  \global\let\svgscale\undefined%
  \makeatother%
  \begin{picture}(1,1.83007075)%
    \lineheight{1}%
    \setlength\tabcolsep{0pt}%
    \put(0,0){\includegraphics[width=\unitlength,page=1]{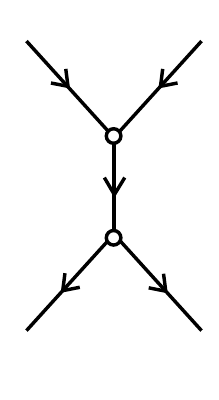}}%
    \put(0.06218467,1.7468877){\color[rgb]{0,0,0}\makebox(0,0)[lt]{\lineheight{1.25}\smash{\begin{tabular}[t]{l}$i$\end{tabular}}}}%
    \put(0.81771979,1.77026521){\color[rgb]{0,0,0}\makebox(0,0)[lt]{\lineheight{1.25}\smash{\begin{tabular}[t]{l}$jk$\end{tabular}}}}%
    \put(0.60913663,0.92923779){\color[rgb]{0,0,0}\makebox(0,0)[lt]{\lineheight{1.25}\smash{\begin{tabular}[t]{l}$ijk$\end{tabular}}}}%
    \put(-0.00645056,0.01605187){\color[rgb]{0,0,0}\makebox(0,0)[lt]{\lineheight{1.25}\smash{\begin{tabular}[t]{l}$ij$\end{tabular}}}}%
    \put(0.84288333,0.02980919){\color[rgb]{0,0,0}\makebox(0,0)[lt]{\lineheight{1.25}\smash{\begin{tabular}[t]{l}$k$\end{tabular}}}}%
  \end{picture}%
\endgroup%
}}}~~~=~~\vcenter{\hbox{{\def\svgscale{0.6}
\begingroup%
  \makeatletter%
  \providecommand\color[2][]{%
    \errmessage{(Inkscape) Color is used for the text in Inkscape, but the package 'color.sty' is not loaded}%
    \renewcommand\color[2][]{}%
  }%
  \providecommand\transparent[1]{%
    \errmessage{(Inkscape) Transparency is used (non-zero) for the text in Inkscape, but the package 'transparent.sty' is not loaded}%
    \renewcommand\transparent[1]{}%
  }%
  \providecommand\rotatebox[2]{#2}%
  \newcommand*\fsize{\dimexpr\f@size pt\relax}%
  \newcommand*\lineheight[1]{\fontsize{\fsize}{#1\fsize}\selectfont}%
  \ifx\svgwidth\undefined%
    \setlength{\unitlength}{61.6804674bp}%
    \ifx\svgscale\undefined%
      \relax%
    \else%
      \setlength{\unitlength}{\unitlength * \real{\svgscale}}%
    \fi%
  \else%
    \setlength{\unitlength}{\svgwidth}%
  \fi%
  \global\let\svgwidth\undefined%
  \global\let\svgscale\undefined%
  \makeatother%
  \begin{picture}(1,1.88210121)%
    \lineheight{1}%
    \setlength\tabcolsep{0pt}%
    \put(0,0){\includegraphics[width=\unitlength,page=1]{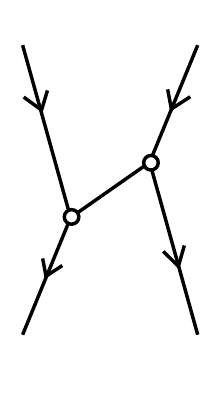}}%
    \put(0.03635133,1.82260872){\color[rgb]{0,0,0}\makebox(0,0)[lt]{\lineheight{1.25}\smash{\begin{tabular}[t]{l}$i$\end{tabular}}}}%
    \put(0.81832272,1.83621074){\color[rgb]{0,0,0}\makebox(0,0)[lt]{\lineheight{1.25}\smash{\begin{tabular}[t]{l}$jk$\end{tabular}}}}%
    \put(-0.00497094,0.01322508){\color[rgb]{0,0,0}\makebox(0,0)[lt]{\lineheight{1.25}\smash{\begin{tabular}[t]{l}$ij$\end{tabular}}}}%
    \put(0.87619998,0.02070504){\color[rgb]{0,0,0}\makebox(0,0)[lt]{\lineheight{1.25}\smash{\begin{tabular}[t]{l}$k$\end{tabular}}}}%
    \put(0,0){\includegraphics[width=\unitlength,page=2]{Frobenius-9.pdf}}%
    \put(0.40713451,1.11039789){\color[rgb]{0,0,0}\makebox(0,0)[lt]{\lineheight{1.25}\smash{\begin{tabular}[t]{l}$j$\end{tabular}}}}%
  \end{picture}%
\endgroup%
}}}~~~.\label{eq49}
\end{align}
\end{thm}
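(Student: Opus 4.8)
The plan is to reduce the two relations in \eqref{eq49} to a single short algebraic identity and then verify that identity by the interchange law together with the module‑morphism property of the structure maps. First I would observe that the two equations in \eqref{eq49} are mutually \emph{adjoint}: applying $*$ to the first relation
\begin{align*}
(\id_i\otimes\mu_{j,k})(\mu_{i,j}^*\otimes\id_k)=\mu_{i,jk}^*\mu_{ij,k}
\end{align*}
turns the left side into $(\mu_{i,j}\otimes\id_k)(\id_i\otimes\mu_{j,k}^*)$ and the right side into $\mu_{ij,k}^*\mu_{i,jk}$, which is precisely the second relation. Hence it suffices to prove the first identity, both sides of which are morphisms $W_{ij}\boxtimes W_k\to W_i\boxtimes W_{jk}$ (indeed $A$-bimodule morphisms, since $\mu_{i,j}^*$ is one by the $C^*$-structure of $\BIMA$).

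The key device is that $\mu_{i,j}\otimes\id_k$ is a split epimorphism: because $\mu_{i,j}\mu_{i,j}^*=d_A\id_{ij}$ (proposition \ref{lb27}), the morphism $d_A^{-1}(\mu_{i,j}^*\otimes\id_k)$ is a right inverse of $\mu_{i,j}\otimes\id_k$, so two morphisms out of $W_{ij}\boxtimes W_k$ are equal as soon as they agree after precomposition with $\mu_{i,j}\otimes\id_k$. I would therefore precompose both sides with $\mu_{i,j}\otimes\id_k$. On the left, $\mu_{i,j}^*\mu_{i,j}=\Cij$ gives $(\id_i\otimes\mu_{j,k})(\Cij\otimes\id_k)$. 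On the right, the associativity relation \eqref{eq48}, namely $\mu_{ij,k}(\mu_{i,j}\otimes\id_k)=\mu_{i,jk}(\id_i\otimes\mu_{j,k})$, together with $\mu_{i,jk}^*\mu_{i,jk}=\chi_{i,jk}$ (proposition \ref{lb27} applied to the pair $W_i,W_{jk}$), gives $\chi_{i,jk}(\id_i\otimes\mu_{j,k})$. Thus everything reduces to the single equality
\begin{align*}
(\id_i\otimes\mu_{j,k})(\Cij\otimes\id_k)=\chi_{i,jk}(\id_i\otimes\mu_{j,k}).
\end{align*}

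This last equality is where the actual content sits, and it is the step I expect to require the most care, although it is short. I would expand $\Cij=(\id_i\otimes\mu^j_L)((\mu^i_R)^*\otimes\id_j)$ and $\chi_{i,jk}=(\id_i\otimes\mu^{jk}_L)((\mu^i_R)^*\otimes\id_{jk})$ by definition, then slide $\id_i\otimes\mu_{j,k}$ past the right-action factor $(\mu^i_R)^*$ using the interchange law (both composites equal $(\mu^i_R)^*\otimes\mu_{j,k}$), and finally invoke that $\mu_{j,k}$ is a \emph{left} $A$-module morphism, i.e. $\mu_{j,k}(\mu^j_L\otimes\id_k)=\mu^{jk}_L(\id_a\otimes\mu_{j,k})$, which holds since $\mu_{j,k}\in\Hom_A(W_j\boxtimes W_k,W_{jk})$ intertwines the left action $\mu^j_L\otimes\id_k$ on $W_j\boxtimes W_k$. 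Both sides then collapse to $(\id_i\otimes\mu^{jk}_L)((\mu^i_R)^*\otimes\mu_{j,k})$, establishing the identity and hence the theorem. The only genuine obstacle is bookkeeping: keeping track of where the $A$-string is inserted while moving the lower trivalent vertex through the comultiplication; in the paper's graphical calculus this is cleanest to check directly on the diagrams of \eqref{eq49}, using the already-proved two-bimodule Frobenius relation \eqref{eq44} as the local model.
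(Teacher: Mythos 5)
Your proposal is correct and follows essentially the same route as the paper's proof: reduce the second relation to the first by taking adjoints, precompose with $\mu_{i,j}\otimes\id_k$ (legitimate since $\mu_{i,j}\mu_{i,j}^*=d_A\id_{ij}$ makes it split epi), and verify the resulting identity using $\mu_{i,j}^*\mu_{i,j}=\Cij$, the associativity relation \eqref{eq48}, and the left $A$-module morphism property of $\mu_{j,k}$. Your algebraic verification of the middle identity $(\id_i\otimes\mu_{j,k})(\Cij\otimes\id_k)=\chi_{i,jk}(\id_i\otimes\mu_{j,k})$ is exactly the content of the paper's graphical chain, just written in equations rather than diagrams.
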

\begin{proof}
Let $F$ and $G$ be the first and the second item of \eqref{eq49}. One computes
\begin{align*}
&F(\mu_{i,j}\otimes\id_k)=~~\vcenter{\hbox{{\def\svgscale{0.6}
\begingroup%
  \makeatletter%
  \providecommand\color[2][]{%
    \errmessage{(Inkscape) Color is used for the text in Inkscape, but the package 'color.sty' is not loaded}%
    \renewcommand\color[2][]{}%
  }%
  \providecommand\transparent[1]{%
    \errmessage{(Inkscape) Transparency is used (non-zero) for the text in Inkscape, but the package 'transparent.sty' is not loaded}%
    \renewcommand\transparent[1]{}%
  }%
  \providecommand\rotatebox[2]{#2}%
  \newcommand*\fsize{\dimexpr\f@size pt\relax}%
  \newcommand*\lineheight[1]{\fontsize{\fsize}{#1\fsize}\selectfont}%
  \ifx\svgwidth\undefined%
    \setlength{\unitlength}{74.83884579bp}%
    \ifx\svgscale\undefined%
      \relax%
    \else%
      \setlength{\unitlength}{\unitlength * \real{\svgscale}}%
    \fi%
  \else%
    \setlength{\unitlength}{\svgwidth}%
  \fi%
  \global\let\svgwidth\undefined%
  \global\let\svgscale\undefined%
  \makeatother%
  \begin{picture}(1,1.94563152)%
    \lineheight{1}%
    \setlength\tabcolsep{0pt}%
    \put(0,0){\includegraphics[width=\unitlength,page=1]{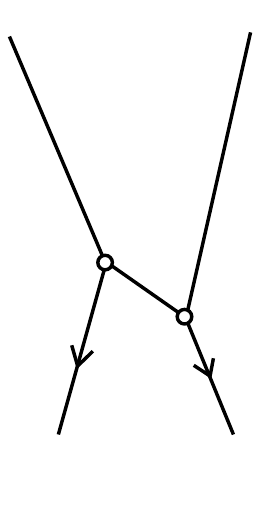}}%
    \put(0.78929754,0.02558381){\color[rgb]{0,0,0}\makebox(0,0)[lt]{\lineheight{1.25}\smash{\begin{tabular}[t]{l}$jk$\end{tabular}}}}%
    \put(0.17830443,0.01187455){\color[rgb]{0,0,0}\makebox(0,0)[lt]{\lineheight{1.25}\smash{\begin{tabular}[t]{l}$i$\end{tabular}}}}%
    \put(0.52436324,0.96523932){\color[rgb]{0,0,0}\makebox(0,0)[lt]{\lineheight{1.25}\smash{\begin{tabular}[t]{l}$j$\end{tabular}}}}%
    \put(0,0){\includegraphics[width=\unitlength,page=2]{Frobenius-proof.pdf}}%
    \put(0.03109438,1.07536911){\color[rgb]{0,0,0}\makebox(0,0)[lt]{\lineheight{1.25}\smash{\begin{tabular}[t]{l}$ij$\end{tabular}}}}%
    \put(-0.00421624,1.87456728){\color[rgb]{0,0,0}\makebox(0,0)[lt]{\lineheight{1.25}\smash{\begin{tabular}[t]{l}$i$\end{tabular}}}}%
    \put(0.42585941,1.89315346){\color[rgb]{0,0,0}\makebox(0,0)[lt]{\lineheight{1.25}\smash{\begin{tabular}[t]{l}$j$\end{tabular}}}}%
    \put(0,0){\includegraphics[width=\unitlength,page=3]{Frobenius-proof.pdf}}%
    \put(0.9040647,1.90450148){\color[rgb]{0,0,0}\makebox(0,0)[lt]{\lineheight{1.25}\smash{\begin{tabular}[t]{l}$k$\end{tabular}}}}%
  \end{picture}%
\endgroup%
}}}~~~\xlongequal{\eqref{eq44}}~~\vcenter{\hbox{{\def\svgscale{0.6}
\begingroup%
  \makeatletter%
  \providecommand\color[2][]{%
    \errmessage{(Inkscape) Color is used for the text in Inkscape, but the package 'color.sty' is not loaded}%
    \renewcommand\color[2][]{}%
  }%
  \providecommand\transparent[1]{%
    \errmessage{(Inkscape) Transparency is used (non-zero) for the text in Inkscape, but the package 'transparent.sty' is not loaded}%
    \renewcommand\transparent[1]{}%
  }%
  \providecommand\rotatebox[2]{#2}%
  \newcommand*\fsize{\dimexpr\f@size pt\relax}%
  \newcommand*\lineheight[1]{\fontsize{\fsize}{#1\fsize}\selectfont}%
  \ifx\svgwidth\undefined%
    \setlength{\unitlength}{74.83884594bp}%
    \ifx\svgscale\undefined%
      \relax%
    \else%
      \setlength{\unitlength}{\unitlength * \real{\svgscale}}%
    \fi%
  \else%
    \setlength{\unitlength}{\svgwidth}%
  \fi%
  \global\let\svgwidth\undefined%
  \global\let\svgscale\undefined%
  \makeatother%
  \begin{picture}(1,1.94563152)%
    \lineheight{1}%
    \setlength\tabcolsep{0pt}%
    \put(0,0){\includegraphics[width=\unitlength,page=1]{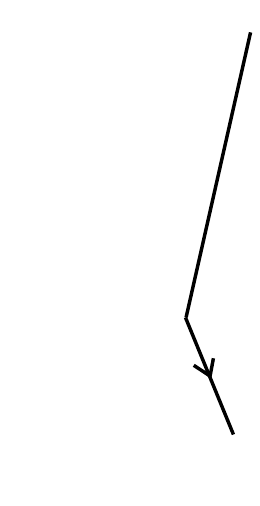}}%
    \put(0.78929754,0.02558381){\color[rgb]{0,0,0}\makebox(0,0)[lt]{\lineheight{1.25}\smash{\begin{tabular}[t]{l}$jk$\end{tabular}}}}%
    \put(0.17830443,0.01187455){\color[rgb]{0,0,0}\makebox(0,0)[lt]{\lineheight{1.25}\smash{\begin{tabular}[t]{l}$i$\end{tabular}}}}%
    \put(-0.00421624,1.87456727){\color[rgb]{0,0,0}\makebox(0,0)[lt]{\lineheight{1.25}\smash{\begin{tabular}[t]{l}$i$\end{tabular}}}}%
    \put(0.42585941,1.89315345){\color[rgb]{0,0,0}\makebox(0,0)[lt]{\lineheight{1.25}\smash{\begin{tabular}[t]{l}$j$\end{tabular}}}}%
    \put(0,0){\includegraphics[width=\unitlength,page=2]{Frobenius-proof-2.pdf}}%
    \put(0.9040647,1.90450147){\color[rgb]{0,0,0}\makebox(0,0)[lt]{\lineheight{1.25}\smash{\begin{tabular}[t]{l}$k$\end{tabular}}}}%
    \put(0,0){\includegraphics[width=\unitlength,page=3]{Frobenius-proof-2.pdf}}%
    \put(0.43177258,0.67753099){\color[rgb]{0,0,0}\makebox(0,0)[lt]{\lineheight{1.25}\smash{\begin{tabular}[t]{l}$j$\end{tabular}}}}%
    \put(0,0){\includegraphics[width=\unitlength,page=4]{Frobenius-proof-2.pdf}}%
  \end{picture}%
\endgroup%
}}}~~~=~~\vcenter{\hbox{{\def\svgscale{0.6}
\begingroup%
  \makeatletter%
  \providecommand\color[2][]{%
    \errmessage{(Inkscape) Color is used for the text in Inkscape, but the package 'color.sty' is not loaded}%
    \renewcommand\color[2][]{}%
  }%
  \providecommand\transparent[1]{%
    \errmessage{(Inkscape) Transparency is used (non-zero) for the text in Inkscape, but the package 'transparent.sty' is not loaded}%
    \renewcommand\transparent[1]{}%
  }%
  \providecommand\rotatebox[2]{#2}%
  \newcommand*\fsize{\dimexpr\f@size pt\relax}%
  \newcommand*\lineheight[1]{\fontsize{\fsize}{#1\fsize}\selectfont}%
  \ifx\svgwidth\undefined%
    \setlength{\unitlength}{74.83884594bp}%
    \ifx\svgscale\undefined%
      \relax%
    \else%
      \setlength{\unitlength}{\unitlength * \real{\svgscale}}%
    \fi%
  \else%
    \setlength{\unitlength}{\svgwidth}%
  \fi%
  \global\let\svgwidth\undefined%
  \global\let\svgscale\undefined%
  \makeatother%
  \begin{picture}(1,1.94563152)%
    \lineheight{1}%
    \setlength\tabcolsep{0pt}%
    \put(0,0){\includegraphics[width=\unitlength,page=1]{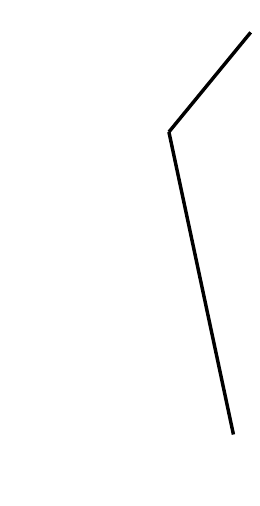}}%
    \put(0.78929754,0.02558381){\color[rgb]{0,0,0}\makebox(0,0)[lt]{\lineheight{1.25}\smash{\begin{tabular}[t]{l}$jk$\end{tabular}}}}%
    \put(0.17830443,0.01187455){\color[rgb]{0,0,0}\makebox(0,0)[lt]{\lineheight{1.25}\smash{\begin{tabular}[t]{l}$i$\end{tabular}}}}%
    \put(-0.00421624,1.87456727){\color[rgb]{0,0,0}\makebox(0,0)[lt]{\lineheight{1.25}\smash{\begin{tabular}[t]{l}$i$\end{tabular}}}}%
    \put(0.42585941,1.89315345){\color[rgb]{0,0,0}\makebox(0,0)[lt]{\lineheight{1.25}\smash{\begin{tabular}[t]{l}$j$\end{tabular}}}}%
    \put(0.9040647,1.90450147){\color[rgb]{0,0,0}\makebox(0,0)[lt]{\lineheight{1.25}\smash{\begin{tabular}[t]{l}$k$\end{tabular}}}}%
    \put(0,0){\includegraphics[width=\unitlength,page=2]{Frobenius-proof-3.pdf}}%
    \put(0.77679528,1.16460639){\color[rgb]{0,0,0}\makebox(0,0)[lt]{\lineheight{1.25}\smash{\begin{tabular}[t]{l}$jk$\end{tabular}}}}%
    \put(0,0){\includegraphics[width=\unitlength,page=3]{Frobenius-proof-3.pdf}}%
  \end{picture}%
\endgroup%
}}}\\[2.5ex]
\xlongequal{\eqref{eq44}}~~~&\vcenter{\hbox{{\def\svgscale{0.6}
\begingroup%
  \makeatletter%
  \providecommand\color[2][]{%
    \errmessage{(Inkscape) Color is used for the text in Inkscape, but the package 'color.sty' is not loaded}%
    \renewcommand\color[2][]{}%
  }%
  \providecommand\transparent[1]{%
    \errmessage{(Inkscape) Transparency is used (non-zero) for the text in Inkscape, but the package 'transparent.sty' is not loaded}%
    \renewcommand\transparent[1]{}%
  }%
  \providecommand\rotatebox[2]{#2}%
  \newcommand*\fsize{\dimexpr\f@size pt\relax}%
  \newcommand*\lineheight[1]{\fontsize{\fsize}{#1\fsize}\selectfont}%
  \ifx\svgwidth\undefined%
    \setlength{\unitlength}{78.82765693bp}%
    \ifx\svgscale\undefined%
      \relax%
    \else%
      \setlength{\unitlength}{\unitlength * \real{\svgscale}}%
    \fi%
  \else%
    \setlength{\unitlength}{\svgwidth}%
  \fi%
  \global\let\svgwidth\undefined%
  \global\let\svgscale\undefined%
  \makeatother%
  \begin{picture}(1,1.84717931)%
    \lineheight{1}%
    \setlength\tabcolsep{0pt}%
    \put(0,0){\includegraphics[width=\unitlength,page=1]{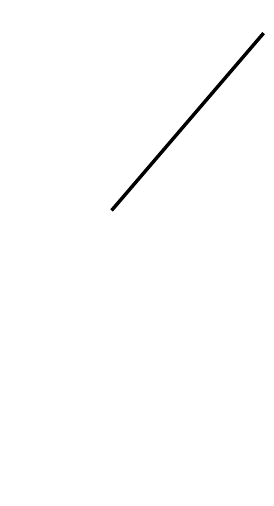}}%
    \put(0.79995943,0.02428923){\color[rgb]{0,0,0}\makebox(0,0)[lt]{\lineheight{1.25}\smash{\begin{tabular}[t]{l}$jk$\end{tabular}}}}%
    \put(0.2198836,0.01127368){\color[rgb]{0,0,0}\makebox(0,0)[lt]{\lineheight{1.25}\smash{\begin{tabular}[t]{l}$i$\end{tabular}}}}%
    \put(0.04659878,1.77971104){\color[rgb]{0,0,0}\makebox(0,0)[lt]{\lineheight{1.25}\smash{\begin{tabular}[t]{l}$i$\end{tabular}}}}%
    \put(0.45491188,1.79735673){\color[rgb]{0,0,0}\makebox(0,0)[lt]{\lineheight{1.25}\smash{\begin{tabular}[t]{l}$j$\end{tabular}}}}%
    \put(0.90891918,1.80813052){\color[rgb]{0,0,0}\makebox(0,0)[lt]{\lineheight{1.25}\smash{\begin{tabular}[t]{l}$k$\end{tabular}}}}%
    \put(0,0){\includegraphics[width=\unitlength,page=2]{Frobenius-proof-4.pdf}}%
    \put(0.60026606,1.1190914){\color[rgb]{0,0,0}\makebox(0,0)[lt]{\lineheight{1.25}\smash{\begin{tabular}[t]{l}$jk$\end{tabular}}}}%
    \put(0,0){\includegraphics[width=\unitlength,page=3]{Frobenius-proof-4.pdf}}%
    \put(-0.00516912,0.86721708){\color[rgb]{0,0,0}\makebox(0,0)[lt]{\lineheight{1.25}\smash{\begin{tabular}[t]{l}$ijk$\end{tabular}}}}%
  \end{picture}%
\endgroup%
}}}~~~\xlongequal{\eqref{eq48}}~~\vcenter{\hbox{{\def\svgscale{0.6}
\begingroup%
  \makeatletter%
  \providecommand\color[2][]{%
    \errmessage{(Inkscape) Color is used for the text in Inkscape, but the package 'color.sty' is not loaded}%
    \renewcommand\color[2][]{}%
  }%
  \providecommand\transparent[1]{%
    \errmessage{(Inkscape) Transparency is used (non-zero) for the text in Inkscape, but the package 'transparent.sty' is not loaded}%
    \renewcommand\transparent[1]{}%
  }%
  \providecommand\rotatebox[2]{#2}%
  \newcommand*\fsize{\dimexpr\f@size pt\relax}%
  \newcommand*\lineheight[1]{\fontsize{\fsize}{#1\fsize}\selectfont}%
  \ifx\svgwidth\undefined%
    \setlength{\unitlength}{77.6525504bp}%
    \ifx\svgscale\undefined%
      \relax%
    \else%
      \setlength{\unitlength}{\unitlength * \real{\svgscale}}%
    \fi%
  \else%
    \setlength{\unitlength}{\svgwidth}%
  \fi%
  \global\let\svgwidth\undefined%
  \global\let\svgscale\undefined%
  \makeatother%
  \begin{picture}(1,1.87513245)%
    \lineheight{1}%
    \setlength\tabcolsep{0pt}%
    \put(0,0){\includegraphics[width=\unitlength,page=1]{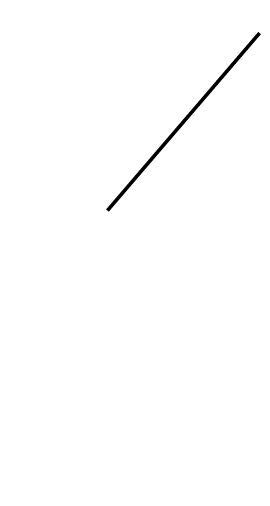}}%
    \put(0.79693224,0.0246568){\color[rgb]{0,0,0}\makebox(0,0)[lt]{\lineheight{1.25}\smash{\begin{tabular}[t]{l}$jk$\end{tabular}}}}%
    \put(0.20807819,0.01144428){\color[rgb]{0,0,0}\makebox(0,0)[lt]{\lineheight{1.25}\smash{\begin{tabular}[t]{l}$i$\end{tabular}}}}%
    \put(0.03217108,1.80664319){\color[rgb]{0,0,0}\makebox(0,0)[lt]{\lineheight{1.25}\smash{\begin{tabular}[t]{l}$i$\end{tabular}}}}%
    \put(0.44666313,1.82455591){\color[rgb]{0,0,0}\makebox(0,0)[lt]{\lineheight{1.25}\smash{\begin{tabular}[t]{l}$j$\end{tabular}}}}%
    \put(0.90754087,1.83549274){\color[rgb]{0,0,0}\makebox(0,0)[lt]{\lineheight{1.25}\smash{\begin{tabular}[t]{l}$k$\end{tabular}}}}%
    \put(0.48140731,0.9049043){\color[rgb]{0,0,0}\makebox(0,0)[lt]{\lineheight{1.25}\smash{\begin{tabular}[t]{l}$ijk$\end{tabular}}}}%
    \put(0,0){\includegraphics[width=\unitlength,page=2]{Frobenius-proof-5.pdf}}%
    \put(-0.00524734,1.18162466){\color[rgb]{0,0,0}\makebox(0,0)[lt]{\lineheight{1.25}\smash{\begin{tabular}[t]{l}$ij$\end{tabular}}}}%
    \put(0,0){\includegraphics[width=\unitlength,page=3]{Frobenius-proof-5.pdf}}%
  \end{picture}%
\endgroup%
}}}~~~=G(\mu_{i,j}\otimes\id_k).
\end{align*}
Therefore $F=d_A^{-1}F(\mu_{i,j}\otimes\id_k)(\mu_{i,j}\otimes\id_k)^*=d_A^{-1}G(\mu_{i,j}\otimes\id_k)(\mu_{i,j}\otimes\id_k)^*=G$, which proves the first equation. The second one is the adjoint of the first one.
\end{proof}
The above Frobenius relations are the decisive property that makes a tensor product theory unitary. They are indeed closely related to the locality axiom of the categorical extensions of conformal nets \cite{Gui20} where the \emph{adjoint commutativity} of left and right actions plays a central role. In subsequent works we will relate the $C^*$-tensor categories of conformal net extensions and unitary VOA extensions using Frobenius relations. \\

We close this section by showing that the $C^*$-tensor structure of $\BIMA$ is independent of the choice of unitary tensor products. Suppose that we have two systems of unitary tensor products: for any objects $W_i,W_j$ in $\BIMA$ we have unitary tensor products $(W_{i\times j},\mu_{i,j}),(W_{i\bullet j},\eta_{i,j})$ of $W_i,W_j$ over $A$, which define (strict) $C^*$-tensor categories $(\BIMA,\boxtimes_A),(\BIMA,\boxdot_A)$. Tensor products of morphisms are written as $\otimes_A,\odot_A$ respectively. By uniqueness up to unitaries, there exists a unique unitary $\Phi_{i,j}\in\Hom_A(W_{i\times j},W_{i\bullet j})$ such that
\begin{align}
\eta_{i,j}=\Phi_{i,j}\mu_{i,j}.\label{eq50}
\end{align}

\begin{pp}
$\Phi$ is functorial: for any unitary $A$-modules $W_i,W_{i'},W_j,W_{j'}$ and any $F\in\Hom_A(W_i,W_{i'}),G\in\Hom_A(W_j,W_{j'})$, 
\begin{align}
\Phi_{i',j'}(F\otimes_A G)=(F\odot_A G)\Phi_{i,j}.
\end{align}
\end{pp}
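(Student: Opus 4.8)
The plan is to invoke the defining universal property of the unitary tensor product $(W_{i\times j},\mu_{i,j})$ from Definition \ref{lb47}. A morphism out of $W_{i\times j}$ is completely determined by its precomposition with $\mu_{i,j}$: indeed $\mu_{i,j}$ is epic, since $\mu_{i,j}\mu_{i,j}^*=d_A\id_{i\times j}$ exhibits it as a nonzero scalar multiple of a coisometry. Hence it suffices to show that the two $A$-bimodule morphisms $\Phi_{i',j'}(F\otimes_A G)$ and $(F\odot_A G)\Phi_{i,j}$ in $\Hom_A(W_{i\times j},W_{i'\bullet j'})$ agree after multiplying on the right by $\mu_{i,j}$.

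First I would simplify the left-hand side. Applying \eqref{eq45} for the $\times$/$\otimes_A$ system gives $(F\otimes_A G)\mu_{i,j}=\mu_{i',j'}(F\otimes G)$, and then \eqref{eq50} gives $\Phi_{i',j'}\mu_{i',j'}=\eta_{i',j'}$. Chaining these,
\begin{align*}
\Phi_{i',j'}(F\otimes_A G)\mu_{i,j}=\Phi_{i',j'}\mu_{i',j'}(F\otimes G)=\eta_{i',j'}(F\otimes G).
\end{align*}

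Next I would simplify the right-hand side. Here \eqref{eq50} gives $\Phi_{i,j}\mu_{i,j}=\eta_{i,j}$, while \eqref{eq45} applied to the $\bullet$/$\odot_A$ system gives $(F\odot_A G)\eta_{i,j}=\eta_{i',j'}(F\otimes G)$. Chaining these,
\begin{align*}
(F\odot_A G)\Phi_{i,j}\mu_{i,j}=(F\odot_A G)\eta_{i,j}=\eta_{i',j'}(F\otimes G).
\end{align*}

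The two precompositions coincide, so the two morphisms are equal by the universal property, completing the proof. I do not expect any genuine obstacle; the only point requiring a word of care is that $\eta_{i',j'}(F\otimes G)$ is a legitimate input to the universal property of $W_{i\times j}$, i.e.\ that it is annihilated by $\Pij$. This is exactly the computation already carried out when $F\otimes_A G$ and $F\odot_A G$ were defined (one has $\mu_{i',j'}(F\otimes G)\Pij=0$, and likewise with $\eta$). Equivalently, one may phrase the argument purely as uniqueness of the morphism induced by $\eta_{i',j'}(F\otimes G)$ via the tensor product $W_{i\times j}$, which both sides satisfy.
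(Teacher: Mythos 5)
Your proposal is correct and follows essentially the same route as the paper: the paper's proof is exactly the chain $\Phi_{i',j'}(F\otimes_A G)\mu_{i,j}=\Phi_{i',j'}\mu_{i',j'}(F\otimes G)=\eta_{i',j'}(F\otimes G)=(F\odot_A G)\eta_{i,j}=(F\odot_A G)\Phi_{i,j}\mu_{i,j}$, concluded by the universal property. Your extra remark that cancellation of $\mu_{i,j}$ can also be justified directly from $\mu_{i,j}\mu_{i,j}^*=d_A\id$ is a valid, equivalent observation.
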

\begin{proof}
We compute
\begin{align*}
\Phi_{i',j'}(F\otimes_A G)\mu_{i,j}=\Phi_{i',j'}\cdot\mu_{i',j'}(F\otimes G)=\eta_{i',j'}(F\otimes G)=(F\odot_A G)\eta_{i,j}=(F\odot_A G)\Phi_{i,j}\cdot\mu_{i,j}.
\end{align*}
Thus the desired equation is proved by universal property.
\end{proof}

\begin{thm}\label{lb41}
$\Phi$ induces an equivalence of $C^*$-tensor categories $(\BIMA,\boxtimes_A)\simeq(\BIMA,\boxdot_A)$. More precisely, for any unitary $A$-bimodules $W_i,W_j,W_k$,
\begin{itemize}
\item The following diagram commutes. 
\begin{gather}
\begin{CD}
W_{i\times j\times k} @>\quad\id_i\otimes_A\Phi_{j,k}\quad>>  W_{i\times(j\bullet k)}\\
@V \Phi_{i,j}\otimes_A\id_k VV @V \Phi_{i,j\bullet k} VV\\
W_{(i\bullet j)\times k} @> ~~\quad\Phi_{i\bullet j,k} \quad~~>> W_{i\bullet j\bullet k}
\end{CD}.\label{eq51}
\end{gather}
\item The following two morphisms equal $\id_i$.
\begin{gather}
W_i=W_{a\times i}\xrightarrow{\Phi_{a,i}} W_{a\bullet i}=W_i,\\
W_i=W_{i\times a}\xrightarrow{\Phi_{i,a}} W_{i\bullet a}=W_i.
\end{gather}
\end{itemize}
\end{thm}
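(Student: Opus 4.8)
The plan is to regard the two tensor structures as living on one and the same underlying $C^*$-category $\BIMA$, so that the asserted equivalence is implemented by the identity functor equipped with the unitary natural family $\Phi=\{\Phi_{i,j}\}$ as its tensorator. The preceding proposition already establishes that $\Phi$ is natural, and each $\Phi_{i,j}$ is unitary by construction; thus $(\mathrm{Id},\Phi)$ is a candidate monoidal functor and it remains only to verify the two coherence conditions, the associativity square \eqref{eq51} and the two unit identities. In both cases the decisive tool is the universal property of unitary triple tensor products (proposition \ref{lb29} and theorem \ref{lb30}): since $\mu_{i,j,k}\mu_{i,j,k}^*=d_A^2\id_{ijk}$, the canonical map $\mu_{i,j,k}$ out of $W_{i\times j\times k}$ is a split epimorphism, so two $A$-bimodule morphisms out of $W_{i\times j\times k}$ coincide as soon as they agree after precomposition with $\mu_{i,j,k}$.

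For the square \eqref{eq51} I would precompose both composites with $\mu_{i,j,k}$, written in the two factorizations supplied by proposition \ref{lb31}. Using $\mu_{i,j,k}=\mu_{i,j\times k}(\id_i\otimes\mu_{j,k})$ together with the defining relation \eqref{eq45} of $\otimes_A$ and the defining relation \eqref{eq50} of $\Phi$, one computes
\begin{align*}
\Phi_{i,j\bullet k}(\id_i\otimes_A\Phi_{j,k})\,\mu_{i,j,k}
&=\Phi_{i,j\bullet k}\,\mu_{i,j\bullet k}(\id_i\otimes\eta_{j,k})\\
&=\eta_{i,j\bullet k}(\id_i\otimes\eta_{j,k}).
\end{align*}
Symmetrically, using the other factorization $\mu_{i,j,k}=\mu_{i\times j,k}(\mu_{i,j}\otimes\id_k)$ and the same two relations,
\begin{align*}
\Phi_{i\bullet j,k}(\Phi_{i,j}\otimes_A\id_k)\,\mu_{i,j,k}
&=\Phi_{i\bullet j,k}\,\mu_{i\bullet j,k}(\eta_{i,j}\otimes\id_k)\\
&=\eta_{i\bullet j,k}(\eta_{i,j}\otimes\id_k).
\end{align*}
The two right-hand sides are precisely the two canonical maps of proposition \ref{lb31} for the $\boxdot_A$-system, hence they agree by equation \eqref{eq46} applied to the $\bullet$-tensor products, the associativity isomorphism there being the identity after strictification. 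Thus the two composites agree after precomposition with $\mu_{i,j,k}$, and the universal property forces them to be equal, which is \eqref{eq51}.

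The unit identities are immediate in the strictified picture. Under the identifications $W_{a\times i}=W_i$ and $W_{a\bullet i}=W_i$ the structure maps $\mu_{a,i}$ and $\eta_{a,i}$ both become the left action $\mu^i_L$, by \eqref{eq52} with $\fk l_i=\id$. Hence \eqref{eq50} reads $\mu^i_L=\Phi_{a,i}\mu^i_L$; post-composing with $(\mu^i_L)^*$ and using $\mu^i_L(\mu^i_L)^*=d_A\id_i$ (theorem \ref{lb25}) gives $\Phi_{a,i}=\id_i$. The right-unit identity $\Phi_{i,a}=\id_i$ follows in the same way from $\mu_{i,a}=\eta_{i,a}=\mu^i_R$ and $\mu^i_R(\mu^i_R)^*=d_A\id_i$.

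The routine parts are the two displayed computations, each of which merely chains \eqref{eq45} with \eqref{eq50}. The main point requiring care — and the only real obstacle — is the bookkeeping across the two tensor-product systems: one must consistently track which maps $\mu$ belong to the $\times$-system and which maps $\eta$ to the $\bullet$-system, pick the correct factorization of $\mu_{i,j,k}$ for each composite, and invoke the strict associativity of the $\boxdot_A$-structure (not the $\boxtimes_A$-structure) at the comparison step. Once these identifications are pinned down, each coherence condition collapses to a single appeal to the universal property.
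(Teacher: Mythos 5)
Your proposal is correct and takes essentially the same route as the paper's proof: you verify the square by precomposing with the triple tensor product map $\mu_{i,j,k}$ in its two factorizations from proposition \ref{lb31}, chaining \eqref{eq45} with \eqref{eq50} and the strictified associativity \eqref{eq48} in both systems, and conclude by the universal property, while the unit identities follow exactly as in the paper from $\mu_{a,i}=\eta_{a,i}=\mu^i_L$ and $\mu_{i,a}=\eta_{i,a}=\mu^i_R$ via \eqref{eq52}. Your explicit split-epi cancellations (using $\mu_{i,j,k}\mu_{i,j,k}^*=d_A^2\id_{ijk}$ and $\mu^i_L(\mu^i_L)^*=d_A\id_i$) are merely a cosmetic variant of the paper's appeal to uniqueness in the universal property.
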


\begin{proof}
To prove the first condition, we calculate
\begin{align*}
&\Phi_{i,j\bullet k}(\id_i\otimes_A\Phi_{j,k})\cdot\mu_{i,j\times k}(\id_i\otimes\mu_{j,k})=\Phi_{i,j\bullet k}\cdot\mu_{i,j\bullet k}(\id_i\otimes\Phi_{j,k})(\id_i\otimes\mu_{j,k})\\
=&\Phi_{i,j\bullet k}\cdot\mu_{i,j\bullet k}(\id_i\otimes\Phi_{j,k}\cdot \mu_{j,k})\xlongequal{\eqref{eq50}} \eta_{i,j\bullet k}(\id_i\otimes\eta_{j,k})\xlongequal{\eqref{eq48}} \eta_{i\bullet j,k}(\eta_{i,j}\otimes \id_k),
\end{align*}
and also
\begin{align*}
&\Phi_{i\bullet j,k}(\Phi_{i,j}\otimes_A\id_k)\cdot\mu_{i,j\times k}(\id_i\otimes\mu_{j,k})\xlongequal{\eqref{eq48}} \Phi_{i\bullet j,k}(\Phi_{i,j}\otimes_A\id_k)\cdot\mu_{i\times j,k}(\mu_{i,j}\otimes\id_k)\\
=&\Phi_{i\bullet j,k}\cdot \mu_{i\bullet j,k}(\Phi_{i,j}\otimes \id_k)(\mu_{i,j}\otimes\id_k)=\Phi_{i\bullet j,k}\cdot \mu_{i\bullet j,k}(\Phi_{i,j}\mu_{i,j}\otimes\id_k) \xlongequal{\eqref{eq50}} \eta_{i\bullet j,k}(\eta_{i,j}\otimes \id_k).
\end{align*}
This proves \eqref{eq51} since $(W_{i\times j\times k},\mu_{i,j\times k}(\id_i\otimes\mu_{j,k}))$ is a unitary tensor product of $W_i,W_j,W_k$ over $A$ by proposition \ref{lb31}.

Let $\mu^i_L,\mu^i_R$ be the left and right actions of $W_i$. Then under the identifications $i=a\times i=i\times a=a\bullet i=i\bullet a$, we know by equations \eqref{eq52} that $\mu_{a,i},\eta_{a,i}$ both equal $\mu^i_L$, and $\mu_{i,a},\eta_{i,a}$ both equal $\mu^i_R$. Thus, by \eqref{eq50}, $\Phi_{a,i}=\id_i=\Phi_{i,a}$.
\end{proof}

\subsection{Dualizable unitary bimodules}\label{lb39}

Let $(W_i,\mu^i_L,\mu^i_R)$ be a unitary $A$-bimodule as usual. Recall that $W_i$ is called $\mc C$-dualizable if it is dualizable as an object in $\mc C$. The notion of $\BIMA$-dualizability is understood in a similar way. In \cite{NY16} section 6.2, it was shown that if $W_i$ is $\mc C$-dualizable, then it is $\BIMA$-dualizable. The converse is also true by proposition 6.13 of \cite{NY16}.\footnote{Note that although  $A$ is assumed in \cite{NY16} to be standard, the results there also apply to the non-standard case since any $C^*$-Frobenius algebra is isomorphic to a standard Q-system by \cite{NY18a} theorem 2.9.} In this section, we give a slightly different proof of this result; see theorem \ref{lb34}.

We first assume that $W_i$ is $\mc C$-dualizable. Our proof of the $\BIMA$-dualizability is motivated by \cite{KO02} lemma 1.16 and \cite{CKM17} proposition 2.77. Notice that $W_{a\boxtimes i\boxtimes a}=W_a\boxtimes W_i\boxtimes W_a$ is naturally a unitary $A$-bimodule with left action $\mu\otimes\id_i\otimes \id_a$ and right action $\id_a\otimes\id_j\otimes\mu$. Moreover, $\mu_{LR}^i:=\mu_R^i(\mu_L^i\otimes\id_a)=\mu_L^i(\id_a\otimes\mu_R^i):W_a\boxtimes W_i\boxtimes W_a\rightarrow W_i$ is an $A$-bimodule morphism, and $d_A^{-1}\mu_{LR}^i$ is a partial isometry with range $\id_i$. Therefore $W_i$ is a sub $A$-bimodule of $W_a\boxtimes W_i\boxtimes W_a$, and hence it suffices to show that $W_a\boxtimes W_i\boxtimes W_a$ is $\BIMA$-dualizable.

Let $W_{\ovl i}$ be a dual object of $W_i$, and choose $\ev_{i,\ovl i},\ev_{\ovl i,i}$ of $W_i,W_{\ovl i}$. Then a natural candidate of dual bimodule of $W_a\boxtimes W_i\boxtimes W_a$ is $W_a\boxtimes W_{\ovl i}\boxtimes W_a$. Let us first understand their unitary tensor product over $A$. For this purpose, we choose a general unitary $A$-bimodule $(W_j,\mu^j_L,\mu^j_R)$, and check easily using proposition \ref{lb27} that $(W_a\boxtimes W_i\boxtimes W_a\boxtimes W_j\boxtimes W_a,\id_a\otimes\id_i\otimes\mu\otimes\id_i\otimes\id_a)$ is a unitary tensor product of $W_a\boxtimes W_i\boxtimes W_a$ and $W_a\boxtimes W_j\boxtimes W_a$ over $A$. We thus define the unitary tensor product of $W_a\boxtimes W_i\boxtimes W_a$ and $W_a\boxtimes W_{\ovl i}\boxtimes W_a$ over $A$ in this way. Briefly, $(a\boxtimes i\boxtimes a)(a\boxtimes \ovl i\boxtimes a)=a\boxtimes i\boxtimes a\boxtimes \ovl i\boxtimes a$, and similarly, $(a\boxtimes \ovl i\boxtimes a)(a\boxtimes i\boxtimes a)=a\boxtimes \ovl i\boxtimes a\boxtimes i\boxtimes a$.

We now define $\ev^A_{a\boxtimes i\boxtimes a,a\boxtimes \ovl i\boxtimes a}\in\Hom_A(W_{a\boxtimes i\boxtimes a\boxtimes \ovl i\boxtimes a},W_a)$ and $\ev^A_{a\boxtimes \ovl i\boxtimes a,a\boxtimes i\boxtimes a}\in\Hom_A(W_{a\boxtimes \ovl i\boxtimes a\boxtimes i\boxtimes a},W_a)$ by
\begin{gather*}
\ev^A_{a\boxtimes i\boxtimes a,a\boxtimes \ovl i\boxtimes a}=\mu(\id_a\otimes(\ev_{i,\ovl i}(\id_i\otimes\iota^*\otimes\id_{\ovl i}))\otimes \id_a),\\
\ev^A_{a\boxtimes \ovl i\boxtimes a,a\boxtimes i\boxtimes a}=\mu(\id_a\otimes(\ev_{\ovl i,i}(\id_{\ovl i}\otimes\iota^*\otimes\id_i))\otimes \id_a).
\end{gather*}
Since we also have $(a\boxtimes \ovl i\boxtimes a)(a\boxtimes i\boxtimes a)(a\boxtimes \ovl i\boxtimes a)=a\boxtimes \ovl i\boxtimes a\boxtimes i\boxtimes a\boxtimes \ovl i\boxtimes a$, we check using \eqref{eq45} and the associativity of $A$ that
\begin{gather*}
\id_{a\boxtimes\ovl i\boxtimes a}\otimes_A\ev^A_{a\boxtimes i\boxtimes a,a\boxtimes \ovl i\boxtimes a}=\id_a\otimes\id_{\ovl i}\otimes\ev^A_{a\boxtimes i\boxtimes a,a\boxtimes \ovl i\boxtimes a},\\
\ev^A_{a\boxtimes i\boxtimes a,a\boxtimes \ovl i\boxtimes a}\otimes_A \id_{a\boxtimes i\boxtimes a}=\ev^A_{a\boxtimes i\boxtimes a,a\boxtimes \ovl i\boxtimes a}\otimes\id_i\otimes\id_a,
\end{gather*}
and that $\ev^A_{a\boxtimes \ovl i\boxtimes a,a\boxtimes i\boxtimes a}$ satisfies similar relations. Using these  equations it is straightforward to check that $\ev^A_{a\boxtimes i\boxtimes a,a\boxtimes \ovl i\boxtimes a}$ and $\ev^A_{a\boxtimes \ovl i\boxtimes a,a\boxtimes i\boxtimes a}$ are evaluations in $\BIMA$ of $W_a\boxtimes W_i\boxtimes W_a$ and $W_a\boxtimes W_{\ovl i}\boxtimes W_a$, which proves that $W_a\boxtimes W_i\boxtimes W_a$ and hence $W_i$ are $\BIMA$-dualizable.

To prove the inverse direction we need the following lemma. 

\begin{lm}\label{lb33}
Let  $W_{\bar i}$ be a unitary $A$-module, not yet known to be dual to $W_i$. Suppose that we have morphisms $\ev^A_{i,\bar i}\in\Hom_A(W_{i\bar i},W_a)$ \index{eviiA@$\ev^A_{i,\ovl i}$} and $\ev^A_{\bar i,i}\in\Hom_A(W_{\bar ii},W_a)$. Set
\begin{align}
\ev_{i,\bar i}=\iota^*\ev^A_{i,\bar i}\cdot\mu_{i,\bar i},\qquad  \ev_{\bar i,i}=\iota^*\ev^A_{\bar i,i}\cdot\mu_{\bar i,i},\label{eq53}
\end{align}
and also set $\coev^A_{i,\bar i}=(\ev^A_{i,\bar i})^*,\coev^A_{\bar i,i}=(\ev^A_{\bar i,i})^*,\coev_{i,\bar i}=(\ev_{i,\bar i})^*,\coev_{\bar i,i}=(\ev_{\bar i,i})^*$. \index{coevA@$\coev^A_{i,\ovl i}$} Then
\begin{gather}
(\id_{\bar i}\otimes\ev_{i,\bar i})(\coev_{\bar i,i}\otimes\id_{\bar i})=(\id_{\bar i}\otimes_A\ev^A_{i,\bar i})(\coev^A_{\bar i,i}\otimes_A\id_{\bar i}),\label{eq20}\\
(\ev_{i,\bar i}\otimes\id_i)(\id_i\otimes \coev_{\bar i,i})=(\ev^A_{i,\bar i}\otimes_A\id_i)(\id_i\otimes_A \coev^A_{\bar i,i}).\label{eq21}
\end{gather}
\end{lm}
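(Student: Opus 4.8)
The plan is to prove the two zigzag comparisons \eqref{eq20} and \eqref{eq21} by reducing each side to one and the same morphism in $\mc C$. The two identities are mirror images of one another (the second is essentially the adjoint/reflected version of the first), so I would treat \eqref{eq20} in detail and obtain \eqref{eq21} by the symmetric argument. First I would unfold the left-hand side using the definitions \eqref{eq53}: since $\ev_{i,\bar i}=\iota^*\ev^A_{i,\bar i}\mu_{i,\bar i}$ and, taking adjoints, $\coev_{\bar i,i}=\mu_{\bar i,i}^*\coev^A_{\bar i,i}\iota$, the composite $(\id_{\bar i}\otimes\ev_{i,\bar i})(\coev_{\bar i,i}\otimes\id_{\bar i})$ becomes an honest composition of $\mc C$-morphisms built from $\iota,\iota^*$, the structural maps $\mu_{i,\bar i},\mu_{\bar i,i}^*$ of the relative tensor products, and $\ev^A,\coev^A$.

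Next I would unfold the right-hand side by translating the products $\otimes_A$ into $\mc C$ via the defining relation \eqref{eq45}. Concretely, $\coev^A_{\bar i,i}\otimes_A\id_{\bar i}$ and $\id_{\bar i}\otimes_A\ev^A_{i,\bar i}$ get expressed through the structural maps $\mu_{\bar i i,\bar i},\mu_{\bar i,i\bar i}$ and their adjoints, using the identity-object identifications $\mu_{a,\bar i}=\mu^{\bar i}_L$, $\mu_{\bar i,a}=\mu^{\bar i}_R$ and the normalizations $\mu_{i,j}\mu_{i,j}^*=d_A\id_{ij}$, $\mu_{i,j}^*\mu_{i,j}=\Cij$ from proposition \ref{lb27} and \eqref{eq44}. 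The factors of $d_A^{-1}$ that appear when inverting \eqref{eq45} are absorbed by the specialness relation $\mu\mu^*=d_A\id$, so that each $\otimes_A$ ultimately contributes a relative-tensor projection rather than a genuine scalar. A convenient device that streamlines this step is to precompose both sides of \eqref{eq20} with the surjection $\mu_{a,\bar i}=\mu^{\bar i}_L\colon W_a\boxtimes W_{\bar i}\to W_{\bar i}$ and then invoke the universal property of the unitary tensor product, which strips off one layer of $\otimes_A$ before any matching is attempted.

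The two unfolded expressions are then matched in two moves. The triple relative tensor product appearing in the middle is handled by the associativity coherence \eqref{eq48}: it identifies the route through $W_{\bar i i}\boxtimes_A W_{\bar i}$ with the route through $W_{\bar i}\boxtimes_A W_{i\bar i}$, which is exactly the content of proposition \ref{lb31} (both iterated tensor products realize the triple one). The end maps are matched using that $\ev^A_{i,\bar i}$ and $\coev^A_{\bar i,i}$ are $A$-bimodule morphisms, together with the unit property $\mu^{\bar i}_R(\id_{\bar i}\otimes\iota)=\id_{\bar i}$ and, where needed, the unitarity of $W_{\bar i}$ (equation \eqref{eq40}); these allow me to trade the bare $\iota,\iota^*$ coming from the left for the module actions $\mu^{\bar i}_L,\mu^{\bar i}_R$ produced on the right.

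The main obstacle is entirely bookkeeping: correctly tracking the relative-tensor projections $\mu^*\mu=\Cij$ and the $d_A$-normalizations through the triple tensor product, and verifying that the raw $\mc C$-expression $(\id_{\bar i}\otimes\mu_{i,\bar i})(\mu_{\bar i,i}^*\otimes\id_{\bar i})$ coming from the left agrees, once sandwiched between the end maps, with the relative-tensor expression coming from the right. This is cleanest to carry out graphically in the strictified picture of section \ref{lb52} (suppressing the label $a$), where the coherence \eqref{eq48} and the Frobenius relations \eqref{eq44} make the cancellations transparent; I do not expect any conceptual difficulty beyond this, and \eqref{eq21} then follows by the mirror computation.
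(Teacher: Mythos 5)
Your proposal is correct and follows the same basic strategy as the paper's proof---unfold both sides via \eqref{eq53} and match them through the triple tensor product $W_{\bar i i\bar i}$---but the execution differs in two ways worth noting. The paper's argument is a three-step graphical identity in which no scalars ever appear: it applies the Frobenius relation \eqref{eq49} of theorem \ref{lb32} exactly once, to rewrite the star--no-star crossing $(\id_{\bar i}\otimes\mu_{i,\bar i})(\mu_{\bar i,i}^*\otimes\id_{\bar i})$ as $\mu_{\bar i,i\bar i}^*\,\mu_{\bar ii,\bar i}$, and then uses \eqref{eq45} in its naturality form, $\mu_{\bar ii,\bar i}(\coev^A_{\bar i,i}\otimes\id_{\bar i})=(\coev^A_{\bar i,i}\otimes_A\id_{\bar i})\,\mu^{\bar i}_L$ together with its adjoint, so that the unit property $\mu^{\bar i}_L(\iota\otimes\id_{\bar i})=\id_{\bar i}$ absorbs the $\iota,\iota^*$ with no $d_A$-bookkeeping at all. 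You instead invert \eqref{eq45} (introducing $d_A^{-1}$ factors cancelled against $\mu_{i,j}\mu_{i,j}^*=d_A\id_{ij}$) and propose to handle the middle with \eqref{eq48}/proposition \ref{lb31} plus \eqref{eq44} and the stripping trick; this does close, but be aware that associativity \eqref{eq48} alone cannot reorganize a crossing that mixes $\mu^*$ with $\mu$---the decisive input is the adjoint commutativity of theorem \ref{lb32}, and your combination of \eqref{eq44}, \eqref{eq48} and the $d_A$-normalization would in effect re-derive it inline (this is precisely how the paper proves theorem \ref{lb32} itself), so citing \eqref{eq49} directly saves that work and eliminates all scalar factors. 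One small correction: \eqref{eq21} is the left--right mirror of \eqref{eq20}, not its adjoint---the adjoint of \eqref{eq20} is the other zigzag on $W_{\bar i}$, an endomorphism of $W_{\bar i}$ rather than of $W_i$---so your fallback phrase ``symmetric argument'' is the accurate one, and is exactly what the paper means by ``proved in a similar way.''
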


We say that $\ev_{i,\ovl i}$ and $\ev^A_{i,\ovl i}$, $\ev_{\ovl i,i}$ and $\ev^A_{\ovl i,i}$ are \textbf{correlated} if they satisfy \eqref{eq53}.
\begin{proof}
The two equations can be proved in a similar way, so we only prove the first one. Let $L,R$ be respectively the left  and right hand sides of \eqref{eq20}. Then
\begin{align*}
L=~\vcenter{\hbox{{\def\svgscale{1.2}
\begingroup%
  \makeatletter%
  \providecommand\color[2][]{%
    \errmessage{(Inkscape) Color is used for the text in Inkscape, but the package 'color.sty' is not loaded}%
    \renewcommand\color[2][]{}%
  }%
  \providecommand\transparent[1]{%
    \errmessage{(Inkscape) Transparency is used (non-zero) for the text in Inkscape, but the package 'transparent.sty' is not loaded}%
    \renewcommand\transparent[1]{}%
  }%
  \providecommand\rotatebox[2]{#2}%
  \newcommand*\fsize{\dimexpr\f@size pt\relax}%
  \newcommand*\lineheight[1]{\fontsize{\fsize}{#1\fsize}\selectfont}%
  \ifx\svgwidth\undefined%
    \setlength{\unitlength}{71.60731237bp}%
    \ifx\svgscale\undefined%
      \relax%
    \else%
      \setlength{\unitlength}{\unitlength * \real{\svgscale}}%
    \fi%
  \else%
    \setlength{\unitlength}{\svgwidth}%
  \fi%
  \global\let\svgwidth\undefined%
  \global\let\svgscale\undefined%
  \makeatother%
  \begin{picture}(1,1.52690089)%
    \lineheight{1}%
    \setlength\tabcolsep{0pt}%
    \put(0,0){\includegraphics[width=\unitlength,page=1]{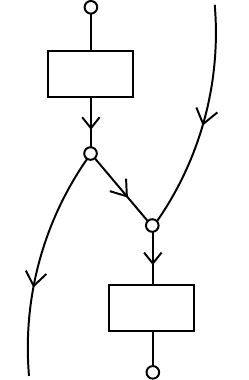}}%
    \put(0.91262333,1.22347217){\color[rgb]{0,0,0}\makebox(0,0)[lt]{\lineheight{1.25}\smash{\begin{tabular}[t]{l}$\bar i$\end{tabular}}}}%
    \put(0.41029936,0.62905134){\color[rgb]{0,0,0}\makebox(0,0)[lt]{\lineheight{1.25}\smash{\begin{tabular}[t]{l}$i$\end{tabular}}}}%
    \put(0.42372613,0.97218078){\color[rgb]{0,0,0}\makebox(0,0)[lt]{\lineheight{1.25}\smash{\begin{tabular}[t]{l}$\bar ii$\end{tabular}}}}%
    \put(0.67222616,0.47837266){\color[rgb]{0,0,0}\makebox(0,0)[lt]{\lineheight{1.25}\smash{\begin{tabular}[t]{l}$i\bar i$\end{tabular}}}}%
    \put(-0.00185257,0.21878725){\color[rgb]{0,0,0}\makebox(0,0)[lt]{\lineheight{1.25}\smash{\begin{tabular}[t]{l}$\bar i$\end{tabular}}}}%
    \put(0,0){\includegraphics[width=\unitlength,page=2]{ev-proof.pdf}}%
  \end{picture}%
\endgroup%
}}}\xlongequal{\eqref{eq49}}~\vcenter{\hbox{{\def\svgscale{1.2}
\begingroup%
  \makeatletter%
  \providecommand\color[2][]{%
    \errmessage{(Inkscape) Color is used for the text in Inkscape, but the package 'color.sty' is not loaded}%
    \renewcommand\color[2][]{}%
  }%
  \providecommand\transparent[1]{%
    \errmessage{(Inkscape) Transparency is used (non-zero) for the text in Inkscape, but the package 'transparent.sty' is not loaded}%
    \renewcommand\transparent[1]{}%
  }%
  \providecommand\rotatebox[2]{#2}%
  \newcommand*\fsize{\dimexpr\f@size pt\relax}%
  \newcommand*\lineheight[1]{\fontsize{\fsize}{#1\fsize}\selectfont}%
  \ifx\svgwidth\undefined%
    \setlength{\unitlength}{52.43192508bp}%
    \ifx\svgscale\undefined%
      \relax%
    \else%
      \setlength{\unitlength}{\unitlength * \real{\svgscale}}%
    \fi%
  \else%
    \setlength{\unitlength}{\svgwidth}%
  \fi%
  \global\let\svgwidth\undefined%
  \global\let\svgscale\undefined%
  \makeatother%
  \begin{picture}(1,2.08531861)%
    \lineheight{1}%
    \setlength\tabcolsep{0pt}%
    \put(0,0){\includegraphics[width=\unitlength,page=1]{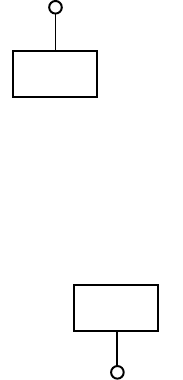}}%
    \put(0.88066796,1.73815628){\color[rgb]{0,0,0}\makebox(0,0)[lt]{\lineheight{1.25}\smash{\begin{tabular}[t]{l}$\bar i$\end{tabular}}}}%
    \put(0.17543894,1.31957553){\color[rgb]{0,0,0}\makebox(0,0)[lt]{\lineheight{1.25}\smash{\begin{tabular}[t]{l}$\bar ii$\end{tabular}}}}%
    \put(0.6153012,0.62683658){\color[rgb]{0,0,0}\makebox(0,0)[lt]{\lineheight{1.25}\smash{\begin{tabular}[t]{l}$i\bar i$\end{tabular}}}}%
    \put(0,0){\includegraphics[width=\unitlength,page=2]{ev-proof-2.pdf}}%
    \put(0.54036685,0.97116907){\color[rgb]{0,0,0}\makebox(0,0)[lt]{\lineheight{1.25}\smash{\begin{tabular}[t]{l}$\bar ii\bar i$\end{tabular}}}}%
    \put(-0.00176483,0.27401466){\color[rgb]{0,0,0}\makebox(0,0)[lt]{\lineheight{1.25}\smash{\begin{tabular}[t]{l}$\bar i$\end{tabular}}}}%
    \put(0,0){\includegraphics[width=\unitlength,page=3]{ev-proof-2.pdf}}%
  \end{picture}%
\endgroup%
}}}~\xlongequal{\eqref{eq45}}~\vcenter{\hbox{{\def\svgscale{1.2}
\begingroup%
  \makeatletter%
  \providecommand\color[2][]{%
    \errmessage{(Inkscape) Color is used for the text in Inkscape, but the package 'color.sty' is not loaded}%
    \renewcommand\color[2][]{}%
  }%
  \providecommand\transparent[1]{%
    \errmessage{(Inkscape) Transparency is used (non-zero) for the text in Inkscape, but the package 'transparent.sty' is not loaded}%
    \renewcommand\transparent[1]{}%
  }%
  \providecommand\rotatebox[2]{#2}%
  \newcommand*\fsize{\dimexpr\f@size pt\relax}%
  \newcommand*\lineheight[1]{\fontsize{\fsize}{#1\fsize}\selectfont}%
  \ifx\svgwidth\undefined%
    \setlength{\unitlength}{53.17973255bp}%
    \ifx\svgscale\undefined%
      \relax%
    \else%
      \setlength{\unitlength}{\unitlength * \real{\svgscale}}%
    \fi%
  \else%
    \setlength{\unitlength}{\svgwidth}%
  \fi%
  \global\let\svgwidth\undefined%
  \global\let\svgscale\undefined%
  \makeatother%
  \begin{picture}(1,2.0556421)%
    \lineheight{1}%
    \setlength\tabcolsep{0pt}%
    \put(0.882346,1.72174952){\color[rgb]{0,0,0}\makebox(0,0)[lt]{\lineheight{1.25}\smash{\begin{tabular}[t]{l}$\bar i$\end{tabular}}}}%
    \put(0,0){\includegraphics[width=\unitlength,page=1]{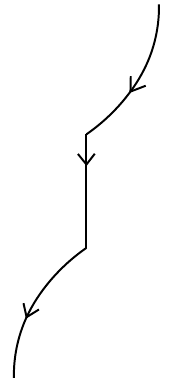}}%
    \put(-0.00174002,0.2781965){\color[rgb]{0,0,0}\makebox(0,0)[lt]{\lineheight{1.25}\smash{\begin{tabular}[t]{l}$\bar i$\end{tabular}}}}%
    \put(0,0){\includegraphics[width=\unitlength,page=2]{ev-proof-3.pdf}}%
    \put(0.52975548,1.13828168){\color[rgb]{0,0,0}\makebox(0,0)[lt]{\lineheight{1.25}\smash{\begin{tabular}[t]{l}$\bar i$\end{tabular}}}}%
    \put(0.5325965,0.71091078){\color[rgb]{0,0,0}\makebox(0,0)[lt]{\lineheight{1.25}\smash{\begin{tabular}[t]{l}$\bar i$\end{tabular}}}}%
  \end{picture}%
\endgroup%
}}}=R.
\end{align*}
\end{proof}

Now if $W_i$ is $\BIMA$-dualizable, then we can find a unitary $A$-bimodule $W_{\ovl i}$ dual to $W_i$, and evaluations $\ev^A_{i,\ovl i},\ev^A_{\ovl i,i}$ of $W_i,W_{\ovl i}$ in $\BIMA$. Define $\ev_{i,\ovl i},\ev_{\ovl i,i}$ by equations \eqref{eq53}. Then equations \eqref{eq20} and \eqref{eq21} imply that $\ev_{i,\ovl i},\ev_{\ovl i,i}$ are evaluations of $W_i,W_{\ovl i}$ in $\mc C$. Thus $W_i$ is $\mc C$-dualizable. This finishes the proof of the following theorem.

\begin{thm}\label{lb34}
If $W_i$ is a unitary $A$-bimodule, then $W_i$ is $\mc C$-dualizable if and only if $W_i$ is $\BIMA$-dualizable. Moreover, one can choose correlated evaluations $\ev,\ev^A$ in $\mc C$ and $\BIMA$.
\end{thm}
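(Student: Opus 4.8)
The plan is to prove the two implications separately while keeping track of evaluations explicitly, so that the ``moreover'' about correlated $\ev,\ev^A$ drops out for free. The governing idea is that an $A$-bimodule dual cannot be produced by putting a bimodule structure on a bare $\mc C$-dual $W_{\ovl i}$; instead one passes to the two-sided induced bimodule $W_a\boxtimes W_i\boxtimes W_a$, which is manifestly an $A$-bimodule and of which $W_i$ is an orthogonal subobject. Dualizability is then transported between $W_i$ and this sandwich bimodule, and between the two categories, by the correlation formula \eqref{eq53}.

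\textbf{Forward direction} ($\mc C$-dualizable $\Rightarrow$ $\BIMA$-dualizable). First I would note that $d_A^{-1}\mu_{LR}^i$, with $\mu_{LR}^i=\mu_R^i(\mu_L^i\otimes\id_a)=\mu_L^i(\id_a\otimes\mu_R^i)$, is a partial isometry in $\Hom_A$ with range $\id_i$, so that $W_i$ is an orthogonal subobject of $W_a\boxtimes W_i\boxtimes W_a$ in $\BIMA$. Since $\BIMA$ is a $C^*$-tensor category closed under orthogonal subobjects (Proposition \ref{lb24}), and dualizability descends to subobjects of a dualizable object, it suffices to dualize $W_a\boxtimes W_i\boxtimes W_a$. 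Taking $W_a\boxtimes W_{\ovl i}\boxtimes W_a$ as candidate dual, and starting from a $\mc C$-dual $W_{\ovl i}$ with $\mc C$-evaluations $\ev_{i,\ovl i},\ev_{\ovl i,i}$, I would write the explicit $\BIMA$-evaluations $\ev^A_{a\boxtimes i\boxtimes a,a\boxtimes\ovl i\boxtimes a}$ and $\ev^A_{a\boxtimes\ovl i\boxtimes a,a\boxtimes i\boxtimes a}$ by sandwiching $\ev_{i,\ovl i}$ (resp.\ $\ev_{\ovl i,i}$) between $\iota^*$ and an outer $\mu$. The remaining task is the two zig-zag identities in $\BIMA$; using the concrete models of the relevant unitary tensor products (of the form $a\boxtimes i\boxtimes a\boxtimes\ovl i\boxtimes a$, etc.), formula \eqref{eq45}, and associativity of $A$, these reduce to the $\mc C$-level snake identities for $\ev_{i,\ovl i},\ev_{\ovl i,i}$.

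\textbf{Reverse direction} ($\BIMA$-dualizable $\Rightarrow$ $\mc C$-dualizable). Here I would run the induced-evaluation map backwards. Given a $\BIMA$-dual $W_{\ovl i}$ with $\BIMA$-evaluations $\ev^A_{i,\ovl i},\ev^A_{\ovl i,i}$, define candidate $\mc C$-evaluations by \eqref{eq53}, that is $\ev_{i,\ovl i}=\iota^*\,\ev^A_{i,\ovl i}\,\mu_{i,\ovl i}$ and $\ev_{\ovl i,i}=\iota^*\,\ev^A_{\ovl i,i}\,\mu_{\ovl i,i}$: precompose the $\BIMA$-evaluation with the canonical quotient $\mu_{i,\ovl i}$ onto the tensor-over-$A$ product, then project $W_a$ to $W_0$ via $\iota^*$. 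The decisive input is Lemma \ref{lb33}: its identities \eqref{eq20} and \eqref{eq21} express the $\mc C$-snake compositions of $\ev_{i,\ovl i},\ev_{\ovl i,i}$ as the $\BIMA$-snake compositions of $\ev^A_{i,\ovl i},\ev^A_{\ovl i,i}$, which equal $\id_{\ovl i}$ and $\id_i$ by the assumed $\BIMA$-duality. Hence $\ev_{i,\ovl i},\ev_{\ovl i,i}$ are genuine $\mc C$-evaluations, so $W_i$ is $\mc C$-dualizable, and by construction the $\ev$ and $\ev^A$ are correlated, which settles the final clause.

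\textbf{Main obstacle.} I expect the real work to be the graphical verifications rather than the logical scaffolding: proving Lemma \ref{lb33}'s equations \eqref{eq20}--\eqref{eq21} and checking the zig-zags for the sandwich evaluations. The difficulty is that these mix the two monoidal structures, $\boxtimes$ in $\mc C$ and $\boxtimes_A$ in $\BIMA$, so one must translate between them via \eqref{eq45} and the partial-isometry description of $\mu_{i,j}$ (with $\mu_{i,j}^*\mu_{i,j}=\Cij$ and $\mu_{i,j}\mu_{i,j}^*=d_A\id_{ij}$). The tool that lets the left and right $A$-actions slide adjointly past each other is the Frobenius relation of Theorem \ref{lb32} (and its special case \eqref{eq44}); the bookkeeping of the $d_A$ normalizations and of which strands carry the suppressed label $a$ is where errors are easiest to make, but no essentially new idea beyond these should be needed.
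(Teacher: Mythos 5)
Your proposal is correct and follows essentially the same route as the paper: the forward direction realizes $W_i$ as an orthogonal sub-bimodule of $W_a\boxtimes W_i\boxtimes W_a$ via the partial isometry $d_A^{-1}\mu^i_{LR}$ and dualizes this sandwich against $W_a\boxtimes W_{\ovl i}\boxtimes W_a$ with the evaluations $\mu(\id_a\otimes\ev_{i,\ovl i}(\id_i\otimes\iota^*\otimes\id_{\ovl i})\otimes\id_a)$ (verified through \eqref{eq45} and the associativity of $A$), while the reverse direction is exactly lemma \ref{lb33} applied to evaluations defined by \eqref{eq53}. The ``moreover'' clause about correlated $\ev,\ev^A$ is likewise settled by construction in the reverse direction, just as in the paper.
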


By the above theorem, we will no longer distinguish between $\mc C$- and $\BIMA$-dualizability.  Using the same argument as lemma \ref{lb33} one also proves that under correlated evaluations, the $\mc C$-transposes and $\BIMA$-transposes of a unitary $A$-bimodule morphism $F$ are equal. Therefore the symbols $^\vee F$ and $F^\vee$ are defined unambiguously. Compare \cite{NY16} lemma 6.10.
\begin{pp}
Let $W_i,W_j$ be dualizable unitary $A$-bimodules. Choose dual objects $W_{\ovl i},W_{\ovl j}$, and evaluations $\ev,\ev^A$ (with suitable subscripts) in $\mc C$ and $\BIMA$ respectively. Assume that $\ev,\ev^A$ are correlated. Then for any $F\in\Hom_A(W_i,W_j)$, its transposes in $\mc C$ are the same  as those in $\BIMA$. More precisely, we have
\begin{align}
&(\id_{\bar i}\otimes\ev_{j,\bar j})(\id_{\ovl i}\otimes F\otimes\id_{\ovl j})(\coev_{\bar i,i}\otimes\id_{\bar j})\nonumber\\
=&(\id_{\bar i}\otimes_A\ev^A_{j,\bar j})(\id_{\ovl i}\otimes_A F\otimes_A\id_{\ovl j})(\coev^A_{\bar i,i}\otimes_A\id_{\bar j}),\\[1ex]
&(\ev_{\ovl j,j}\otimes\id_{\ovl i})(\id_{\ovl j}\otimes F\otimes \id_{\ovl i})(\id_{\ovl j}\otimes\coev_{i,\ovl i})\nonumber\\
=&(\ev^A_{\ovl j,j}\otimes_A\id_{\ovl i})(\id_{\ovl j}\otimes_A F\otimes_A \id_{\ovl i})(\id_{\ovl j}\otimes_A\coev^A_{i,\ovl i}).
\end{align}
\end{pp}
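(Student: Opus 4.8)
The plan is to recognize that these two identities are, apart from the single bimodule morphism $F$ inserted on the middle strand, precisely the snake identities \eqref{eq20} and \eqref{eq21} established in Lemma \ref{lb33}; consequently the same graphical argument transcribes almost verbatim. Since the two equations are mirror images of one another, I would prove only the first (which computes the left transpose ${}^\vee F$, in the sense of \eqref{eq25}) and leave the second to the reader.

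First I would start from the left-hand side, i.e.\ the transpose formed in $\mc C$, namely $(\id_{\ovl i}\otimes\ev_{j,\ovl j})(\id_{\ovl i}\otimes F\otimes\id_{\ovl j})(\coev_{\ovl i,i}\otimes\id_{\ovl j})$, and substitute the correlation relations \eqref{eq53}, writing $\ev_{j,\ovl j}=\iota^*\ev^A_{j,\ovl j}\mu_{j,\ovl j}$ and dually $\coev_{\ovl i,i}=\mu_{\ovl i,i}^*(\coev^A_{\ovl i,i})\iota$ (the adjoint statement). This replaces the bare evaluations by the $\BIMA$-level evaluations $\ev^A,\coev^A$ dressed with the tensor-product projections $\mu_{-,-},\mu_{-,-}^*$ and unit vertices $\iota,\iota^*$ along the $A$-strands. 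Working under the strictification identifications $a\boxtimes i=i=i\boxtimes a$ made at the end of section \ref{lb52}, these are exactly the ingredients appearing in the computation of Lemma \ref{lb33}.

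Next I would apply the Frobenius relations of Theorem \ref{lb32} (equation \eqref{eq49}) to slide the multiplication vertices past the caps and cups, and then the defining relation \eqref{eq45} of $\otimes_A$ to reassemble the dressed morphisms into genuine $\boxtimes_A$-tensored ones, producing the right-hand side $(\id_{\ovl i}\otimes_A\ev^A_{j,\ovl j})(\id_{\ovl i}\otimes_A F\otimes_A\id_{\ovl j})(\coev^A_{\ovl i,i}\otimes_A\id_{\ovl j})$. The sole new feature relative to Lemma \ref{lb33} is that $F$ now occupies the central strand; but because $F\in\Hom_A(W_i,W_j)$ is a bona fide $A$-bimodule morphism, the intertwining relations \eqref{eq41} let it commute with both the left and right $A$-actions, so it passes freely through every rearrangement and simply rides along.

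The main obstacle is not conceptual but combinatorial bookkeeping: one must track carefully which strand carries $W_a$ versus $W_i,W_j,W_{\ovl i},W_{\ovl j}$ after each resolution of $\otimes_A$ into $\boxtimes$, and confirm that the multiplication vertices so produced are exactly the ones that \eqref{eq49} absorbs, so that no spurious factor of $d_A$ survives. Since this is a faithful transcription of the Lemma \ref{lb33} calculation with the harmless extra factor $F$, I expect no genuinely new difficulty, and the equality of $\mc C$- and $\BIMA$-transposes follows at once.
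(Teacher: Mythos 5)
Your proposal is correct and matches the paper exactly: the paper proves this proposition with a one-line remark that "the same argument as lemma \ref{lb33}" applies, which is precisely your plan of rerunning the correlation substitutions \eqref{eq53}, the Frobenius relations \eqref{eq49}, and the defining relation \eqref{eq45} of $\otimes_A$, with the bimodule morphism $F$ riding along on the middle strand. Your observations that $F$ commutes with the $A$-actions via \eqref{eq41} and that the two equations are mirror images are exactly the (implicit) details the paper leaves to the reader.
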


Recall that for any $F\in\End(W_i)$, one can define scalars $\Tr_L(F),\Tr_R(F)$ such that $\ev_{i,\ovl i}(F\otimes\id_{\ovl i})\coev_{i,\ovl i}=\Tr_L(F)\id_0$ and $\ev_{\ovl i,i}(\id_{\ovl i}\otimes F)\coev_{\ovl i,i}=\Tr_R(F)\id_0$. If $A$ is \textbf{simple} in the sense that $\End_A(W_a)=\mathbb C\id_a$, and $F\in\End_A(W_i)$, one can similarly define scalars $\Tr^A_L(F),\Tr^A_R(F)$\index{TrA@$\Tr^A_L,\Tr^A_R$} such that $\ev^A_{i,\ovl i}(F\otimes_A\id_{\ovl i})\coev^A_{i,\ovl i}=\Tr^A_L(F)\id_a$ and $\ev^A_{\ovl i,i}(\id_{\ovl i}\otimes_A F)\coev^A_{\ovl i,i}=\Tr_R(F)\id_a$. In the case that $\ev^A$ and $\ev$ are correlated, these two traces satisfy very simple relations:

\begin{pp}\label{lb37}
If $A$ is a simple $Q$-system,  $W_i,W_{\ovl i}$ are mutually dual unitary $A$-bimodules, and the $\ev$ and $\ev^A$ for $W_i$ are correlated, then for any $F\in\End_A(W_i)$, we have
\begin{align}
\Tr_L(F)=D_A\Tr^A_L(F),\qquad\Tr_R(F)=D_A\Tr^A_R(F).\label{eq56}
\end{align}
As a consequence, $\ev^A$ are standard if the correlated  $\ev$ are so.
\end{pp}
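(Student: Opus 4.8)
The plan is to compute the $\mc C$-traces $\Tr_L(F),\Tr_R(F)$ directly in terms of the $\BIMA$-traces by substituting the correlation formulas \eqref{eq53} and exploiting the interplay between $\otimes$ and $\otimes_A$. I only treat $\Tr_L$; the computation for $\Tr_R$ is identical after reflecting all diagrams left-to-right.

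First I would record the adjoint form of the correlation relation: taking the adjoint of $\ev_{i,\ovl i}=\iota^*\ev^A_{i,\ovl i}\mu_{i,\ovl i}$ gives $\coev_{i,\ovl i}=\mu_{i,\ovl i}^*\coev^A_{i,\ovl i}\iota$. Next, since $F\in\End_A(W_i)$, the morphism $F\otimes\id_{\ovl i}$ commutes with both the left and the right $A$-actions on $W_i\boxtimes W_{\ovl i}$, hence lies in $\Hom_A(W_i\boxtimes W_{\ovl i},W_i\boxtimes W_{\ovl i})$; the defining relation \eqref{eq45} of $\otimes_A$ then yields $\mu_{i,\ovl i}(F\otimes\id_{\ovl i})=(F\otimes_A\id_{\ovl i})\mu_{i,\ovl i}$. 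Combining this with $\mu_{i,\ovl i}\mu_{i,\ovl i}^*=d_A\id_{i\ovl i}$ (proposition \ref{lb27}), I would compute
\begin{align*}
\Tr_L(F)\id_0 &= \ev_{i,\ovl i}(F\otimes\id_{\ovl i})\coev_{i,\ovl i}
= \iota^*\ev^A_{i,\ovl i}\,\mu_{i,\ovl i}(F\otimes\id_{\ovl i})\mu_{i,\ovl i}^*\,\coev^A_{i,\ovl i}\iota\\
&= \iota^*\ev^A_{i,\ovl i}(F\otimes_A\id_{\ovl i})\mu_{i,\ovl i}\mu_{i,\ovl i}^*\,\coev^A_{i,\ovl i}\iota
= d_A\,\iota^*\ev^A_{i,\ovl i}(F\otimes_A\id_{\ovl i})\coev^A_{i,\ovl i}\iota.
\end{align*}
The definition of $\Tr^A_L$ — available precisely because $A$ is simple, so that $\ev^A_{i,\ovl i}(F\otimes_A\id_{\ovl i})\coev^A_{i,\ovl i}$ is a scalar multiple of $\id_a$ — turns the right-hand side into $d_A\,\Tr^A_L(F)\,\iota^*\iota$.

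Finally I would convert the factor $d_A\iota^*\iota$ into $D_A\id_0$: since $A$ is special, $\mu\mu^*=d_A\id_a$, whence $D_A\id_0=\iota^*\mu\mu^*\iota=d_A\iota^*\iota$. This gives $\Tr_L(F)=D_A\Tr^A_L(F)$, and likewise $\Tr_R(F)=D_A\Tr^A_R(F)$. The standardness consequence is then immediate: standardness of $\ev$ means $\Tr_L=\Tr_R$ on all of $\End(W_i)$, so in particular on the subalgebra $\End_A(W_i)$; since $D_A>0$, the relations \eqref{eq56} let me cancel $D_A$ to obtain $\Tr^A_L=\Tr^A_R$ on $\End_A(W_i)$, i.e.\ $\ev^A$ is standard.

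The computation is essentially routine once the two bookkeeping moves — passing $F$ across $\mu_{i,\ovl i}$ via \eqref{eq45} and collapsing $\mu_{i,\ovl i}\mu_{i,\ovl i}^*$ — are in place. The only point demanding care is the final identification $d_A\iota^*\iota=D_A\id_0$, which is exactly where the two hypotheses on the Q-system enter: simplicity is needed to even define $\Tr^A$, and specialness is needed to reconcile the two normalization-dependent constants $d_A$ and $D_A$. This is the step I would double-check most carefully, since it is where the scalars coming from the two different trace normalizations must be matched.
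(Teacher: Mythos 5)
Your proof is correct and follows essentially the same computation as the paper's: substitute the correlation relations \eqref{eq53}, pass $F\otimes\id_{\ovl i}$ across $\mu_{i,\ovl i}$ via \eqref{eq45}, collapse $\mu_{i,\ovl i}\mu_{i,\ovl i}^*=d_A\id$, and identify $d_A\iota^*\iota=D_A\id_0$ using specialness. The paper leaves that final scalar identification and the standardness deduction implicit, but your spelled-out versions coincide exactly with its argument.
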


\begin{proof}
We only prove the relation for left traces.
\begin{align*}
&\Tr_L(F)\id_0=\ev_{i,\ovl i}(F\otimes\id_{\ovl i})\coev_{i,\ovl i}\xlongequal{\eqref{eq53}} \iota^*\ev^A_{i,\ovl i}\cdot\mu_{i,\ovl i}(F\otimes\id_{\ovl i})(\mu_{i,\ovl i})^*\coev^A_{i,\ovl i}\iota\\
=&\iota^*\ev^A_{i,\ovl i}(F\otimes_A\id_{\ovl i})\cdot\mu_{i,\ovl i}(\mu_{i,\ovl i})^*\coev^A_{i,\ovl i}\iota=d_A\iota^*\ev^A_{i,\ovl i}(F\otimes_A\id_{\ovl i})\cdot\coev^A_{i,\ovl i}\iota\\
=&d_A\Tr^A_L(F)\iota^*\iota=D_A\Tr^A_L(F)\id_0.
\end{align*}
\end{proof}

Note that a simple $C^*$-Frobenius algebra is always a simple Q-system, since $\mu\mu^*$ is in $\End_A(W_a)$, which must be a scalar and hence proves the specialness. Examples of simple Q-systems include haploid $C^*$-Frobenius algebras, since in general we have $\dim\Hom(W_0,W_a)=\dim\End_{A,-}(W_a)\geq\dim\End_A(W_a)$ (cf. \cite{NY18a} remark 2.7-(1)).  Recall  that haploid $C^*$-Frobenius algebras are also standard ($D_A=d_a$) by proposition \ref{lb36}. As a consequence, the $\mc C$-algebra $A_U$ associated to a unitary VOA extension $U$ is haploid and hence a simple standard Q-system.

\subsubsection*{Construction of dual bimodules and correlated $\ev^A$}

In the remaining part of this section we assume that $A$ is standard. Then for  a dualizable unitary $A$-bimodule $(W_i,\mu^i_L,\mu^i_R)$ one can explicitly construct the dual bimodule  and $\ev^A$ following \cite{KO02} figures 9-11 or \cite{NY18b} section 4.1. This construction will be used in the next section to understand the ribbon structure of the unitary representation category of $A$.

Since $A$ is now standard, $\ev_{a,a}:=\iota^*\mu$ is a standard evaluation of $W_a$. Choose an object $W_{\ovl i}$ in $\mc C$ dual to $W_i$, and choose standard $\ev$ for $W_i,W_{\ovl i}$. Recall convention \ref{lb35}.  Motivated by corollary \ref{lb13}, we define
\begin{gather}
\mu^{\ovl i}_L=((\mu^i_R)^*)^\vee,\qquad \mu^{\ovl i}_R=((\mu^i_L)^*)^\vee.\label{eq54}
\end{gather}
Then using graphical calculus it is not hard to verify that $(W_{\ovl i},\mu^{\ovl i}_L,\mu^{\ovl i}_R)$ is a unitary $A$-bimodule. (Note that the standardness is used to verify the unitarity.) Moreover, using the above definition, and noting that $(\cdot)^\vee={^\vee(\cdot)}$, one checks that
\begin{gather}
\fk e_{i,\ovl i}:=~\vcenter{\hbox{{\def\svgscale{0.6}}
\begingroup%
  \makeatletter%
  \providecommand\color[2][]{%
    \errmessage{(Inkscape) Color is used for the text in Inkscape, but the package 'color.sty' is not loaded}%
    \renewcommand\color[2][]{}%
  }%
  \providecommand\transparent[1]{%
    \errmessage{(Inkscape) Transparency is used (non-zero) for the text in Inkscape, but the package 'transparent.sty' is not loaded}%
    \renewcommand\transparent[1]{}%
  }%
  \providecommand\rotatebox[2]{#2}%
  \newcommand*\fsize{\dimexpr\f@size pt\relax}%
  \newcommand*\lineheight[1]{\fontsize{\fsize}{#1\fsize}\selectfont}%
  \ifx\svgwidth\undefined%
    \setlength{\unitlength}{50.64058246bp}%
    \ifx\svgscale\undefined%
      \relax%
    \else%
      \setlength{\unitlength}{\unitlength * \real{\svgscale}}%
    \fi%
  \else%
    \setlength{\unitlength}{\svgwidth}%
  \fi%
  \global\let\svgwidth\undefined%
  \global\let\svgscale\undefined%
  \makeatother%
  \begin{picture}(1,0.90122571)%
    \lineheight{1}%
    \setlength\tabcolsep{0pt}%
    \put(0,0){\includegraphics[width=\unitlength,page=1]{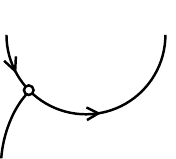}}%
    \put(0.00795785,0.81746264){\color[rgb]{0,0,0}\makebox(0,0)[lt]{\lineheight{1.25}\smash{\begin{tabular}[t]{l}$i$\end{tabular}}}}%
    \put(0.88321496,0.84044195){\color[rgb]{0,0,0}\makebox(0,0)[lt]{\lineheight{1.25}\smash{\begin{tabular}[t]{l}$i$\end{tabular}}}}%
  \end{picture}%
\endgroup%
}}~=~\vcenter{\hbox{{\def\svgscale{0.6}}
\begingroup%
  \makeatletter%
  \providecommand\color[2][]{%
    \errmessage{(Inkscape) Color is used for the text in Inkscape, but the package 'color.sty' is not loaded}%
    \renewcommand\color[2][]{}%
  }%
  \providecommand\transparent[1]{%
    \errmessage{(Inkscape) Transparency is used (non-zero) for the text in Inkscape, but the package 'transparent.sty' is not loaded}%
    \renewcommand\transparent[1]{}%
  }%
  \providecommand\rotatebox[2]{#2}%
  \newcommand*\fsize{\dimexpr\f@size pt\relax}%
  \newcommand*\lineheight[1]{\fontsize{\fsize}{#1\fsize}\selectfont}%
  \ifx\svgwidth\undefined%
    \setlength{\unitlength}{49.93485797bp}%
    \ifx\svgscale\undefined%
      \relax%
    \else%
      \setlength{\unitlength}{\unitlength * \real{\svgscale}}%
    \fi%
  \else%
    \setlength{\unitlength}{\svgwidth}%
  \fi%
  \global\let\svgwidth\undefined%
  \global\let\svgscale\undefined%
  \makeatother%
  \begin{picture}(1,0.91396265)%
    \lineheight{1}%
    \setlength\tabcolsep{0pt}%
    \put(0,0){\includegraphics[width=\unitlength,page=1]{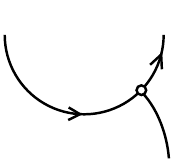}}%
    \put(-0.00606259,0.82901576){\color[rgb]{0,0,0}\makebox(0,0)[lt]{\lineheight{1.25}\smash{\begin{tabular}[t]{l}$i$\end{tabular}}}}%
    \put(0.88156445,0.85231984){\color[rgb]{0,0,0}\makebox(0,0)[lt]{\lineheight{1.25}\smash{\begin{tabular}[t]{l}$i$\end{tabular}}}}%
  \end{picture}%
\endgroup%
}}~~\in\Hom_A(W_i\boxtimes W_{\ovl i},W_a),\label{eq57}\\[2ex]
\fk e_{\ovl i,i}:=~\vcenter{\hbox{{\def\svgscale{0.6}}
\begingroup%
  \makeatletter%
  \providecommand\color[2][]{%
    \errmessage{(Inkscape) Color is used for the text in Inkscape, but the package 'color.sty' is not loaded}%
    \renewcommand\color[2][]{}%
  }%
  \providecommand\transparent[1]{%
    \errmessage{(Inkscape) Transparency is used (non-zero) for the text in Inkscape, but the package 'transparent.sty' is not loaded}%
    \renewcommand\transparent[1]{}%
  }%
  \providecommand\rotatebox[2]{#2}%
  \newcommand*\fsize{\dimexpr\f@size pt\relax}%
  \newcommand*\lineheight[1]{\fontsize{\fsize}{#1\fsize}\selectfont}%
  \ifx\svgwidth\undefined%
    \setlength{\unitlength}{50.64058246bp}%
    \ifx\svgscale\undefined%
      \relax%
    \else%
      \setlength{\unitlength}{\unitlength * \real{\svgscale}}%
    \fi%
  \else%
    \setlength{\unitlength}{\svgwidth}%
  \fi%
  \global\let\svgwidth\undefined%
  \global\let\svgscale\undefined%
  \makeatother%
  \begin{picture}(1,0.90122571)%
    \lineheight{1}%
    \setlength\tabcolsep{0pt}%
    \put(0,0){\includegraphics[width=\unitlength,page=1]{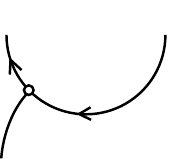}}%
    \put(0.00795785,0.81746264){\color[rgb]{0,0,0}\makebox(0,0)[lt]{\lineheight{1.25}\smash{\begin{tabular}[t]{l}$i$\end{tabular}}}}%
    \put(0.88321496,0.84044195){\color[rgb]{0,0,0}\makebox(0,0)[lt]{\lineheight{1.25}\smash{\begin{tabular}[t]{l}$i$\end{tabular}}}}%
  \end{picture}%
\endgroup%
}}~=~\vcenter{\hbox{{\def\svgscale{0.6}}
\begingroup%
  \makeatletter%
  \providecommand\color[2][]{%
    \errmessage{(Inkscape) Color is used for the text in Inkscape, but the package 'color.sty' is not loaded}%
    \renewcommand\color[2][]{}%
  }%
  \providecommand\transparent[1]{%
    \errmessage{(Inkscape) Transparency is used (non-zero) for the text in Inkscape, but the package 'transparent.sty' is not loaded}%
    \renewcommand\transparent[1]{}%
  }%
  \providecommand\rotatebox[2]{#2}%
  \newcommand*\fsize{\dimexpr\f@size pt\relax}%
  \newcommand*\lineheight[1]{\fontsize{\fsize}{#1\fsize}\selectfont}%
  \ifx\svgwidth\undefined%
    \setlength{\unitlength}{49.93485797bp}%
    \ifx\svgscale\undefined%
      \relax%
    \else%
      \setlength{\unitlength}{\unitlength * \real{\svgscale}}%
    \fi%
  \else%
    \setlength{\unitlength}{\svgwidth}%
  \fi%
  \global\let\svgwidth\undefined%
  \global\let\svgscale\undefined%
  \makeatother%
  \begin{picture}(1,0.91396265)%
    \lineheight{1}%
    \setlength\tabcolsep{0pt}%
    \put(0,0){\includegraphics[width=\unitlength,page=1]{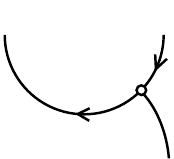}}%
    \put(-0.00606259,0.82901576){\color[rgb]{0,0,0}\makebox(0,0)[lt]{\lineheight{1.25}\smash{\begin{tabular}[t]{l}$i$\end{tabular}}}}%
    \put(0.88156445,0.85231984){\color[rgb]{0,0,0}\makebox(0,0)[lt]{\lineheight{1.25}\smash{\begin{tabular}[t]{l}$i$\end{tabular}}}}%
  \end{picture}%
\endgroup%
}}~~\in\Hom_A(W_{\ovl i}\boxtimes W_i,W_a),\label{eq58}
\end{gather}
and that $\fk e_{i,\ovl i}\Psi_{i,\ovl i}=0=\fk e_{\ovl i,i}\Psi_{\ovl i,i}$. Therefore there exist $\ev^A_{i,\ovl i}\in\Hom_A(W_{i,\ovl i},W_a),\ev^A_{\ovl i,i}\in\Hom_A(W_{\ovl i,i},W_a)$ satisfying
\begin{gather}
\fk e_{i,\ovl i}=\ev^A_{i,\ovl i}\mu_{i,\ovl i},\qquad \fk e_{\ovl i,i}=\ev^A_{\ovl i,i}\mu_{\ovl i,i}.\label{eq55}
\end{gather}
By unit property, $\ev^A_{i,\ovl i}$ and $\ev_{i,\ovl i}$, $\ev^A_{\ovl i,i}$ and $\ev_{\ovl i,i}$ are correlated. Therefore, by lemma \ref{lb33},  $\ev^A_{i,\ovl i}$ and $\ev^A_{\ovl i,i}$ are evaluations of $W_i,W_{\ovl i}$ in $\BIMA$. If, moreover, $A$ is simple, then $\ev^A$ are standard, and $\Tr_L(F)=\Tr_R(F)=d_a\Tr^A_L(F)=d_a\Tr^A_R(F)$ for any $F\in\End_A(W_i)$ by proposition \ref{lb37}. By the uniqueness up to unitaries of standard evaluations, the values of traces are independent of the choice of standard evaluations. Therefore we have (cf. \cite{KO02} theorem 1.18 and \cite{NY16} proposition 6.9.):

\begin{thm}
	If $A$ is a simple and standard Q-system, $W_i$ is a dualizable unitary $A$-bimodule, and $\Tr:=\Tr_L=\Tr_R$  and $\Tr^A:=\Tr^A_L=\Tr^A_R$ are defined using (not necessarily correlated) standard $\ev$ and standard $\ev^A$ respectively. Then $\Tr(F)=d_a\Tr^A(F)$, where $d_a$ is the ($\mc C$-)quantum dimension of $W_a$. In particular, the $\mc C$-quantum dimension of $W_i$ equals $d_a$ multiplied by the $\BIMA$-quantum dimension of $W_i$.
\end{thm}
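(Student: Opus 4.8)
The plan is to deduce the uncorrelated statement from the correlated one already proved in Proposition \ref{lb37}, by exploiting the fact that the trace values depend only on the objects involved and not on the particular standard evaluations used to compute them. Thus the whole content of the theorem beyond Proposition \ref{lb37} is an independence-of-choice argument together with the identity $D_A=d_a$ built into the definition of a standard Q-system.

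First I would record two preliminary facts. Since the given $\ev$ and $\ev^A$ are standard, the very definition of standardness gives $\Tr_L=\Tr_R=:\Tr$ on $\End(W_i)$ and $\Tr^A_L=\Tr^A_R=:\Tr^A$ on $\End_A(W_i)$, which is what licenses the single symbols $\Tr,\Tr^A$ in the statement. Next I would verify that these traces do not change if one replaces the standard evaluations by others. Indeed, by the up-to-unitary uniqueness of standard evaluations, any other standard $\wtd\ev$ (say with a chosen unitary $T\in\Hom(W_{i'},W_{\ovl i})$) has the form $\wtd\ev_{i,i'}=\ev_{i,\ovl i}(K\otimes T)$ for some unitary $K\in\End(W_i)$; substituting and using $TT^*=\id_{\ovl i}$ gives $\wtd\Tr_L(F)=\Tr_L(KFK^*)$, and since the standard left trace is tracial and $K^*K=\id_i$, one gets $\Tr_L(KFK^*)=\Tr_L(K^*KF)=\Tr_L(F)$. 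The same computation inside $\BIMA$ shows $\Tr^A$ is independent of the standard $\ev^A$.

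With these in hand I would manufacture a distinguished correlated pair. Because $A$ is standard, the explicit construction of the previous subsection applies: fix a $\mc C$-dual $W_{\ovl i}$ of $W_i$ with standard evaluations $\ev^0$, equip $W_{\ovl i}$ with the bimodule structure \eqref{eq54}, form $\fk e_{i,\ovl i},\fk e_{\ovl i,i}$ as in \eqref{eq57}--\eqref{eq58}, and define $\ev^{A,0}$ through \eqref{eq55}. By the unit property these $\ev^{A,0}$ are correlated with $\ev^0$ in the sense of \eqref{eq53}, so by Lemma \ref{lb33} they are genuine evaluations in $\BIMA$, and since $A$ is simple, Proposition \ref{lb37} guarantees that $\ev^{A,0}$ are standard. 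Now applying \eqref{eq56} to this correlated pair yields $\Tr^0(F)=D_A\Tr^{A,0}(F)$ for every $F\in\End_A(W_i)$, where $\Tr^0,\Tr^{A,0}$ are computed from $\ev^0,\ev^{A,0}$; the standardness of $A$ gives $D_A=d_a$, so $\Tr^0(F)=d_a\Tr^{A,0}(F)$.

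Finally I would invoke the independence established above: $\Tr^0=\Tr$ and $\Tr^{A,0}=\Tr^A$, whence $\Tr(F)=d_a\Tr^A(F)$, and setting $F=\id_i$ gives the comparison of the $\mc C$- and $\BIMA$-quantum dimensions of $W_i$. The only step demanding real care is the independence claim, and within it the traciality of the standard trace: the unitary $K$ appearing when changing standard evaluations need not be an $A$-bimodule morphism, so one genuinely needs cyclicity of $\Tr_L$ (a standard property of standard evaluations) to absorb the conjugation. Everything else is a direct assembly of Proposition \ref{lb37}, Lemma \ref{lb33}, and the defining relation $D_A=d_a$ of a standard Q-system.
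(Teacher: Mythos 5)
Your proposal is correct and follows essentially the same route as the paper: construct the correlated pair $\ev,\ev^A$ via \eqref{eq54}--\eqref{eq55}, apply Proposition \ref{lb37} together with $D_A=d_a$ from standardness of $A$, and then invoke independence of the trace values from the choice of standard evaluations. The only difference is that you spell out the independence step (via the up-to-unitary uniqueness and traciality of the standard trace, which is indeed the point requiring care since the unitary $K$ need not be an $A$-bimodule morphism), whereas the paper asserts it in a single sentence.
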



\subsection{Braiding and ribbon structures}

In this section, $\mc C$ is a braided $C^*$-tensor category with (unitary) braiding $\ss$ \index{zz@$\ss=\ss_{i,j}$} and simple $W_0$, and $A$ is a commutative $Q$-system in $\mc C$. Let $\RepA$ \index{RepA@$\RepA$} be the $C^*$-category of single-valued unitary left $A$-modules. As discussed in section \ref{lb38}, single-valued unitary left $A$-modules admits a canonical unitary bimodule structure, $\RepA$ is a full $C^*$-subcategory of $\BIMA$, and $\Hom_{A,-},\Hom_{-,A},\Hom_A$ are the same for $\RepA$. If $W_i,W_j$ are in $\RepA$, $(W_k,\mu^k_L,\mu^k_R)$ is in $\BIMA$, and $\alpha\in\Hom_A(W_i\boxtimes W_j,W_k)$ satisfies $\alpha\Pij=0$, then one can show easily using graphical calculus that $\mu^k_L\ss_{k,a}(\alpha\otimes\id_a)=\mu^k_R(\alpha\otimes\id_a)$. Now we choose a unitary tensor product $(W_{ij},\mu_{i,j})$ of $W_i,W_j$ over $A$, where $W_{ij}$ is a unitary $A$-bimodule with left and right actions $\mu^{ij}_L,\mu^{ij}_R$. Set $W_k=W_{ij},\alpha=\mu_{i,j}$. Then we have $\mu^{ij}_L\ss_{ij,a}=\mu^{ij}_R$ since $(\alpha\otimes\id_a)(\alpha\otimes\id_a)^*=(\mu_{i,j}\otimes\id_a)(\mu_{i,j}\otimes\id_a)^*=d_A\id_{ij}\otimes\id_a$. Similar argument shows $\mu^{ij}_L\ss_{a,ij}^{-1}=\mu^{ij}_R$. Therefore $W_{ij}$ is  single-valued with left and right actions related by $\ss$. We conclude that $\RepA$ is closed under unitary tensor products. In other words, $\RepA$ is a full $C^*$-tensor subcategory of $\BIMA$.

\begin{pp}[cf. \cite{KO02} theorem 1.15]\label{lb42}
If $A$ is standard, and $W_i$ is an object in $\RepA$, then $W_i$ is $\RepA$-dualizable if and only if $W_i$ is $\BIMA$-dualizable (equivalently, $\mc C$-dualizable).
\end{pp}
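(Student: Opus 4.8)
The statement to prove, Proposition \ref{lb42}, is an equivalence, and the two directions are of very different difficulty. The forward implication is essentially formal. Since $\RepA$ was just shown to be a \emph{full} $C^*$-tensor subcategory of $\BIMA$, any dual of $W_i$ inside $\RepA$ — together with its evaluations and coevaluations, which are $A$-bimodule morphisms — is automatically a dual of $W_i$ in $\BIMA$. Hence $\RepA$-dualizability implies $\BIMA$-dualizability, which by Theorem \ref{lb34} is the same as $\mc C$-dualizability. So all the content is in the reverse direction.

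For the reverse implication, suppose $W_i$ is $\BIMA$-dualizable, equivalently $\mc C$-dualizable. Since $A$ is assumed standard, I would invoke the explicit construction of the preceding subsection: take a $\mc C$-dual $W_{\ovl i}$ of $W_i$ equipped with standard $\ev$, and endow it with the $A$-bimodule structure $\mu^{\ovl i}_L=((\mu^i_R)^*)^\vee$, $\mu^{\ovl i}_R=((\mu^i_L)^*)^\vee$ of \eqref{eq54}. Then $W_{\ovl i}$ is a unitary $A$-bimodule dual to $W_i$ in $\BIMA$, with evaluations $\ev^A_{i,\ovl i},\ev^A_{\ovl i,i}$ correlated to the chosen standard $\ev$. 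The whole problem then reduces to a single point: showing that $W_{\ovl i}$ is \emph{single-valued}, i.e. that $W_{\ovl i}$ lies in $\RepA$. Granting this, the correlated evaluations are morphisms in the full subcategory $\RepA$, and the rigidity (zig-zag) relations, being equations among morphisms already valid in $\BIMA$, remain valid in $\RepA$; thus $W_i$ is $\RepA$-dualizable.

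To establish single-valuedness of $W_{\ovl i}$, I would verify $\mu^{\ovl i}_R=\mu^{\ovl i}_L\ss_{\ovl i,a}$. The single-valued hypothesis on $W_i$ supplies the two identities $\mu^i_R=\mu^i_L\ss_{i,a}=\mu^i_L\ss_{a,i}^{-1}$; taking adjoints and using unitarity of $\ss$ converts these into relations between $(\mu^i_R)^*$ and $(\mu^i_L)^*$ that differ by a braiding, e.g. $(\mu^i_R)^*=\ss_{i,a}^{-1}(\mu^i_L)^*$. Feeding this into \eqref{eq54} and using functoriality of the transpose, $(FG)^\vee=G^\vee F^\vee$, I get $\mu^{\ovl i}_L=\mu^{\ovl i}_R\,(\ss_{i,a}^{-1})^\vee$, so the left and right actions of $W_{\ovl i}$ are related by the transpose of a braiding of $W_i$ with $W_a$. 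The decisive and hardest step is then to identify this transposed braiding $(\ss_{i,a}^{-1})^\vee$ with the braiding $\ss_{\ovl i,a}$ (equivalently $\ss_{a,\ovl i}^{-1}$) of the dual module; this is exactly where naturality of $\ss$ and the hexagon axioms are needed, and graphically amounts to sliding the braiding strand past the cap and cup that define the transpose. I expect this reconciliation of the transpose of $\ss_{i,a}$ with the braiding $\ss_{\ovl i,a}$ through the duality data to be the main obstacle, since it is the only place where the braided structure and the rigidity data genuinely interact. Once it is carried out, the single-valued condition for $W_{\ovl i}$ follows from that for $W_i$, and the proof is complete.
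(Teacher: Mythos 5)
Your proof is correct and takes essentially the same route as the paper's: the forward direction via fullness of $\RepA$ in $\BIMA$ together with theorem \ref{lb34}, and for the converse the dual bimodule $W_{\ovl i}$ with actions \eqref{eq54} constructed in section \ref{lb39}, reducing everything to checking that $W_{\ovl i}$ is single-valued. The paper dismisses that last check as ``easy,'' while you carry it out explicitly (transposing the relations $\mu^i_R=\mu^i_L\ss_{i,a}=\mu^i_L\ss_{a,i}^{-1}$ and identifying the transposed braiding with the braiding of the dual via naturality), which is exactly the intended verification.
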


\begin{proof}
We have seen in theorem \ref{lb34} that $\BIMA$-dualizability and $\mc C$-dualizability are the same. $\RepA$-dualizability clearly implies $\BIMA$-dualizability. Now assume that $W_i$ is $\mc C$-dualizable. In section \ref{lb39} we have constructed a unitary $A$-bimodule $W_{\ovl i}$ dual to $W_i$. It is easy to check that the left and right actions of $W_{\ovl i}$ defined by \eqref{eq54} are related by the braiding $\ss$ of $\mc C$. In particular, $W_{\ovl i}$ is an object in $\RepA$. Thus $W_i$ is $\RepA$-dualizable.
\end{proof}

\subsubsection*{Braiding}

We now define braiding for $\RepA$. Let $W_i,W_j$ be objects in $\RepA$, and let $(W_{ij},\mu_{i,j})$ and $(W_{ji},\mu_{j,i})$ be respectively the unitary tensor products over $A$ of $W_i,W_j$ and $W_j,W_i$ used to define the tensor structure of $\BIMA$. Since the braiding of $\mc C$ is unitary, using proposition \ref{lb27} one easily shows that $(W_{ij},\mu_{j,i}\ss_{i,j})$ is also a unitary tensor product of $W_j,W_i$ over $A$. Hence there exists a unique unitary $\ss^A_{i,j}\in\End_A(W_{ij},W_{ji})$ such that
\begin{align}
\mu_{j,i}\ss_{i,j}=\ss^A_{i,j}\mu_{i,j}.\label{eq60}
\end{align}

\begin{thm}
$(\RepA,\boxtimes_A,\ss^A)$ is a braided $C^*$-tensor category.
\end{thm}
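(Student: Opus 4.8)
The plan is to observe that nearly all the work has already been done. The paragraphs preceding the theorem establish that $\RepA$ is a full $C^*$-tensor subcategory of $\BIMA$ (so it is a $C^*$-tensor category), and that for each pair $W_i,W_j$ in $\RepA$ equation \eqref{eq60} defines a \emph{unitary} $\ss^A_{i,j}\in\Hom_A(W_{ij},W_{ji})$. Thus it remains only to check that the family $\ss^A$ is natural in both variables and satisfies the two hexagon identities. Everything will be reduced to the corresponding facts for the braiding $\ss$ of $\mc C$ by means of the universal property of unitary tensor products (definition \ref{lb47}).

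First I would verify naturality. Given $F\in\Hom_A(W_i,W_{i'})$ and $G\in\Hom_A(W_j,W_{j'})$, I would compute $\ss^A_{i',j'}(F\otimes_A G)$ and $(G\otimes_A F)\ss^A_{i,j}$ after precomposing with $\mu_{i,j}$, using the defining relation \eqref{eq45} for $\otimes_A$, the defining relation \eqref{eq60} for $\ss^A$, and naturality of $\ss$ in $\mc C$:
\begin{align*}
\ss^A_{i',j'}(F\otimes_A G)\mu_{i,j} &= \ss^A_{i',j'}\mu_{i',j'}(F\otimes G) = \mu_{j',i'}\ss_{i',j'}(F\otimes G)\\
&= \mu_{j',i'}(G\otimes F)\ss_{i,j} = (G\otimes_A F)\mu_{j,i}\ss_{i,j} = (G\otimes_A F)\ss^A_{i,j}\mu_{i,j}.
\end{align*}
Both $\ss^A_{i',j'}(F\otimes_A G)$ and $(G\otimes_A F)\ss^A_{i,j}$ are therefore $A$-bimodule morphisms induced by one and the same morphism $W_i\boxtimes W_j\to W_{j'i'}$ (annihilated by $\Pij$), so the uniqueness clause of the universal property forces $\ss^A_{i',j'}(F\otimes_A G)=(G\otimes_A F)\ss^A_{i,j}$.

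Next I would prove the two hexagon axioms. Working in the strictified picture arranged at the end of section \ref{lb52} (associators and unit isomorphisms suppressed), the first hexagon reads $\ss^A_{ij,k}=(\ss^A_{i,k}\otimes_A\id_j)(\id_i\otimes_A\ss^A_{j,k})$. I would prove it by composing both sides with the structure map $\mu_{i,j,k}$ of the triple unitary tensor product $W_{ijk}$, which by proposition \ref{lb31} may be realized as $\mu_{ij,k}(\mu_{i,j}\otimes\id_k)$ or as $\mu_{i,jk}(\id_i\otimes\mu_{j,k})$. For the left side, applying \eqref{eq60} and then naturality of $\ss$ together with the $\mc C$-hexagon $\ss_{i\boxtimes j,k}=(\ss_{i,k}\otimes\id_j)(\id_i\otimes\ss_{j,k})$ reduces $\ss^A_{ij,k}\mu_{i,j,k}$ to $\mu_{k,i,j}(\ss_{i,k}\otimes\id_j)(\id_i\otimes\ss_{j,k})$, where $\mu_{k,i,j}$ is the triple structure map into $W_{k(ij)}$. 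For the right side I would repeatedly use \eqref{eq45} and \eqref{eq60} to push the elementary braidings $\ss^A_{j,k}$ and $\ss^A_{i,k}$ through the multiplication maps, switching between the two realizations of the triple product via \eqref{eq46}/\eqref{eq48}, until I reach the same expression $\mu_{k,i,j}(\ss_{i,k}\otimes\id_j)(\id_i\otimes\ss_{j,k})$. Equality of the two composites plus the universal property for the triple tensor product (theorem \ref{lb30}) then yield the first hexagon; the second is entirely analogous, invoking instead the $\mc C$-hexagon for $\ss_{k,i\boxtimes j}$ (equivalently for $\ss^{-1}$).

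The main obstacle is purely bookkeeping: keeping the several realizations of $W_{ijk}$ straight and correctly inserting the identifications \eqref{eq46} and \eqref{eq48} whenever a factor $\ss^A_{i,k}$ or $\ss^A_{j,k}$ is tensored with an identity and must be moved past the nested multiplications. There is no conceptual difficulty beyond the universal property and the fact that the $\mc C$-braiding is unitary; once each side is expressed through $\mu_{i,j,k}$ and the $\mc C$-hexagon is invoked exactly once, the two sides collapse to the same morphism, and unitarity and naturality of $\ss^A$ are immediate from \eqref{eq60}.
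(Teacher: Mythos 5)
Your proposal is correct and follows essentially the same route as the paper: the paper proves the hexagon axioms exactly by multiplying both sides on the right by $\mu_{i,jk}(\id_i\otimes\mu_{j,k})=\mu_{ij,k}(\mu_{i,j}\otimes\id_k)$ and invoking the universal property of the triple unitary tensor product, as in the pentagon argument of proposition \ref{lb40}. Your explicit check of naturality of $\ss^A$ (which the paper leaves implicit, since it follows by the same universal-property computation from \eqref{eq45} and \eqref{eq60}) is a harmless and correct supplement.
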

\begin{proof}
The hexagon axioms
\begin{gather*}
(\ss^A_{i,k}\otimes_A\id_j)(\id_i\otimes_A\ss^A_{j,k})=\ss^A_{ij,k},\\
((\ss^A_{k,i})^{-1}\otimes_A\id_j)(\id_i\otimes_A(\ss^A_{k,j})^{-1})=(\ss^A_{k,ij})^{-1}
\end{gather*}
(for all $W_i,W_j,W_k$ in $\RepA$) can be proved in a similar way as pentagon axiom (proposition \ref{lb40}): one shows that both sides are equal when multiplied from the right by $\mu_{i,jk}(\id_i\otimes\mu_{j,k})=\mu_{ij,k}(\mu_{i,j}\otimes\id_k)$.
\end{proof}

Now assume that we have two systems of unitary tensor products $(W_{i\times j},\mu_{i,j}),(W_{i\bullet j},\eta_{i,j})$ which define two braided $C^*$-tensor categories $(\RepA,\boxtimes_A,\ss^A)$ and $(\RepA,\boxdot_A,\sigma^A)$. By theorem \ref{lb41}, the functorial unitary $\Phi$ defined by \eqref{eq50} induces an equivalence of the $C^*$-tensor categories. Indeed, it also preserves the braidings:

\begin{thm}
The functorial unitary $\Phi$ defined by \eqref{eq50} induces an equivalence of the braided $C^*$-tensor categories $(\RepA,\boxtimes_A,\ss^A)\simeq(\RepA,\boxdot_A,\sigma^A)$, which means that $\Phi$ satisfies the two conditions of theorem \ref{lb41}, together with the condition that for any objects $W_i,W_j$ in $\RepA$,
\begin{align}
\Phi_{j,i}\ss^A_{i,j}=\sigma^A_{i,j}\Phi_{i,j}.
\end{align}
\end{thm}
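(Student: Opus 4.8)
The plan is to reduce the identity $\Phi_{j,i}\ss^A_{i,j}=\sigma^A_{i,j}\Phi_{i,j}$ to the universal property of the unitary tensor product $(W_{i\times j},\mu_{i,j})$, exactly as in the proofs of the pentagon axiom (proposition \ref{lb40}) and theorem \ref{lb41}. The two conditions of theorem \ref{lb41} are already established there, so only the braiding compatibility is new. Both $\Phi_{j,i}\ss^A_{i,j}$ and $\sigma^A_{i,j}\Phi_{i,j}$ lie in $\Hom_A(W_{i\times j},W_{j\bullet i})$, so by the universal property it suffices to check that they agree after composing on the right with $\mu_{i,j}$.

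First I would compute the left-hand side. Using \eqref{eq60} (the defining relation of $\ss^A$) and then \eqref{eq50} (the defining relation of $\Phi$),
\begin{align*}
\Phi_{j,i}\ss^A_{i,j}\mu_{i,j}=\Phi_{j,i}\mu_{j,i}\ss_{i,j}=\eta_{j,i}\ss_{i,j}.
\end{align*}
For the right-hand side, applying \eqref{eq50} first gives $\sigma^A_{i,j}\Phi_{i,j}\mu_{i,j}=\sigma^A_{i,j}\eta_{i,j}$. The braiding $\sigma^A$ of $(\RepA,\boxdot_A)$ is defined, in analogy with \eqref{eq60}, by $\eta_{j,i}\ss_{i,j}=\sigma^A_{i,j}\eta_{i,j}$, whence $\sigma^A_{i,j}\eta_{i,j}=\eta_{j,i}\ss_{i,j}$. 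Thus both composites equal $\eta_{j,i}\ss_{i,j}$.

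Finally I would invoke the universal property of $(W_{i\times j},\mu_{i,j})$ — equivalently, the relation $\mu_{i,j}\mu_{i,j}^*=d_A\id_{i\times j}$, which lets one cancel $\mu_{i,j}$ on the right — to conclude $\Phi_{j,i}\ss^A_{i,j}=\sigma^A_{i,j}\Phi_{i,j}$. There is essentially no obstacle here; the only points needing a line of justification are that all the morphisms involved are genuine $A$-bimodule morphisms between objects of $\RepA$, so that the braiding $\ss$ of $\mc C$ and the induced braidings $\ss^A,\sigma^A$ all apply, and that $W_{i\times j}$ and $W_{j\bullet i}$ are themselves objects of $\RepA$, which follows from the closure of $\RepA$ under unitary tensor products established at the beginning of this section.
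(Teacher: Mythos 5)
Your proof is correct and is essentially identical to the paper's own one-line verification, which checks $\Phi_{j,i}\ss^A_{i,j}\mu_{i,j}=\sigma^A_{i,j}\eta_{i,j}=\sigma^A_{i,j}\Phi_{i,j}\mu_{i,j}$ using \eqref{eq60} and \eqref{eq50} and then cancels $\mu_{i,j}$ by the universal property. Your extra remarks (that $\sigma^A$ satisfies the $\eta$-analogue of \eqref{eq60}, that $\mu_{i,j}\mu_{i,j}^*=d_A\id$ justifies the cancellation, and that $\RepA$ is closed under unitary tensor products) are exactly the details the paper leaves implicit.
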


\begin{proof}
One verifies that $\Phi_{j,i}\ss^A_{i,j}\mu_{i,j}=\sigma^A_{i,j}\eta_{i,j}=\sigma^A_{i,j}\Phi_{i,j}\mu_{i,j}$.
\end{proof}

\subsubsection*{Ribbon structures}

Let us now assume that $\mc C$ is rigid, which means that any object of $\mc C$ is dualizable. By \cite{Mueg00} proposition 2.4, there is a canonical twist operator $\vartheta=\vartheta_i\in\End(W_i)$  for any object $W_i$ in $\mc C$: Choose $W_{\ovl i}$ dual to $W_i$, standard $\ev_{i,\ovl i},\ev_{\ovl i,i}$ and corresponding $\coev$ for $W_i,W_{\ovl i}$. Then by standardness of $\ev$ one can show \index{zz@$\vartheta=\vartheta_i$}
\begin{align}
&\vartheta_i:=(\ev_{\ovl i,i}\otimes\id_i)(\id_{\ovl i}\otimes \ss_{i,i})(\coev_{\ovl i,i}\otimes\id_i)\nonumber\\
=&(\id_i\otimes\ev_{i,\ovl i})(\ss_{i,i}\otimes\id_{\ovl i})(\id_i\otimes\coev_{i,\ovl i}).
\end{align}
(Note that by uniqueness up to unitaries of standard $\ev$, $\vartheta_i$ is independent of the choice of standard evaluations.) By this relation,  $\vartheta$ is unitary. Moreover, $\vartheta$ defines a ribbon structure on $\mc C$ (i.e., $\vartheta$ commutes with all morphisms, $\vartheta_{i\boxtimes j}=(\vartheta_i\otimes\vartheta_j)\ss_{j,i}\ss_{i,j}$, and $\vartheta_i^\vee=\vartheta_{\ovl i}$). Then $(\mc C,\boxtimes,\ss,\vartheta)$ is a rigid $C^*$-ribbon category. Using the definition of $\vartheta_i$, one easily shows
\begin{align}
\ev_{i,\ovl i}=\ev_{\ovl i,i}\ss_{i,\ovl i}(\vartheta_i\otimes\id_{\ovl i}),\label{eq59}
\end{align}
which completely determines the morphism $\vartheta_i$. In the case of $\RepV$, we have shown in \cite{Gui19b} section 7.3 (especially equation (7.30), which relies on \cite{Gui19a} formula (1.41)) that $e^{2\im\pi L_0}$ satisfies the above equation.  Thus the twist $\vartheta=e^{2\im\pi L_0}$ defined in the end of section \ref{lb2} is the canonical twist of the braided $C^*$-fusion category (unitary braided fusion category) $\RepV$.

Suppose now that $A$ is haploid. By the commutativity of $A$, haploidness is equivalent to simpleness since $\End_{A,-}(W_a)=\End_A(W_a)$. $A$ is also standard by proposition \ref{lb36}. Therefore, by theorem \ref{lb42}, $\RepA$ is rigid. Thus $\RepA$ also admits a canonical twist $\vartheta^A$ \index{zz@$\vartheta^A=\vartheta^A_i$} under which $\RepA$ becomes a $C^*$-ribbon category. The twist satisfies
\begin{align}
&\vartheta_i:=(\ev^A_{\ovl i,i}\otimes_A\id_i)(\id_{\ovl i}\otimes_A \ss^A_{i,i})(\coev^A_{\ovl i,i}\otimes_A\id_i)\nonumber\\
=&(\id_i\otimes_A\ev^A_{i,\ovl i})(\ss^A_{i,i}\otimes_A\id_{\ovl i})(\id_i\otimes_A\coev^A_{i,\ovl i}),
\end{align}
where $W_{\ovl i}$ is an object of $\RepA$ dual to $W_i$, and $\ev^A_{i,\ovl i},\ev^A_{\ovl i,i}$ are standard evaluations for $W_i,W_{\ovl i}$. We now show that the ribbon structures of $\mc C$ and $\RepA$ are compatible.

\begin{thm}\label{lb44}
Suppose that $\mc C$ is a rigid braided $C^*$-tensor category,  $A$ is a haploid commutative Q-system in $\mc C$, and $\vartheta$ and $\vartheta^A$ are the canonical unitary twists  of $\mc C$  and $\RepA$ respectively. Then $\vartheta_i=\vartheta^A_i$ for any object $W_i$ in $\RepA$. 
\end{thm}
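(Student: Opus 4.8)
The plan is to reduce everything to the characterization \eqref{eq59} of the canonical twist, which states that $\vartheta_i$ is the \emph{unique} endomorphism of $W_i$ in $\mc C$ satisfying $\ev_{i,\ovl i}=\ev_{\ovl i,i}\ss_{i,\ovl i}(\vartheta_i\otimes\id_{\ovl i})$. Since $\vartheta^A_i$ lies in $\End_A(W_i)\subseteq\End(W_i)$, it is also a morphism of $\mc C$; hence it suffices to show that $\vartheta^A_i$ obeys this same defining relation and then invoke uniqueness. So the whole proof is: verify \eqref{eq59} with $\vartheta^A_i$ in place of $\vartheta_i$.

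First I would fix the data carefully. Because $A$ is haploid and commutative it is simple ($\End_{A,-}(W_a)=\End_A(W_a)=\mathbb C\id_a$) and standard (proposition \ref{lb36}), so by theorem \ref{lb42} the category $\RepA$ is rigid and carries its own canonical twist $\vartheta^A$. Using the explicit construction of duals from section \ref{lb39}, I would choose $W_{\ovl i}$ dual to $W_i$ inside $\RepA$ together with \emph{correlated} standard evaluations: standard $\ev_{i,\ovl i},\ev_{\ovl i,i}$ in $\mc C$ and the associated $\ev^A_{i,\ovl i},\ev^A_{\ovl i,i}$ in $\RepA$ related by \eqref{eq53}. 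Proposition \ref{lb37} guarantees that these $\ev^A$ are standard, so the identical derivation that produces \eqref{eq59} in $\mc C$ (the argument of \cite{Mueg00}) applies verbatim inside $\RepA$ and yields the analog $\ev^A_{i,\ovl i}=\ev^A_{\ovl i,i}\ss^A_{i,\ovl i}(\vartheta^A_i\otimes_A\id_{\ovl i})$.

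The core step is then a short computation transporting the $\RepA$-relation down to $\mc C$. Expanding $\ev_{\ovl i,i}\ss_{i,\ovl i}(\vartheta^A_i\otimes\id_{\ovl i})$ and using, in turn, the correlation \eqref{eq53} for $\ev_{\ovl i,i}$, the braiding relation \eqref{eq60} in the form $\mu_{\ovl i,i}\ss_{i,\ovl i}=\ss^A_{i,\ovl i}\mu_{i,\ovl i}$, and the fact that $\vartheta^A_i$ is an $A$-bimodule map, so that $\mu_{i,\ovl i}(\vartheta^A_i\otimes\id_{\ovl i})=(\vartheta^A_i\otimes_A\id_{\ovl i})\mu_{i,\ovl i}$ by \eqref{eq45}, the chain would read
\begin{align*}
\ev_{\ovl i,i}\ss_{i,\ovl i}(\vartheta^A_i\otimes\id_{\ovl i})
&=\iota^*\ev^A_{\ovl i,i}\,\mu_{\ovl i,i}\ss_{i,\ovl i}(\vartheta^A_i\otimes\id_{\ovl i})
=\iota^*\ev^A_{\ovl i,i}\ss^A_{i,\ovl i}\,\mu_{i,\ovl i}(\vartheta^A_i\otimes\id_{\ovl i})\\
&=\iota^*\ev^A_{\ovl i,i}\ss^A_{i,\ovl i}(\vartheta^A_i\otimes_A\id_{\ovl i})\,\mu_{i,\ovl i}
=\iota^*\ev^A_{i,\ovl i}\,\mu_{i,\ovl i}=\ev_{i,\ovl i},
\end{align*}
where the penultimate equality is the $\RepA$-characterization of $\vartheta^A_i$ just recorded and the last is again the correlation \eqref{eq53}. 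Thus $\vartheta^A_i$ satisfies \eqref{eq59}, and uniqueness of its solution forces $\vartheta^A_i=\vartheta_i$.

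The main obstacle is not the final calculation, which is essentially formal once the ingredients are assembled, but the bookkeeping required to use them together legitimately: one must arrange for the chosen evaluations to be \emph{simultaneously} standard on both sides and correlated, and one must confirm that $\vartheta^A$ genuinely satisfies the same characterizing identity \eqref{eq59} as $\vartheta$. Both rest on $A$ being a simple standard Q-system, which is exactly what the haploid commutative hypothesis provides (via proposition \ref{lb36} and theorem \ref{lb42}). A secondary point to verify is that neither twist depends on the particular standard evaluations used — true by their uniqueness up to unitaries — which is what licenses computing $\vartheta^A_i$ with the correlated $\ev^A$ rather than with an arbitrary standard choice.
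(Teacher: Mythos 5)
Your proof is correct and follows essentially the same route as the paper's: the same correlated standard evaluations from section \ref{lb39}, the same identities \eqref{eq53}, \eqref{eq60}, \eqref{eq45}, and the same strategy of verifying the characterizing relation \eqref{eq59} in the other category and invoking its uniqueness. The only difference is the direction of transport — the paper first checks that $\vartheta_i\in\End_A(W_i)$ and then shows it satisfies the $\RepA$-analogue of \eqref{eq59}, whereas you push $\vartheta^A_i$ down to $\mc C$, which spares you that membership check since $\vartheta^A_i$ is an $A$-bimodule morphism by construction.
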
	

As an immediate consequence, $W_a$ has trivial ($\mc C$-)twist since $\vartheta^A_a=\id_a$.

\begin{proof}
Choose any $W_i$ in $\RepA$. Using the definition of twist one checks easily that $\vartheta_i\in\End_A(W_i)$. Let $W_{\ovl i}$ be a dual object in $\mc C$, equip $W_{\ovl i}$ with a unitary $A$-bimodule structure by \eqref{eq54}, and use equations \eqref{eq57}, \eqref{eq58}, and \eqref{eq55} to define standard $\RepA$-evaluations $\ev^A$ correlated to standard $\mc C$-evaluations $\ev$ for $W_i,W_{\ovl i}$.  We now prove $\vartheta_i=\vartheta^A_i$ by showing
\begin{align}
\ev^A_{i,\ovl i}=\ev^A_{\ovl i,i}\ss^A_{i,\ovl i}(\vartheta_i\otimes_A\id_{\ovl i}).
\end{align}
We compute
\begin{align*}
&\qquad\ev^A_{i,\ovl i}\cdot \mu_{i,\ovl i} \xlongequal{\eqref{eq55}} \fk e_{i,\ovl i}\xlongequal{\eqref{eq57}} ~\vcenter{\hbox{{\def\svgscale{0.6}}}}~\xlongequal{\eqref{eq59}}~\vcenter{\hbox{{\def\svgscale{0.6}}
\begingroup%
  \makeatletter%
  \providecommand\color[2][]{%
    \errmessage{(Inkscape) Color is used for the text in Inkscape, but the package 'color.sty' is not loaded}%
    \renewcommand\color[2][]{}%
  }%
  \providecommand\transparent[1]{%
    \errmessage{(Inkscape) Transparency is used (non-zero) for the text in Inkscape, but the package 'transparent.sty' is not loaded}%
    \renewcommand\transparent[1]{}%
  }%
  \providecommand\rotatebox[2]{#2}%
  \newcommand*\fsize{\dimexpr\f@size pt\relax}%
  \newcommand*\lineheight[1]{\fontsize{\fsize}{#1\fsize}\selectfont}%
  \ifx\svgwidth\undefined%
    \setlength{\unitlength}{47.24581359bp}%
    \ifx\svgscale\undefined%
      \relax%
    \else%
      \setlength{\unitlength}{\unitlength * \real{\svgscale}}%
    \fi%
  \else%
    \setlength{\unitlength}{\svgwidth}%
  \fi%
  \global\let\svgwidth\undefined%
  \global\let\svgscale\undefined%
  \makeatother%
  \begin{picture}(1,1.00090167)%
    \lineheight{1}%
    \setlength\tabcolsep{0pt}%
    \put(-0.00640764,0.93389194){\color[rgb]{0,0,0}\makebox(0,0)[lt]{\lineheight{1.25}\smash{\begin{tabular}[t]{l}$i$\end{tabular}}}}%
    \put(0.87482356,0.93575039){\color[rgb]{0,0,0}\makebox(0,0)[lt]{\lineheight{1.25}\smash{\begin{tabular}[t]{l}$i$\end{tabular}}}}%
    \put(0,0){\includegraphics[width=\unitlength,page=1]{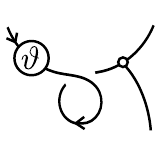}}%
    \put(0.42494064,0.01492361){\color[rgb]{0,0,0}\makebox(0,0)[lt]{\lineheight{1.25}\smash{\begin{tabular}[t]{l}$i$\end{tabular}}}}%
    \put(0,0){\includegraphics[width=\unitlength,page=2]{ev-proof-8.pdf}}%
  \end{picture}%
\endgroup%
}}~=~\vcenter{\hbox{{\def\svgscale{0.6}}
\begingroup%
  \makeatletter%
  \providecommand\color[2][]{%
    \errmessage{(Inkscape) Color is used for the text in Inkscape, but the package 'color.sty' is not loaded}%
    \renewcommand\color[2][]{}%
  }%
  \providecommand\transparent[1]{%
    \errmessage{(Inkscape) Transparency is used (non-zero) for the text in Inkscape, but the package 'transparent.sty' is not loaded}%
    \renewcommand\transparent[1]{}%
  }%
  \providecommand\rotatebox[2]{#2}%
  \newcommand*\fsize{\dimexpr\f@size pt\relax}%
  \newcommand*\lineheight[1]{\fontsize{\fsize}{#1\fsize}\selectfont}%
  \ifx\svgwidth\undefined%
    \setlength{\unitlength}{47.24581359bp}%
    \ifx\svgscale\undefined%
      \relax%
    \else%
      \setlength{\unitlength}{\unitlength * \real{\svgscale}}%
    \fi%
  \else%
    \setlength{\unitlength}{\svgwidth}%
  \fi%
  \global\let\svgwidth\undefined%
  \global\let\svgscale\undefined%
  \makeatother%
  \begin{picture}(1,1.04046672)%
    \lineheight{1}%
    \setlength\tabcolsep{0pt}%
    \put(-0.00640764,0.973457){\color[rgb]{0,0,0}\makebox(0,0)[lt]{\lineheight{1.25}\smash{\begin{tabular}[t]{l}$i$\end{tabular}}}}%
    \put(0.87482356,0.97531545){\color[rgb]{0,0,0}\makebox(0,0)[lt]{\lineheight{1.25}\smash{\begin{tabular}[t]{l}$i$\end{tabular}}}}%
    \put(0,0){\includegraphics[width=\unitlength,page=1]{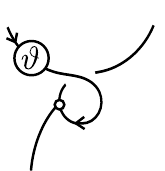}}%
    \put(0.64878054,0.34548027){\color[rgb]{0,0,0}\makebox(0,0)[lt]{\lineheight{1.25}\smash{\begin{tabular}[t]{l}$i$\end{tabular}}}}%
    \put(0,0){\includegraphics[width=\unitlength,page=2]{ev-proof-9.pdf}}%
  \end{picture}%
\endgroup%
}}\\
&\xlongequal{\eqref{eq58},~\eqref{eq55}}\ev^A_{\ovl i,i}\cdot\mu_{\ovl i,i}\ss_{i,\ovl i}(\vartheta_i\otimes\id_{\ovl i})\xlongequal{\eqref{eq60}}\ev^A_{\ovl i,i}\ss^A_{i,\ovl i}\cdot\mu_{i,\ovl i}(\vartheta_i\otimes\id_{\ovl i})\xlongequal{\eqref{eq45}}\ev^A_{\ovl i,i}\ss^A_{i,\ovl i}(\vartheta_i\otimes_A\id_{\ovl i})\cdot\mu_{i,\ovl i}.
\end{align*}
\end{proof}

A modular tensor category is called \emph{unitary} if it is a (rigid) braided $C^*$-tensor category, and if its twist is the canonical unitary twist associated to the rigid braided $C^*$-tensor structure.
\begin{co}
Let $(\mc C,\boxtimes,\ss,\vartheta)$ be a unitary modular tensor category. If $A$ is a haploid commutative Q-system in $\mc C$, then $(\RepA,\boxtimes_A,\ss^A,\vartheta)$ is also a unitary modular tensor category.
\end{co}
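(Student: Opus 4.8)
The plan is to assemble the structural results already proved into the stated definition of a unitary modular tensor category, the one genuinely new input being nondegeneracy of the braiding, which I would import from \cite{KO02}. First I would record that all of the non-modular structure is in place. Since $\mc C$ is rigid, every object of $\RepA$ is $\mc C$-dualizable, hence $\RepA$-dualizable by proposition \ref{lb42} (whose hypothesis that $A$ be standard holds because $A$ is haploid, so standard, by proposition \ref{lb36}); thus $(\RepA,\boxtimes_A,\ss^A)$ is a rigid braided $C^*$-tensor category. By the corollary following proposition \ref{lb24} it is a semisimple $C^*$-category; it has only finitely many isomorphism classes of simple objects, since every simple object of $\RepA$ is, via Frobenius reciprocity, a subobject of an induced module $W_a\boxtimes W_s$ with $W_s$ simple in the fusion category $\mc C$; and its unit $W_a$ is simple because $A$ is haploid and commutative, whence $\End_A(W_a)=\End_{A,-}(W_a)=\Hom(W_0,W_a)=\mathbb C\id_a$. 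So $\RepA$ is a braided ribbon fusion $C^*$-category whose canonical twist $\vartheta^A$ equals the restriction of the twist $\vartheta$ of $\mc C$ by theorem \ref{lb44}.

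Next I would establish modularity. The single-valued condition $\mu_L=\mu_L\ss^2$ defining the objects of $\RepA$ is precisely the locality (dyslexia) condition defining the Kirillov--Ostrik category $\Rep^0_A$ of local $A$-modules; forgetting the $*$-structure identifies $\RepA$, as a braided ribbon fusion category, with $\Rep^0_A$, since the tensor product, associativity, braiding $\ss^A$ and twist $\vartheta^A$ are all pinned down by the same universal properties and formulas used in \cite{KO02}. The remark following theorem \ref{lb44} gives $\vartheta_a=\id_a$, so $A$ has trivial twist and the hypotheses of the Kirillov--Ostrik modularity theorem are met: because $\mc C$ is modular and $A$ is a haploid (connected) commutative algebra with trivial twist, $\Rep^0_A$ is a modular tensor category (\cite{KO02} theorem 4.5). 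Modularity, that is invertibility of the $S$-matrix, equivalently triviality of the M\"uger center, is a property of the underlying braided category preserved under braided equivalence, so $\RepA$ is modular.

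Finally I would conclude. Combining the two steps, $(\RepA,\boxtimes_A,\ss^A,\vartheta)$ is a modular tensor category that is at the same time a rigid braided $C^*$-tensor category whose twist is the canonical unitary twist $\vartheta^A=\vartheta$; by the definition of unitary modular tensor category recalled just above, this is exactly the claim. The main obstacle, and the only place where real work hides, is the identification used in the second step: one must check that the forgetful functor $\RepA\to\Rep^0_A$ is genuinely an equivalence of braided ribbon categories, i.e. that the $C^*$-theoretic tensor product built from the projections $d_A^{-1}\Cij$ agrees, as an abelian braided ribbon category, with the cokernel construction of \cite{KO02}, so that modularity may be transported. This rests on the uniqueness-up-to-unitaries of unitary tensor products (theorems \ref{lb28} and \ref{lb30}) together with the compatibility of $\ss^A$ and $\vartheta^A$ already verified, and it is where the bridge between the functional-analytic and the purely algebraic frameworks must be made explicit.
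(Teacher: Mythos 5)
Your proposal is correct and follows essentially the same route as the paper: the paper's proof likewise observes that $A$ has trivial twist (via theorem \ref{lb44}), invokes \cite{KO02} theorem 4.5 to get modularity of $\RepA$ (identified with $\Rep^0(A)$ of \cite{KO02}, which the paper does silently in a footnote), and then uses theorem \ref{lb44} again to conclude the twist is the canonical unitary one. The extra verifications you supply (rigidity via propositions \ref{lb36} and \ref{lb42}, finiteness of simples, and the braided ribbon identification of $\RepA$ with $\Rep^0(A)$) are just explicit versions of steps the paper takes for granted.
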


\begin{proof}
We have shown that $A$ has trivial $\mc C$-twist. Thus by \cite{KO02} theorem 4.5, $\RepA$ is a modular tensor category.\footnote{Our $\RepA$ is written as $\Rep^0(A)$ in \cite{KO02}.} By the above theorem, when restricted to $\RepA$, $\vartheta$ is the canonical unitary twist $\vartheta^A$ of $\RepA$. Thus $\RepA$ is a unitary modular tensor category. 
\end{proof}

\subsection{Complete unitarity of unitary VOA extensions}

Recall that $V$ is a CFT-type, regular, and completely unitary VOA. Let $U$ be a CFT-type unitary extension of $V$, and let $A_U=(W_a,\mu,\iota)$ be the corresponding standard commutative Q-system. (Note that the trivial twist condition for $A$ is now redundant by theorem \ref{lb44}.) Recall by theorem \ref{lb23} that any unitary $U$-module is naturally a single-valued unitary left $A_U$-module, and any single-valued unitary left $A_U$-module can be regarded as a unitary $U$-module. Thus  $\RepU$ is naturally equivalent to $\RepAU$ as $C^*$-categories. Note that $U$ is also regular (equivalently, rational and $C_2$-cofinite \cite{ABD04}) by the proof of \cite{McR20} theorem 4.13.\footnote{Although \cite{McR20} theorem 4.13 only discusses orbifold type extensions, the argument there is quite general and clearly applies to the general case. Note that the nonzeroness of quantum dimensions required in that theorem is obvious in the unitary case.} Thus, just as $V$, the tensor category of $U$-modules is (rigid and) modular. In the following, we shall show that $U$ is completely unitary, which implies that $\Rep^\uni(U)$ is a unitary modular tensor category. Moreover, we shall show that the unitary modular tensor categories $\RepU$ and $\RepAU$ are naturally equivalent.

Let $W_i,W_j,W_k$ be unitary $U$-modules, which can be regarded  respectively as unitary $A_U$-bimodules with left actions $\mu^i_L,\mu^j_L,\mu^k_L$ and  right actions $\mu^i_R,\mu^j_R,\mu^k_R$ related by $\ss$. Recall \eqref{eq61}. Then $\mc Y_{\mu^i_L},\mc Y_{\mu^j_L},\mc Y_{\mu^k_L}$ are the vertex operators of $U$ on $W_i,W_j,W_k$ respectively.  We let $\mc V_U{k\choose i~j}$ be the vector space of type $k\choose i~j$ intertwining operators of $U$. \index{Vijk@$\mc V_U{k\choose i~j}$} Again, $\mc V{k\choose i~j}$ denotes the vector space of  type $k\choose i~j$ intertwining operators of $V$. Since any intertwining operator of $U$ is also an intertwining operator of $V$, $\mc V_U{k\choose i~j}$ is a subspace of $\mc V{k\choose i~j}$. We give a categorical interpretation of $\mc V_U{k\choose i~j}$. Note that $W_i\boxtimes W_j$ is naturally a unitary $A_U$-bimodule, with left and actions defined by the left action of $W_i$ and the right action of $W_j$.

\begin{lm}\label{lb45}
Any $\alpha\in\Hom_{A_U}(W_i\boxtimes W_j,W_k)$ satisfies $\alpha\Pij=0$.
\end{lm}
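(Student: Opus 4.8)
The plan is to unwind the statement: by definition of $\Pij=\mu^i_R\otimes\id_j-\id_i\otimes\mu^j_L$, the claim $\alpha\Pij=0$ is exactly the equality $\alpha(\mu^i_R\otimes\id_j)=\alpha(\id_i\otimes\mu^j_L)$ of two morphisms $W_i\boxtimes W_a\boxtimes W_j\to W_k$. I would prove it by reducing \emph{both} sides to the single common morphism $\mu^k_R(\alpha\otimes\id_a)(\id_i\otimes\ss_{j,a}^{-1})$. The only inputs are the single-valuedness of $W_i,W_j,W_k$ (which ties left and right actions together through the braiding, e.g. $\mu^i_R=\mu^i_L\ss_{i,a}$, $\mu^j_L=\mu^j_R\ss_{j,a}^{-1}$, $\mu^k_L\ss_{k,a}=\mu^k_R$), the hypothesis that $\alpha$ is simultaneously a left and a right $A_U$-module morphism, and the naturality and hexagon identities for the braiding $\ss$ of $\mc C$. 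Conceptually this is the familiar fact that morphisms between local (single-valued) modules are automatically balanced (cf. \cite{KO02}), transcribed into the present unitary bimodule language.

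The side $\alpha(\id_i\otimes\mu^j_L)$ is the easy one. Using single-valuedness of $W_j$ in the form $\mu^j_L=\mu^j_R\ss_{j,a}^{-1}$ I would write $\id_i\otimes\mu^j_L=(\id_i\otimes\mu^j_R)(\id_i\otimes\ss_{j,a}^{-1})$, and then the right-module property $\alpha(\id_i\otimes\mu^j_R)=\mu^k_R(\alpha\otimes\id_a)$ delivers the target form at once.

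The side $\alpha(\mu^i_R\otimes\id_j)$ carries the real content. First I would use $\mu^i_R=\mu^i_L\ss_{i,a}$ and the left-module property $\alpha(\mu^i_L\otimes\id_j)=\mu^k_L(\id_a\otimes\alpha)$ to get $\alpha(\mu^i_R\otimes\id_j)=\mu^k_L(\id_a\otimes\alpha)(\ss_{i,a}\otimes\id_j)$; here the inserted copy of $A$ now sits on the left, acting on $W_k$ through $\mu^k_L$, and the task is to migrate it past $W_j$ to the right. I would do this by braiding naturality applied to $\alpha$, $(\id_a\otimes\alpha)=\ss_{k,a}(\alpha\otimes\id_a)\ss_{i\boxtimes j,a}^{-1}$, followed by $\mu^k_L\ss_{k,a}=\mu^k_R$, turning the expression into $\mu^k_R(\alpha\otimes\id_a)\ss_{i\boxtimes j,a}^{-1}(\ss_{i,a}\otimes\id_j)$. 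The hexagon identity $\ss_{i\boxtimes j,a}=(\ss_{i,a}\otimes\id_j)(\id_i\otimes\ss_{j,a})$ then collapses the trailing double braiding, since $\ss_{i\boxtimes j,a}^{-1}(\ss_{i,a}\otimes\id_j)=\id_i\otimes\ss_{j,a}^{-1}$, and this side too becomes $\mu^k_R(\alpha\otimes\id_a)(\id_i\otimes\ss_{j,a}^{-1})$, completing the proof.

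The main obstacle is precisely this second reduction, and it hides a subtlety worth flagging. Single-valuedness of $W_j$ gives two a priori distinct rewritings, $\mu^j_L=\mu^j_R\ss_{j,a}^{-1}$ and $\mu^j_L=\mu^j_R\ss_{a,j}$, and only the first matches the reduced left-hand expression term by term. The two are equal, but their equality \emph{is} single-valuedness of $W_j$ (equivalently $\mu^j_R(\ss_{j,a}^{-1}-\ss_{a,j})=0$); consequently one cannot hope to finish by pure braiding manipulation that forgets the module actions, since a naive attempt to identify $\id_i\otimes\ss_{j,a}^{-1}$ with $\id_i\otimes\ss_{a,j}$ \emph{after} $\mu^k_R(\alpha\otimes\id_a)$ would demand the generally nontrivial monodromy of $A$ with $W_j$ to vanish. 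Choosing the compatible rewriting and tracking carefully which braiding enters each naturality and hexagon step is therefore the crux; once the common form is reached the conclusion is immediate. In the final writeup I would present the whole argument as a short string-diagram computation.
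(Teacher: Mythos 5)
Your proof is correct and is essentially the paper's own argument written out in full: the paper disposes of this lemma in one line, appealing to graphical calculus and the fact that the left actions of $W_i,W_j,W_k$ are related to the right actions by $\ss$, and your chain of steps (the left- and right-module properties of $\alpha$, naturality of the braiding applied to $\alpha$, and the hexagon identity $\ss_{i\boxtimes j,a}=(\ss_{i,a}\otimes\id_j)(\id_i\otimes\ss_{j,a})$ collapsing against $\mu^j_L=\mu^j_R\ss_{j,a}^{-1}$) is precisely the string-diagram computation being alluded to. Your flagged subtlety about choosing the compatible rewriting of $\mu^j_L$ is a fair observation, but it does not change the route.
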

\begin{proof}
This is easy to prove using graphical calculus and the fact that the left actions of $W_i,W_j,W_k$ are related by $\ss$ to the right ones.
\end{proof}

\begin{pp}[\cite{CKM17} section 3.4]\label{lb46}
The map $\mc Y:\Hom(W_i\boxtimes W_j,W_k)\xrightarrow{\simeq} \mc V{k\choose i~j},~\alpha\mapsto\mc Y_\alpha$ (see section \ref{lb2}) restricts to an isomorphism $\mc Y:\Hom_{A_U}(W_i\boxtimes W_j,W_k)\xrightarrow{\simeq} \mc V_U{k\choose i~j}$.
\end{pp}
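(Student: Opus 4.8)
The plan is to leverage that $\mc Y\colon\Hom(W_i\boxtimes W_j,W_k)\to\mc V{k\choose i~j}$ is already a linear isomorphism by the $V$-module theory of section \ref{lb2} (see \eqref{eq61}). Since $\Hom_{A_U}(W_i\boxtimes W_j,W_k)$ is a subspace of the domain and $\mc V_U{k\choose i~j}$ is a subspace of the codomain, and $\mc Y$ is injective, the proposition reduces to the single equivalence: for $\alpha\in\Hom(W_i\boxtimes W_j,W_k)$ one has $\alpha\in\Hom_{A_U}(W_i\boxtimes W_j,W_k)$ if and only if $\mc Y_\alpha\in\mc V_U{k\choose i~j}$. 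I would devote the whole argument to this equivalence; the isomorphism statement then follows formally.

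Every $\mc Y_\alpha$ is automatically a $V$-intertwining operator, so the only content is compatibility with the action of the larger algebra $U=W_a$, carried by the $U$-vertex operators $\mc Y_{\mu^i_L},\mc Y_{\mu^j_L},\mc Y_{\mu^k_L}$. I would write out the conditions for $\alpha$ to be a left, resp.\ right, $A_U$-module morphism from the bimodule $(W_i\boxtimes W_j,\mu^i_L\otimes\id_j,\id_i\otimes\mu^j_R)$ to $W_k$,
\begin{gather*}
\mu^k_L(\id_a\otimes\alpha)=\alpha(\mu^i_L\otimes\id_j),\qquad \mu^k_R(\alpha\otimes\id_a)=\alpha(\id_i\otimes\mu^j_R),
\end{gather*}
and translate each into a fusion relation via the dictionary of section \ref{lb2}. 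The first equation is exactly the morphism form \eqref{eq31} of the fusion relation \eqref{eq16} for
\[
\mc Y_{\mu^k_L}(w^{(a)},z)\mc Y_\alpha(w^{(i)},\zeta)=\mc Y_\alpha\big(\mc Y_{\mu^i_L}(w^{(a)},z-\zeta)w^{(i)},\zeta\big),
\]
which expresses that $\mc Y_\alpha$ intertwines the $U$-action on the charge space $W_i$ with that on the target $W_k$. The second equation, after using the braiding relations \eqref{eq65} together with the single-valuedness identities $\mu^j_R=\mu^j_L\ss_{j,a}$ and $\mu^k_R=\mu^k_L\ss_{k,a}$, becomes the corresponding fusion relation governing the $U$-action on the source space $W_j$.

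It remains to see that these two fusion relations together are equivalent to $\mc Y_\alpha$ being a genuine $U$-intertwining operator. Here I would invoke \cite{CKM17} section 3.4, which identifies the type $k\choose i~j$ intertwining operators of the extension $U$ with precisely those $V$-intertwining operators intertwining the $U$-action in all three slots; the underlying analytic input is Huang's convergence and associativity theory that makes the fusion relations \eqref{eq16} meaningful. The balancing $\alpha\Pij=0$, needed to pass freely between $W_i\boxtimes W_j$ and the relative tensor product, is automatic by Lemma \ref{lb45}, reflecting that all the modules in play are single-valued. Reading the translation in both directions then yields $\alpha\in\Hom_{A_U}(W_i\boxtimes W_j,W_k)\iff\mc Y_\alpha\in\mc V_U{k\choose i~j}$.

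I expect the main obstacle to be this last equivalence between the pair of categorical fusion identities and the full Jacobi identity defining a $U$-intertwining operator. The difficulty is analytic rather than algebraic: because $U$ need not be generated by a tractable set of vectors, one cannot reduce the $U$-intertwining property to a check on generators, and the argument must proceed through the convergence and rearrangement of products of intertwining operators exactly as packaged in \eqref{eq16}. Once that equivalence is imported from \cite{CKM17}, the categorical translation through \eqref{eq31} and \eqref{eq65} is routine bookkeeping.
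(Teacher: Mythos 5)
Your proposal is correct and follows essentially the same route as the paper's own sketch: both reduce the proposition to the equivalence between the $U$-Jacobi identity for $\mc Y_\alpha$ and a pair of fusion relations (deferred to \cite{CKM17}, resp.\ the contour-integral results of \cite{Gui19a}), which are then translated categorically through \eqref{eq31} and the braiding relations \eqref{eq65}. The only cosmetic difference is that the paper encodes the second fusion relation as the balancing condition $\alpha\Pij=0$ rather than as the right $A_U$-module property, and these amount to the same thing given the left-module property and single-valuedness, exactly as you indicate via Lemma \ref{lb45}.
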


\begin{proof}
We sketch the proof here; details can be found in the reference provided. Choose any $\alpha\in\Hom(W_i\boxtimes W_j,W_k)$. Then $\mc Y_\alpha$ being an intertwining operator of $U$ means precisely that $\mc Y_\alpha$ satisfies the Jacobi identity with the vertex operator of $U$. By contour integrals, the Jacobi identity is well known to be equivalent to the fusion relations
\begin{gather}
\mc Y_{\mu^k_L}(u,z)\mc Y_\alpha(w^{(i)},\zeta)=\mc Y_\alpha(\mc Y_{\mu^i_L}(u,z-\zeta)w^{(i)},\zeta),\label{eq71}\\
\mc Y_\alpha(w^{(i)},\zeta)\mc Y_{\mu^k_L}(u,z)=\mc Y_\alpha(\mc Y_{\mu^i_L}(u,z-\zeta)w^{(i)},\zeta)\label{eq70}
\end{gather}
for any $u\in U=W_a,w^{(i)}\in W_i$, where $0<|z-\zeta|<|\zeta|<|z|$ and $\arg(z-\zeta)=\arg\zeta=\arg z$ in the first equation, and $0<|\zeta-z|<|z|<|\zeta|$ and $\arg z=\arg \zeta=\pi+\arg(z-\zeta)$ in the second one. (See for instance \cite{Gui19a} proposition 2.13.) By \cite{Gui19a} proposition 2.9, \eqref{eq70} is equivalent to the fusion relation
\begin{align}
\mc Y_\alpha(w^{(i)},\zeta)\mc Y_{\mu^k_L}(u,z)=\mc Y_\alpha(\mc Y_{\mu^i_R}(w^{(i)},\zeta-z)u,z).\label{eq72}
\end{align}
The categorical interpretations of \eqref{eq71} and \eqref{eq72}  are respectively $\alpha\in\Hom_{A_U,-}(W_i\boxtimes W_j,W_k)$ and $\alpha\Pij=0$, which are clearly equivalent to that $\alpha\in\Hom_{A_U}(W_i\boxtimes W_j,W_k)$.
\end{proof}

Recall the definition of VOA modules in section \ref{lb2}. Recall by theorem \ref{lb53} that any irreducible $U$-module admits a unitary structure. We choose a representative $W_t$ for each equivalence class $[W_t]$ of irreducible unitary $U$-modules (equivalently, irreducible unitary single-valued left $A_U$-modules), and let all these $W_t$ form a set $\mc E_U$. \index{EU@$\mc E_U$} That $W_t\in\mc E_U$ is abbreviated to $t\in\mc E_U$. We also assume that the vacuum $U$-module $W_a$ is in $\mc E_U$. Then the tensor product of $U$-modules $W_i,W_j$ is
\begin{align}
W_{ij}\equiv W_i\boxtimes_U W_j=\bigoplus_{t\in\mc E_U}\mc V_U{t\choose i~j}^*\otimes W_t.\label{eq63}
\end{align}
We choose an inner product $\Lambda_U$ for any $\mc V_U{t\choose i~j}^*$, and assume that the above direct sum is orthogonal.  The vertex operator for $W_{ij}$ is $\bigoplus_t\id\otimes\mc Y_{\mu^t_L}$, where $\mu^t_L\in\Hom_{A_U}(W_a\boxtimes W_t,W_t)$ is the left action of the $A_U$-bimodule $W_t$.

Define a $U$-intertwining operator $\mc Y_{\mu_{i,j}}$ of type ${ij\choose i~j}={W_{ij}\choose W_iW_j}$, such that for any $w^{(i)}\in W_i,w^{(j)}\in W_j,t\in\mc E_U,\mc Y_\alpha\in\mc V_U{t\choose i~j}$, and $w^{(\ovl t)}\in W_{\ovl t}$ (the contragredient unitary $U$-module of $W_t$),	
\begin{align}
\bk{\mc Y_{\mu_{i,j}}(w^{(i)},z)w^{(j)},\mc Y_\alpha\otimes w^{(\ovl t)}}=\bk{\mc Y_\alpha(w^{(i)},z)w^{(j)},w^{(\ovl t)}}.	\label{eq67}
\end{align}
To write the above definition more explicitly, we choose a basis $\Upsilon^t_{i,j}$ of the vector space $\Hom_{A_U}(W_i\boxtimes W_j,W_t)$.\index{zz@$\Upsilon^t_{i,j}$} Then $\{\mc Y_\alpha:\alpha\in\Upsilon^k_{i,j}\}$ is a basis of $\mc V_U{t\choose i~j}$ whose dual basis is denoted by $\{ \widecheck {\mc Y}^\alpha:\alpha\in\Upsilon^k_{i,j}\}$. Then
\begin{align}
\mc Y_{\mu_{i,j}}(w^{(i)},z)w^{(j)}=\sum_{t\in\mc E_U}\sum_{\alpha\in\Upsilon^t_{i,j}}\widecheck {\mc Y}^\alpha\otimes\mc Y_\alpha(w^{(i)},z)w^{(j)}.
\end{align}
Note that $\mc \mu_{i,j}\in\Hom_{A_U}(W_i\boxtimes W_j,W_{ij})$. Then the above relation can also be written as
\begin{align}
\mu_{i,j}=\sum_{t\in\mc E_U}\sum_{\alpha\in\Upsilon^t_{i,j}}\widecheck {\mc Y}^\alpha\otimes\alpha.\label{eq64}
\end{align}

By lemma \ref{lb45} we have $\mu_{i,j}\Pij=0$ . We claim that $\mc Y_{\mu_{i,j}}$ satisfies the universal property that for any unitary $U$-module $W_k$ and any $\mc Y_\alpha$ in $\mc V_U{k\choose i~j}$ (equivalently, $\alpha\in\Hom_{A_U}(W_i\boxtimes W_j,W_k)$) there exists a unique $U$-module homomorphism $\wtd\alpha:W_{ij}\rightarrow W_k$ (equivalently, $\wtd\alpha\in\Hom_{A_U}(W_{ij},W_k)$) such that $\mc Y_\alpha=\wtd\alpha\mc Y_{\mu_{i,j}}$ (equivalently, $\alpha=\wtd\alpha\mu_{i,j}$ by \eqref{eq31}). Indeed, since  the vector space $\Hom_U(W_{ij},W_k)$  of $U$-module morphisms from $W_{ij}$ to $W_k$ is naurally identified with $\Hom_{A_U}(W_{ij},W_k)$, just as \eqref{eq61}, we have a natural isomorphism of vector spaces
\begin{align}
\wtd{\mc Y}:\Hom_U(W_{ij},W_k)=\Hom_{A_U}(W_{ij},W_k)\rightarrow\mc V_U{k\choose i~j},\qquad \wtd\alpha\mapsto\wtd{\mc Y}_{\wtd\alpha}.
\end{align}
It is easy check for any $\wtd\alpha\in\Hom_{A_U}(W_{ij},W_k)$ that
\begin{align}
\wtd{\mc Y}_{\wtd\alpha}=\wtd\alpha\mc Y_{\mu_{i,j}},\label{eq69}
\end{align}
which by \eqref{eq31} also equals $\mc Y_{\wtd\alpha\mu_{i,j}}$. Now, by proposition \ref{lb46}, for any $\alpha\in\Hom_{A_U}(W_i\boxtimes W_j,W_k)$, we can find an $\wtd\alpha$ satisfying
\begin{align}
\mc Y_\alpha=\wtd{\mc Y}_{\wtd\alpha}.\label{eq68}
\end{align}
Thus we have $\mc Y_\alpha=\wtd{\mc Y}_{\wtd\alpha}=\wtd\alpha\mc Y_{\mu_{i,j}}=\mc Y_{\wtd\alpha\mu_{i,j}}$, which shows $\alpha=\wtd\alpha\mu_{i,j}$. Recalling definition \ref{lb47}, we conclude that $(W_{ij},\mu_{i,j})$ is a tensor product of the $A_U$-bimodules $W_i,W_j$ over $A_U$. Indeed, under suitable choice of $\Lambda_U$ the tensor products become unitary:

\begin{thm}\label{lb50}
There exists for each $t\in\mc E_U$ a unique inner product $\Lambda_U$ on the vector space $\mc V_U{t\choose i~j}^*$ such that $(W_{ij},\mu_{i,j})$ becomes a unitary tensor product of the $A_U$-bimodules $W_i,W_j$ over $A_U$. Moreover, $\Lambda_U$ is the invariant sesquilinear form of $U$ (cf. section \ref{lb48}).
\end{thm}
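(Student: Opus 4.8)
The plan is to prove the statement in two stages: existence and uniqueness of $\Lambda_U$, and its identification with the invariant sesquilinear form of $U$.

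For existence and uniqueness, I would first observe that, since the $W_t$ ($t\in\mc E_U$) are mutually inequivalent irreducible $A_U$-bimodules, \emph{every} inner product making $W_{ij}=\bigoplus_t\mc V_U{t\choose i~j}^*\otimes W_t$ a unitary $A_U$-bimodule (with the fixed actions $\bigoplus_t\id\otimes\mu^t_L$, $\bigoplus_t\id\otimes\mu^t_R$) is an orthogonal sum over $t$ of forms $\Lambda_U\otimes\langle\cdot|\cdot\rangle_{W_t}$, with $\Lambda_U$ a positive form on $\mc V_U{t\choose i~j}^*$. This is a Schur-type computation: for $\xi\in\mc V_U{t\choose i~j}^*$ the insertion $w\mapsto\xi\otimes w$ is a bimodule morphism $W_t\to W_{ij}$, so the product of two such insertions lies in $\End_{A_U}(W_t)=\mathbb C\id_t$ and defines $\Lambda_U$, while inequivalence of the $W_t$ forces orthogonality across $t$. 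Hence admissible inner products correspond bijectively to choices of $\Lambda_U$. Existence then follows by transport of structure: an abstract unitary tensor product $(W_{i\bullet j},\eta_{i,j})$ exists by Theorem \ref{lb28}, the universal property of the already-constructed tensor product $(W_{ij},\mu_{i,j})$ yields an $A_U$-bimodule isomorphism $\phi$ with $\eta_{i,j}=\phi\mu_{i,j}$ (as in the proof of Theorem \ref{lb28}), and pulling the inner product of $W_{i\bullet j}$ back along $\phi$ makes $W_{ij}$ a unitary bimodule for which $\mu_{i,j}^*\mu_{i,j}=\Cij$ and $\mu_{i,j}\mu_{i,j}^*=d_A\id_{ij}$, i.e.\ a unitary tensor product by Proposition \ref{lb27}. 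For uniqueness, two admissible $\Lambda_U,\Lambda_U'$ make $(W_{ij},\mu_{i,j})$ a unitary tensor product in two ways; Theorem \ref{lb28} provides a unitary $u$ with $\mu_{i,j}=u\mu_{i,j}$, and the universal property forces $u=\id_{ij}$, so the two inner products agree.

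For the identification with the invariant form, I would verify that the $\Lambda_U$ just produced satisfies the fusion relation of Definition \ref{lb49} written for $U$; since the invariant form of the strongly unitary VOA $U$ (Theorem \ref{lb53}) is well defined, Hermitian, non-degenerate, and the unique solution of that relation, this identifies the two. Translating the relation into $\RepAU$ through the isomorphism $\mc Y$ of Proposition \ref{lb46}, it reads $L=\sum_{t}\sum_{\alpha,\beta\in\Upsilon^t_{i,j}}\Lambda_U^{\mathrm{inv}}(\widecheck{\mc Y}^\alpha|\widecheck{\mc Y}^\beta)\,\beta^{\dagger_U}(\id_{\ovl i}\otimes\alpha)$, where $L=\mu^j_L(e^U_{\ovl i,i}\otimes\id_j)$ is built from the $U$-action and the $U$-annihilation operator $e^U_{\ovl i,i}$. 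The key observation is that $U$ and $V$ share the conformal vector, so the formal-adjoint formula for intertwining operators is identical over the two VOAs; consequently $\mc Y_\alpha^{\dagger_U}=\mc Y_\alpha^{\dagger_V}$ as an intertwining operator, and Theorem \ref{lb6} applied in $\RepV$ gives $\beta^{\dagger_U}=(\ev_{\ovl i,i}\otimes\id_j)(\id_{\ovl i}\otimes\beta^*)$, with $\ev_{\ovl i,i}$ the $V$-evaluation and $\beta^*$ the adjoint in $\mc C$.

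Substituting this and using $\mu_{i,j}=\sum_{t,\alpha}\widecheck{\mc Y}^\alpha\otimes\alpha$, I would recognize (after relabeling dummy indices) that $\sum_{t,\alpha,\beta}\Lambda_U(\widecheck{\mc Y}^\alpha|\widecheck{\mc Y}^\beta)\beta^*\alpha=\mu_{i,j}^*\mu_{i,j}=\Cij$ straight from the unitarity condition; hence the right-hand side with $\Lambda_U$ in place of $\Lambda_U^{\mathrm{inv}}$ collapses to $(\ev_{\ovl i,i}\otimes\id_j)(\id_{\ovl i}\otimes\Cij)$, and it remains only to check that this equals $L$. The hard part will be exactly this last equality: it amounts to identifying $U$'s intrinsically defined annihilation operator $e^U_{\ovl i,i}$ with the categorical duality datum $\ev^{A_U}_{\ovl i,i}\mu_{\ovl i,i}$ of the Q-system, via the correlation \eqref{eq53} between $\ev$ and $\ev^{A_U}$ and the construction of $\ev^{A_U}$ in section \ref{lb39}. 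This is the $U$-analogue of the fact — encoded for $V$ in Corollary \ref{lb13} and built into the section \ref{lb39} definition $\mu^{\ovl i}_L=((\mu^i_R)^*)^\vee$ — that the VOA annihilation operator is the categorical evaluation, and carrying it out requires a graphical comparison parallel to the proof of Theorem \ref{lb6}. Once it is in place, the $d_A$-normalizations introduced by the Q-system need only be tracked consistently, which is routine.
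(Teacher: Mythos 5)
Your first stage is essentially the paper's own argument: the paper likewise takes an abstract unitary tensor product $(W_{i\bullet j},\eta_{i,j})$, extracts from the first half of the proof of theorem \ref{lb28} an invertible $K\in\Hom_{A_U}(W_{ij},W_{i\bullet j})$ with $\eta_{i,j}=K\mu_{i,j}$, chooses $\Lambda_U$ so that $K$ becomes unitary, and gets uniqueness of $\Lambda_U$ from the uniqueness up to unitaries of unitary tensor products. Your Schur-type classification of all admissible inner products on $\bigoplus_t\mc V_U{t\choose i~j}^*\otimes W_t$ is a correct and welcome elaboration of the step the paper compresses into the phrase ``using linear algebra.''

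In the second stage you have the right skeleton — $\dagger_U=\dagger_V$ since $U$ and $V$ share the conformal vector and inner products, theorem \ref{lb6} to convert $\beta^\dagger$ into $(\ev_{\ovl i,i}\otimes\id_j)(\id_{\ovl i}\otimes\beta^*)$, and $\mu_{i,j}^*\mu_{i,j}=\Cij$ to collapse the sum — but the step you isolate as ``the hard part'' is where your proposal stops short of a proof, and the detour you sketch through the section \ref{lb39} duality data is both unexecuted and unnecessary. The paper closes that step immediately with tools you already invoke: since $\mu^i_R=\mu^i_L\ss_{a,i}$, equation \eqref{eq65} gives $\mc Y_{\mu^i_R}=B_+\mc Y_{\mu^i_L}$, so $\mc Y_{\mu^i_R}$ is the \emph{creation} operator of the $U$-module $W_i$; because $U$ is unitary with the same adjoint operation as $V$, relation \eqref{eq9} applied to $U$ identifies the $U$-annihilation operator of $W_i$ as $\mc Y_{(\mu^i_R)^\dagger}$. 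Theorem \ref{lb6} then yields $(\mu^i_R)^\dagger=(\ev_{\ovl i,i}\otimes\id_a)(\id_{\ovl i}\otimes(\mu^i_R)^*)$, whence, by the interchange law and the definition of $\Cij$,
\begin{align*}
\mu^j_L\big((\mu^i_R)^\dagger\otimes\id_j\big)
=(\ev_{\ovl i,i}\otimes\id_j)\Big(\id_{\ovl i}\otimes\big((\id_i\otimes\mu^j_L)((\mu^i_R)^*\otimes\id_j)\big)\Big)
=(\ev_{\ovl i,i}\otimes\id_j)(\id_{\ovl i}\otimes\Cij),
\end{align*}
which is precisely your morphism $L$. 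Hence the identity $\Cij=\mu_{i,j}^*\mu_{i,j}$ translates verbatim into the fusion relation \eqref{eq66}, which is literally the relation of definition \ref{lb49} defining the invariant sesquilinear form of $U$ — with no $d_A$-normalizations to track and no appeal to the correlated evaluations \eqref{eq53}. By contrast, your proposed route would first require proving that $W_{\ovl i}$ with the actions \eqref{eq54} is the contragredient $U$-module and that the $U$-annihilation operator corresponds to $\fk e_{\ovl i,i}$, statements the paper never establishes for $U$ and which would need exactly the kind of graphical argument your reduction was designed to avoid. So the gap is genuine but small, and it closes in two lines via \eqref{eq65}, \eqref{eq9}, and the definition of $\Cij$.
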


\begin{proof}
Let $(W_{i\bullet j},\eta_{i,j})$ be a unitary tensor product of $W_i,W_j$ over $A_U$. By the first half of the proof of theorem \ref{lb28}, tensor products of unitary bimodules of a Q-system are unique up to multiplications by invertible morphisms. Thus there exists an invertible $K\in\Hom_{A_U}(W_{ij},W_{i\bullet j})$ such that $\eta_{i,j}=K\mu_{i,j}$. Therefore, the  decomposition of $W_{i\bullet j}$ into irreducible single-valued left $A_U$-modules is the same as that of $W_{ij}$, which takes the form  \eqref{eq63}. Now, using linear algebra, one can easily find an inner product $\Lambda_U$ on any $\mc V_U{t\choose i~j}^*$, such that $K$ becomes unitary. Then the tensor product $(W_{i\bullet j},\eta_{i,j})$ defined by such $\Lambda_U$ is clearly unitary. This proves the existence of $\Lambda_U$. The uniqueness of $\Lambda_U$ follows from the uniqueness up to unitaries of the unitary tensor products of $A_U$-bimodules (theorem \ref{lb28}).

Now assume that  $(W_{ij},\mu_{i,j})$ is a unitary tensor product. We show that $\Lambda_U$ is the invariant sesquilinear form. Assume that for each $t\in\mc E_U$,  $\Upsilon^t_{i,j}$ is chosen in  such a way that $\{ \widecheck {\mc Y}^\alpha:\alpha\in\Upsilon^k_{i,j}\}$ is an orthonormal basis of $\mc V_U{t\choose i~j}^*$ under $\Lambda_U$. Then  by $\Cij=\mu_{i,j}^*\mu_{i,j}$ and equation \eqref{eq64}, we have
\begin{align}
\vcenter{\hbox{{\def\svgscale{0.6}
\begingroup%
  \makeatletter%
  \providecommand\color[2][]{%
    \errmessage{(Inkscape) Color is used for the text in Inkscape, but the package 'color.sty' is not loaded}%
    \renewcommand\color[2][]{}%
  }%
  \providecommand\transparent[1]{%
    \errmessage{(Inkscape) Transparency is used (non-zero) for the text in Inkscape, but the package 'transparent.sty' is not loaded}%
    \renewcommand\transparent[1]{}%
  }%
  \providecommand\rotatebox[2]{#2}%
  \newcommand*\fsize{\dimexpr\f@size pt\relax}%
  \newcommand*\lineheight[1]{\fontsize{\fsize}{#1\fsize}\selectfont}%
  \ifx\svgwidth\undefined%
    \setlength{\unitlength}{58.40038919bp}%
    \ifx\svgscale\undefined%
      \relax%
    \else%
      \setlength{\unitlength}{\unitlength * \real{\svgscale}}%
    \fi%
  \else%
    \setlength{\unitlength}{\svgwidth}%
  \fi%
  \global\let\svgwidth\undefined%
  \global\let\svgscale\undefined%
  \makeatother%
  \begin{picture}(1,2.04920951)%
    \lineheight{1}%
    \setlength\tabcolsep{0pt}%
    \put(-0.00518377,1.99650224){\color[rgb]{0,0,0}\makebox(0,0)[lt]{\lineheight{1.25}\smash{\begin{tabular}[t]{l}$i$\end{tabular}}}}%
    \put(0.88216243,1.99613488){\color[rgb]{0,0,0}\makebox(0,0)[lt]{\lineheight{1.25}\smash{\begin{tabular}[t]{l}$j$\end{tabular}}}}%
    \put(0,0){\includegraphics[width=\unitlength,page=1]{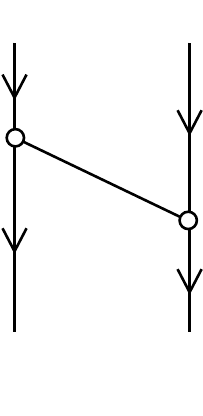}}%
    \put(0.01243959,0.01451404){\color[rgb]{0,0,0}\makebox(0,0)[lt]{\lineheight{1.25}\smash{\begin{tabular}[t]{l}$i$\end{tabular}}}}%
    \put(0.87132956,0.01414668){\color[rgb]{0,0,0}\makebox(0,0)[lt]{\lineheight{1.25}\smash{\begin{tabular}[t]{l}$j$\end{tabular}}}}%
  \end{picture}%
\endgroup%
}}}~~~=~~\sum_{t\in\mc E_U}\sum_{\alpha\in\Upsilon^t_{ij}}\vcenter{\hbox{{\def\svgscale{0.6}
			}}}~~,
\end{align}
and hence
\begin{align}
\vcenter{\hbox{{\def\svgscale{0.6}
\begingroup%
  \makeatletter%
  \providecommand\color[2][]{%
    \errmessage{(Inkscape) Color is used for the text in Inkscape, but the package 'color.sty' is not loaded}%
    \renewcommand\color[2][]{}%
  }%
  \providecommand\transparent[1]{%
    \errmessage{(Inkscape) Transparency is used (non-zero) for the text in Inkscape, but the package 'transparent.sty' is not loaded}%
    \renewcommand\transparent[1]{}%
  }%
  \providecommand\rotatebox[2]{#2}%
  \newcommand*\fsize{\dimexpr\f@size pt\relax}%
  \newcommand*\lineheight[1]{\fontsize{\fsize}{#1\fsize}\selectfont}%
  \ifx\svgwidth\undefined%
    \setlength{\unitlength}{59.64504442bp}%
    \ifx\svgscale\undefined%
      \relax%
    \else%
      \setlength{\unitlength}{\unitlength * \real{\svgscale}}%
    \fi%
  \else%
    \setlength{\unitlength}{\svgwidth}%
  \fi%
  \global\let\svgwidth\undefined%
  \global\let\svgscale\undefined%
  \makeatother%
  \begin{picture}(1,1.96857552)%
    \lineheight{1}%
    \setlength\tabcolsep{0pt}%
    \put(0,0){\includegraphics[width=\unitlength,page=1]{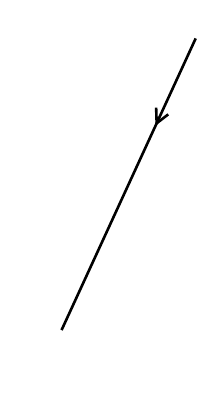}}%
    \put(-0.00161539,1.90458717){\color[rgb]{0,0,0}\makebox(0,0)[lt]{\lineheight{1.25}\smash{\begin{tabular}[t]{l}$i$\end{tabular}}}}%
    \put(0.41499815,1.91190204){\color[rgb]{0,0,0}\makebox(0,0)[lt]{\lineheight{1.25}\smash{\begin{tabular}[t]{l}$i$\end{tabular}}}}%
    \put(0.89626129,1.91696813){\color[rgb]{0,0,0}\makebox(0,0)[lt]{\lineheight{1.25}\smash{\begin{tabular}[t]{l}$j$\end{tabular}}}}%
    \put(0,0){\includegraphics[width=\unitlength,page=2]{adjoint-8.pdf}}%
    \put(0.240334,0.01385148){\color[rgb]{0,0,0}\makebox(0,0)[lt]{\lineheight{1.25}\smash{\begin{tabular}[t]{l}$j$\end{tabular}}}}%
  \end{picture}%
\endgroup%
}}}~~~=~~\sum_{t\in\mc E_U}\sum_{\alpha\in\Upsilon^t_{ij}}\vcenter{\hbox{{\def\svgscale{0.6}
\begingroup%
  \makeatletter%
  \providecommand\color[2][]{%
    \errmessage{(Inkscape) Color is used for the text in Inkscape, but the package 'color.sty' is not loaded}%
    \renewcommand\color[2][]{}%
  }%
  \providecommand\transparent[1]{%
    \errmessage{(Inkscape) Transparency is used (non-zero) for the text in Inkscape, but the package 'transparent.sty' is not loaded}%
    \renewcommand\transparent[1]{}%
  }%
  \providecommand\rotatebox[2]{#2}%
  \newcommand*\fsize{\dimexpr\f@size pt\relax}%
  \newcommand*\lineheight[1]{\fontsize{\fsize}{#1\fsize}\selectfont}%
  \ifx\svgwidth\undefined%
    \setlength{\unitlength}{64.2530111bp}%
    \ifx\svgscale\undefined%
      \relax%
    \else%
      \setlength{\unitlength}{\unitlength * \real{\svgscale}}%
    \fi%
  \else%
    \setlength{\unitlength}{\svgwidth}%
  \fi%
  \global\let\svgwidth\undefined%
  \global\let\svgscale\undefined%
  \makeatother%
  \begin{picture}(1,1.85761453)%
    \lineheight{1}%
    \setlength\tabcolsep{0pt}%
    \put(0,0){\includegraphics[width=\unitlength,page=1]{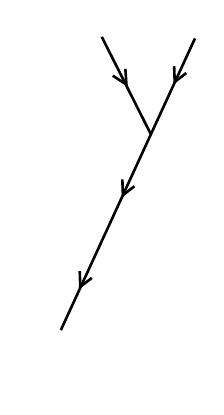}}%
    \put(-0.0047116,1.79821516){\color[rgb]{0,0,0}\makebox(0,0)[lt]{\lineheight{1.25}\smash{\begin{tabular}[t]{l}$i$\end{tabular}}}}%
    \put(0.3820241,1.80500544){\color[rgb]{0,0,0}\makebox(0,0)[lt]{\lineheight{1.25}\smash{\begin{tabular}[t]{l}$i$\end{tabular}}}}%
    \put(0.82877299,1.80970821){\color[rgb]{0,0,0}\makebox(0,0)[lt]{\lineheight{1.25}\smash{\begin{tabular}[t]{l}$j$\end{tabular}}}}%
    \put(0.40553729,1.05728956){\color[rgb]{0,0,0}\makebox(0,0)[lt]{\lineheight{1.25}\smash{\begin{tabular}[t]{l}$t$\end{tabular}}}}%
    \put(0.20332515,0.0128581){\color[rgb]{0,0,0}\makebox(0,0)[lt]{\lineheight{1.25}\smash{\begin{tabular}[t]{l}$j$\end{tabular}}}}%
    \put(0.72654491,1.17511938){\color[rgb]{0,0,0}\makebox(0,0)[lt]{\lineheight{1.25}\smash{\begin{tabular}[t]{l}$\alpha$\end{tabular}}}}%
    \put(0.50626289,0.63765951){\color[rgb]{0,0,0}\makebox(0,0)[lt]{\lineheight{1.25}\smash{\begin{tabular}[t]{l}$\alpha^*$\end{tabular}}}}%
    \put(0,0){\includegraphics[width=\unitlength,page=2]{adjoint-9.pdf}}%
  \end{picture}%
\endgroup%
}}}~~,
\end{align}
which by theorem \ref{lb6} implies for any $w_1^{(i)},w_2^{(i)}\in W_i$ the fusion relation
\begin{align}
\mc Y_{\mu^j_L}\big(\mc Y_{(\mu^i_R)^\dagger}(\ovl{w^{(i)}_2},z-\zeta)w^{(i)}_1,\zeta \big)=\sum_{t\in\mc E_U}\sum_{\alpha\in\Upsilon^t_{ij}}\mc Y_{\alpha^\dagger}(\ovl{w^{(i)}_2},z)\mc Y_\alpha(w^{(i)}_1,\zeta).\label{eq66}
\end{align}
Recall that $\mc Y_{\mu^j_L}$ is the vertex operator of $U$ on $W_j$. Since $\mu^i_R=\mu^i_L\ss_{a,i}$, we have $\mc Y_{\mu^i_R}=B_+\mc Y_{\mu^i_L}$ by \eqref{eq65}, which shows that $\mc Y_{\mu^i_R}\in\mc V_U{i\choose i~a}$ is the creation operator of the $U$-module $W_i$. (See section \eqref{lb2} for the definition of creation and annihilation operators). Thus by \eqref{eq9}, $\mc Y_{(\mu^i_R)^\dagger}\in\mc V_U{a\choose\ovl i~i}$ is the annihilation operator of the $U$-module $W_j$. Therefore, by definition \ref{lb49}, we see that \eqref{eq66} is the fusion relation that defines invariant sesquilinear forms for $U$. This shows that $\{ \widecheck {\mc Y}^\alpha:\alpha\in\Upsilon^k_{i,j}\}$ is also an orthonormal basis of $\mc V_U{t\choose i~j}^*$ under the invariant sesquilinear form, which proves that the later is positive definite and equals $\Lambda_U$.
\end{proof}

\begin{thm}\label{lb55}
Let $V$ be a CFT-type, regular, and completely unitary VOA, and let $U$ be a CFT-type unitary VOA extension of $V$. Then:
\begin{itemize}
	\item $U$ is also (regular and) completely unitary.
	\item Under the natural identification of $\RepU$ and $\RepAU$ as $C^*$-categories,  the monoidal, braiding, and ribbon structures of $\RepU$ agree with those of $(\RepAU,\boxtimes_{A_U},\ss^{A_U},\vartheta^{A_U})$ defined by the system of unitary tensor products $(W_{ij},\mu_{i,j})$ (for any $W_i,W_j$ in $\RepAU$) as constructed in \eqref{eq63}, \eqref{eq67} under the invariant inner product $\Lambda_U$.
\end{itemize}

\end{thm}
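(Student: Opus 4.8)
The plan is to treat the two assertions in turn, deriving the first from the tensor-product analysis already in place and reducing the second to a structure-by-structure comparison carried out under the $C^*$-equivalence of theorem \ref{lb23}. For the first bullet, regularity of $U$ is cited from (the proof of) \cite{McR20} theorem 4.13. For complete unitarity, $U$ is unitary by hypothesis and every $U$-module is unitarizable by theorem \ref{lb53}, so only the positivity of the invariant sesquilinear form on each $\mc V_U{t\choose i~j}^*$ remains. This is exactly what theorem \ref{lb50} delivers: the inner product $\Lambda_U$ it constructs, to make $(W_{ij},\mu_{i,j})$ a unitary tensor product of $A_U$-bimodules, is identified there with the invariant sesquilinear form, and being an inner product it is positive. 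Hence $U$ is completely unitary, and theorem \ref{lb51} applied to $U$ shows $\RepU$ is a unitary modular tensor category. On the other side, $A_U$ is a haploid commutative Q-system in the unitary modular tensor category $\mc C=\RepV$, so the corollary to theorem \ref{lb44} shows $\RepAU$ is also a unitary modular tensor category.

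For the second bullet I would fix once and for all the system of unitary tensor products $(W_{ij},\mu_{i,j})$ of \eqref{eq63}--\eqref{eq67} with $\Lambda_U$ the invariant inner product, and check that the identity-on-underlying-spaces $C^*$-functor of theorem \ref{lb23} intertwines every structure. The monoidal part is built into theorem \ref{lb50}: with this choice the object $W_{ij}$ and the universal morphism $\mu_{i,j}$ are simultaneously the VOA tensor product $W_i\boxtimes_U W_j$ (with its unitary structure from complete unitarity) and the $A_U$-bimodule unitary tensor product $W_i\boxtimes_{A_U}W_j$. Tensor products of morphisms then agree, since $F\otimes_{A_U}G$ is characterized by \eqref{eq45} while the VOA tensor product of morphisms is characterized, through proposition \ref{lb46}, by the corresponding intertwining-operator identity \eqref{eq31}; the associativity isomorphisms agree because the categorical $\fk A_{i,j,k}$ of \eqref{eq46} translates, via \eqref{eq31} and proposition \ref{lb46}, into exactly the fusion relation \eqref{eq16} defining the associativity isomorphism of $\RepU$; and the unit isomorphisms agree because the $\fk l_i,\fk r_i$ of \eqref{eq52} are induced by $\mu_{a,i}=\mu^i_L$ and $\mu_{i,a}=\mu^i_R$, whose intertwining operators are the vertex operator and creation operator of $U$ on $W_i$, i.e.\ precisely the maps realizing the VOA unitors. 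The ribbon structure is the easiest: by theorem \ref{lb44} the canonical twist $\vartheta^{A_U}_i$ equals the $\mc C=\RepV$ twist $\vartheta_i=e^{2\im\pi L_0}$, and since $U$ and $V$ share the conformal vector $\nu$ the operator $L_0$ is the same on $W_i$ whether $W_i$ is viewed as a $V$- or $U$-module, so $\vartheta^{A_U}_i$ coincides with the $U$-twist $e^{2\im\pi L_0}$ on $\RepU$.

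The braiding is where the real work lies, and I expect it to be the main obstacle. The category $\RepU$ carries an intrinsic braiding defined through braided $U$-intertwining operators $B_\pm$, whereas $\ss^{A_U}_{i,j}$ is defined in \eqref{eq60} through the braiding $\ss$ of the ambient category $\mc C=\RepV$ acting on the $V$-tensor product $W_i\boxtimes W_j$. The bridge is that $B_\pm$ is given by the same formula on intertwining operators of $U$ and of $V$, so applying the $\mc C$-braiding relation $B_\pm\mc Y_{\mu_{i,j}}=\mc Y_{\mu_{i,j}\circ\ss^{\pm1}}$ to the universal $U$-intertwining operator $\mc Y_{\mu_{i,j}}$ and invoking the universal property of $(W_{ji},\mu_{j,i})$ — together with proposition \ref{lb46}, which guarantees $B_\pm\mc Y_{\mu_{i,j}}$ is again a $U$-intertwining operator — reproduces exactly equation \eqref{eq60}. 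The delicate points will be keeping the braiding conventions and the indices straight and confirming that the braided operator of a $U$-intertwining operator remains a $U$-intertwining operator; once these are settled the $U$-braiding and $\ss^{A_U}$ are the same $A_U$-bimodule isomorphism, and combining the monoidal, braiding, and ribbon comparisons yields the asserted equivalence of unitary modular tensor categories (modularity being automatically respected by a braided ribbon $*$-equivalence between unitary modular tensor categories).
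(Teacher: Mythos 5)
Your proposal is correct and follows essentially the same route as the paper: regularity from \cite{McR20} and complete unitarity via theorems \ref{lb53} and \ref{lb50}; agreement of the monoidal structure and unitors through the universal property, the fusion-relation characterization of the VOA associativity isomorphism, and the identification of $\fk l_i,\fk r_i$ with the vertex and creation operators; and the braiding identity $\ss^U_{i,j}\mu_{i,j}=\mu_{j,i}\ss_{i,j}$ obtained by applying $B_+$ --- which, as you correctly flag and the paper confirms, is the same operation on $U$- and $V$-intertwining operators because $U$ and $V$ share the Virasoro operators --- to the universal intertwining operator and invoking uniqueness in \eqref{eq60}. Your only deviation is cosmetic: for the twist you combine theorem \ref{lb44} with the shared conformal vector (hence shared $e^{2\im\pi L_0}$), whereas the paper simply observes that both twists are the canonical ones attached to the already-matched rigid braided $C^*$-structures; both arguments are valid.
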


By ``natural identification", we mean that each unitary $U$-module $W_i$ is identified with the corresponding single-valued unitary left $A_U$-module $W_i$; a homomorphism $F:W_i\rightarrow W_j$ of unitary $U$-modules is identified with $F$, considered as a homomorphism of $A_U$-bimodules.

\begin{proof}
As mentioned at the beginning of this section, the regularity of $U$ is proved in \cite{McR20}. By theorems \ref{lb53} and \ref{lb50}, $U$ is completely unitary. Hence $\Rep^\uni(U)$ is a unitary modular tensor category. That $\RepU$ and $\RepAU$ share the  same tensor and braiding structures is proved in \cite{CKM17}. In order for this paper to be self-contained, we sketch the proof as follows.

For any objects $W_i,W_j,W_k$, let $\mc A_{i,j,k}:W_{(ij)k}=(W_i\boxtimes_U W_j)\boxtimes_U W_k\rightarrow W_{i(jk)}=W_i\boxtimes_U(W_j\boxtimes_U W_k)$ be the associativity isomorphism of $\RepU$. It is shown in \cite{Gui20} proposition 4.3 that under the identification of $W_{(ij)k}$ and $W_{i(jk)}$ via $\mc A_{i,j,k}$, one has the fusion relation
\begin{align}
\mc Y_{\mu_{i,jk}}(w^{(i)},z)\mc Y_{\mu_{j,k}}(w^{(j)},\zeta)=\mc Y_{\mu_{ij,k}}(\mc Y_{\mu_{i,j}}(w^{(i)},z-\zeta)w^{(j)},
\zeta)
\end{align}
for any $w^{(i)}\in W_i,w^{(j)}\in W_j$. This means that relation \eqref{eq48} holds under the identification via $\mc A_{i,j,k}$. But we know that due to equation \eqref{eq46}, the same relations also hold under the identification via $\fk A_{i,j,k}$, the associativity isomorphism of $\RepAU$. Thus $\mc A_{i,j,k}=\fk A_{i,j,k}$.

Next, we know that in $\RepU$, the identification $W_{ai}\simeq W_i$ is via the vertex operator $\mc Y_{\mu^i_L}$ of $U$ on $W_i$, and the identification $W_{ia}\simeq W_i$ is via the creation operator of the $U$-module $W_i$, which is $\mc Y_{\mu^i_R}$, as argued at the end of the proof of theorem \ref{lb50}. Define $\fk l_i\in\Hom_{A_U}(W_{ai},W_i)$ and $\fk r_i\in\Hom_{A_U}(W_{ia},W_i)$ using equations \eqref{eq52}. Then, from section \ref{lb52}, we know that $\fk l_i$ and $\fk r_i$ define respectively the equivalences $W_{ai}\simeq W_i$ and $W_{ia}\simeq W_i$ in $\RepAU$ (as a full $C^*$-tensor subcategory of $\BIM(A_U)$). On the other hand, by \eqref{eq68} we have $\mc Y_{\mu^i_L}=\wtd{\mc Y}_{\fk l_i}$. Therefore $\fk l_i:W_{ai}=W_a\boxtimes_U W_i\rightarrow W_i$ is the $U$-module homomorphism corresponding to the vertex operator of the $U$-module $W_i$. Thus, by the definition of the monoidal stuctures of VOA tensor categories (see section \ref{lb2}), $\fk l_i$ also defines the equivalence  $W_{ai}\simeq W_i$ in $\RepU$.  Similarly, $\fk r_i$ is the $U$-module morphism corresponding to the creation operator of $W_i$. Hence it defines the equivalence $W_{ia}\simeq W_i$ in $\RepU$. We have now proved that the ($C^*$-)monoidal structure of $\RepU$ agrees with that of $\RepAU$.

Let $\ss^U_{i,j}\in\Hom_U(W_{ij},W_{ji})$ be the braiding of $W_i\boxtimes_U W_j$ in $\Rep^U$. We want to show that $\ss^U$ equals the braiding $\ss^{A_U}$ of $\RepAU$. By \eqref{eq60} it suffices to check that for any object $W_k$ in $\RepAU$ and any $\wtd\alpha\in\Hom_{A_U}(W_{ij},W_k)$,
\begin{align}
\wtd\alpha\mu_{j,i}\ss_{i,j}=\wtd\alpha\ss^U_{i,j}\mu_{i,j},
\end{align}
where $\ss$ is the braiding of $\RepV$. Set $\alpha=\wtd\alpha\mu_{j,i}\in\Hom_{A_U}(W_i\boxtimes W_j,W_k)$. Then by \eqref{eq65}, $\mc Y_{\alpha\ss_{i,j}}=B_+\mc Y_\alpha$, and similarly $\wtd{\mc Y}_{\wtd\alpha\ss^U_{i,j}}=B_+\wtd{\mc Y}_{\wtd\alpha}$. Note that the braiding $B_+$ defined for $V$-intertwining operators and for $U$-intertwining operators are the same since $U$ and $V$ have the same Virasoro operators. We now compute
\begin{align*}
\mc Y_{\wtd\alpha\ss^U_{i,j}\mu_{i,j}}\xlongequal{\eqref{eq31}} \wtd\alpha\ss^U_{i,j} \mc Y_{\mu_{i,j}} \xlongequal{\eqref{eq69}} \wtd{\mc Y}_{\wtd\alpha\ss^U_{i,j}}=B_+\wtd{\mc Y}_{\wtd\alpha} \xlongequal{\eqref{eq68}} B_+\mc Y_\alpha=\mc Y_{\alpha\ss_{i,j}}=\mc Y_{\wtd\alpha\mu_{j,i}\ss_{i,j}}.
\end{align*}

Finally,  for both categories the twists are defined by the rigid braided $C^*$-tensor structures. Therefore the ribbon structures agree.
\end{proof}

\subsection{Applications}

To use theorem \ref{lb55} in its full power, we first prove the complete unitarity for another type of extensions (which do not preserve conformal vectors).

\begin{pp}\label{lb56}
Let $V$ and $\wtd V$ be CFT-type and regular VOAs. Then $V\otimes\wtd V$ is (regular and) completely unitary if and only if both $V$ and $\wtd V$ are completely unitary. If this is true then $\Rep^\uni(V\otimes \wtd V)$ is the tensor product of $\Rep^\uni(V)$ and $\Rep^\uni(\wtd V)$. 
\end{pp}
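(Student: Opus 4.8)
The plan is to check each clause of complete unitarity for $V\otimes\wtd V$ against the corresponding clause for $V$ and $\wtd V$ separately, using the Frenkel--Huang--Lepowsky factorization of a tensor product VOA \cite{FHL93}. First I would record the structural inputs: $V\otimes\wtd V$ is regular if and only if both $V,\wtd V$ are regular; its irreducible modules are exactly the outer tensor products $W_i\otimes\wtd W_{i'}$ of irreducibles of $V$ and $\wtd V$; and for all choices of modules there is a canonical factorization of intertwining operator spaces
\[
\mc V_{V\otimes\wtd V}{W_k\otimes\wtd W_{k'}\choose W_i\otimes\wtd W_{i'}\ \ W_j\otimes\wtd W_{j'}}\cong\mc V_V{W_k\choose W_i\ W_j}\otimes\mc V_{\wtd V}{\wtd W_{k'}\choose\wtd W_{i'}\ \wtd W_{j'}},\qquad \mc Y_{\alpha\otimes\wtd\alpha}=\mc Y_\alpha\otimes\mc Y_{\wtd\alpha}.
\]
Under this identification all the auxiliary constructions of Sections \ref{lb2} and \ref{lb48}---vertex and creation operators, annihilation operators $\mc Y_{\ev}$, the braidings $B_\pm$, and the adjoints $\alpha^\dagger$---factor as outer tensor products, since each is built from the Virasoro action and the contragredient pairing, both of which respect the bigrading of $V\otimes\wtd V$ by $(L_0^V,L_0^{\wtd V})$.

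Next I would dispose of the two easy clauses. For unitarity, the $\Leftarrow$ direction puts the tensor inner product on $V\otimes\wtd V$ with PCT operator $\Theta\otimes\wtd\Theta$; the $\Rightarrow$ direction uses the uniqueness of the PCT operator (proposition \ref{lb9}, \cite{CKLW18}) together with the fact that $\Theta$ preserves the decomposition $\nu=\nu_V\otimes\Omega+\Omega\otimes\nu_{\wtd V}$ into commuting conformal vectors, hence preserves the bigrading and restricts to each factor, exhibiting $V,\wtd V$ as unitary. Unitarizability of modules is handled the same way: an outer tensor product of unitary modules is unitary, while conversely a unitary structure on the irreducible module $W_i\otimes\wtd V$ restricts along the lowest $\wtd V$-weight subspace $W_i\otimes\Omega\cong W_i$ to a unitary structure on $W_i$ (and symmetrically using $V\otimes\wtd W_{i'}$ for $\wtd V$). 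Since all three VOAs are regular, checking irreducibles suffices.

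The heart of the argument is the positivity clause, which I would obtain from the factorization of the invariant sesquilinear forms of Definition \ref{lb49},
\[
\Lambda_{V\otimes\wtd V}=\Lambda_V\otimes\Lambda_{\wtd V}.
\]
This is an identity of nondegenerate Hermitian forms (valid once strong unitarity holds, before any positivity is assumed): writing the defining fusion relation \eqref{eq11} for $V\otimes\wtd V$ and applying the factorizations above shows it is the outer product of the defining relations for $V$ and $\wtd V$, and uniqueness of $\Lambda$ forces the forms to factor. Granting this, the $\Leftarrow$ direction is immediate, as a tensor product of positive forms is positive. For $\Rightarrow$ I would specialize the $\wtd V$-entries to the vacuum module $\wtd V$: the space $\mc V_{\wtd V}{\wtd V\choose\wtd V\ \wtd V}$ is one-dimensional, spanned by the vacuum vertex operator, on which a direct computation from Definition \ref{lb49} (using that $\wtd Y$ equals its own adjoint intertwining operator, Section \ref{lb48}) gives a positive value $c>0$; then $\Lambda_{V\otimes\wtd V}$ restricted to $\mc V_V{W_k\choose W_i\ W_j}\otimes\mc V_{\wtd V}{\wtd V\choose\wtd V\ \wtd V}$ equals $c\,\Lambda_V$, whose positivity (the hypothesis) yields $\Lambda_V>0$, and symmetrically $\Lambda_{\wtd V}>0$.

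Once complete unitarity is established, Theorem \ref{lb51} makes $\Rep^\uni(V\otimes\wtd V)$ a unitary modular tensor category, and the same factorizations identify its tensor bifunctor, associators, braiding $B_+$ (via \eqref{eq65}), and canonical twist $e^{2\im\pi L_0}=e^{2\im\pi L_0^V}\otimes e^{2\im\pi L_0^{\wtd V}}$ with those of the external (Deligne) product $\Rep^\uni(V)\otimes\Rep^\uni(\wtd V)$, the object $W_i\otimes\wtd W_{i'}$ carrying the invariant inner product $\Lambda_V\otimes\Lambda_{\wtd V}$ by the factorization above. The main obstacle I anticipate is making the identity $\Lambda_{V\otimes\wtd V}=\Lambda_V\otimes\Lambda_{\wtd V}$ fully rigorous: although forced in principle by uniqueness, it requires careful tracking of the conformal-weight conventions and the branch choices $\arg z$ in the fusion relations (convention \ref{lb43}), and in particular the compatibility of the adjoint operation $\dagger$ with the tensor factorization, since $\dagger$ simultaneously involves the contragredient pairings and reflection operators of both $V$ and $\wtd V$ (cf.\ \eqref{eq9}). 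A secondary subtlety, already flagged above, is verifying that $\Theta$ and the unitary structures descend to the two factors, which rests on the bigrading-preservation of $\Theta$.
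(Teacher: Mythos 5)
Your proposal is correct in substance and its global strategy coincides with the paper's: tensorize the defining fusion relations of definition \ref{lb49} to obtain $\Lambda_{V\otimes\wtd V}=\Lambda_V\otimes\Lambda_{\wtd V}$, and specialize the $\wtd V$-slots to the vacuum for the converse. The genuine difference is in how the key technical input is sourced. You take the factorization of intertwining-operator spaces as a given, credited to \cite{FHL93}; but \cite{FHL93} theorem 4.7.4 only classifies modules of tensor product VOAs, while the factorization of intertwiner spaces is \cite{ADL05} theorem 2.10, which the paper cites only parenthetically as an alternative. The paper's proof avoids this input altogether: in the ``if'' direction it tensorizes the two fusion relations \eqref{eq11} to exhibit the family $\{\mc Y_{\alpha\otimes\wtd\alpha}\}$ as one on which the invariant form is the identity matrix, and then invokes the \emph{nondegeneracy} of $\Lambda$ (known from rigidity) to conclude that this family is already a basis of $\mc V{t\otimes\wtd t\choose i\otimes\wtd i~j\otimes\wtd j}$ --- so positivity, the factorization of intertwiner spaces, and $\Lambda=\Lambda\otimes\Lambda$ all fall out of a single argument rather than the factorization being a prerequisite. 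In the ``only if'' direction the paper proves the one case of factorization it needs, namely $\mc Y_\alpha\mapsto\mc Y_\alpha\otimes Y_{\wtd 0}$ giving $\mc V{t\choose i~j}\simeq\mc V{t\otimes\wtd 0\choose i\otimes\wtd 0~j\otimes\wtd 0}$, by a bare-hands restriction-to-$W_\cdot\otimes\wtd\Omega$ argument; this is also where your constant $c$ is effectively seen to be $1$. So your route is fine once the correct reference (or the paper's nondegeneracy trick) is substituted, and your worry about $\dagger$-compatibility is resolvable, since $e^{\ovl zL_1}(e^{-\im\pi}\ovl{z^{-2}})^{L_0}$ and the conjugation maps factor over the two commuting Virasoro actions.

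One claim in your sketch is asserted without proof and does not follow from what you cite: that the PCT operator $\Theta$ of an \emph{arbitrary} unitary structure on $V\otimes\wtd V$ preserves the summands of $\nu=\nu_V\otimes\wtd\Omega+\Omega\otimes\nu_{\wtd V}$, hence the bigrading. The identity $\Theta\nu=\nu$ only controls the total conformal vector, and when $V\cong\wtd V$ the decomposition of $\nu$ into commuting conformal vectors is not unique, so some further input (e.g.\ positivity of the form, or the unitary-subalgebra machinery of \cite{CKLW18}) is needed. The paper's route runs instead through restriction: the inner product of $V\otimes\wtd V$ is restricted to the (non-conformal) vertex subalgebra $V\otimes\wtd\Omega$, and for modules the unitary structure on $W_i\otimes W_{\wtd 0}$ is restricted to $W_i\otimes\wtd\Omega$ (citing \cite{Ten18b} proposition 2.20), which bypasses producing a $\Theta$-stable bigrading first. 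You flagged this soft spot yourself, which is the right instinct; it is the one step of your proposal that needs a real argument rather than bookkeeping.
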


\begin{proof}
Clearly $V\otimes\wtd V$ is CFT-type. Note that  $V\otimes\wtd V$ is also regular by \cite{DLM97} proposition 3.3.  Assume first of all that $V$ and $\wtd V$ are completely unitary. By \cite{DL14} proposition 2.9, $V\otimes\wtd V$ is unitary.  By \cite{FHL93} theorem 4.7.4, any irreducible $V\otimes\wtd V$-module is the tensor product of an irreducible $V$-module and an irreducible $\wtd V$-module, which by the strong unitarity of $V$ and $\wtd V$ are unitarizable. Therefore the $V\otimes\wtd V$-module is also unitarizable, and hence $V\otimes\wtd V$ is strongly unitary.
	
We now show that $V\otimes \wtd V$ is completely unitary. Choose   unitary $V$-modules $W_i,W_j$ and unitary $\wtd V$-modules $W_{\wtd i},W_{\wtd j}$. Choose $W_t$ in $\mc E$. Let also $\wtd{\mc E}$ be a complete set of representatives of irreducible $\wtd V$-modules, and choose any $W_{\wtd t}$ in $\mc E$. Choose bases $\Xi^t_{i,j}$ of $\mc V{t\choose i~j}$ and $\wtd\Xi^{\wtd t}_{\wtd i,\wtd j}$ of $\wtd{\mc V}{\wtd t\choose \wtd i~\wtd j}$ (the vector space of type $\wtd t\choose \wtd i~\wtd j$ intertwining operators of $\wtd V$) so that their dual bases $\{\widecheck{\mc Y}^\alpha:\alpha\in \Xi^t_{i,j}\}$ and $\{\widecheck{\mc Y}^{\wtd \alpha}:\wtd \alpha\in \wtd\Xi^{\wtd t}_{\wtd i,\wtd j}\}$ are orthonormal under the invariant inner products in $\mc V{t\choose i~j}^*$ and $\wtd{\mc V}{\wtd t\choose \wtd i~\wtd j}^*$ respectively. Then equation \eqref{eq11} holds for $V$ with $\Lambda(\widecheck{\mc Y}^\alpha|\widecheck{\mc Y}^\beta)=\delta_{\alpha,\beta}$, and a similar relation holds for $\wtd V$. Let $Y_{j\otimes\wtd j}$ be the vertex operator of the $V\otimes\wtd V$-module $W_j\otimes W_{\wtd j}$, and let  $\mc Y_{\ev_{\ovl {i\otimes\wtd i},i\otimes\wtd i}}$ be the annihilation operator of $W_i\otimes W_{\wtd i}$. Then $Y_{j\otimes\wtd j}=Y_j\otimes Y_{\wtd j}$ and $\mc Y_{\ev_{\ovl {i\otimes\wtd i},i\otimes\wtd i}}=\mc Y_{\ev_{\ovl i,i}}\otimes\mc Y_{\ev_{\ovl {\wtd i},\wtd i}}$. Set $\mc Y_{\alpha\otimes\wtd\alpha}=\mc Y_\alpha\otimes\mc Y_{\wtd\alpha}$, which is a type $t\otimes\wtd t\choose i\otimes\wtd i~j\otimes\wtd j$ intertwining operator of $V\otimes \wtd V$ if $\alpha\in\Xi^t_{i,j},\wtd\alpha\in\wtd\Xi^{\wtd t}_{\wtd i,\wtd j}$. Then  for any $w_1^{(i)},w_2^{(i)}\in W_i,w_3^{(\wtd i)},w_4^{(\wtd i)}\in  W_{\wtd i}$ we have the fusion relation
	\begin{align}
	&Y_{j\otimes\wtd j}\Big(\mc Y_{\ev_{\ovl {i\otimes\wtd i},i\otimes\wtd i}}\big(\ovl{w_2^{(i)}}\otimes \ovl{w_4^{(\wtd i)}},z-\zeta\big)\big(w_1^{(i)}\otimes w_3^{(\wtd i)} \big),\zeta  \Big)\nonumber\\
	=&\sum_{t\in\mc E,\wtd t\in\wtd{\mc E}}\sum_{\alpha\in\Xi^t_{i,j},\wtd\alpha\in\wtd\Xi^{\wtd t}_{\wtd i,\wtd j}}\mc Y_{(\alpha\otimes\wtd\alpha)^\dagger}(\ovl{w_2^{(i)}}\otimes \ovl{w_4^{(\wtd i)}},z)\cdot \mc Y_{\alpha\otimes\wtd\alpha}(w_1^{(i)}\otimes w_3^{(\wtd i)},\zeta).
	\end{align}
	From this relations, we see  that the invariant sesquilinear form $\Lambda$ on $\mc V{t\otimes\wtd t\choose i\otimes\wtd i~j\otimes\wtd j}^*$ is positive. Moreover, by the non-degeneracy of this $\Lambda$,  $\Xi^t_{i,j}\times \wtd\Xi^{\wtd t}_{\wtd i,\wtd j}$ is a basis of $\mc V{t\otimes\wtd t\choose i\otimes\wtd i~j\otimes\wtd j}$ whose dual basis is therefore orthonormal in $\mc V{t\otimes\wtd t\choose i\otimes\wtd i~j\otimes\wtd j}^*$. Thus $\mc V{t\otimes\wtd t\choose i\otimes\wtd i~j\otimes\wtd j}=\mc V{t\choose i~j}\otimes \wtd{\mc V}{\wtd t\choose \wtd i~\wtd j}$, and the  $\Lambda$ on $\mc V{t\otimes\wtd t\choose i\otimes\wtd i~j\otimes\wtd j}^*$ equals $\Lambda\otimes \Lambda$ on  $\mc V{t\choose i~j}^*\otimes \wtd{\mc V}{\wtd t\choose \wtd i~\wtd j}^*$. That $\Rep^\uni(V\otimes \wtd V)=\Rep^\uni(V)\otimes \Rep^\uni(\wtd V)$ now follows easily. (It also follows from \cite{ADL05} theorem 2.10.)
	
We now prove the ``only if" part. Assume that $V\otimes\wtd V$ is completely unitary. We want to prove that $V$ (or similarly $\wtd V$) is completely unitary. Let $\wtd\Omega$ be the vacuum vector of $\wtd V$. Then $V$ can be regarded as a (non-conformal) vertex subalgebra of $V\otimes\wtd V$. The inner product on $V\otimes \wtd V$ restricts to one on $V$ which makes $V$ unitary. Now  let $W_i$ be an irreducible $V$-module. Write $\wtd V=W_{\wtd0}$ as the unitary vacuum $\wtd V$-module. Then the $V\otimes\wtd V$-module $W_i\otimes W_{\wtd 0}$ admits a unitary structure. The  restriction of the inner product of $W_i\otimes W_{\wtd 0}$ to $W_i\otimes\wtd\Omega$ produces a unitary structure on $W_i$ (cf. \cite{Ten18b} proposition 2.20).

Now choose unitary $V$-modules $W_i,W_j$, and choose $W_t$ in $\mc E$ again. Notice the natural isomorphism $\mc V{t\choose i~j}\xrightarrow{\simeq} \mc V{t\otimes\wtd 0\choose i\otimes\wtd 0~j\otimes\wtd 0}$ sending $\mc Y_\alpha\in\mc V{t\choose i~j}$ to $\mc Y_\alpha\otimes Y_{\wtd 0}$. Here $Y_{\wtd 0}$ is the vertex operator of $\wtd V$ (on the vacuum module $W_{\wtd 0}$). Such map is clearly injective. It is also surjective, since any intertwining operator in  $\mc V{t\otimes\wtd 0\choose i\otimes\wtd 0~j\otimes\wtd 0}$ can be restricted to the subspaces $W_i\otimes\wtd\Omega,W_j\otimes\wtd\Omega,W_t\otimes\wtd\Omega$ to produce the desired preimage. Note that the $\Lambda$ on $\mc V{t\otimes\wtd 0\choose i\otimes\wtd 0~j\otimes\wtd 0}^*$ is positive definite by the complete unitarity of $V\otimes\wtd V$. Choose a basis in $\mc V{t\otimes\wtd 0\choose i\otimes\wtd 0~j\otimes\wtd 0}$ whose dual basis is orthonormal under $\Lambda$. Using a suitable fusion relation, it is straightforward to check  that the corresponding basis in $\mc V{t\choose i~j}$ also has orthonormal dual basis in $\mc V{t\choose i~j}^*$ under $\Lambda$. In particular, $\Lambda$ is positive on $\mc V{t\choose i~j}^*$. This proves the complete unitarity of $V$.
\end{proof}

The above proposition implies a strategy of proving the completely unitarity of a non-conformal unitary extension $U$ of $V$. Let $V^c$ be the commutant of $V$ in $V^c$ (the coset subalgebra) which is unitary by \cite{CKLW18}. Then $U$ is a unitary (conformal) extension of $V\otimes V^c$ by \cite{Ten19a} proposition 2.21.  Now it suffices to show the regularity and complete unitarity of $V$ and $V^c$, the proof of which might require a similar trick applied to $V$ and $V^c$.

\begin{co}\label{lb57}
Let $V$ be a (finite) tensor product of $c<1$ unitary Virasoro VOAs, affine unitary VOAs, and even lattice VOAs. Let $U$ be a CFT-type unitary extension of $V$. Then $U$ is regular and completely unitary. Consequently, the category of unitary $U$-modules is a unitary modular tensor category. 
\end{co}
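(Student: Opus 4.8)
The plan is to reduce the assertion to the complete unitarity of the three families of ``building block'' VOAs, then transport that property across the tensor product using Proposition~\ref{lb56} and across the extension $V\subset U$ using Theorem~\ref{lb55}. Concretely, write $V=V_1\otimes\cdots\otimes V_n$, where each $V_m$ is a $c<1$ unitary Virasoro VOA, an affine unitary VOA, or an even positive-definite lattice VOA. The whole corollary then follows from the three structural theorems already proved, applied in the order: single factors $\Rightarrow$ the tensor product $V$ $\Rightarrow$ the extension $U$.

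First I would record that each $V_m$ is \emph{regular and completely unitary}. Regularity is classical in all three cases: the unitary minimal models are rational and $C_2$-cofinite; the affine VOAs at positive integer level are regular; and even positive-definite lattice VOAs are regular (all instances of \cite{DLM97}). Complete unitarity of the Virasoro and affine factors is precisely the main content of \cite{Gui19b,Gui19c}, where the positivity of the invariant sesquilinear form $\Lambda$ on the dual intertwining spaces is established. For the lattice factors, $\Rep(V_L)$ is \emph{pointed}: its simple objects are the modules $V_{L+\lambda}$ indexed by $\lambda\in L^\ast/L$, all of which are simple currents, so every fusion space $\mc V{t\choose i~j}$ is at most one-dimensional. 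Since $\Lambda$ is Hermitian and non-degenerate (the latter from the rigidity of $\Rep(V_L)$, as recalled after Definition~\ref{lb49}), on each such non-zero space it is simply a non-zero real scalar, and complete unitarity amounts to checking that this scalar is positive; this can be pinned down by evaluating a single two-point function of the standard lattice vertex operators $e^{\alpha}$, which are manifestly compatible with the natural inner product on $V_L$.

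With each $V_m$ regular and completely unitary in hand, I would apply Proposition~\ref{lb56} inductively to the decomposition $V_1\otimes\cdots\otimes V_n=(V_1\otimes\cdots\otimes V_{n-1})\otimes V_n$, concluding at each stage that the partial tensor product is again regular and completely unitary; hence so is $V$ itself. Since $U$ is by hypothesis a CFT-type unitary extension of this completely unitary $V$, Theorem~\ref{lb55} applies directly and yields both that $U$ is regular and completely unitary and that $\RepU$ agrees with $\RepAU$ as unitary modular tensor categories. The final ``Consequently'' then follows from Theorem~\ref{lb51} applied to $U$, giving that $\Rep^\uni(U)$ is a unitary modular tensor category.

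The routine part is the bookkeeping of the tensor-product induction and the invocation of Theorem~\ref{lb55}, which does all the real work of the extension step. The genuine obstacle is localized entirely in the base case, and within it in the lattice factors: for Virasoro and affine VOAs the positivity of $\Lambda$ is imported wholesale from \cite{Gui19b,Gui19c}, whereas for even lattice VOAs one must supply the positivity sign for the one-dimensional fusion spaces. I expect that the pointedness of $\Rep(V_L)$ makes this a short and essentially explicit computation rather than a deep analytic difficulty, so that the corollary really is a formal consequence of the machinery developed above together with the previously established complete unitarity of the elementary models.
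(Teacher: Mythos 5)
Your proposal follows exactly the paper's route: establish regularity and complete unitarity for each tensor factor, propagate through the finite tensor product via Proposition~\ref{lb56}, then through the extension $V\subset U$ via Theorem~\ref{lb55}, with the final clause coming from Theorem~\ref{lb51}. The only divergence is in the base case for lattice VOAs. The paper handles all three families purely by citation: regularity from \cite{DLM97}, complete unitarity of the $c<1$ Virasoro VOAs from \cite{Gui19b} theorem 8.1, of the unitary affine VOAs from \cite{Gui19b} theorem 8.4, \cite{Gui19c} theorem 6.1 and \cite{Ten19b} theorem 5.5, and of even lattice VOAs from \cite{Gui20} theorem 5.8. You instead sketch a direct argument for the lattice case via pointedness of the module category: all fusion spaces are at most one-dimensional, so the Hermitian non-degenerate form $\Lambda$ is a non-zero real scalar on each, and only its sign must be checked. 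This reduction is sound and is the right structural observation, but as written it is not yet a proof: the positivity of that scalar is precisely the nontrivial content, and ``I expect a short explicit computation with $e^{\alpha}$'' leaves it unverified (one must actually exhibit the fusion relation of Definition~\ref{lb49} with the correct coefficients, or appeal to \cite{Gui20} theorem 5.8, which is what the paper does). So your architecture matches the paper's, and your lattice sketch is a plausible alternative to the citation, provided the two-point-function computation is carried out rather than asserted; everything else in your argument (the induction over factors, the invocation of Theorems~\ref{lb55} and~\ref{lb51}) is complete and correct as stated.
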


\begin{proof}
The affine unitary VOAs of these types are regular by \cite{DLM97} and completely unitary by \cite{Gui19b} theorem 8.4  , \cite{Gui19c} theorem 6.1, and \cite{Ten19b} theorem 5.5. The $c<1$ unitary Virasoro VOAs (resp. even lattice VOAs) are regular also by \cite{DLM97} and completely unitary by \cite{Gui19b} theorem 8.1 (resp. \cite{Gui20} theorem 5.8). Therefore, by  theorem \ref{lb55} and proposition \ref{lb56}, CFT-type unitary extensions of their tensor products are also regular and completely unitary.
\end{proof}

The above corollary is by no means  in the most general form. For example, we know that W-algebras in discrete series of type $A$ and $E$ are completely unitary by \cite{Ten19b} theorem 5.5. So one can definitely add these examples to the list in that corollary.

\begin{co}
Let $U$ be a CFT-type unitary VOA with central charge $c<1$. Then $U$ is completely unitary. Consequently, the category of unitary $U$-modules is a unitary modular tensor category. 
\end{co}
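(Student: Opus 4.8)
The plan is to realize any CFT-type unitary VOA $U$ with central charge $c<1$ as a unitary extension of a completely unitary VOA to which Theorem \ref{lb55} applies. The key structural fact about $c<1$ theories is that such a $U$ must contain a copy of the unitary Virasoro VOA $V$ of the same central charge $c$ as a subalgebra. Indeed, $U$ being unitary and CFT-type means its conformal vector $\nu$ generates, via the modes $L_n = Y(\nu)_{n+1}$, a unitary Virasoro vertex operator subalgebra. When $c<1$, the allowed central charges are exactly those in the discrete series, and the Virasoro VOA at such $c$ is the simple (minimal model) unitary Virasoro VOA $V$, which is regular and completely unitary by \cite{Gui19b} theorem 8.1 (as cited in the proof of corollary \ref{lb57}). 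Since $U$ and $V$ share the same conformal vector $\nu$ and the same vacuum $\Omega$, and $V \subset U$ as vertex operator algebras, $U$ is by definition a CFT-type extension of $V$.

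First I would verify that this extension is \emph{unitary} in the sense of the definition preceding proposition on unitary VOA extensions: the normalized inner product of the unitary VOA $U$ restricts to the normalized inner product of $V$. This is immediate because the inner product on $V$ is determined by unitarity and the normalization $\bk{\Omega|\Omega}=1$, so the restriction of $U$'s inner product makes $V$ a unitary VOA with the same vacuum; by uniqueness of the unitary structure on a unitary VOA (the PCT operator $\Theta$ is unique by \cite{CKLW18} proposition 5.1), the restricted inner product is the correct one. Thus $V \subset U$ is a CFT-type unitary VOA extension with $V$ completely unitary.

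With this setup the result follows directly from theorem \ref{lb55}: since $V$ is CFT-type, regular, and completely unitary, and $U$ is a CFT-type unitary VOA extension of $V$, theorem \ref{lb55} gives that $U$ is regular and completely unitary. The consequence that $\Rep^\uni(U)$ is a unitary modular tensor category then follows from theorem \ref{lb51} (equivalently, from the first bullet of theorem \ref{lb55} together with the regularity of $U$). In fact this corollary is essentially a special case of corollary \ref{lb57}, taking the tensor product there to consist of a single $c<1$ unitary Virasoro VOA.

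The main obstacle is the embedding step: establishing that the Virasoro subalgebra generated by $\nu$ inside a $c<1$ unitary VOA is the \emph{simple} minimal-model Virasoro VOA rather than a proper (non-unitary or reducible) quotient of the universal Virasoro VOA. The point is that the unitarity of $U$ restricts to unitarity of the Virasoro subVOA, and a unitary Virasoro VOA of central charge $c<1$ forces $c$ to lie in the discrete series and forces the subalgebra to be the simple quotient $L(c,0)$ — a reducible or non-simple Virasoro module cannot carry a positive-definite invariant form. Once one knows the Virasoro subalgebra is $V = L(c,0)$, which is regular and completely unitary, everything else is a clean application of the machinery already developed. I would therefore spend the bulk of the argument justifying this identification, citing the classification of unitary Virasoro representations, and then invoke theorem \ref{lb55} to conclude.
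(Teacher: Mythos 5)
Your proposal is correct and follows essentially the same route as the paper: the paper's entire proof is the observation that, by \cite{DL14} theorem 5.1, $U$ is a CFT-type unitary extension of the completely unitary Virasoro VOA $L(c,0)$, after which theorem \ref{lb55} (via corollary \ref{lb57}) applies. The embedding step you spend most of your effort justifying — that unitarity forces the Virasoro subalgebra generated by $\nu$ to be the simple discrete-series module $L(c,0)$, and that the restricted inner product is the normalized one — is exactly the content of the Dong--Lin result the paper cites, so your argument amounts to sketching a proof of that citation rather than taking a different path.
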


\begin{proof}
By \cite{DL14} theorem 5.1, $U$ is a unitary extension of the unitary Virasoro VOA $L(c,0)$.
\end{proof}

\printindex

\newpage

\noindent {\small \sc Department of Mathematics, Rutgers University, USA.}

\noindent {\em E-mail}: bin.gui@rutgers.edu\qquad binguimath@gmail.com
\end{document}